\definecolor{hot}{RGB}{65,105,225}
\newcommand{\commentOut}[1]{}
\theoremstyle{plain}
\newtheorem{theorem}{Theorem}[section]
\newtheorem{prop}[theorem]{Proposition}
\newtheorem{que}[theorem]{Question}
\newtheorem{cor}[theorem]{Corollary}
\newtheorem{conj}[theorem]{Conjecture}
\newtheorem{lemma}[theorem]{Lemma}
\newtheorem{thrm}[theorem]{Theorem}
\theoremstyle{definition}
\newtheorem{defn}[theorem]{Definition}
\newtheorem{rmk}[theorem]{Remark}
\newtheorem*{ex*}{Example}
\newtheorem{propdef}[theorem]{Proposition-Definition}
\newcommand{\cO}{{\mathcal O}}
\newcommand{\cH}{{\mathcal H}}
\newcommand{\cJ}{{\mathcal J}}
\newcommand{\cM}{{\mathcal M}}
\newcommand{\cV}{{\mathcal V}}
\newcommand{\cL}{\mathcal{L}}
\newcommand{\cD}{\mathcal{D}}
\newcommand{\sV}{\mathscr{V}}
\newcommand{\bP}{{\mathbb{P}}}
\newcommand{\bQ}{{\mathbb{Q}}}
\newcommand{\bZ}{{\mathbb{Z}}}
\newcommand{\bR}{{\mathbb{R}}}
\newcommand{\bC}{{\mathbb{C}}}
\newcommand{\bA}{{\mathbb{A}}}
\newcommand{\bN}{{\mathbb{N}}}
\def\bz{{\mathbf{0}}}
\newcommand{\ubul}{{\,\begin{picture}(-1,1)(-1,-3)\circle*{2}\end{picture}\ }}
\newcommand{\lbul}{{\,\begin{picture}(-1,1)(-1,-2)\circle*{2}\end{picture}\ }}
\DeclareMathOperator{\codim}{codim}
\DeclareMathOperator{\id}{id}
\DeclareMathOperator{\im}{Im}
\DeclareMathOperator{\pic}{Pic}
\DeclareMathOperator{\lct}{lct}
\DeclareMathOperator{\rank}{rank}
\DeclareMathOperator{\Aut}{Aut}
\DeclareMathOperator{\homo}{Hom}
\DeclareMathOperator{\enmo}{End}
\DeclareMathOperator{\spec}{Spec}
\DeclareMathOperator{\Def}{Def}
\DeclareMathOperator{\gr}{gr}                    
\DeclareMathOperator{\length}{length}                    
\DeclareMathOperator{\ord}{ord}                    
\DeclareMathOperator{\Hom}{Hom}
\DeclareMathOperator{\End}{End}
\DeclareMathOperator{\HH}{H}
\DeclareMathOperator{\Trace}{Trace}
\DeclareMathOperator{\Span}{Span}
\DeclareMathOperator{\MC}{MC}
\DeclareMathOperator{\IC}{IC}
\DeclareMathOperator{\mld}{mld}
\def\ra{\rightarrow}
\def\al{\alpha}
\newcommand\be{\begin{equation}}
\newcommand\ee{\end{equation}}
\def\xa{\xrightarrow}
\def\Linf{L_\infty}
\def\Art{{\mathcal{A}rt}}
\def\Set{{\mathcal{S}et}}
\def\eps{\epsilon}
\newcommand{\rS}{{\mathrm{S}}}
\newcommand{\rrS}{{ \bar{\mathrm{S}}}}
\def\sf{\mathsf{f}}
\def\dec{{\mathrm{dec}}}
\def\mm{{\mathfrak{m}}}
\def\wh{\widehat}
\def\length{{\rm{length}}}
\def\sQ{\mathcal Q}
\def\sR{\mathcal R}
\def\sM{\mathcal M}
\newcommand{\mb}{\mathcal{M}_{\textrm{B}}}
\newcommand{\mdol}{\mathcal{M_{\textrm{Dol}}}}
\newcommand{\mdr}{\mathcal{M}_{\textrm{DR}}}
\def\ti{\tilde}
\title[Deformations with cohomology constraints: a review]{Deformations with cohomology constraints: a review}
\author{Nero Budur}\address{KU Leuven, Celestijnenlaan 200B, B-3001 Leuven, Belgium} \email{nero.budur@kuleuven.be}
\author{An-Khuong Doan}\address{KU Leuven, Celestijnenlaan 200B, B-3001 Leuven, Belgium}\email{an-khuong.doan@kuleuven.be}
\begin{document}

\begin{abstract} 
Deformation problems with cohomology constraints over a field of characteristic zero are controlled by $L_\infty$ pairs. In this largely expository article we review this theory and focus on recent applications.
\end{abstract}

\maketitle

\setcounter{tocdepth}{1}

\numberwithin{equation}{section}

\tableofcontents

\section{Introduction}

Deformation theory has been well-developed for a while. Over a field of characteristic zero, a principle of Deligne \cite{De86} says that  every deformation problem is controlled by a differential graded Lie algebra, and two equivalent dgla's describe the same deformation theory. The technique was developed by Goldman-Milson \cite{GM}. A more flexible version in terms of $L_\infty$ algebras is due to Fukaya, Kontsevich, Soibelman, Manetti, and others, cf. \cite[p.451]{Mane}.

Recently deformation theory was enhanced to handle cohomology constraints. Deformation problems with cohomology constraints are controlled by dgl pairs, or better, $L_\infty$ pairs, according to a generalization of Deligne's principle by Budur-Wang \cite{BW}, Budur-Rubi\'o \cite{BR}.  A pair means an algebra together with a module.  The main feature is that only one $L_\infty$ pair is necessary to control the local structure at a fixed point $E$ of the pairs $(\cM,\cV_k^i)$ for all $k,i \in\bZ$, where $\cM$ is a fixed moduli scheme of objects with a cohomology theory and $\cV_k^i=\{E\in\cM\mid h^i(E)\ge k\}$ are the cohomology jump  subschemes. Equivalent $L_\infty$ pairs describe the same $(\cM,\{\cV_k^i\}_{i,k})$ locally at $E$. 

The goal of this article is to give a review of the deformation theory with cohomology constraints and of its applications.  We start in Section \ref{secDFT} describing the theory as a black box leaving the technical details to Part \ref{part2}.  Section \ref{secA} is a short guide to the applications we  cover in this paper. The next sections, grouped in Part \ref{part1}, are concerned with applications of this theory to stable vector bundles on  curves following \cite{theta}. In Part \ref{part3} we review other applications of this theory from \cite{BW, BR}. 
 In Part \ref{part2} we review $\Linf$ structures and the technical aspects of  the deformation theory with cohomology constraints from \cite{BW, BR}.

 \subs{\bf Acknowledgement.} We thank M. Aprodu, E. Arbarello, C. Chiu, M. Coppens, R. Docampo,  G. Farkas, M. Musta\c{t}\u{a}, J. Nicaise, M. Rubi\'o, M. Saito, C. Schnell, R. Yang, N. Zhang for discussions.
The work was  supported by the Methusalem grant METH/21/03 of long term structural funding of the Flemish Government, and the grants G097819N, G0B3123N from FWO.

\subs{\bf Notation.}\label{subNot} We work over a field $K$ of characteristic zero. When needed, we also require $K$ to be algebraically closed. An {\it algebraic variety} $X$ over $K$ is a geometrically irreducible, reduced, separated scheme of finite type over $K$. By $Sing(X)$ we denote the  singular locus of $X$ with the induced reduced scheme structure. If $x$ is a point of $X$, we denote by $T_xX$ the Zariski tangent space of $X$ at $x$, and by $TC_xX$ the tangent cone of $X$ at $x$, cf. Section \ref{appSI}.
We freely switch between vector bundles and locally free sheaves. We denote by $E^\vee$ the dual of a vector bundle $E$ on $X$, by $\enmo (E)$ the vector bundle of endomorphisms, and  set $h^i(E)=h^i(X,E):=\dim_K \HH^i(X,E)$. 

By convention, if $k\in\bZ$ and $l,l'>0$, then the ideal generated by the $k$-minors of an $l'\times l$ matrix of linear forms is 0 if $k>\min\{l,l'\}$, and is the ideal $\langle 1\rangle$ if $k\le 0$.

\section{Deformations with cohomology constraints as a black box}\label{secDFT}

We start by summarizing the deformation theory with cohomology constraints over a field $K$ of characteristic zero from \cite{BW, BR}, leaving the technical details for Section \ref{apxLinf}. 
By this theory one means to understand the local structure of cohomology jump loci 
$
\cV^i_k=\{L\in \cM\mid \dim_K \HH^i(L)\ge k\}
$
of objects in a moduli space $\cM$ endowed with a cohomology theory over $K$. When the moduli space $\cM$ is a scheme,  $\cV^i_k$ are  to be considered with a natural closed subscheme structure. 

\subs{\bf Dgl pairs.}
Classical deformation theory studies the local structure of $\cM$ at an object $L$ by  attaching a differential graded Lie algebra (dgla) $C$ such that the formal completion of $\cM$ at $L$ is isomorphic to the deformation functor of $C$
$$
\widehat{\cM}_L\simeq \Def ({C}) :\Art \ra \Set
$$ 
as functors from the category of local Artinian finite type $K$-algebras to the category of sets. 

Recall that for a scheme $\cM$ and a point $L$ on $\cM$ corresponding to a maximal ideal $\mathfrak{m}$, the formal completion $\widehat{\cM}_L$ is the formal spectrum of the completion of the structure sheaf $\cO$ of $\cM$ at $\mathfrak{m}$,
$$
\widehat{\cM}_L = {\rm{Spf}}(\widehat{\cO}_{\mathfrak{m}}).
$$
It defines a functor by associating to $A\in\Art$ the set of local $K$-algebra homomorphisms  $\widehat{\cO}_{\mathfrak{m}}\ra A$. By Yoneda Lemma, this functor is pro-represented by $\widehat{\cO}_{\mathfrak{m}}$ and therefore it causes no harmful ambiguity to denote this functor by $\widehat{\cM}_L$ too.

 The deformation functor of the dgla $C$ is defined by associating to every $A$ in $\Art$ with maximal ideal $\mathfrak{m}_A$ the set of Maurer-Cartan elements of $C\otimes \mm_A$ modulo the gauge action
$$
\Def(C; A) =\{\omega\in C^1\otimes_K \mathfrak{m}_A\mid d_C\omega+\frac{1}{2}[\omega,\omega]_C=0\}/ (C^0\otimes_K\mathfrak{m}_A),
$$
where $d_C$ is the differential of $C$ extended by identity on $A$, and $[.\,,.]_C$ is the Lie bracket of $C$ extended by the usual multiplication on $A$. Two quasi-isomorphic dgla's have isomorphic deformation functors, by \cite{GM}.

The moduli space $\cM$ plays an illustrative role in the above. In general, for an object $L$ of a category one has a well-defined deformation functor of $L$, and deformation subfunctors for each $i, k$, which play the role of $\widehat{\cM}_L$, respectively of $\widehat{(\cV^i_k)}_L$, in the above.

To the object $L$ one also attaches a (left) dgl  module $M$ over $C$, that is,  a dgl pair  $(C,M)$ in the terminology from \cite{BW}. Typically this process goes hand in hand with finding the dgla $C$, and the cohomology pair $(HC,HM)$ is typically the pair of cohomologies of $(\enmo(L),L)$, where $\enmo(L)$ is the endomorphisms object attached to $L$, which  exists since we dispose of a cohomology theory.  When $C, M$ are graded by $\bN$, and as cochain complexes they are bounded-above and have finite-dimensional cohomology, one has well-defined deformation subfunctors $\Def^i_k(C,M)$ of $\Def(C)$ for every integral $i$ and $k$ such that
$$
\widehat{(\cV^i_k)}_L\simeq \Def^i_k(C,M),
$$
and any two equivalent dgl pairs give the same cohomology jump deformation subfunctors, by \cite[\S 3]{BW}. This is by definition what it means for $(C,M)$ to {\it control} the deformations of $L$ with cohomology constraints. The deformation subfunctors send $A\in\Art$ to
\begin{align*}
\Def^i_k(C,M;A)  = \{\omega\in C^1\otimes_K & \mathfrak{m}_A\mid  d_C\omega+\frac{1}{2}[\omega,\omega]_C=0\text{ and }\\
& J^i_k(M\otimes_KA, d_M+\omega)=0 \}/ (C^0\otimes_K\mathfrak{m}_A),
\end{align*}
where $d_M$ is the differential of $M$ extended by identity on $A$ to $M\otimes_KA$, and the cohomology jump ideals $J^i_k\subset A$ of the {Aomoto} complex $(M\otimes_KA, d_M+\omega)$ of $A$   modules are defined as follows.

\begin{defn}\label{defCJI} 
 Let $R$ be a noetherian commutative ring and  $N$ a complex of $R$-modules, bounded above, with finitely generated cohomology. There always exists a bounded above complex $F$ of finitely generated free $R$-modules and a quasi-isomorphism of complexes $F \xrightarrow{\sim} N$. The {\it cohomology jump ideals} of $N$ are the ideals in $R$ defined as 
$$
J^i_k(N) = I_{{\rank}(F^i)-k+1}(d^{i-1} \oplus d^i),
$$
where $d^i: F^i \rightarrow F^{i+1}$ are the differentials of $F$, and $I_r$ is the ideal generated by the  $r\times r$ minors. The cohomology jump ideals do not depend on the choice of the free resolution, by \cite[\S 2]{BW}.
\end{defn}

\subs{\bf $\Linf$ pairs.} A  more efficient theory has been developed in \cite{BR} by passing from dgl pairs to $\Linf$ pairs. The dgl approach has the disadvantage that typically the dgl pairs involved are infinite dimensional in each degree, while the problem one deals with involves typically finite dimensional cohomology vector spaces. Even computing tangent spaces to cohomology jump deformation functors is difficult using dgl pairs. 

The category of dgla's is a subcategory of  the category of  $\Linf$ algebras, and the category of dgl pairs is a subcategory of the category of $\Linf$ pairs, that is, pairs consisting of an $\Linf$ algebra together with an $\Linf$ module, with morphisms appropriately defined.
We refer to Section \ref{apxLinf} for  details and definitions. Here we only recall that an {\it $\Linf$ algebra} is a graded vector space $ C$ together with a collection of graded anti-symmetric multilinear maps
$$
l_n: C^{\otimes n}\ra  C
$$
of degree $2-n$ for every $n\ge 1$, satisfying a generalized Jacobi identity. The dgla's are precisely the $\Linf$ algebras with $l_n=0$ for $n\ge 3$, in which case $l_1$ is the differential and $l_2$ is the Lie bracket. An {\it $\Linf$ module} over $ C$ is a graded vector space $ M$ together with a collection of graded linear maps
$$
m_n: C^{\otimes n-1}\otimes  M\ra  M
$$
of degree $2-n$ for every $n\ge 1$, satisfying a certain compatibility with the maps $l_n$. The notion of {\it weak equivalence} between $\Linf$ pairs is recalled in  Section \ref{apxLinf}.

The homotopy transfer theorem  guarantees that for every dgla $C$ and  dgl module $M$, the cohomology graded vector spaces $HC$ and $HM$ can be endowed with an $\Linf$ algebra structure $l_*$ and, respectively, a $\Linf$ module structure $m_*$, such that: $l_1=0$, $m_1=0$, $l_2$ and $m_2$ are induced from  the Lie bracket and the dgl module structure, and the dgl pair $(C,M)$ is weakly equivalent as an $\Linf$ pair with $(HC,HM)$, see Theorem \ref{thrmTLP}.

There is a well-defined deformation functor $\Def (HC)$ attached to $HC$ such that to  every $A$ in $\Art$ one attaches
 \be\label{eqLMC}
\Def(HC; A)=\Bigl\{\omega\in H^1C\otimes_K \mathfrak{m}_A\mid \sum_{n\ge 2}\frac{1}{n!}l_n(\omega^{\otimes n})=0\Bigr\}/_\sim
\ee
where $\sim$ is the homotopy equivalence relation, see Definition \ref{defMCequi}. Via the homotopy transfer theorem, the implication for deformation theory is that there is one more  isomorphism of functors
\be\label{eqMOD}
\widehat{\cM}_L\simeq \Def (C)\simeq \Def (HC)
\ee
This is due to Fukaya, Kontsevich, Soibelman,
Manetti, etc., see Theorem \ref{corFKM}.

It was shown in \cite{BR} that, under the assumption that the cochain complex $M$ is bounded above, there are well-defined subfunctors $\Def^i_k(HC,HM)$ of $\Def(HC)$ such that
\begin{equation}\label{eqLJI}
\begin{split}
\Def^i_k(HC,HM;A)=&\left\{\omega\in  H^1C\otimes_K \mathfrak{m}_A\mid \sum_{n\ge 2}\frac{1}{n!}l_n(\omega^{\otimes n})=0\text{ and }\right.\\
&\left.J^i_k\biggl(HM, \sum_{n\ge 1}\frac{1}{n!}m_{n+1}(\omega^{\otimes n}\otimes \_)\biggr)=0 \right\}/_\sim
\end{split}
\end{equation}
with the cohomology jump ideals $J^i_k\subset A $ defined as above, see Definition \ref{defDik}. A weak equivalence of  $\Linf$ pairs induces an isomorphism of deformation functors restricting to isomorphisms of the cohomology jump deformation subfunctors, see Theorem \ref{thmHCHMa}. 

For deformation theory with cohomology constraints this implies that there is one more   isomorphism of functors
\be\label{eqTISOM}
\widehat{(\cV^i_k)}_L\simeq \Def^i_k(C,M)\simeq \Def^i_k(HC,HM),
\ee
see  Theorem \ref{thmHCHMa}, cf. \cite[Thm. 1.6]{BR}. The price one pays for gaining finite-dimensionality is thus the introduction of higher degree terms in the equations.

Recall that the tangent space to a deformation functor $F$ is $TF:=F(K[\eps]/(\eps^2))$, see \cite[3.5]{Mane}.

\begin{theorem}\label{thmTgxxy}{{\rm{(}}\cite[Thm 1.7]{BR}{\rm{)}}}
\label{propTDef}
Let $(C,M)$ be a dgl pair or, more generally, an $\Linf$ pair, over a field of characteristic zero. Assume that $C, M$ are $\bN$-graded and that $M$ is bounded above as a cochain complex. Let $h_i=\dim H^iM$. The Zariski tangent spaces to the functors 
$$\Def^i_0(C,M)=\Def(C)\supset\ldots\supset\Def^i_k(C,M)\supset\ldots \supset  \Def^i_{h_i+1}(C,M)=\emptyset$$ 
are: the full Zariski tangent space $T\Def(C)=H^1C$ if $k<h_i$; empty if $k>h_i$; and if $k=h_i$, equal to the kernel of the linear map
$$
H^1C\ra \bigoplus_{j=i-1,i}\Hom(H^{j}M,H^{j+1}M)
$$
induced from the $\Linf$ module multiplication maps
$
H^1C\otimes H^jM\ra H^{j+1}M.
$
\end{theorem}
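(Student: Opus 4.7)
The plan is to evaluate everything on the dual numbers $A = K[\epsilon]/(\epsilon^2)$ and exploit repeatedly that $\mathfrak{m}_A^2 = 0$. Writing a tangent vector as $\omega = x\epsilon$ with $x \in H^1C$, one has $\omega^{\otimes n} \in (HC)^{\otimes n}\otimes_K \mathfrak{m}_A^n = 0$ for every $n \geq 2$, so the Maurer-Cartan equation in \eqref{eqLMC} is trivially satisfied. The homotopy equivalence of first-order Maurer-Cartan elements also collapses: on $HC$ we have $l_1 = 0$, and any correction from higher brackets is of order $\epsilon^{\geq 2} = 0$. Hence $T\Def(HC) = H^1 C$ with no constraint coming from the MC equation.

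Next, I would compute the twisted Aomoto differential on $HM \otimes_K A$ appearing in \eqref{eqLJI}. For $\omega = x\epsilon$, contributions with $n \geq 2$ vanish because $\epsilon^n = 0$, and the $n = 1$ term reduces (using $m_1 = 0$ on $HM$) to
$$D_\omega^j \;=\; \epsilon \cdot m_2(x \otimes \_) \,:\, H^j M \otimes_K A \lra H^{j+1} M \otimes_K A.$$
Because $HM \otimes_K A$ is already a bounded-above complex of finitely generated free $A$-modules, it can itself serve as the resolution $F$ in Definition \ref{defCJI}, giving $J^i_k = I_{h_i - k + 1}(D_\omega^{i-1} \oplus D_\omega^i)$ directly.

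A case analysis on $k$ then finishes the proof. If $k > h_i$, then $h_i - k + 1 \leq 0$, so by the convention recorded in \ref{subNot} the ideal $J^i_k = \langle 1 \rangle \neq 0$, giving $T\Def^i_k = \emptyset$. If $k < h_i$, then $h_i - k + 1 \geq 2$; since every entry of $D_\omega^{i-1} \oplus D_\omega^i$ is a $K$-multiple of $\epsilon$, every $r \times r$ minor with $r \geq 2$ lies in $(\epsilon^2) = 0$, so $J^i_k = 0$ automatically and $T\Def^i_k = H^1 C$. In the critical case $k = h_i$, the ideal $J^i_{h_i}$ is generated by the entries of $D_\omega^{i-1} \oplus D_\omega^i$, which are precisely $\epsilon$ times the matrix entries of $m_2(x \otimes \_) : H^j M \to H^{j+1} M$ for $j = i-1$ and $j = i$. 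Since $c\epsilon = 0$ in $A$ forces $c = 0$, the vanishing $J^i_{h_i} = 0$ is equivalent to the simultaneous vanishing of both multiplication maps, i.e.\ to $x$ lying in the kernel of the map $H^1 C \to \bigoplus_{j = i-1, i} \Hom(H^j M, H^{j+1}M)$ asserted in the statement. The one thing to be careful with is the bookkeeping around the two boundary conventions for minors and the check that homotopy equivalence really does collapse at first order; everything else is a direct unwinding of \eqref{eqLMC} and \eqref{eqLJI}.
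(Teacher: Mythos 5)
Your argument is correct, and since the paper only cites \cite[Thm 1.7]{BR} for this statement without reproducing a proof, there is nothing to diverge from: evaluating \eqref{eqLMC} and \eqref{eqLJI} on $K[\eps]/(\eps^2)$, where $\mathfrak{m}_A^2=0$ kills all operations of arity $\ge 2$ and trivializes the homotopy relation, is exactly the intended computation, and your three-way case analysis on the minor size $h_i-k+1$ (empty for $k>h_i$, the zero ideal for $k<h_i$, the entries themselves for $k=h_i$) is right. The only step you leave tacit is the initial replacement of $(C,M)$ by the transferred cohomology pair $(HC,HM)$ via Theorem \ref{thmHCHMa}, which is what licenses working with the formulas \eqref{eqLMC}--\eqref{eqLJI} and with $HM\otimes_K A$ as its own free resolution in Definition \ref{defCJI}.
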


\section{Guide to applications}\label{secA}

Deformation theory with cohomology constraints provides a conceptual and technical framework for such deformation problems. To successfully apply the black box presented above, the next step is to find controlling dgl or $\Linf$ pairs with enough simplifying conditions to allow one to say something useful. The applications surveyed here fall into three classes; $(C,M)$ denotes a dgl pair:

\begin{itemize}
\item Formal cohomology $\Linf$ pairs $(HC,HM)$, that is, the only non-zero $\Linf$ operations are $l_2:HC^{\otimes 2}\to HC$ and $m_2:HC\otimes HM\to HM$. Equivalently, a dgl pair $(C,M)$ is {\it formal} if it is equivalent to its  cohomology dgl  pair $(HC,HM)$.
\item Cohomology $\Linf$ pairs $(HC,HM)$ such that only finitely many $\Linf$ module multiplication maps $m_{n+1}:HC^{\otimes n}\otimes HM\to HM$ are non-zero;
\item Cohomology $\Linf$ pairs $(HC,HM)$ such that 
$m_2:C\otimes HM\to HM$ is generic.
\end{itemize}

These assumptions simplify the equations in (\ref{eqLMC}) and (\ref{eqLJI}). There are additional simplifying assumptions on the objects to be deformed, such as ``stable", ``semi-simple", ``irreducible", to the effect that one does not have to mod out by any equivalence relation in (\ref{eqLMC}) and (\ref{eqLJI}), or at least that one understands very well this equivalence relation, and, in the last two cases, that $\Def(C)\simeq\widehat{H^1C}_{0}$.

The first and third cases lead to answers to the deformation problem with cohomology constraints that are as nice as possible, as close to linear algebra as one can hope to get.
The second case leads to qualitative conclusions. If the cohomology jump loci one studies are already algebraic, then the finitely many $\Linf$ multiplication maps provide another algebraic structure instead of just an answer in terms of power series. Bi-algebraicity leads to constraints via an Ax-Lindemann type of theorem.

In Part \ref{part1} we illustrate  the last case and look at  stable vector bundles on smooth projective curves with generic Petri maps following \cite{theta}. This subject, Brill-Noether theory, has a long and rich history. We also present here a few new results which do not appear in \cite{theta}.

In Part \ref{part3} we will see applications that fall in the first two categories. Among the first case we cover: stable holomorphic vector bundles with vanishing  Chern classes, irreducible complex local systems, stable Higgs bundles with vanishing  Chern classes, and semisimple representations of the fundamental group of compact K\"ahler manifold, following \cite{BW}. Regarding the second case we look at spaces endowed with a weight filtration such as complex algebraic varieties, links of singularities of complex algebraic varieties, and Milnor fibers of germs of holomorphic functions, all with a vanishing $W_0H^1=0$ constraint, following \cite{BR}.

\bigskip

\part{Brill-Noether loci and $\Linf$ pairs}\label{part1} 

\bigskip

\section{Introduction to Part \ref{part1}} 
Let $C$ be  a smooth projective curve of genus $g$ over an algebraically closed field $K$ of characteristic zero. In this part we consider the Brill-Noether loci of $C$. This is a subject with a long history, see Section \ref{apxBN} for a short review related to the results below.

Let $\omega_C$ be the canonical bundle of $C$. Let $n>0, d\ge 0, k> 0$ in $\bZ$. Fix a vector bundle $F$ on $C$. Let $\cM_{n,d}$ be  the moduli space of stable vector bundles on $C$ of rank $n$ and degree $d$. Let
$$
\cV_{n,d,k}(F):=\{E\in \cM_{n,d}\mid h^{0}(C,E\otimes F)\ge k\}
$$
 endowed with the natural structure of closed subscheme of $\cM_{n,d}$. 
 We set $\cV_{n,d,k}=\cV_{n,d,k}(\cO_C)$. When $\cM_{n,d}$ is fixed from the context, we set $\cV_k(F)=\cV_{n,d,k}(F)$ and $\cV_k=\cV_k(\cO_C)$.  It is known that $\cV_k(F)$ form a filtration of closed subschemes
 $$
 \cM_{n,d}=\cV_0(F)\supset\cV_1(F)\supset\cV_2(F)\supset\ldots.
 $$
  For $E\in\cM_{n,d}$, the {\it Petri map} is a natural map
$$
\pi_{E,F}: \HH^0(C,E\otimes F)\otimes \HH^0(C,E^\vee\otimes F^\vee\otimes \omega_C) \to \HH^0(C,E\otimes E^\vee\otimes\omega_C),
$$
see Definition \ref{defBNP}. We set $\pi_{E}=\pi_{E,\cO}$. Set 
$l=h^0(C,E\otimes F)$, $l'=h^1(C,E\otimes F)$. Then $l-l'$ is given by (\ref{eqChi}). 
We assume $l\ge 1$. If $E$ is a line bundle, that is $n=1$, we denote it by $L$ to stress this fact and use the classical notation $\pic^d(C)=\cM_{1,d}$, $W^{k-1}_d=\cV_k$.

Using deformation theory with cohomology constraints in terms of $\Linf$ pairs the following was shown in \cite{theta}. 

\begin{theorem}\label{thmGenToConeEt} There is a canonical isomorphism of $K$-vector spaces between the tangent space $T_E\cM_{n,d}$ and  $\HH^1(C,E\otimes E^\vee)$. Assume $\pi_{E,F}$ is  injective. Then:
\begin{itemize}
\item There is a local $K$-isomorphism for the \'etale topology between $(\cM_{n,d},E)$ and  $(\HH^1(C,E\otimes E^\vee),\bz)$ inducing  for every $1\le k\le l$ local $K$-isomorphisms for the \'etale topology between $(\cV_{k}(F),E)$
 and $(TC_E\cV_k(F),\bz)$.
\item Moreover,  $TC_E\cV_{k}(F)$ is the closed 
  subscheme defined by the ideal generated by the minors of size $l-k+1$ of the  $l'\times l$ matrix of linear forms on $\HH^1(C,E\otimes E^\vee)$ given by $\pi_{E,F}$. 
  \end{itemize}
 \end{theorem}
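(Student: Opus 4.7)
The plan is to deploy the $L_\infty$ pair machinery summarized in Section \ref{secDFT} and exploit the one-dimensionality of $C$. First I would attach to $E$ a controlling dgl pair $(C_0,M_0)$ coming from a Čech or Dolbeault resolution: $C_0$ computes $R\Gamma(C,\enmo(E))$ with its cup-product Lie bracket, and $M_0$ computes $R\Gamma(C,E\otimes F)$ as a dgl module. By the homotopy transfer theorem (Theorem \ref{thrmTLP}) this is weakly equivalent as an $L_\infty$ pair to the minimal model $(HC,HM)$ with $HC=H^*(C,\enmo(E))$ and $HM=H^*(C,E\otimes F)$, both concentrated in degrees $0$ and $1$ since $\dim C=1$.

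The identification of the tangent space and the smoothness of $\cM_{n,d}$ at $E$ are immediate from this setup: since $H^2C=0$, every summand $l_n(\omega^{\otimes n})$ on the right-hand side of (\ref{eqLMC}) lands in $H^2C=0$, so the Maurer--Cartan equation is automatic. Stability forces $HC^0=K$, which acts trivially on $H^1C\otimes\mm_A$; consequently $\Def(HC;A)=H^1C\otimes_K\mm_A$ and by (\ref{eqMOD}) the formal completion $\widehat{\cM_{n,d}}_E$ is isomorphic to $\widehat{H^1C}_{\bz}$. In particular $T_E\cM_{n,d}\simeq H^1(C,E\otimes E^\vee)$.

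For the cohomology jump loci I would use (\ref{eqTISOM}) and (\ref{eqLJI}) to describe $\widehat{\cV_k(F)}_E$ as the vanishing of the jump ideal $J^0_k(HM,d_\omega)$, where
\[
d_\omega \;=\; \sum_{n\ge 1}\frac{1}{n!}\,m_{n+1}(\omega^{\otimes n}\otimes \_)\colon HM^0\longrightarrow HM^1.
\]
Since $HM$ has only two nonzero degrees, a free resolution is just the two-term complex $HM^0\to HM^1$ with differential $d_\omega$, so by Definition \ref{defCJI} the ideal $J^0_k$ is generated by the $(l-k+1)\times(l-k+1)$ minors of the matrix of $d_\omega$ in the power series ring on $H^1C$. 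The linear part of this matrix is the multiplication map $m_2\colon H^1C\otimes H^0(E\otimes F)\to H^1(E\otimes F)$, and by Serre duality its transpose is exactly the Petri map $\pi_{E,F}$ (Definition \ref{defBNP}). Thus the minors of the linear part cut out precisely the asserted subscheme of $H^1C$, which by construction is $TC_E\cV_k(F)$.

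The main obstacle is showing that the full formal matrix of $d_\omega$ and its linear part define the same ideal up to an \'etale change of coordinates on $H^1C$. Here I would use injectivity of $\pi_{E,F}$: it says that the $l'\times l$ matrix of linear forms attached to $m_2$ is, on a Zariski-dense subset, of maximal rank along each fiber in the relevant sense. This places us in the third class of Section \ref{secA}, the ``generic $m_2$'' case, where a gauge-type $L_\infty$ automorphism can absorb all higher-order corrections $m_{n+1}(\omega^{\otimes n}\otimes \_)$ for $n\ge 2$ into a formal change of coordinates on $H^1C$; one can implement this inductively degree-by-degree, solving a linear system at each step whose solvability is exactly the injectivity of $\pi_{E,F}$ (equivalently, surjectivity of its transpose on the relevant jet spaces). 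Once this formal equivalence between $(\cV_k(F),E)$ and $(TC_E\cV_k(F),\bz)$ is in place, Artin approximation upgrades it to an isomorphism in the \'etale topology, yielding both conclusions of the theorem simultaneously.
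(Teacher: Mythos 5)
Your proposal follows essentially the same route as the paper: transfer to the minimal $\Linf$ pair $(\HH^\ubul(C,\enmo(E)),\HH^\ubul(C,E\otimes F))$ concentrated in degrees $0,1$, note that the Maurer--Cartan equation is vacuous and that stability kills the homotopy equivalence relation, identify the jump functors with determinantal ideals of a universal matrix whose linear part is $\pi_{E,F}$, remove the higher-order terms using injectivity of the Petri map, and algebraize. Two caveats. First, your gloss of injectivity of $\pi_{E,F}$ as the matrix being ``of maximal rank along each fiber on a Zariski-dense subset'' is not what is used (nor what injectivity means here): the relevant consequence is that the $ll'$ entries of the matrix of linear forms are linearly independent on $\HH^1(C,E\otimes E^\vee)$, equivalently the transpose $\HH^1(C,\enmo(E))\to\Hom(\HH^0(E\otimes F),\HH^1(E\otimes F))$ is surjective; this is exactly what lets one either change formal coordinates so that $d_{univ}$ becomes its linear part (the paper's first proof, Theorem \ref{thm1GDik}) or kill the higher module operations by an $\Linf$ automorphism (the partial formality route of Theorem \ref{thrmLinf1Gen}), and your inductive ``absorb the corrections'' step is the latter. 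Second, the passage from formal to \'etale is not a bare application of Artin approximation to each pair $(\cV_k(F),E)$ separately: the statement requires a single \'etale-local isomorphism of the ambient $(\cM_{n,d},E)\simeq(\HH^1(C,E\otimes E^\vee),\bz)$ inducing the isomorphisms for all $k$ at once, and for this the paper invokes the tower version of Artin's algebraization theorem (Proposition \ref{propTower}); your sketch asserts the simultaneity but does not supply the mechanism.
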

 
 The injectivity condition holds for example in the following cases.
  
 \begin{theorem}\label{thrmInj}
Assume that $C$ is generic among curves with same genus. If
\begin{itemize}
\item $\rm{(}$\cite{Gie}$\rm{)}$   $F=\cO_C$, or
\item $\rm{(}$\cite{TiBtw}$\rm{)}$   $F$ is generic among vector bundles with same rank and degree,
\end{itemize}
then the Petri map
$
\pi_{L,F}$
is injective for every $L\in \pic^d(C)$. 
\end{theorem}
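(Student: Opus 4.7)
The plan is to prove both statements by the same semicontinuity-plus-degeneration argument, exploiting that non-injectivity of $\pi_{L,F}$ is a closed condition on a suitable parameter space of triples. First, form the relative moduli space whose points are triples $(C,L)$ with $C$ a smooth projective curve of genus $g$ and $L\in\pic^d(C)$ (for the first statement), or triples $(C,L,F)$ with $F$ a vector bundle on $C$ of fixed rank and degree (for the second). Stratify the base by the values of $h^0(L\otimes F)$; on each stratum $\pi_{L,F}$ becomes a morphism between coherent sheaves of locally constant rank, and the locus where its rank drops is closed. Hence it suffices to exhibit a single point on which $\pi_{L,F}$ is injective, since semicontinuity then yields injectivity on an open dense set, and in particular at a generic $C$.

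The key step is producing such a point by degeneration. The approach is the Gieseker--Eisenbud--Harris strategy: specialize $C$ to a reducible nodal curve of compact type $X_0$, most conveniently a chain of $g$ elliptic components joined at generic nodes. A $g^{k-1}_d$ on nearby smooth curves then has a limit linear series on $X_0$ in the sense of Eisenbud--Harris, described by compatible sub-linear-series on each component together with a prescribed vanishing sequence at each node. The Petri map on $X_0$ block-decomposes according to the components, and on each elliptic component it reduces to an elementary multiplication statement of the form $\HH^0(L_i)\otimes \HH^0(\omega_{E_i}\otimes L_i^\vee)\to \HH^0(\omega_{E_i})$ for a line bundle $L_i$ on an elliptic curve $E_i$, whose injectivity is verified directly by exploiting the generic choice of gluing points. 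For the second bullet \cite{TiBtw} one runs the same scheme in parallel, degenerating $F$ as well to a vector bundle on $X_0$ and using the analogous block decomposition of the Petri map.

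The hardest step, and the real content of the two references, is proving that the limit-linear-series framework faithfully tracks an honest $g^{k-1}_d$ on the general smooth curve in the family: one must exhibit smoothable limit series of the expected dimension so that semicontinuity propagates information in the informative direction, and then, at such a smoothable limit, show that the block-structured Petri map has trivial kernel. The combinatorial control for the second task comes from the genericity of the gluing nodes, which prevents resonance among products of sections that would otherwise create kernel elements. Note that the deformation-theoretic machinery of Section \ref{secDFT} plays no role in the proof itself: injectivity of $\pi_{L,F}$ is a linear-algebra statement on a fixed curve. Rather, the point of Theorem \ref{thrmInj} in the present context is to supply the hypothesis of Theorem \ref{thmGenToConeEt} in a broad range of cases, so that the conclusions about the local structure of $\cV_{k}(F)$ from the $\Linf$-pair formalism become unconditional for generic $C$.
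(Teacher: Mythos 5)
The paper does not prove this statement at all: it is quoted verbatim from \cite{Gie} (the Gieseker--Petri theorem) and \cite{TiBtw}, so there is no in-paper argument to compare yours against. Your last remark is correct --- the $\Linf$ machinery plays no role here, and the theorem's function in the paper is only to supply the injectivity hypothesis of Theorem \ref{thmGenToConeEt}.

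Judged on its own, your outline has the right general shape (degeneration to a curve of compact type, limit linear series, componentwise analysis of the Petri map), but the reduction in your first paragraph has a genuine quantifier error. Exhibiting a single point $(C_0,L_0)$ at which $\pi_{L_0,F}$ is injective, plus semicontinuity, only yields injectivity for a \emph{generic pair} $(C,L)$ --- which is easy and much weaker than the theorem, whose content is ``for generic $C$, for \emph{every} $L\in\pic^d(C)$.'' The correct reduction uses that the space of linear series (or of pairs (linear series, Petri-kernel element)) is \emph{proper} over the moduli of curves, so that the locus of curves admitting \emph{some} $L$ with $\ker\pi_{L,F}\neq 0$ is closed in $\cM_g$; one must then show that the \emph{entire fiber} over the degenerate curve $X_0$ is free of Petri kernels, i.e.\ analyze all limit linear series on $X_0$ and all candidate kernel elements, not just one well-chosen point. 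Relatedly, the logic in your second and third paragraphs runs in the wrong direction: for Gieseker--Petri one does not need smoothability of limit linear series (that is the existence half of Brill--Noether theory); one assumes a nonzero Petri kernel on nearby smooth curves, passes to the limit by compactness, and derives a contradiction on $X_0$. Since the combinatorial heart of that contradiction is exactly what you defer to the references, the proposal as written is an (imperfect) roadmap rather than a proof.
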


\begin{rmk}\label{rmkBad}
Theorem \ref{thmGenToConeEt} can fail for $n=1$ and non-generic curves. There are curves $C$ of genus 4 such that the scheme $W^1_3$ is supported only on $L$ and such that $\pi_L$ is the matrix $\big(\begin{smallmatrix}
  x_1 & x_2\\
  x_2 & x_3
\end{smallmatrix}\big)$ of linear forms on $\HH^1(C,\cO_C)$, see \cite[p196, A.3 (ii)]{A+}. The 1-minors define a 1-dimensional linear subspace of the 4-dimensional affine space $\HH^1(C,\cO_C)$.
\end{rmk}

To prove Theorem \ref{thmGenToConeEt}, of which only the first was new,  its formal analog, Theorem \ref{thmGenToCone}, was proven first. Theorem \ref{thmGenToCone} is essentially due to
 \cite[Thm. 0.1]{Po} (a slight improvement presented here might already follow from \cite{Po}). The latter was proved using $A_\infty$-categories. Theorem \ref{thmGenToCone} was proved in \cite{theta} using deformation theory with cohomology constraints in terms of $\Linf$ pairs. This gave a quick, conceptual proof. Note that as a first step, this also reproves with $\Linf$-algebras the classical statement that $\cM_{n,d}$ is smooth of the right dimension at $E$, cf.  Theorem \ref{thmLePot}. We give in Section \ref{secTecCore} a second proof of Theorem \ref{thmGenToCone}. For this, we show in Theorem \ref{thrmLinf1Gen} that a  partial formality result similar to \cite[Thm. 3.1]{Po} holds for $\Linf$ pairs. This will require more background on $\Linf$ pairs which is the reason behind postponing this proof to the last section.


We state some consequences.
By  Theorem   \ref{thmGenToConeEt} the local models for Brill-Noether loci on curves at stable bundles with injective Petri maps are generic determinantal schemes.   Generic determinantal varieties have been abundantly studied. We  gathered in Section \ref{secDet}  some known results on singularities of generic determinantal schemes. By Theorem   \ref{thmGenToConeEt} information about singularities of the  local models passes automatically to  the Brill-Noether loci if the Petri map is injective. We find it convenient to assume  that
\be\label{eqAssu}
 l=h^0(C,E\otimes F)\le l'=h^1(C,E\otimes F).
\ee
Equivalently, $n\deg(F)-\rank (F)(n(g-1)-d)\le 0$, so the condition is independent of $E$. This is for simplicity only, since one can always reduce to this case, see \ref{subAss}. In case $F=\cO_C$, (\ref{eqAssu}) becomes
\be\label{eqAssuF}
n(g-1)-d\ge 0.
\ee
If $E=L$ is a line bundle and $F=\cO_C$ this is equivalent to $d<g$. The terminology from singularity theory used below is recalled in Section \ref{appSI}. 

\begin{thrm}\label{thrmLCThrk}\label{thrmLCT} In the setup of Theorem \ref{thmGenToConeEt}, let $K=\bC$,
let   $E\in \cV_k(F)\subset\cM_{n,d}$ with $1\le k\le l$, satisfying (\ref{eqAssu}), and such that $\pi_{E,F}$ is injective.
Then the following hold in a   Zariski open neighborhood of $E$ in $\cM_{n,d}$:

\begin{enumerate}

\item[(o)] $\cV_k(F)$  is variety with at most rational singularities, it has dimension $\rho_{n,d,k}(F)$, see Definition \ref{defBNP} {\color{hot}(4)},  and  the singular locus of $\cV_k(F)$ is $\cV_{k+1}(F)$.

\item[(oo)] The multiplicity of $\cV_k(F)$ at $E$ is 
$$
\prod_{i=0}^{k-1}\frac{(l'+i)!i!}{(l-k+i)!(l-l'+k+i)!}.
$$

\item[(i)] The multiplier ideal at $E$ of the pair $(\cM_{n,d},\cV_k(F))$ with coefficient $c\in \bR_{>0}$ is the intersection of symbolic powers 
$$
\bigcap_{j=0}^{l-k} {J_{k+j}}^{(\lfloor c(j+1)\rfloor+1-(k+j)(l'-l+k+j))}
$$
where $J_k$ is the ideal sheaf defining $\cV_k(F)$ in $\cM_{n,d}$. If $k=1$ the intersection simplifies to
$
{J_{1}}^{\lfloor c\rfloor +l-l'}.
$

\item[(ii)]  The  irreducible components of the scheme of $m$-jets of $\cV_k(F)$ centered at $E$ can be explicitly described.


\item[(iii)] If $k=1$ the local Bernstein-Sato polynomial at $E$ of the ideal defining  $\cV_1(F)$ in $\cM_{n,d}$ is
$$
\prod_{i=l'-l+1}^{l'} (s+i).
$$

\item[(iv)] If $d=n(g-1-\deg(F)/\rank (F))$, equivalently $l=l'$,   the topological zeta function at $E$ of the pair $(\cM_{n,d},\cV_k(F))$ is $$\prod_{\al\in \Omega}\frac{1}{1-\al^{-1}s}$$
where
$$
\Omega\subset\left\{ -\frac{l^2}{l-k+1}, -\frac{(l-1)^2}{l-k}, -\frac{(l-2)^2}{l-k-1},\ldots, - k^2 \right\}.
$$

\item[(v)] If $k=1$ and $d=n(g-1-\deg(F)/\rank (F))$, the  monodromy conjecture relating the local topological zeta function with the local Bernstein-Sato polynomial, see Conjecture \ref{conjMC}, holds   for the generalized theta divisor $\cV_1(F)\subset\cM_{n,d}.$

\item[(vi)] Consider $f:Y\to \cM_{n,d}$ the composition of blowups of (strict transforms) of $\cV_l(F)$, $\cV_{l-1}(F)$, $\cV_{l-2}(F)$, $\ldots$, $\cV_1(F)$, in this order. Then:

\begin{itemize}
\item At each stage this is the blowup of a smooth center. 
\item The composition $f$ is a log resolution of $(\cM_{n,d},\cV_{k}(F))$. 

\item The pullback of the ideal sheaf defining $\cV_{k}(F)$ is $\cO_Y(-\sum_{i=0}^{l-k}(l-k+1-i)E_i)$, where $E_i$ is the (strict transform of the) divisor introduced by blowing up the (strict transform of) $\cV_{l-i}(F)$.
\end{itemize}

\item[(vii)] The stratification of $\cV_k(F)$ given by $\cV_{t}(F)\setminus \cV_{t+1}(F)$ with $k\le t$ is a Whitney stratification, and the local Euler obstruction at $E$ of $\cV_k(F)$  is $\binom{l}{l-k}$.

\item[(viii)] The log canonical threshold of $(\cM_{n,d},\cV_k(F))$ at $E$ is $$
\min\left\{\frac{(l-i)(l'-i)}{l -k+1-i}\mid i=0,\ldots ,l-k\right\}.
$$

\item[(ix)] The simple holonomic $\cD$-module composition factors of the local cohomology modules $$\cH^j_{\cV_{k}(F)}(\cO_{M_{n,d}})$$ are known, each is equal to the intersection homology module $\cL(\cV_{k'}(F),M_{n,d})$ for some $k'$ with $k\le k'\le l$, in which case the weight  with respect to the weight filtration is also known. The Hodge filtrations on $\cH^j_{\cV_{k}(F)}(\cO_{M_{n,d}})$ and $\cL(\cV_{k}(F),M_{n,d})$ are combinatorially determined, as well as their generating levels. 
If $l=l'$, the same information is available for $\cO_{\cM_{n,d}}(*\cV_1(F))$, and for $p\ge 0$ the $p$-Hodge ideal of $\cV_1(F)$ is 
$$
I_p(\cV_1(F)) = \bigcap_{k=1}^{l-1} J_{k+1}^{\left(k(p-1)-\binom{k}{2}\right)}.
$$

\item[(x)] If $l=l'$, the minimal discrepancies of $\cV_k(F)$ along $\cV_{k+1}(F)$ and, respectively, along a point $E'\in \cV_{k'}(F)\setminus \cV_{k'+1}(F)$ with $k\le k'\le l$ are:
$$
\mld(\cV_{k+1}(F);\cV_k(F)) = k+1,\quad \mld(E';\cV_k(F)) = l^2-kk'.
$$
\end{enumerate}
 \end{thrm}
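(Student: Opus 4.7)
The plan is to let Theorem \ref{thmGenToConeEt} do essentially all of the geometric work, and then read off each clause from known facts about generic determinantal varieties. Concretely, Theorem \ref{thmGenToConeEt} gives a $K$-isomorphism in the \'etale topology between $(\cM_{n,d},E)$ and $(\HH^1(C,E\otimes E^\vee),\bz)$ that identifies the filtration $\cV_l(F)\subset\cdots\subset\cV_1(F)\subset\cM_{n,d}$ near $E$ with the filtration by the schemes $D_{l-k+1}$ of $(l-k+1)$-minors of the $l'\times l$ matrix of linear forms coming from $\pi_{E,F}$. Assumption (\ref{eqAssu}), i.e.\ $l\le l'$, places us in the standard normalization of generic determinantal varieties $M_k^{l,l'}$ of matrices of rank $\le l-k$ at the origin.

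The key observation is that every invariant mentioned in (o)--(x) is preserved under \'etale-local isomorphism at a point: rational singularities, multiplicity, the scheme-theoretic singular locus, multiplier ideals and log canonical thresholds, (geometric) decomposition of jet schemes, local Bernstein--Sato polynomials, local topological zeta functions, Whitney stratifications, local Euler obstructions, minimal discrepancies, and the $\cD$-module/Hodge-theoretic data of local cohomology modules, intermediate extensions, and Hodge ideals. Thus each of (o)--(x) reduces mechanically to the corresponding statement for $M_k^{l,l'}\subset \Mat_{l'\times l}$ at $\bz$, and these statements are gathered in Section \ref{secDet}. In particular, (o), (oo), (i), (ii), (iii), (iv), (vii), (viii), (ix), (x) are direct transfers; (v) follows by comparing the output of (iii) with that of (iv) and checking that each candidate pole $-(l-i)^2/(l-k+1-i)$ is among the listed roots of the Bernstein--Sato polynomial.

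Item (vi), being a statement about an explicit resolution rather than a numerical invariant, needs slightly more care but no new ideas. Blowups commute with flat (in particular \'etale) base change, and étale-locally at $E$ the successive centers $\cV_l(F)$, strict transform of $\cV_{l-1}(F)$, etc., correspond to the nested rank strata of the determinantal model, which are smooth at each stage of the iterated Springer-type resolution recalled in Section \ref{secDet}. The assertion on the multiplicities of the exceptional divisors is then the standard computation on the determinantal side.

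I do not expect a genuine obstacle, but the main bookkeeping task is to match each of (o)--(x) with the correct reference or formula for generic determinantal varieties and to verify, once, that each invariant is indeed \'etale-local; the substantive input, namely the appearance of the generic determinantal local model, is already supplied by Theorem \ref{thmGenToConeEt} via the $\Linf$-pair machinery.
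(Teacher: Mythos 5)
Your proposal follows essentially the same route as the paper: reduce everything via Theorem \ref{thmGenToConeEt} to the generic determinantal local model and transfer the invariants listed in Theorem \ref{thrmGen}. The only place the paper does more than mechanical transfer is part (ix), where ``\'etale/analytic-locality'' of the mixed Hodge module data requires an explicit analytic-to-algebraic argument and attention to the extra smooth factor $\bA^m$ (the local model is $M_k\times\bA^m$, not $M_k$), a point your blanket locality claim glosses over but which is bookkeeping rather than a gap.
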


Parts {\color{hot} (o), (iii)-(viii), (x)} are from  \cite{theta}. 
Parts  {\color{hot} (o)-(oo)} recover and state in a slightly more general way older results  due to  \cite{Ke, AC, A+, TiBtw, CT}, cf. Theorem \ref{thrmLCTa}. 
 Part {\color{hot}(vi)}  recovers \cite[Thm. 3.3]{Mul} which says that for $n=1$, $F=\cO_C$, and $d=g-2$, this blowup process is an embedded resolution (without checking the simple normal crossings condition) of $(\pic^{g-2}(C),W^0_{g-2})$. Part {\color{hot}(viii)} is due to  \cite{Zhu} for $n=1$ and $F=\cO_C$, cf. Theorem \ref{thrmLCTa} {\color{hot} (iv)}.

\subs{\bf Beyond genericity.} If the Petri map is not injective there is less known about the singularities of the Brill-Noether loci. 
 Theorem \ref{thrmLCTb}  collects some known results  including  (extensions of) the Riemann-Kempf singularity theorem due to \cite{Ke, A+, La, Li, CT}.  Next result generalizes Theorem \ref{thrmLCTb} {\color{hot}(iii)}, has the same proof, but
cannot be found in the literature:
 
 \begin{thrm}\label{thmGG2} 
For any curve $C$ and assuming (\ref{eqAssuF}), $\cV_1\subset\cM_{n,d}$ has rational singularities at every point if non-empty.
\end{thrm}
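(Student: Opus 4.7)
My plan is to build a Kempf-style resolution of $\cV_1$ by the moduli space of pairs $(E,[s])$ with $s\in\HH^0(C,E)\setminus 0$, following the same strategy that underlies the Riemann-Kempf singularity theorem (Theorem \ref{thrmLCTb}{\color{hot}(iii)}). The crucial point, and the reason this works for an arbitrary curve, is that on a curve the product of two nonzero sections is always nonzero.

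\'Etale locally near $E_0\in\cV_1$, let $\phi\colon F_0\to F_1$ be a representative of $R\pi_*\cE$ with $\rank F_0=l:=h^0(E_0)$ and $\rank F_1=l':=h^1(E_0)$; by (\ref{eqAssuF}) we have $l\le l'$, and $\cV_1$ is the degeneracy locus $\{\rank\phi\le l-1\}$ cut out by the $l\times l$ minors. Define
$$
\tilde{\cV}_1:=\{(u,[v])\in\bP(F_0)\mid \phi(u)v=0\}\subset \bP(F_0),
$$
the zero scheme of the tautological section $\sigma\colon\cO_{\bP(F_0)}(-1)\hookrightarrow p^*F_0\xrightarrow{p^*\phi} p^*F_1$ of the rank-$l'$ bundle $\cO(1)\otimes p^*F_1$, where $p\colon\bP(F_0)\to U$ is the projection. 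Geometrically, $\tilde{\cV}_1$ parametrizes pairs $(E,[s])$ with $s\in\HH^0(E)$, and the forgetful map $f\colon\tilde{\cV}_1\to\cV_1$ is proper and surjective with fiber $\bP(\HH^0(E))=\bP^{h^0(E)-1}$ over $E\in\cV_1$.

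The key step is to show $\tilde{\cV}_1$ is smooth. At any zero $(E,[s])$ of $\sigma$, the $\bP(F_0)$-component of $d\sigma$ has image $\im(\phi|_E)\subset F_1|_E$, while the $U$-component is, after projection onto $F_1|_E/\im(\phi|_E)=\HH^1(E)$, the cup product
$$
\alpha_*\colon\HH^1(C,\enmo(E))\to\HH^1(C,E),\qquad \xi\mapsto\xi\cdot s.
$$
Thus $\sigma$ is transverse to zero iff $\alpha_*$ is surjective, equivalently (by Serre duality) the dual map $\alpha^*\colon\HH^0(E^\vee\otimes\omega_C)\to\HH^0(\enmo(E)\otimes\omega_C)$, $t\mapsto s\otimes t$, is injective. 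But if $s\ne 0$ and $t\ne 0$, then $s$ and $t$ each vanish only on a finite set of points of $C$, so $s\otimes t$ (the rank-one endomorphism-valued section $v\mapsto s\cdot t(v)$) is pointwise nonzero on a dense open subset of $C$, hence a nonzero global section. So $\alpha^*$ is injective, $\sigma$ is regular, and $\tilde{\cV}_1$ is smooth of codimension $l'$ in $\bP(F_0)$ at every point.

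The conclusion follows by Kempf's criterion: since the fibers of $f$ are projective spaces with $\HH^i(\bP^{k-1},\cO)=0$ for $i>0$, cohomology and base change yields $R^if_*\cO_{\tilde{\cV}_1}=0$ for $i>0$, while normality of $\cV_1$ together with connectedness of the fibers gives $f_*\cO_{\tilde{\cV}_1}=\cO_{\cV_1}$; hence $Rf_*\cO_{\tilde{\cV}_1}=\cO_{\cV_1}$ and $\cV_1$ has rational singularities. The main technical obstacle is handling the codimension and normality of $\cV_1$ uniformly, with care at points where $\cV_1$ fails to be covered birationally by $\tilde{\cV}_1$ (i.e., points lying on components where $h^0\ge 2$ generically): this is controlled by the fact that the ideal of $\cV_1$ is generated by \emph{maximal} minors (the $k=1$ Brill-Noether case), for which the Eagon-Northcott complex ensures the expected codimension and Cohen-Macaulayness, and by combining with Serre's criterion for normality.
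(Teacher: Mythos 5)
Your proposal is correct in outline, but it takes a genuinely different route from the paper. The paper's proof is a one-line reduction: by Theorem \ref{thmKVF} (whose hypothesis of $1$-genericity of $\pi_E$ is always satisfied, by Lemma \ref{lem1genP}), the tangent cone $TC_E\cV_1$ is, up to a smooth factor, the determinantal locus $N_1$ of a $1$-generic linear space of matrices $N=\im(\pi_E)^\vee$; Kempf's Theorem \ref{thmkGen} gives rational singularities of this \emph{linear model}, and one passes back to $\cV_1$ itself via specialization to the tangent cone and Elkik's Theorem \ref{thmElik}. You instead run Kempf's resolution directly over the moduli space: the incidence variety $\tilde\cV_1=\{(E,[s])\}$ inside the projectivized bundle of a local presentation $F_0\to F_1$ of $R\pi_*\cE$, with smoothness of $\tilde\cV_1$ reduced by Serre duality to exactly the content of Lemma \ref{lem1genP}. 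Both arguments thus hinge on the same key input (the product of two nonzero sections on a curve is nonzero), but yours avoids the tangent-cone identification and Elkik at the cost of having to control $Rf_*\cO_{\tilde\cV_1}$ on the moduli space itself; the paper's route outsources all such analysis to the linear space of matrices, where it is already in the literature.

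Two steps in your last paragraph need to be tightened, though both are standard. First, ``cohomology and base change'' does not apply as stated, since $f$ is not flat (the fibers jump from points to $\bP^{k-1}$ over $\cV_k$); the correct tool is Kempf's computation (\cite[Lemma 2]{Ke}): resolve $\cO_{\tilde\cV_1}$ on $\bP(F_0)$ by the Koszul complex of the regular section $\sigma$ and push forward, obtaining the Eagon--Northcott complex and hence $Rf_*\cO_{\tilde\cV_1}=\cO_{\cV_1}$ directly. Second, your logic on normality is reversed: you should not feed normality into $f_*\cO_{\tilde\cV_1}=\cO_{\cV_1}$, because the codimension estimate $\codim_{\cV_1}\cV_2\ge 2$ needed for Serre's criterion is not immediate here (the resolution only gives $\codim\ge 1$). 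Rather, once the Koszul computation yields $f_*\cO_{\tilde\cV_1}=\cO_{\cV_1}$ and one knows $f$ is birational, normality of $\cV_1$ is a \emph{consequence}, since $f_*\cO_{\tilde\cV_1}$ is the integral closure of $\cO_{\cV_1}$. Birationality itself follows from your dimension count: $\tilde\cV_1$ is smooth of dimension $\rho_{n,d,1}$, every component of $\cV_1$ has dimension at least $\rho_{n,d,1}$ by the determinantal lower bound, and surjectivity of $f$ forces equality and zero-dimensional generic fibers on every component. With these adjustments the argument is complete.
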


Next, we would like to pose some questions regarding Brill-Noether loci in absence of genericity. When $F=\cO_C$, the Petri map $\pi_E$ is 1-generic, cf. Lemma \ref{lem1genP}. Here {\it 1-generic} means that the multiplication of two non-zero vectors is non-zero.    More generally, {\it $k$-generic} means by definition that the kernel of the multiplication map does not contain a sum of $\le k$ pure tensors. Then injective and 1-generic are the two extremes of being $k$-generic. The associated matrix of linear forms  is also called $k$-generic if the multiplication map is $k$-generic. This terminology is due to \cite{Eis}.

We  gathered in Section \ref{secDet}  some known results on singularities of $k$-generic determinantal schemes. 
Organizing the information in this way, we realized that some of it had escaped attention in the last decades. For example, the fact that determinantal varieties of  Hankel matrices have rational singularities, which is also the title of \cite{Con}, follows  from Kempf's method of well-presented morphisms and rational resolutions \cite{Ke}, see Theorem \ref{thrmHankel}. In  general, without further specialization to specific situations, the properties of determinantal schemes of 1-generic matrices of linear forms depend on the matrix and not only on its size $b\times a$ and the size $m$ of the minors  one uses as ideal generators. However it seems that,  keeping $a,b,m$ fixed, the $k$-generic determinantal schemes become more singular as $k$ decreases. Moreover, Hankel matrices seem to be the most special 1-generic  matrices. We therefore pose the following question about log canonical thresholds:

\begin{que}\label{queKgen}
Let $0<m\le a\le b$ and $M=\bA^{ab}$ be the space of $b\times a$ matrices over an algebraically closed field $K$. Let $N,N'\subset M$ be two  linear subspaces, and let $N_m, N'_m$ be the natural closed subschemes parametrizing the matrices of rank $\le a-m$ in $N, N'$, respectively. 
\begin{itemize}
\item
If $N$ is $k$-generic  for some $1\le k\le a$ then 
$$
\min\left\{\frac{(a-i)(b-i)}{a -m+1-i}\mid i=0,\ldots ,a-m\right\}\ge 
 \lct(N,N_m)\ge  \left\{
\begin{array}{ll}
1 & \text{ if }a=b\text{ and }m=1,\\
1+\displaystyle{\frac{b+m-2}{a-m+1}} & \text{ if }a<b \quad \quad ?
\end{array}
\right.$$ 

\item
If $N$ is $k$-generic and $N'$ is $k'$-generic for some $1\le k'<k\le a$, then 
$$
\lct(N,N_m)\ge 
 \lct(N',N'_m) \quad ?$$ 
\end{itemize}
\end{que}

The upper bound is chosen to be exactly the log canonical threshold of generic determinantal schemes, whereas the
lower bound is chosen to be exactly the log canonical threshold of Hankel determinantal schemes, cf. Theorem \ref{thrmGen} {\color{hot} (iv)} and Theorem \ref{thrmHankel} {\color{hot} (viii)}. 

\begin{rmk}
In the case when $a=b$ and $m=1$ the question is true, and all numbers are equal to 1,  since in this case $N_1$, $N'_1$ are hypersurfaces  with at most rational singularities, by Theorem \ref{thmkGen} essentially due to Kempf. 
In the case $b=a+1$ and $m=1$, the question asks if $\lct(N,N_1)=2$ always.
\end{rmk}

The question is relevant for Brill-Noether loci because of:

\begin{prop}\label{propXWQ}
For any curve $C$ and $E\in\cV_k(F)\subset\cM_{n,d}$, there are inequalities of local log canonical thresholds
$$
\lct_E(\cM_{n,d},\cV_k(F)) \ge \lct_\bz(\HH^1(C,E\otimes E^\vee),TC_E\cV_k(F))\ge \lct_\bz(N,N_k)
$$
where $N=\im(\pi_E)^\vee$ and $N_k$ is the closed subscheme cut out by the $(l-k+1)$-minors of the $l'\times l$ matrix of linear forms determined by the  Petri map $\pi_{E,F}$.
\end{prop}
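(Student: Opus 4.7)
The strategy is to prove the two inequalities separately. The second results from a scheme-theoretic inclusion of $TC_E\cV_k(F)$ into the preimage of $N_k$ in $V:=\HH^1(C,E\otimes E^\vee)$, while the first follows from semi-continuity of log canonical thresholds applied to the deformation of $(\cM_{n,d},\cV_k(F))$ to its tangent cone.

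For the second inequality I would start from the standard local determinantal description of Brill-Noether loci. A two-term complex $\phi:\cA\to\cB$ of locally free sheaves on a Zariski neighborhood of $E$ in $\cM_{n,d}$ represents $R\pi_*(\cE\otimes F)$, and after eliminating the maximal invertible block of $\phi_0$ one is left with an $l'\times l$ matrix $\ti A$ with entries in $\mm_E$ whose $(l-k+1)$-minors define $\cV_k(F)$ near $E$. The linear part of $\ti A$ at $E$ is the cup-product map $V\to\Hom(\HH^0(E\otimes F),\HH^1(E\otimes F))$, which is Serre dual to $\pi_{E,F}$ and coincides with the matrix of linear forms in the statement; in particular its entries factor through the canonical linear surjection $V\twoheadrightarrow N$. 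Each $(l-k+1)$-minor $M_i$ of $\ti A$ lies in $\mm_E^{l-k+1}$ and expands as $M_i=m_i+(\text{order}\ge l-k+2)$, where $m_i$ is the corresponding $(l-k+1)$-minor of the Petri matrix. Whether $m_i$ is nonzero (so $m_i$ is the initial form of $M_i$) or zero (trivially), each $m_i$ lies in the initial ideal of $\cV_k(F)$ at $E$ and hence vanishes on $TC_E\cV_k(F)$. This yields the scheme-theoretic inclusion $TC_E\cV_k(F)\subseteq V\times_N N_k$. Since larger subschemes of a smooth ambient have smaller lct, and since the linear projection $V\to N$ is smooth, smooth-pullback invariance of lct gives
$$\lct_\bz(V,TC_E\cV_k(F))\ge \lct_\bz(V,V\times_N N_k)=\lct_\bz(N,N_k).$$

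For the first inequality, smoothness of $\cM_{n,d}$ at $E$ (classical for stable bundles, and reproved in this paper via $\Linf$ methods) produces an \'etale identification of a neighborhood of $E$ in $\cM_{n,d}$ with a neighborhood of $\bz$ in $V$. The $\bG_m$-action on $V$ by scalar multiplication then gives the standard deformation to the tangent cone: a flat family over $\bA^1$ whose fiber over $\lambda\ne 0$ is identified with the pair $(\cM_{n,d},\cV_k(F))$ at $E$ and whose fiber over $\lambda=0$ is $(V,TC_E\cV_k(F))$ at $\bz$. Semi-continuity of log canonical thresholds in flat families (Demailly-Koll\'ar, de Fernex-Ein-Musta\c{t}\u{a}) then yields
$$\lct_\bz(V,TC_E\cV_k(F))\le\lct_E(\cM_{n,d},\cV_k(F)).$$
The only non-cosmetic step is the identification of the linear part of $\ti A$ with the Serre dual of the Petri map, which is the standard cup-product computation for obstruction maps of Brill-Noether loci already underlying Theorem \ref{thmGenToConeEt}; no genuine obstacle is expected.
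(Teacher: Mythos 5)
Your proof is correct and has the same two-step skeleton as the paper's: semicontinuity of $\lct$ under the flat specialization to the tangent cone gives the first inequality, and a containment of ideals combined with smooth-pullback invariance along the linear surjection $\HH^1(C,E\otimes E^\vee)\twoheadrightarrow N$ gives the second. The one point where you genuinely diverge is the source of the key containment, namely that the initial ideal of $\cV_k(F)$ at $E$ contains the $(l-k+1)$-minors of the Petri matrix of linear forms. You extract this from the classical degeneracy-locus presentation of $\cV_k(F)$ by a two-term complex of bundles representing the derived pushforward, reduced at $E$ to an $l'\times l$ matrix with entries in $\mm_E$ whose linear part is Serre dual to $\pi_{E,F}$; the paper instead reads it off from the $\Linf$-theoretic universal matrix $d_{univ}$ constructed in the proof of Theorem \ref{thm1GDik}, whose linear part is again the Petri matrix. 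The two inputs are interchangeable for this argument, so your route is a legitimate and more classical alternative; the paper's choice is deliberate, since it advertises the second inequality as a further application of the $\Linf$ technique. The only point worth flagging in your version is that for $n>1$ the two-term complex requires a (local, or \'etale-local) universal family on $\cM_{n,d}$, as handled in \cite{Li,CT}; this is standard and not an obstacle.
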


The second inequality in this proposition is also proven with the $\Linf$ technique.

In the case of line bundles, Hankel matrices arise from Petri maps $\pi_L$ of  line bundles on hyperelliptic curves by Proposition \ref{propHyl}.

If the Petri map is not injective the following questions arise:

\begin{que}$\;$
 If $n=1$ and $d<g$ is there a class of curves $C$ for which Theorem \ref{thmGenToConeEt} for $W^{k-1}_d$ is true with $L$ only satisfying that $\pi_L$ is $k_0$-generic and $1\le k \le k_0\le l$?
\end{que}

If $k_0=1$ we will see below that there is strong evidence that the class of hyperelliptic curves provides a positive answer. We pose a simpler form of the question for them:

\begin{que}\label{queHypo} Does Theorem \ref{thmGenToConeEt} hold for $W^0_d$ for every hyperelliptic curve $C$?
\end{que}

\begin{rmk} 
For an arbitrary curve $C$  one does not necessarily have an isomorphism of analytic germs $(W^0_d,L)\simeq (TC_LW^0_d,\bz)$.  Consider $C$ and $L$ as in Remark \ref{rmkBad}. Then the singular locus of $TC_LW^0_3$ is 1-dimensional whereas the singular locus of $W^0_3$ is the reduced support of $W^1_3$ by \cite[IV, Cor. 4.5]{A+}, hence 0-dimensional. We thank C. Schnell for this remark.
\end{rmk}

Questions \ref{queKgen} and \ref{queHypo} suggest that, from the point of view of the log canonical thresholds, $W^0_d$ are the most singular for hyperelliptic curves and the least singular for generic curves.

We show next that there is compelling evidence for a positive answer to Question \ref{queHypo}. Note that a positive answer to Question \ref{queHypo} would allow one to apply next lemma to hyperelliptic curves:

\begin{lemma}\label{thmW0d} Let $C$ be a smooth projective curve over $K=\bC$, and let $L\in\pic^d(C)$ with $0\neq h^0(L)h^1(L)$, and $d<g$. Suppose that Theorem \ref{thmGenToConeEt} holds for $W^0_d$ at $L$. Then there are equalities of  log canonical thresholds and minimal exponents (and other local analytic invariants)
$$
\lct_L(\pic^d(C),W^0_d) = \lct(N,N_1)\quad\text{ and } \quad\al_L(\pic^d(C),W^0_d) = \al(N,N_1),
$$
where $N=\im(\pi_L)^\vee$ and $N_1$ is the closed subscheme cut out by the maximal minors of the  matrix of linear forms determined by the  Petri map $\pi_{L}$.
\end{lemma}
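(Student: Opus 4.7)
The plan is to transfer the computation of both invariants on $(\pic^d(C), W^0_d)$ at $L$ to a computation on the tangent cone $(H^1(C,\cO_C), TC_L W^0_d)$ at $\bz$, using the étale local isomorphism of pairs furnished by the hypothesis that Theorem \ref{thmGenToConeEt} applies at $L$ (we take $n=1$, $k=1$, $F=\cO_C$, so $W^0_d = \cV_1(\cO_C)$). Since both the log canonical threshold and the minimal exponent are local analytic invariants and in particular étale local invariants, this immediately yields
$$
\lct_L(\pic^d(C), W^0_d) = \lct_\bz(H^1(C,\cO_C), TC_L W^0_d)
$$
and similarly $\al_L(\pic^d(C), W^0_d) = \al_\bz(H^1(C,\cO_C), TC_L W^0_d)$.

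The next step is to describe $TC_L W^0_d$ explicitly and observe that its defining equations only involve linear forms lying in a specific subspace. By Theorem \ref{thmGenToConeEt}, $TC_L W^0_d$ is the closed subscheme of $H^1(C,\cO_C)$ defined by the maximal ($l\times l$) minors of the $l' \times l$ matrix of linear forms associated to the Petri map $\pi_L: H^0(L)\otimes H^0(L^\vee\otimes\omega_C)\to H^0(\omega_C)$. Under Serre duality, the linear forms on $H^1(C,\cO_C)$ are identified with elements of $H^0(\omega_C)$, and the entries of the Petri matrix lie in $\im(\pi_L) \subset H^0(\omega_C)$. Hence the matrix of linear forms factors through the surjection $H^1(C,\cO_C) \twoheadrightarrow \im(\pi_L)^\vee = N$, and any splitting of this surjection produces an isomorphism of germs of pairs
$$
(H^1(C,\cO_C),\, TC_L W^0_d) \;\cong\; (N \times \bA^{g-\dim N},\; N_1 \times \bA^{g-\dim N}).
$$

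Finally, I invoke the standard fact that both the log canonical threshold and the minimal exponent are invariant under taking a product with a smooth factor. This gives $\lct_\bz(H^1(C,\cO_C), TC_L W^0_d) = \lct(N, N_1)$ and the analogous equality for $\al$, which together with the first step complete the proof. The only mildly non-trivial ingredient is the product decomposition of the tangent cone, whose justification reduces to the observation that the Petri matrix visibly factors through $N$; the étale and smooth-product invariance of $\lct$ and $\al$ are standard.
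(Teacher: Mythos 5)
Your argument follows the same route as the paper: transfer via the \'etale-local isomorphism of embedded germs provided by the hypothesis, observe that the Petri matrix factors through the quotient $\HH^1(C,\cO_C)\twoheadrightarrow N=\im(\pi_L)^\vee$ so that the pair of germs splits off a smooth factor $\bA^{g-\dim N}$, and use invariance of $\lct$ and $\al$ under \'etale maps and smooth products. All of that is correct.

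There is one small omission at the end. The smooth-product invariance only yields the equalities with the \emph{local} invariants at the origin, $\lct_L(\pic^d(C),W^0_d)=\lct_\bz(N,N_1)$ and $\al_L(\pic^d(C),W^0_d)=\al_\bz(N,N_1)$, whereas the lemma asserts equality with the \emph{global} invariants $\lct(N,N_1)$ and $\al(N,N_1)$. To close this you need the observation the paper makes explicitly: $N_1$ is a cone in $N$ (it is cut out by homogeneous polynomials, namely minors of a matrix of linear forms), so the $\bG_m$-action makes the local invariants constant along each orbit, and semicontinuity under specialization to the vertex shows the global minimum is attained at $\bz$; hence the local invariants at the origin coincide with the global ones. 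This is an easy step, but it is the one point your write-up skips.
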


If $C$ is a hyperelliptic curve and $d<g$, it is known that 
 $W^r_d$ is an irreducible scheme of dimension $d-2r$,  $Sing(W^r_d)= (W^{r+1}_d)_{red}$, and $(W^r_d)_{red}\simeq W^0_{d-2r}$, see Proposition \ref{propHell}. This is compatible with Hankel matrices, see Theorem \ref{thrmHankel}, and these properties would follow from a positive answer to Question \ref{queHypo}.  Additional consequences would be:

\begin{prop}\label{thmHyE} Suppose Question \ref{queHypo} has a positive answer. Let $C$ be a smooth projective hyperelliptic curve over $K=\bC$, and $d<g\ge 2$. Let $L\in W^{k-1}_d$ with $1\le k\le l$. Then:

\begin{enumerate}[(i)]

\item\label{iti0}{\rm{(}\cite{SY}\rm{)}} $W^{k-1}_d$ is reduced. Hence $W^{k-1}_d\simeq W^0_{d-2(k-1)}$.

\item\label{iti} Theorem \ref{thmGenToConeEt} holds for arbitrary hyperelliptic $C$ and all $W^{k-1}_d$.

\item\label{iti2} If  $k\le m\le l$ then the multiplicity of $W^{k-1}_d$ at any point in $W^{m-1}_d\setminus W^{m}_d$ is $$\binom{g-d-2+m+k}{m-k}.$$
Hence if $d=g-1$ then $W^{k-1}_{g-1}\setminus W^{k}_{g-1}$ is the locus of points of $W^0_d$ with multiplicity exactly $k$, cf. \cite{SY}.

\item\label{iti3} Consider $f_{l-k}:Y_{l-k}\to \pic^d(C)$ the composition of blowups of (strict transforms of) $W^{l-1}_d$, $W^{l-2}_d$, $\ldots$, $W^{k-1}_d$, in this order. At each stage this is the blowup of a smooth center, such that $f_{l-k}$ is a log resolution of $(\pic^d(C),W^{k-1}_d)$, cf. \cite{SY} for $d=g-1$ and $k=1$.

\item\label{iti4}{\rm{(}\cite{SY}\rm{)}} If $d=g-1$ then $$f^*_{l-1}(W^0_{g-1})= \sum_{i=0}^{l-1}(l-i)E_i$$
 where $E_i$ is the (strict transform of the) divisor introduced by blowing up the (strict transform of) $W^{l-i-1}_{g-1}$. 
 
\item\label{iti5}  $$
\lct _L(\pic^d(C), W^{k-1}_d)=
\left\{
\begin{array}{ll}
1 & \text{ if }d=g-1\, (\text{that is, }l=l')\text{ and }k=1,\\
1+\displaystyle{\frac{l'+k-2}{l-k+1}} & \text{ if }d\neq g-1\, (\text{that is, }l<l'). 
\end{array}
\right.
$$

\item \label{iti6}{\rm{(}\cite{SY2}\rm{)}} If $d=g-1$ and $l>1$, the minimal exponent of the theta divisor is $$\al_L(\pic^{g-1}(C), W^{0}_{g-1})=3/2.$$
\end{enumerate}
\end{prop}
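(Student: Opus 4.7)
The plan is to combine the hypothesis that Question \ref{queHypo} has a positive answer with the reducedness statement (\ref{iti0}) of \cite{SY} to reduce every assertion to known local computations on Hankel determinantal schemes.

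First I would establish (\ref{iti}). Granting (\ref{iti0}), reducedness promotes the set-theoretic identification $(W^{k-1}_d)_{\rm red}\simeq W^0_{d-2(k-1)}$ of Proposition \ref{propHell} to an isomorphism of schemes, sending a point $L\in W^{m-1}_d\setminus W^m_d$ to a point $L'\in W^{m-k}_{d-2(k-1)}\setminus W^{m-k+1}_{d-2(k-1)}$ with $l:=h^0(L')=m-k+1$ and $l':=h^1(L')=g-d+m+k-2$. The hypothesis then supplies Theorem \ref{thmGenToConeEt} for $W^0_{d-2(k-1)}$ at $L'$, and by Proposition \ref{propHyl} the matrix of linear forms on $\HH^1(C,\cO_C)$ associated to $\pi_{L'}$ is Hankel of size $l'\times l$. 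Hence the germ $(W^{k-1}_d,L)$ is \'etale-locally isomorphic to the Hankel determinantal scheme cut out by the $(l-k+1)$-minors of that Hankel matrix at the origin, which is (\ref{iti}).

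With this translation in hand, the remaining parts are direct readings of the corresponding invariants of Hankel determinantal schemes collected in Theorem \ref{thrmHankel}: (\ref{iti2}) is the multiplicity at the origin of the Hankel scheme of $(l-k+1)$-minors, which equals $\binom{l'}{l-1}=\binom{g-d-2+m+k}{m-k}$; (\ref{iti3}) and (\ref{iti4}) realize the log resolution of the Hankel determinantal scheme by iteratively blowing up its deepest rank strata and, in the hypersurface case $l=l'$, compute the exceptional multiplicities from the fact that the Hankel determinant has multiplicity $l$ at the origin and drops by one at each successive blowup, yielding $\sum_{i=0}^{l-1}(l-i)E_i$; (\ref{iti5}) is the log canonical threshold of the Hankel scheme, which in the notation of Question \ref{queKgen} with $a=l$, $b=l'$, and $m=k$ equals $1+(l'+k-2)/(l-k+1)$ for $l<l'$ and $1$ for $l=l'$, $k=1$. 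Finally, (\ref{iti6}) is quoted directly from \cite{SY2}.

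The main obstacle is verifying that the iterated blowup on the curve side matches the iterated blowup of rank strata on the Hankel side at every stage, so that the \'etale-local model of Theorem \ref{thmGenToConeEt} transports the log resolution of (\ref{iti3})--(\ref{iti4}) cleanly. This rests on the uniform compatibility, for all $t$, of the identification in Theorem \ref{thmGenToConeEt} sending $\cV_t=W^{t-1}_d$ to the closed subscheme of the tangent cone defined by the $(l-t+1)$-minors; granted this, the blowup of $W^{t-1}_d$ pulls back to the blowup of the corresponding rank stratum, and smoothness of the successive centers together with the divisorial multiplicities $l-i$ in (\ref{iti4}) then follow from the Hankel analogue of Theorem \ref{thrmLCT}\,(vi).
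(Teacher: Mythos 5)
Your route to part (\ref{iti}) is genuinely different from the paper's. You take reducedness (\ref{iti0}) as an input from \cite{SY} and then use the translation $\pic^{d-2(k-1)}(C)\to\pic^d(C)$, $M\mapsto M\otimes\cO((k-1)A)$, to carry $(\pic^{d-2(k-1)}(C),W^0_{d-2(k-1)},L')$ onto $(\pic^d(C),W^{k-1}_d,L)$ and invoke the hypothesis at $L'$. The paper instead \emph{derives} (\ref{iti0}) from the hypothesis (sandwiching $TC_LW^{k-1}_d$ between $(TC_LW^{k-1}_d)_{red}$ and $TC_L((W^{k-1}_d)_{red})$ via the fact that the universal matrix from the deformation theory has linear part $\pi_L$, then comparing dimensions), and it obtains the embedded model for $(W^{k-1}_d)_{red}$ by induction on $k$, taking successive reduced singular loci (Lemma \ref{lemRexd}). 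Your reduction is shorter, at the cost of importing \cite{SY}; the paper's argument is self-contained relative to the hypothesis.

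Two points in your argument need to be filled in. First, your transport yields the model of $(W^{k-1}_d,L)$ as the \emph{maximal}-minor scheme of the $l'\times l$ matrix of $\pi_{L'}$ (note the slip: for $W^0_{d-2(k-1)}$ one takes $l$-minors, not $(l-k+1)$-minors, in your renormalized notation), whereas Theorem \ref{thmGenToConeEt} for $W^{k-1}_d$ asserts the model is the $(h^0(L)-k+1)$-minor scheme of the matrix of $\pi_L$. Matching the two requires Theorem \ref{thrmHankel} {\color{hot}(i)} \emph{together with} the equality $\im\pi_L=\im\pi_{L'}$ inside $\HH^0(\omega_C)$; the latter does hold by the explicit description in Proposition \ref{propHyl} (both images equal $l_1\cdots l_{d-2r}\cdot S^{g-1-d+2r}V$), but it must be checked. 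Second, the ``uniform compatibility for all $t$'' that you correctly flag as the main obstacle for (\ref{iti3})--(\ref{iti4}) cannot simply be granted: the hypothesis only supplies the model for the single subscheme $W^0$. The resolution (which is exactly the content of Lemma \ref{lemRexd}) is that the deeper strata are recovered intrinsically on both sides as iterated reduced singular loci ($Sing(W^{t-1}_d)=(W^t_d)_{red}$ by Proposition \ref{propHell}, and $Sing(N_t)=N_{t+1}$ by Theorem \ref{thrmHankel} {\color{hot}(iv)}), so the single ambient \'etale isomorphism automatically matches the whole flag; reducedness then upgrades this to the schemes themselves. With these two repairs, your readings of (\ref{iti2}), (\ref{iti5}), (\ref{iti6}) from Theorem \ref{thrmHankel} coincide with the paper's and are correct.
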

 
The properties citing \cite{SY} are already known to hold unconditionally of a positive answer to Question \ref{queHypo}. The proofs in \cite{SY} do not go through Hankel matrices, apart from (\ref{iti0}). We regard this as compelling evidence that Hankel determinantal varieties are the local \'etale models for Brill-Noether loci of hyperelliptic curves. It would be interesting to check if (\ref{iti2}) and (\ref{iti5}) also hold unconditionally for all hyperelliptic curves.

\subs{\bf Organization of Part \ref{part1}.} 
In Section \ref{subContr} we review how to obtain Theorem \ref{thmGenToCone} describing locally formally the  Brill-Noether loci from  the black box of Section \ref{secDFT}. 
In Section \ref{secApplBN} we address Theorem \ref{thmGenToConeEt}, Theorem \ref{thrmLCThrk},  Proposition \ref{propXWQ},  Lemma \ref{thmW0d}, and Proposition \ref{thmHyE}. Part \ref{part1}  ends with three short survey sections supporting the previous sections. 
In Section \ref{secDet} we collect some known facts about the singularities of spaces of $k$-generic matrices matrices. 
In Section \ref{apxBN} we  collect some known facts about the singularities of Brill-Noether loci and prove  Theorem \ref{thmGG2}. Section \ref{appSI}  recalls some terminology and facts from singularity theory.

\section{The controlling pairs}\label{subContr}


The dgl pairs controlling locally the Brill-Noether loci are given by the following, see \cite{theta}:

\begin{prop}\label{rmkPair} 
Let $E, F$ be two vector bundles over a smooth projective variety $X$ over an algebraically closed field $K$. Assume that $E$ is stable with respect to a fixed polarization. Then the deformations of $E$ with cohomology constraints $h^i(X,E\otimes F)\ge k$ are controlled by the dgl pair $( R\Gamma(X, \enmo  (E)), R\Gamma(X,E\otimes F))$.
\end{prop}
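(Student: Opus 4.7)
The plan is to realize the dgla and the dgl module by explicit cochain models, recover the standard Kodaira--Spencer description of deformations of $E$, identify the twisted Aomoto complex with the \v{C}ech complex of the universal deformation of $E\otimes F$, and then invoke the black box of Section \ref{secDFT}.

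First, choose a finite affine cover $\mathcal{U}$ of $X$ (or use a Dolbeault resolution when $K=\bC$). Set $C=\check{C}^\bullet(\mathcal{U},\enmo(E))$ with the dgla structure given by the \v{C}ech cup product followed by the commutator of endomorphisms, so that $C$ models $R\Gamma(X,\enmo(E))$; set $M=\check{C}^\bullet(\mathcal{U},E\otimes F)$ with the dgl module structure induced by the natural action $\enmo(E)\otimes(E\otimes F)\to E\otimes F$ combined with cup product, so that $M$ models $R\Gamma(X,E\otimes F)$. For every $A\in\Art$ with maximal ideal $\mm_A$, a Maurer--Cartan element $\omega\in C^1\otimes_K\mm_A$ is a \v{C}ech $1$-cocycle valued in $\enmo(E)\otimes\mm_A$ satisfying the integrability relation, and the classical Kodaira--Spencer construction glues it into an $A$-flat deformation $E_A$ of $E$ on $X\times\spec A$; gauge equivalence by $C^0\otimes_K\mm_A$ matches isomorphism of deformations. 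The stability of $E$ gives $H^0(X,\enmo(E))=K\cdot\id_E$, so the only infinitesimal automorphisms are scalars, which is exactly what is needed for $\Def(C)$ to represent $\widehat{\cM}_E$ and for the Goldman--Millson equivalence to yield $\widehat{\cM}_E\simeq\Def(C)$.

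For the cohomology constraints, observe that the twisted differential $d_M+\omega\cdot(-)$ on $M\otimes_K A$ coincides, in the same \v{C}ech model, with the \v{C}ech differential of $E_A\otimes F$ over $X\times\spec A$, so that its cohomology computes $R\Gamma(X\times\spec A,E_A\otimes F)$. By Definition \ref{defCJI} the cohomology jump ideals $J^i_k(M\otimes_K A,d_M+\omega)$ are the Fitting-type ideals cutting out the locus in $\spec A$ where $h^i(E_A\otimes F)\ge k$ with its natural determinantal scheme structure. Combined with the previous paragraph this yields the required isomorphism $\widehat{(\cV^i_k)}_E\simeq\Def^i_k(C,M)$, which by definition means that the dgl pair $(C,M)$ controls the deformations of $E$ with the prescribed cohomology constraints.

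The main obstacle is the functorial compatibility, for all $A\in\Art$ simultaneously, between the scheme structure on $\cV^i_k\subset\cM$ defined via Fitting ideals of the push-forward of the universal family and the cohomology jump ideals of the twisted Aomoto complex. This rests on cohomology and base change for the universal deformation and on stability of $E$ to ensure prorepresentability; once this is in place, everything else reduces to applying the black box of Section \ref{secDFT}.
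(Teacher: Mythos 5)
Your proposal is correct and follows essentially the argument that the paper defers to \cite{theta} for: a \v{C}ech (or Dolbeault) model for the dgl pair, the Kodaira--Spencer identification of Maurer--Cartan elements modulo gauge with deformations of $E$ (using simplicity of the stable bundle $E$ to get pro-representability by $\widehat{\cM}_E$), and the identification of the twisted Aomoto complex with a base-changed complex computing $R\Gamma(E_A\otimes F)$, so that the cohomology jump ideals match the determinantal scheme structure on $\cV^i_k$. The only slip is terminological (a Maurer--Cartan element is a $1$-cochain satisfying the integrability equation, not a $1$-cocycle), and you correctly isolate cohomology-and-base-change as the point that makes the Fitting-ideal comparison work.
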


Applying Theorem \ref{thmTgxxy} and homotopy transfer, that is Theorem \ref{thrmTLP}, to the pair in Proposition \ref{rmkPair}, one obtains the controlling $\Linf$ pairs and the tangent spaces to the Brill-Noether loci, cf. \cite{theta}:

\begin{prop}\label{propOurLePot}
Let $E, F$ be two vector bundles over a smooth projective variety $X$ over an algebraically closed field $K$. Assume that $E$ is stable with respect to a fixed polarization. Then:

\begin{enumerate}[(1)]

\item The deformations of $E$ with cohomology constraints $h^i(E\otimes F)\ge k$ are controlled by the $\Linf$ pair $(\HH^\ubul(X,\enmo(E)), \HH^\ubul(X,E\otimes F))$.

\item If $\cM$ denotes the moduli space of stable vector bundles on $X$ of same Hilbert polynomial as $E$,  $\cV^i_k=\{E'\in\cM\mid h^i(E\otimes F)\ge k\}$ denote the cohomology jump loci endowed with the natural closed subscheme structure, and $h^i=h^i(E\otimes F)$, then the Zariski tangent spaces at $E$ to
$$\cV^i_0=\cM\supset\ldots\supset\cV^i_k\supset\ldots \supset  \cV^i_{h_i+1} (=\emptyset \text{ around } E)$$ 
are: the full Zariski tangent space $T_E\cM=\HH^1(X,\enmo(E))$ if $k<h_i$; empty if $k>h_i$; and if $k=h_i$, equal to the kernel of the linear map
$$
\HH^1(X,\enmo(E))\ra \bigoplus_{j=i-1,i}\Hom(\HH^{j}(X,E\otimes F),\HH^{j+1}(X,E\otimes F))
$$
induced from the natural multiplication maps
$
\HH^1(X,\enmo(E))\otimes \HH^{j}(X,E\otimes F)\ra \HH^{j+1}(X,E\otimes F).
$
\end{enumerate}

\end{prop}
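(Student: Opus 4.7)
The plan is to chain together the three tools already set up in the excerpt: the dgl controlling pair from Proposition \ref{rmkPair}, the homotopy transfer theorem (Theorem \ref{thrmTLP}) to pass to cohomology, and the tangent space formula (Theorem \ref{thmTgxxy}) to read off the kernel description. No new constructions are needed; both statements should follow by translation.

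First I would apply Proposition \ref{rmkPair}, which uses stability of $E$ to provide the controlling dgl pair $(C, M) := (R\Gamma(X, \enmo(E)), R\Gamma(X, E\otimes F))$. By the principle recalled in Section \ref{secDFT}, the formal completion $\widehat{\cM}_E$ and the cohomology jump subschemes $\widehat{(\cV^i_k)}_E$ are identified with $\Def(C)$ and $\Def^i_k(C, M)$ respectively. Next I invoke Theorem \ref{thrmTLP}: homotopy transfer equips the cohomology spaces $HC = \HH^\ubul(X, \enmo(E))$ and $HM = \HH^\ubul(X, E\otimes F)$ with $\Linf$ algebra and $\Linf$ module structures, together with a weak equivalence of $\Linf$ pairs $(C, M) \simeq (HC, HM)$. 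By the invariance statement (Theorem \ref{thmHCHMa}), such a weak equivalence induces an isomorphism of deformation functors restricting to isomorphisms of the cohomology jump subfunctors. This gives part (1).

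For part (2), I would apply Theorem \ref{thmTgxxy} to the $\Linf$ pair $(HC, HM)$. Its tangent space description reads: $T\Def^i_k(HC, HM) = H^1(HC) = \HH^1(X, \enmo(E))$ when $k < h^i$; empty when $k > h^i$; and for $k = h^i$ equal to the kernel of the map into $\bigoplus_{j=i-1,i} \Hom(\HH^j(X, E\otimes F), \HH^{j+1}(X, E\otimes F))$ assembled from the $\Linf$ multiplication $m_2 : H^1(HC) \otimes H^j(HM) \to H^{j+1}(HM)$. Together with $T\widehat{(\cV^i_k)}_E = T\Def^i_k(HC, HM)$ and the Zariski-semicontinuity of $h^i$ making $\cV^i_{h_i+1}$ empty near $E$, this produces the stated filtration and tangent space formula.

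The only step requiring care, and the one I would flag as the main bookkeeping obstacle, is the identification of the transferred operation $m_2$ with the natural cup-product pairing coming from the action of $\enmo(E)$ on $E\otimes F$. This is in fact built into homotopy transfer, and is explicitly recorded in the excerpt before Theorem \ref{thmTgxxy}: the binary operations $l_2$ and $m_2$ on cohomology are induced from the Lie bracket on $C$ and the dgl module structure on $M$. In the concrete case $C = R\Gamma(X, \enmo(E))$ and $M = R\Gamma(X, E\otimes F)$, these dgl operations are precisely the cup products with the $\enmo(E)$-module structure, so the map appearing in the kernel description is the natural multiplication claimed. With this identification, both (1) and (2) follow immediately from Proposition \ref{rmkPair}, Theorem \ref{thrmTLP}, Theorem \ref{thmHCHMa}, and Theorem \ref{thmTgxxy}.
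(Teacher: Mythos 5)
Your proposal is correct and follows exactly the route the paper takes: the paper derives Proposition \ref{propOurLePot} by applying homotopy transfer (Theorem \ref{thrmTLP}) and the tangent space formula (Theorem \ref{thmTgxxy}) to the controlling dgl pair of Proposition \ref{rmkPair}, which is precisely your chain of reasoning. The identification of the transferred $m_2$ with the natural cup-product multiplication, which you rightly flag as the one point needing care, is indeed supplied by the transfer theorem as recalled in Section \ref{secDFT}.
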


Proposition \ref{propOurLePot} is classical for Brill-Noether loci of line bundles when $X$ is a curve, cf. \cite[IV, Prop. 4.2]{A+}, and one can show it implies Theorem \ref{thmDimSing} {\color{hot}(i)} below.

With these preliminaries  we can now state the formal neighborhood version of Theorem \ref{thmGenToConeEt}. 

\begin{theorem}\label{thmGenToCone} Let $E, F$ be as in Theorem \ref{thmGenToConeEt}. There is a canonical isomorphism of $K$-vector spaces between the tangent space $T_E\cM_{n,d}$ and  $\HH^1(C,E\otimes E^\vee)$. If $\pi_{E,F}$ is  injective, there is an isomorphism between the formal neighborhood of $E$ in $\cM_{n,d}$ and the formal neighborhood of the origin in $\HH^1(C,E\otimes E^\vee)$ inducing  for every $1\le k\le l$ isomorphisms between:

\begin{itemize}
\item the formal neighborhood of 
$
\cV_{k}(F)$
 at $E$ in $\cM_{n,d}$,
 \item the formal neighborhood at the vertex of the tangent cone $TC_E\cV_k(F)$  in the tangent space $T_E\cM_{n,d}$.
  \end{itemize}
Moreover,  $TC_E\cV_{k}(F)$ is the closed 
  subscheme defined by the ideal generated by the minors of size $l-k+1$ of the  $l'\times l$ matrix of linear forms on $\HH^1(C,E\otimes E^\vee)$ given by $\pi_{E,F}$. 
 \end{theorem}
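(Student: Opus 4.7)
The plan is to apply the $L_\infty$-pair black box of Proposition~\ref{propOurLePot} and the isomorphism~(\ref{eqTISOM}) to the controlling cohomology pair $(HC,HM)=(H^\ubul(C,\End E),H^\ubul(C,E\otimes F))$, and then to exploit the injectivity of $\pi_{E,F}$ through a partial-formality argument to kill the higher $L_\infty$-module multiplications. First, I would observe that since $C$ is a curve, $HC$ is concentrated in degrees $0$ and $1$, so for any $\omega\in H^1C\otimes\mathfrak{m}_A$ and any $n\ge 2$ the element $l_n(\omega^{\otimes n})$ lies in $H^2C=0$; the Maurer--Cartan equation of~(\ref{eqLMC}) is thus tautologically satisfied. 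Stability of $E$ forces $H^0(C,\End E)=K\cdot\id$, and since $\id$ is central in the Lie structure, the homotopy equivalence relation on $H^1C\otimes\mathfrak{m}_A$ is trivial. Hence $\Def(HC;A)=H^1C\otimes\mathfrak{m}_A$, yielding at once the canonical identification $T_E\cM_{n,d}\simeq H^1(C,E\otimes E^\vee)$ together with the formal isomorphism $\widehat{\cM}_{n,d,E}\simeq \widehat{(H^1C)}_{\bz}$.

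Next I would translate the constraint $h^0(E\otimes F)\ge k$ using~(\ref{eqLJI}): the formal neighborhood of $\cV_k(F)$ at $E$ is cut out inside $\widehat{(H^1C)}_{\bz}$ by the cohomology jump ideal $J^0_k$ of the Aomoto complex
\[
H^0M\otimes A\;\xrightarrow{\;\delta(\omega)\;}\;H^1M\otimes A,\qquad \delta(\omega)=\sum_{n\ge 1}\tfrac{1}{n!}\,m_{n+1}(\omega^{\otimes n}\otimes\_).
\]
This is an $l'\times l$ matrix of power series on $H^1C$, and $J^0_k$ is generated by its $(l-k+1)$-minors. The linear part $m_2(\omega\otimes\_)\colon H^0M\to H^1M$ is the cup product, which by Serre duality is dual to $\pi_{E,F}$. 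Hence the linearization of $\delta(\omega)$ is precisely the $l'\times l$ matrix of linear forms on $H^1C$ determined by $\pi_{E,F}$, and its $(l-k+1)$-minors cut out $TC_E\cV_k(F)$.

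The hard part will be to upgrade this tangent-cone description to a genuine formal isomorphism, which amounts to eliminating the nonlinear contributions $m_{n+1}(\omega^{\otimes n}\otimes\_)$ for $n\ge 2$ by a weak equivalence of $L_\infty$ pairs, or equivalently by a formal change of coordinates on $H^1C$ after which $\delta(\omega)$ becomes its linear part. This is a partial-formality statement for the module $HM$ over the (already trivially formal) algebra $HC$, and it is precisely the content of Theorem~\ref{thrmLinf1Gen}, the $L_\infty$-pair analog of Polishchuk's partial formality theorem~\cite[Thm.~3.1]{Po}. The cohomological input that drives the inductive killing of obstructions is the injectivity of $\pi_{E,F}$. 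Once this partial formality is in place, $J^0_k$ is generated by the $(l-k+1)$-minors of the linear Petri matrix, and the desired formal isomorphism between $(\widehat{\cV_k(F)})_E$ and $(\widehat{TC_E\cV_k(F)})_{\bz}$ follows.
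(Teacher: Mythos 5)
Your proposal is correct, and it reproduces almost verbatim the \emph{second} proof the paper gives (Section \ref{secTecCore}): triviality of the Maurer--Cartan equation from $H^2(C,\enmo(E))=0$, triviality of the homotopy relation from $H^0(C,\enmo(E))=K\cdot\id$ being central, identification of the linearization of the Aomoto differential with the Petri matrix, and then the partial formality Theorem \ref{thrmLinf1Gen} (the $\Linf$ analog of Polishchuk's theorem, whose inductive obstruction-killing is driven by the surjectivity of $\tilde m_2$, i.e.\ the injectivity of $\pi_{E,F}$) to discard the higher module multiplications. The paper's primary proof, via Theorem \ref{thm1GDik}, shares your entire first half but finishes more cheaply: it does not modify the $\Linf$ structure at all. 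Instead it assembles the universal Aomoto matrix $d_{univ}$ with entries in $\widehat S=K\llbracket x_1,\dots,x_s\rrbracket$ and observes that, because $\pi_{E,F}$ is injective, the $ll'$ linear parts of these entries are linearly independent linear forms; hence the entries can be completed to a formal coordinate system, and an automorphism of $\widehat S$ carries $d_{univ}$ onto its linear part $B$, so that $J^0_k(d_{univ})$ becomes the homogeneous determinantal ideal $J^0_k(B)$ on the nose. What the coordinate-change argument buys is brevity and the avoidance of the infinite composition of $\Linf$ isomorphisms; what your (Polishchuk-style) route buys is a structural statement about the transferred $\Linf$ pair itself, which is reusable beyond this theorem. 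Either way the argument is complete, so there is no gap to report.
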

 

The proof of Theorem \ref{thmGenToCone} in \cite{theta} applied the following intermediate result to the controlling pair from Proposition \ref{propOurLePot}:

\begin{thrm}\label{thm1GDik}  Let $(M,V)$ be an $L_\infty$ algebra together with a module, both of finite dimension over a field $K$ of characteristic zero, such that:
\begin{itemize} 
\item $M^i=0$ and $V^i=0$ for $i\neq 0,1$,
\item the differentials on $M$ and $V$ are zero, 
\item  the linear map $\pi:V^0\otimes (V^1)^\vee\to (M^1)^\vee$ induced from the multiplication map $m_2:M^1\otimes V^0\to V^1$ is injective.
\end{itemize} 

Assume that the $\Linf$ algebra $M$ is  obtained as a transferred structure from a dgla $C$ with $\iota:M=HC\subset C$ as in Theorem \ref{thmTTL},  
and  $[\iota(M^0),C]=0$.   Let $\bz\in M^1$ denote the origin. For every $k\in \bN$ let $\cV_k\subset M^1$ be
 the closed subscheme   defined by minors of size $\dim V^0 -k+1$ of the matrix of linear forms on $M^1$ determined by $\pi$. 
 Then there is a canonical isomorphism of vector spaces $T\Def(M)= M^1$ and  an isomorphism of functors $\Def(M)\simeq \widehat{(M^1)}_\bz$ compatible with each other, inducing
 isomorphisms of functors  $\Def^0_k(M,V)\simeq \widehat{(\cV_k)}_\bz$ for every $k$.  
 \end{thrm}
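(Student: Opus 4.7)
The plan is to address the two assertions --- the smoothness of the deformation space and the identification of jump subfunctors with determinantal loci --- in sequence, using the universal Maurer--Cartan element.

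First, I would show that $\Def(M)\simeq\widehat{(M^1)}_{\bz}$. Since every term $l_n(\omega^{\otimes n})$ in the Maurer--Cartan equation lies in degree $n+(2-n)=2$ and $M^2=0$, the MC locus coincides with all of $M^1\otimes\mm_A$. The infinitesimal homotopy action of $\lambda\in M^0\otimes\mm_A$ sends $\omega\mapsto\omega+\sum_{n\ge 0}\tfrac{1}{n!}\,l_{n+1}(\lambda,\omega^{\otimes n})$. By the transfer tree formula from Theorem \ref{thmTTL}, any tree contributing to a transferred $l_{n+1}$ with a leaf in $\iota(M^0)$ contains a Lie bracket with $\iota(M^0)$ at the adjacent internal node; since $[\iota(M^0),C]=0$, every such tree evaluates to zero. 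Thus the gauge action is trivial, $\Def(M)(A)=M^1\otimes\mm_A$ canonically, and we obtain $\Def(M)\simeq\widehat{(M^1)}_{\bz}$, pro-represented by $\hat R:=\widehat{\mathrm{Sym}}((M^1)^\vee)$, together with $T\Def(M)=M^1$.

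Second, let $\omega^{univ}\in M^1\otimes\hat R$ be the tautological universal element. Formula \eqref{eqLJI} identifies $\Def^0_k(M,V)$ as the subfunctor of $\Def(M)$ pro-represented by $\hat R/J$, where $J$ is the ideal of $(\dim V^0-k+1)$-minors of the universal Aomoto differential
\[
d^{univ}\colon V^0\otimes\hat R\longrightarrow V^1\otimes\hat R,\qquad d^{univ}(v)=\sum_{n\ge 1}\tfrac{1}{n!}\,m_{n+1}\bigl((\omega^{univ})^{\otimes n},v\bigr).
\]
The linear part of $d^{univ}$ is the $\dim V^1\times\dim V^0$ matrix $\pi$ of linear forms on $M^1$; its $(\dim V^0-k+1)$-minor ideal $J_0\hat R$ pro-represents $\widehat{(\cV_k)}_{\bz}$. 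The remaining content of the theorem is therefore the equality $J=J_0\hat R$ in $\hat R$.

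The hard part is this equality. My strategy is to invoke the invariance of cohomology jump ideals under quasi-isomorphism, inherent to Definition \ref{defCJI}, and construct invertible $\hat R$-linear endomorphisms $\Phi$ of $V^0\otimes\hat R$ and $\Psi$ of $V^1\otimes\hat R$, with $\Phi,\Psi\equiv\id\pmod{\mm}$, intertwining $d^{univ}$ and $\pi$ as $\Psi\circ d^{univ}\circ \Phi=\pi$. Such $\Phi,\Psi$ would be built inductively in powers of $\mm$: at each order $n$ one must solve an equation that lifts the accumulated higher-order discrepancy between $d^{univ}$ and $\pi$ through the linear map $\pi$, and the injectivity of $\pi$ (dual to a surjection produced by $m_2$) is precisely what permits each lift. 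The principal difficulty will be verifying at every order that the correction one must absorb does lie in the image of $\pi$; the input from the hypotheses that should carry this check through is the structure of the transferred higher operations $m_{n+1}$ expressed via successive brackets in $C$ and actions on $N$, constrained by the complete triviality of the transferred $l_n^M$ (itself a consequence of $M^2=0$ combined with $[\iota(M^0),C]=0$ established in the first step). Once $\Phi,\Psi$ are produced, the invariance of minor ideals under multiplication by invertible matrices yields $J=J_0\hat R$, compatibly with the identification from step one.
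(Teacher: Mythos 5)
Your first two steps (the Maurer--Cartan locus is all of $M^1\otimes\mm_A$ because $M^2=0$, the homotopy/gauge relation is trivial because $[\iota(M^0),C]=0$ kills every transfer tree with an input from $M^0$, and the subfunctors are cut out by the minor ideals of a universal Aomoto matrix $d_{univ}$ over $\widehat S=\widehat{\mathrm{Sym}}((M^1)^\vee)$) coincide with the paper's argument, which packages the gauge triviality as Lemma \ref{MCnequi} via Remark \ref{rmkHigher}. The divergence, and the gap, is in your final step.

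You reduce the theorem to the equality of ideals $J=J_0\widehat S$ in $\widehat S$ and propose to prove it by finding invertible matrices $\Phi,\Psi\equiv\id\pmod{\mm}$ with $\Psi\circ d_{univ}\circ\Phi=B$, where $B$ is the matrix of linear forms given by $\pi$. Both the reduction and the mechanism fail. Row and column operations can only produce matrices whose entries lie in the ideal generated by the entries of $B$ together with the higher-order part, and more precisely $\Psi d_{univ}\Phi - d_{univ}$ has all entries in the ideal $\langle b_{ij}\rangle+\mm\cdot\langle h_{ij}\rangle$; so the order-$n$ correction you must absorb would have to lie in $\langle b_{ij}\rangle$, which is not implied by injectivity of $\pi$. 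A minimal counterexample: $\dim M^1=2$ with all $l_n=0$ (so $M=HC$ for the abelian dgla $C=M$), $\dim V^0=\dim V^1=1$, $m_2$ dual to $x_1$ and $m_3(e_2,e_2,\cdot)\neq 0$; all $\Linf$ relations hold trivially since $V^2=0$, and $d_{univ}=(x_1+c\,x_2^2)$ with $c\neq 0$. Then $J=(x_1+c\,x_2^2)\neq(x_1)=J_0$ in $K\llbracket x_1,x_2\rrbracket$, and no units $u,w$ give $u(x_1+c\,x_2^2)w=x_1$. The theorem is nevertheless true because it only asserts an isomorphism of subfunctors induced by \emph{some} isomorphism $\Def(M)\simeq\widehat{(M^1)}_\bz$ that is the identity on tangent spaces, not equality of ideals under the tautological one. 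The correct mechanism (the paper's) is an automorphism of the base ring $\widehat S$, i.e., a nonlinear change of formal coordinates on $M^1$: since $\pi$ is injective, the entries of $B$ are linearly independent linear forms, hence the entries of $d_{univ}$ (whose linear parts they are) extend to a formal coordinate system, and the resulting automorphism of $\widehat S$ carries $d_{univ}$ to $B$, hence $J$ to $J_0$. (Equivalently, as in the paper's second proof via Theorem \ref{thrmLinf1Gen}, surjectivity of $\tilde m_2$ lets one write $d_{univ}=B\circ\varphi$ for a formal automorphism $\varphi$ of $M^1$ tangent to the identity.) Your proposal is missing exactly this substitution on the base; no choice of $\Phi,\Psi$ can replace it.
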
 
 \begin{proof} We recall the proof from \cite{theta} since in Section \ref{secTecCore} we  give another proof and we would like to point to some steps from here.
 Denote by $l=\{l_n\}_{n\ge 1}$ the $\Linf$ algebra structure on $M$, and by $m=\{m_n\}_{n\ge 1}$ the $\Linf$ module structure on $V$. We have $l_1=0$ and $m_1=0$. Let $\omega\in  M^1$. Since $l_n$ has degree $2-n$, $l_n(\omega^{\otimes n})$ is in $M^2=0$.  Hence $M^1\otimes \mm_A=\MC_M(A)$ for all $A\in\Art$, with the Maurer-Cartan set as in Definition \ref{defnMC}. This gives $T\Def(M)= M^1$, cf. Theorem \ref{propTDef}. The assumption on $M^0$  implies that no two elements in $M^1\otimes \mm_A$ are homotopy equivalent by  Lemma \ref{MCnequi}. Thus
$
\Def(M)\simeq (\widehat{M^1})_{\bz}. 
$
Since there is no homotopy equivalence to mod out by, we also have
\begin{equation}\label{eqLJIp1}
\Def^0_k(M,V;A)=\{\omega\in  M^1\otimes \mm_A\mid   J^0_k(V\otimes A, d_\omega)=0 \}
\end{equation}
where
$$d_\omega:V^0\otimes A\to V^1\otimes A,\quad
d_\omega(\_):=\sum_{n\ge 1}\frac{1}{n!}m_{n+1}^A(\omega^{\otimes n}\otimes\_),
$$
since $V$ is concentrated in degrees 0,1 and $m_1=0$, see Definition \ref{defDik}. It will be slightly more convenient to work with the graded-symmetric version of the $\Linf$ pair structure; by Remarks \ref{rmkSymM0} and \ref{rmkSymM} this amounts to changing $\omega^{\otimes n}$ to its symmetric version $\omega^{\vee n}$ in the formula for $d_{\omega}$ if we keep denoting by $\{m_n\}_n$ the graded-symmetric version of the $\Linf$ module structure on $V$.

We construct now a universal matrix $d_{univ}$ with entries in the completion $\widehat S$ at the maximal ideal at $\bz\in M^1$ of the symmetric algebra $S$ of $(M^1)^\vee$, such that $d_{univ}$ gives all $d_\omega$ for all  $A$ and $\omega$ as above. Let $s=\dim M^1$.
Fix  a basis $e_1,\ldots, e_s$  of the vector space $M^1$.  Let $x_1,\ldots , x_s$ be the dual basis, so that $S=K[x_1,\ldots ,x_s]$ and $\widehat{S}=K\llbracket x_1,\ldots ,x_s \rrbracket$. Let
$
\omega_{univ}=\sum_{i=1}^se_i\otimes x_i\in M^1\otimes S.
$
Define the morphism of free $\widehat{S}$-modules
\be\label{eqDUNI}
d_{univ}:V^0\otimes \widehat{S} \to V^1\otimes \widehat{S},\quad \sigma\otimes 1\mapsto  \sum_{n\geq 1}\frac{1}{n!} (m_{n+1}\otimes\id_{\widehat{S}})((\omega_{univ})^{\vee n}\otimes(\sigma\otimes 1)).
\ee
Fixing bases for $V^0, V^1$, we write $d_{univ}$ as a matrix with entries in $\widehat{S}$.
By construction we have for all $k$ canonical isomorphisms of subfunctors
$
\Def^0_k(M,V)= {\rm{Spf}}(\widehat{S}/J^0_k(d_{univ}))
$
compatible with the inclusion of subfunctors for $k\le k'$.

The  matrix $B$ formed by the linear parts of the entries of $d_{univ}$ is by construction the matrix of linear forms on $M^1$ determined by $\pi$ and the above vector space bases. 
By the injectivity assumption on $\pi$, the entries of $B$ are linearly independent. Hence we can find an isomorphism of $\widehat{S}$ such that 
$d_{univ}$ becomes $B$. This implies the claim since $\widehat{(\cV_k)}_\bz$ is defined by the ideal $J^0_k(B)\subset\widehat{S}$. 
 \end{proof}


One can apply Theorem \ref{thm1GDik} to the context of Theorem \ref{thmGenToCone}. It is here that stability of the vector bundle $E$ becomes crucial. It is needed to guarantee that all conditions from Theorem \ref{thm1GDik} are met. Moreover $\pi$ corresponds to the Petri map $\pi_{E,F}$ and hence its entries are linearly independent linear forms. Thus one can apply a change of formal coordinates to obtain that $J^0_k(\pi_{E,F})$ define locally formally the twisted Brill-Noether loci. This finishes the proof of  Theorem \ref{thmGenToCone}.

In Section \ref{secTecCore} we  give a  proof of Theorem \ref{thmGenToCone} different that the one in \cite{theta}, similar to \cite{Po} but requiring more $\Linf$  background.

\section{Proofs of the applications to  Brill-Noether loci}\label{secApplBN}

The goal of this section is to recall the main step left in the proof of Theorem \ref{thmGenToConeEt}, to prove the claims from Theorem \ref{thrmLCThrk} not stated in \cite{theta},  Proposition \ref{propXWQ}, Lemma \ref{thmW0d}, and Proposition \ref{thmHyE}. Let $K$ be a field  of characteristic zero.  Artin showed:

\begin{thrm}\label{thrmArtin}{\rm{(}\cite{ar}, \cite[Cor. 2.6]{ar1}\rm{)}}  Let $X_1$ and $X_2$ be two $K$-schemes of finite type, and let $x_i\in X_i$ be two points. If the  formal neighborhoods $\widehat{(X_i)}_{x_i}$
are $K$-isomorphic then:
\begin{itemize}
\item $(X_i,x_i)$ are locally isomorphic for the \'etale topology, that is, there exist a $K$-scheme of finite type $X'$, a point $x'\in X'$, and  \'etale maps $X_1\leftarrow X'\to X_2$ sending $x_1\mapsfrom x'\mapsto x_2$, and inducing isomorphisms of residue fields of $x_1,x',x_2$;

\item $(X_i,x_i)$ are locally analytic isomorphic if $K=\bC$. 
 \end{itemize}
\end{thrm}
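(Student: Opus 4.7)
The plan is to deduce both conclusions from M.~Artin's approximation theorem applied in its two forms (algebraic and analytic). The strategy is: reduce to an affine local setting, interpret the formal isomorphism as a formal solution of a finite polynomial system, approximate it by Henselian (respectively convergent) solutions in each direction, and then promote the resulting pair of approximate maps to an honest pair of mutually inverse isomorphisms via a Nakayama argument. The \'etale (respectively analytic) neighborhoods are obtained by spreading out.

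In more detail, I would first shrink each $X_i$ to an affine open, write $X_i=\spec(A_i)$ with a presentation $A_i=K[y^{(i)}_1,\ldots,y^{(i)}_{n_i}]/(g^{(i)}_1,\ldots,g^{(i)}_{r_i})$ chosen so that $x_i$ corresponds to the origin. The formal $K$-isomorphism $\hat\phi\colon\widehat{(A_1)}_{\mm_1}\xrightarrow{\sim}\widehat{(A_2)}_{\mm_2}$ and its inverse then correspond to tuples of formal power series $\hat u\in(\widehat{(A_1)}_{\mm_1})^{n_2}$ and $\hat v\in(\widehat{(A_2)}_{\mm_2})^{n_1}$ satisfying $g^{(2)}(\hat u)=0$ and $g^{(1)}(\hat v)=0$ respectively. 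For any integer $c\geq 1$, Artin's approximation theorem \cite{ar} applied to each polynomial system independently produces honest Henselian tuples $u\in(\cO^h_{X_1,x_1})^{n_2}$ and $v\in(\cO^h_{X_2,x_2})^{n_1}$ satisfying the same equations and agreeing with $\hat u,\hat v$ modulo $\mm_1^c,\mm_2^c$ respectively. These tuples define $K$-algebra morphisms $\alpha\colon\cO^h_{X_2,x_2}\to\cO^h_{X_1,x_1}$ and $\beta\colon\cO^h_{X_1,x_1}\to\cO^h_{X_2,x_2}$.

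The next step is to show that, for $c\geq 2$, $\alpha$ and $\beta$ are mutually inverse isomorphisms. A direct computation on completions, using that $\hat u$ and $\hat v$ already compose formally to the identity, gives $\widehat{\beta\alpha}\equiv \id\pmod{\mm_2^2}$ and $\widehat{\alpha\beta}\equiv\id\pmod{\mm_1^2}$; in particular $\beta\alpha$ induces the identity on the cotangent space $\mm_{x_2}/\mm_{x_2}^2$ of the Henselian local ring $\cO^h_{X_2,x_2}$. Nakayama's lemma then forces $\beta\alpha$ to be surjective, and a surjective endomorphism of a Noetherian local ring is automatically injective, hence an isomorphism. Symmetrically $\alpha\beta$ is an isomorphism, so both $\alpha$ and $\beta$ are isomorphisms of Henselian local rings. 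Since $\cO^h_{X_i,x_i}$ is the filtered colimit of the local rings of pointed \'etale neighborhoods $(U,x')\to(X_i,x_i)$, the pair $(\alpha,\beta)$ and the two composition identities all descend to a common pointed \'etale neighborhood $(X',x')$ equipped with \'etale maps $X_1\leftarrow X'\to X_2$ carrying $x_1\mapsfrom x'\mapsto x_2$ and inducing identities on residue fields, proving the first item. For $K=\bC$, the same scheme runs verbatim with $\cO^h_{X_i,x_i}$ replaced by the analytic local ring of convergent power series and Artin's convergent approximation theorem \cite[Cor.~2.6]{ar1} replacing the algebraic version, yielding an isomorphism of pointed complex analytic germs.

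The main technical step I expect to require care is the Nakayama-surjectivity of $\beta\alpha$: one must verify that the order $c\geq 2$ approximation, combined with the formal identity $\hat\phi\circ\hat\phi^{-1}=\id$, really does force the composed Henselian map to induce the identity modulo the square of the maximal ideal. Once this is in place, the remainder is a bookkeeping exercise on top of Artin's theorem.
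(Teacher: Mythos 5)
The paper offers no proof of this statement---it is quoted as a black box from Artin's two papers---so there is nothing internal to compare against; I am assessing your argument on its own terms. Your overall strategy (reduce to a finite polynomial system, apply Artin approximation to the formal solution, promote the approximate solution to an \'etale, resp.\ analytic, local isomorphism) is indeed the standard route and essentially Artin's own. The reduction to the affine presentation, the application of \cite{ar} and \cite[Cor.~2.6]{ar1}, and the final spreading-out over the filtered colimit defining the Henselization are all fine.

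The genuine soft spot is the step ``$\beta\alpha$ induces the identity on $\mm_{x_2}/\mm_{x_2}^2$, hence by Nakayama is surjective.'' For a ring endomorphism of a Noetherian local ring this implication is false in general: $x\mapsto x+x^2$ on $K[x]_{(x)}$ induces the identity on the cotangent space but is not surjective (its inverse involves $\sqrt{1+4x}$). Nakayama controls module generators, not the image of a subring, and what actually makes the argument work is \emph{completeness}: for an endomorphism $g$ of a complete Noetherian local ring with $g\equiv\id\pmod{\mm^2}$ on generators, one gets surjectivity by summing the successive corrections degree by degree, and then injectivity from the fact that a surjective endomorphism of a Noetherian ring is injective. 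So you should run the argument on $\widehat{\beta\alpha}$ and $\widehat{\alpha\beta}$, conclude that $\hat\alpha$ and $\hat\beta$ are isomorphisms of completions, and then finish not by inverting $\alpha$ on the Henselization but by invoking the criterion that a finite-type morphism of Noetherian $K$-schemes inducing an isomorphism on completed local rings (with trivial residue field extension) is \'etale at the point: the map $u$, spread out to an \'etale neighborhood $U_1\to X_1$, gives $U_1\to X_2$ which is then \'etale at the marked point, and $X'$ can be taken to be a neighborhood of that point in $U_1$. This repair also shows you only need to approximate in one direction; the second tuple $v$ and the mutual-inverse bookkeeping are not needed. (A last cosmetic point: ``chosen so that $x_i$ corresponds to the origin'' presumes $K$-rational points; harmless here since the paper's applications take $K$ algebraically closed and $x_i$ closed, but worth flagging if you want the statement at the stated generality.)
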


The key to passing from the formal neighborhood in Theorem \ref{thmGenToCone} to the local \'etale neighborhood in Theorem \ref{thmGenToConeEt} is the following version of Artin's algebraization theorem.

\begin{prop}{\rm{(}\cite[Prop. 3.2]{theta}\rm{)}}\label{propTower}
Let $X$ be a smooth  $K$-variety, $X\supset Y_1\supset Y_2\supset \ldots Y_m$  closed subschemes, and $x\in Y_m$ a point. Let $T=T_xX$, $C_i=TC_xY_i$, and $0\in C_m$ be the vertex. Suppose there exists a $K$-isomorphism of formal neighborhoods $\widehat{X}_x\simeq \wh{T}_0$ inducing  isomorphisms

$$
\begin{tikzcd}
\wh{X}_x \arrow[d,"\simeq"] \arrow[r, hookleftarrow]& \wh{Y}_{1,x} \arrow[d,"\simeq"] \arrow[r, hookleftarrow]& \wh{Y}_{2,x} \arrow[d,"\simeq"] \arrow[r, hookleftarrow] &  \ldots \arrow[r, hookleftarrow]& \wh{Y}_{m,x} \arrow[d,"\simeq"]\\
\wh{T}_0 \arrow[r, hookleftarrow]& \wh{C}_{1,0} \arrow[r, hookleftarrow]& \wh{C}_{2,0} \arrow[r, hookleftarrow]&  \ldots \arrow[r, hookleftarrow]& \wh{C}_{m,0}.\\
\end{tikzcd}
$$

Then:
\begin{itemize}
\item There exists a local isomorphism for the \'etale topology $(X,x)\simeq (T,0)$ inducing  local isomorphisms for the \'etale topology 

\be\label{eqdiT}
\begin{tikzcd}
(X,x) \arrow[d,"\simeq"] \arrow[r, hookleftarrow]& (Y_1,x) \arrow[d,"\simeq"] \arrow[r, hookleftarrow]& (Y_2,x) \arrow[d,"\simeq"] \arrow[r, hookleftarrow] &  \ldots \arrow[r, hookleftarrow]& (Y_m,x) \arrow[d,"\simeq"]\\
(T,0) \arrow[r, hookleftarrow]& (C_1,0) \arrow[r, hookleftarrow]& (C_2,0) \arrow[r, hookleftarrow]&  \ldots \arrow[r, hookleftarrow]&(C_m,0).\\
\end{tikzcd}
\ee

\item If $K=\bC$, there exists a local analytic isomorphism  $(X,x)\simeq (T,0)$ inducing  local analytic isomorphisms   
in the  diagram (\ref{eqdiT}).
\end{itemize}
\end{prop}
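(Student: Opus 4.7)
The plan is to derive this from Artin's approximation theorem (the stronger ``equational'' version underlying Theorem \ref{thrmArtin}) applied to a finite system of polynomial equations that encodes preservation of the entire flag of subschemes simultaneously. Thus, rather than applying Theorem \ref{thrmArtin} subscheme-by-subscheme (which would produce different, incompatible étale neighborhoods at each stage), I would carry out a single approximation argument for all the $Y_i$ at once.

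First I would pass to affine data: near $x$ replace $X$ by $\spec A$ with maximal ideal $\mathfrak{m}$, and write $T=\spec B$ with $B=K[y_1,\ldots,y_n]$ and origin $\mathfrak{n}$. Let $I_i\subset A$ and $J_i\subset B$ cut out $Y_i$ and $C_i$ respectively. A pointed $K$-algebra map $\psi:B\to A$ with $\psi(\mathfrak{n})\subset\mathfrak{m}$ is determined by the tuple $(a_j)=(\psi(y_j))\in\mathfrak{m}^n$, and if we fix a finite generating set for each $J_i$, the conditions $\psi(J_i)\subset I_iA$ for $i=1,\ldots,m$ become finitely many polynomial equations in the $a_j$ with coefficients in $A$. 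By hypothesis, $\widehat{\varphi}$ yields a formal solution $(\widehat{\varphi}(y_j))\in\widehat{A}_{\mathfrak{m}}^{\,n}$; symmetrically, $\widehat{\varphi}^{-1}$ provides a formal solution of the analogous system for a map $\chi:A\to B$ encoding $\chi(I_i)\subset J_iB$.

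Applying Artin's approximation theorem \cite{ar} to each system, for arbitrarily large $c\gg 0$ one obtains henselian solutions $(a_j^h)\in (A^h_{\mathfrak{m}})^n$ and $(b_k^h)\in (B^h_{\mathfrak{n}})^{\dim A}$ agreeing with $\widehat{\varphi}$ and $\widehat{\varphi}^{-1}$ modulo the $c$-th power of the maximal ideal. These translate into étale neighborhoods $(U,u)\to(\spec A,x)$ and $(V,v)\to(T,0)$ together with $K$-morphisms $\psi^h:(U,u)\to(T,0)$ and $\chi^h:(V,v)\to(\spec A,x)$ satisfying $\psi^h(J_i)\subset I_i\cO_U$ and $\chi^h(I_i)\subset J_i\cO_V$ on the nose. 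For $c$ larger than the Artin--Rees exponents attached to the pairs $(I_i,\mathfrak{m})$ and $(J_i,\mathfrak{n})$, Nakayama's lemma upgrades these containments to equalities on a possibly smaller étale neighborhood. For $c$ larger than the order of the relevant Jacobian, $\psi^h$ and $\chi^h$ are étale at their marked points (their cotangent maps match those of $\widehat{\varphi}^{\pm 1}$, which are isomorphisms), and $\psi^h\circ\chi^h$ is within order $c$ of the identity, so one is mutual quasi-inverse to the other after shrinking. This produces the étale-local isomorphism of (\ref{eqdiT}). The complex-analytic statement follows identically upon substituting the convergent-series variant of Artin approximation \cite{ar1}.

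The main obstacle will be the promotion of the approximate containments $\psi^h(J_i)\subset I_i\cO_U$ to genuine equalities of ideal sheaves, and of the approximate composition $\psi^h\chi^h\equiv\id\pmod{\mathfrak{m}^c}$ to an actual mutual inverse relationship. Both are matters of choosing the approximation order $c$ sufficiently large compared with the finitely many Artin--Rees and Jacobian bounds appearing in the flag; the conceptual point is that Artin approximation applies to the full flag in one step, and this is why Theorem \ref{thrmArtin} strengthens here to a chain-compatible isomorphism.
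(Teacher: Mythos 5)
The paper does not reprove this statement; it cites \cite[Prop.~3.2]{theta}, whose argument is exactly the one you outline: a single application of Artin approximation to a system of polynomial equations encoding the whole flag at once, rather than subscheme-by-subscheme. Your proposal is essentially correct, and the conceptual point you isolate (one approximation for the full flag is what upgrades Theorem \ref{thrmArtin} to a chain-compatible isomorphism) is the right one.

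Two steps deserve more care. First, the promotion of $\psi^h(J_i)\cO_U\subset I_i\cO_U$ to an equality: your Artin--Rees/Nakayama argument does work, but only because the formal solution satisfies the \emph{reverse} containment $\widehat\varphi(\widehat{J_i})\supset\widehat{I_i}$, which is what guarantees $I_i\subset\psi^h(J_i)\cO_U+\mathfrak m^c$ before Nakayama can be applied; you should either say this explicitly or, more cleanly, adjoin the equations $f_{il}=\sum_k d_{ilk}\,g_{ik}(a)$ to the system so that both containments come out of the approximation on the nose, with no Artin--Rees constant to control. Second, the ``mutual quasi-inverse'' step is both imprecise ($\psi^h\circ\chi^h$ is not defined on a common space, and being congruent to the identity modulo $\mathfrak m^c$ does not make a map invertible) and unnecessary: once $\psi^h:(U,u)\to(T,0)$ induces an isomorphism on cotangent spaces it is étale at $u$ (both sources being smooth of the same dimension), so $X\leftarrow U\rightarrow T$ is already the required common étale neighborhood, and the one-sided ideal equalities finish the proof; the map $\chi^h$ can be discarded. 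Finally, note that your citations are swapped: \cite{ar1} is the algebraic approximation theorem and \cite{ar} is the analytic one.
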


\subs{\bf Proof of Theorem \ref{thmGenToConeEt}.} The direct application of Proposition \ref{propTower} and Theorem \ref{thmGenToCone} gives Theorem \ref{thmGenToConeEt}. $\hfill\Box$

\subs{\bf Proof of Theorem \ref{thrmLCThrk}.} We proceed as in \cite{theta}.  By Theorem \ref{thmGenToConeEt}, there exists a tower of cartesian diagrams

\be\label{eqBlubli}
\begin{tikzcd}
(\cM,E) \arrow[d,hookleftarrow]& (X,x) \arrow[l] \arrow[r] \arrow[d,hookleftarrow]& (M ,\bz) = (T_E\cM,\bz)\arrow[d,hookleftarrow]\\
(\cV_1,E) \arrow[d,hookleftarrow]& (X_1,x) \arrow[l] \arrow[r] \arrow[d,hookleftarrow]& (M_1,\bz) = (TC_E\cV_1,\bz) \arrow[d,hookleftarrow]\\
(\cV_2,E) \arrow[d,hookleftarrow]& (X_2,x) \arrow[l] \arrow[r] \arrow[d,hookleftarrow]& (M_2,\bz) =(TC_E\cV_2,\bz)\arrow[d,hookleftarrow]\\
\vdots \arrow[d,hookleftarrow]&\vdots \arrow[d,hookleftarrow]& \vdots \arrow[d,hookleftarrow]\\
(\cV_l,E) & (X_l,x) \arrow[l] \arrow[r] & (M_l,\bz)=(TC_E\cV_l,\bz) \\
\end{tikzcd}
\ee
where: the horizontal maps  are \'etale, sending $E\mapsfrom x\mapsto \bz$, inducing isomorphisms of residue fields at $E,x,\bz$; the vertical maps are closed embeddings of subschemes; $\cM=\cM_{n,d}$,
$\cV_k=\cV_k(F)$, $M=\HH^1(C,E\otimes E^\vee)$, $\bz$ is the origin; $M_k$ is the closed subscheme defined by the ideal generated by the minors of size $l-k+1$ of matrix of linear forms on $\HH^1(C,E\otimes E^\vee) $ determined by $\pi_{E,F}$. 
By the injectivity of $\pi_{E,L}$ this matrix is generic  of size $l'\times l$, cf. Definition \ref{defnGen}.

To simplify notation, we denote by the same symbols, and work with them so from now on, the restriction of the diagram to two Zariski open neighborhoods of $E$ and $x$, respectively in $\cM_{n,d}$ and $X$, respectively. By shrinking these open neighborhoods, we can and will assume that $\cM, \cV_k, X, X_k$ are connected.

Parts {(o), (iii)-(viii), (x)} are proved in \cite{theta} using the corresponding features of the local models from Theorem \ref{thrmGen}.

(oo) Follows from Theorem \ref{thrmGen} {\color{hot}(ii)}, with $a=l$, $b=l'$. 
Note that $M$ and $M_k$ here are the same as $\bA^{ab}$ and $M_k$, respectively, from Theorem \ref{thrmGen} up to the product with an affine space of dimension equal to $h^1(E\otimes E^\vee) - ll'$. In any case the codimension of $M_k$ here agrees with the codimension of $M_k$ from Theorem \ref{thrmGen}.

(ii) The scheme of $m$-jets of a variety passing through a fixed point only depends on the formal neighborhood of that point. Hence the problem is reduced to describing the  scheme of $m$-jets of $M_k$ passing through $\bz$. This was done in \cite{Roi} using pre-partitions.



 (i) We use the definition in terms of the log resolution from (vi) of multiplier ideals, see Definition \ref{defnLR}. To compute that relative canonical divisor of $f:Y\to \cM$ we use the formula for the codimension of $\cV_k$ from (o). We obtain the multiplier ideal at $E$ of the pair $(\cM,\cV_k)$ with coefficient $c\in \bR_{>0}$  is
 $$
f_*\cO_Y\left(A-\sum_{j=0}^{l-k} (\lfloor c(j+1)\rfloor+1-(k+j)(l'-l+k+j))E_{l-k-j}\right)=\star
$$
where $A$ is an effective exceptional divisor such that $K_{Y/\cM}-A$ is also effective. Hence 
$$
\star=f_*\cO_Y\left(-\sum_{j=0}^{l-k} (\lfloor c(j+1)\rfloor+1-(k+j)(l'-l+k+j))E_{l-k-j}\right)
$$
which gives the claim, cf. Theorem \ref{thrmGen} {\color{hot}(v)}.

(ix)  Consider a triplet $(X,Z,x)$ consisting of a smooth affine algebraic variety, a closed subvariety together with a point in it, and  $(X',Z',x')$  another such triplet such that there exists an analytic isomorphism $(X,Z,x)\simeq (X',Z',x')$, that is, the germs of $Z$ and $Z'$ at $x$ and $x'$, respectively, are embedded analytically equivalent. Then under this isomorphism the analytic mixed Hodge modules $j_*\bQ_{X\setminus Z}^H$ (if $Z$ has codimension one), $\HH^k(i_*i^!\bQ_X^H)$, $\IC_Z\bQ^H$ determined by $(X,Z,x)$ in a small analytic neighborhood of $x$ correspond to their obvious counterparts in a small analytic neighborhood of $(X',Z',x')$, where $i:Z\to X$,  $j:X\setminus Z\to Z$ are the natural closed and, respectively, open embeddings.  Since the two triplets are algebraic, the algebraic mixed Hodge modules $j'_*\bQ_{X'\setminus Z'}^H$, $\HH^k(i'_*(i')^!\bQ_{X'}^H)$, $\IC_{Z'}\bQ^H$ determine the algebraic mixed Hodge modules $j_*\bQ_{X\setminus Z}^H$, $\HH^k(i_*i^!\bQ_X^H)$, $\IC_Z\bQ^H$ not only in a small analytic neighborhood of $x$ but also in a small Zariski open neighborhood.

We apply this to our case, where we have an analytic isomorphism $(\cM,\cV_k,E)\simeq (M,M_k,\bz)$ and information about the three algebraic mixed Hodge modules  from above is available from Theorem \ref{thrmGen} (\ref{itPr1}), (\ref{itPr2}). Recall that $(M,M_k)$ here is the product of $\bA^{m}$  with $(M,M_k)$ from Theorem \ref{thrmGen}, where $m=\dim\cM-ll$. Thus one needs to apply $(\_)\boxtimes \bQ_{\bA^m}^H[m]$ to the mixed Hodge modules involved in Theorem \ref{thrmGen} (\ref{itPr1}), (\ref{itPr2}) to obtain those for $(\cM,\cV_k)$ in a suitable open Zariski neighborhood of $E$. This does not shift the $F$ filtration since we consider the underlying left $\cD$-modules, cf. Remark \ref{rmkDm1} and Remark \ref{rmkDm2}. $\hfill\Box$


\subs{\bf Proof of Proposition \ref{propXWQ}.} Log canonical thresholds cannot increase under specialization \cite[9.5.41]{Laz}. 
By specialization to the tangent cone, cf. Section \ref{appSI}, we obtain $
\lct_E(\cM,\cV_k) \ge \lct_\bz(\HH^1(C,E\otimes E^\vee),TC_E\cV_k)$. 
For the second inequality, we have seen in the proof of Theorem \ref{thmGenToCone}, more precisely in the proof of Theorem \ref{thm1GDik}, that the ideal defining the tangent cone $TC_E\cV_k$ in the completion at $E$ contains the ideal generated by the $(l-k+1)$-minors of the matrix  of linear forms determined by the Petri map $\pi_E$. The span $N$ of the entries is the dual of the vector subspace $\im(\pi_E)$ of $\HH^1(C,E\otimes E^\vee)$. Since bigger ideal implies bigger log canonical threshold cf. \cite[Prop. 9.2.31]{Laz}, we obtain the second inequality $\lct_\bz(\HH^1(C,E\otimes E^\vee),TC_E\cV_k)\ge \lct_\bz(N,N_k)$.
$\hfill\Box$

\subs{\bf Proof of Lemma \ref{thmW0d}.}
 More generally, any invariant of the local \'etale embedded structure stays the same. Since $N_1$ is a cone in $N$, the local lct and the local minimal exponent at the origin are the same as the global counterparts.
$\hfill\Box$

\subs{\bf Proof of Proposition \ref{thmHyE}.} Before we can prove Proposition \ref{thmHyE} we need a preliminary result.

\begin{lemma}\label{lemRexd} With the conditions as in Proposition \ref{thmHyE},
\begin{enumerate}[(a)]
\item\label{jiti0} $Sing(W^{k-1}_d)=Sing((W^{k-1}_d)_{red})$.

\item\label{jiti} There is a local $K$-isomorphism for the \'etale topology between $(\pic^d(C),L)$ and  $(\HH^1(C,\cO),\bz)$ inducing  for  $1\le k\le h^0(L)$ local $K$-isomorphisms for the \'etale topology between $((W^{k-1}_d)_{red},L)$ and the closed 
  subscheme defined by the ideal generated by the minors of size $h^0(L)-k+1$ of the  $h^1(L)\times h^0(L)$ matrix of linear forms on $\HH^1(C,\cO)$ given by the Petri map $\pi_{L}$ of $L$, if $h^0(L)\le h^1(L)$.
  \end{enumerate}
\end{lemma}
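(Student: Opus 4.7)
The plan is to read off both parts from the working hypothesis that Question \ref{queHypo} has a positive answer, combined with the reducedness of $W^{k-1}_d$ on hyperelliptic curves (part (\ref{iti0}) of Proposition \ref{thmHyE}, due to \cite{SY}).

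For part (b), a positive answer to Question \ref{queHypo} asserts that the conclusion of Theorem \ref{thmGenToConeEt} holds for the pair $(\pic^d(C),L)$ on any hyperelliptic curve, in spite of the Petri map $\pi_L$ typically failing to be injective. Applied to our $L\in W^{k-1}_d\subseteq W^0_d$, this yields a local $K$-isomorphism for the \'etale topology $(\pic^d(C),L)\simeq (\HH^1(C,\cO),\bz)$ which simultaneously identifies, for every $1\le k\le h^0(L)$, the pair $(W^{k-1}_d,L)$ with the closed subscheme of $\HH^1(C,\cO)$ cut out by the size-$(h^0(L)-k+1)$ minors of the $h^1(L)\times h^0(L)$ matrix of linear forms determined by $\pi_L$. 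The cited reducedness $W^{k-1}_d=(W^{k-1}_d)_{red}$ from \cite{SY} then upgrades this to the claimed local model of $((W^{k-1}_d)_{red},L)$.

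For part (a), by the same reducedness $W^{k-1}_d=(W^{k-1}_d)_{red}$, the two schemes coincide and hence share the same singular locus.

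The main subtle point is the interpretation of Question \ref{queHypo}: what needs to be extracted from it is the entire conclusion of Theorem \ref{thmGenToConeEt} (the simultaneous description of $(\cV_k(F),E)$ for all $k\le h^0(E\otimes F)$) in the hyperelliptic setting. Should Question \ref{queHypo} be read as covering only the $k=1$ case, one extends to general $k$ by bootstrapping: factor $L=h^{k-1}\otimes L'$ with $L'\in W^0_{d-2(k-1)}$ via the Clifford--Martens structure for hyperelliptic curves, apply Question \ref{queHypo} at $L'$, and transfer via the translation $L'\mapsto L'\otimes h^{k-1}$, which sends $W^0_{d-2(k-1)}$ isomorphically onto $(W^{k-1}_d)_{red}$ by Proposition \ref{propHell}. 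The ideal-theoretic identification of the maximal minors of $\pi_{L'}$ with the size-$(h^0(L)-k+1)$ minors of $\pi_L$ then uses the Hankel form of Petri matrices on hyperelliptic curves (Proposition \ref{propHyl}), and Proposition \ref{propTower} assembles the $k$-by-$k$ formal isomorphisms into a single local \'etale isomorphism of the whole tower of subschemes; this assembly and the Hankel matching of minors are the main potential technical obstacles.
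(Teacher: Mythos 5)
Your proposal does not match the paper's argument and has genuine gaps. The first is a misreading of Question \ref{queHypo}: it asks only whether Theorem \ref{thmGenToConeEt} holds for $W^0_d$, i.e.\ the case $k=1$, so your ``main line'' — that the hypothesis directly hands you the whole tower $(W^{k-1}_d,L)$ for all $k$ — is not available. Your fallback bootstrap (factor $L=h^{\otimes(k-1)}\otimes L'$, apply the $k=1$ case on $\pic^{d-2(k-1)}(C)$, translate back) produces, for each $k$, a \emph{different} ambient isomorphism $(\pic^d(C),L)\simeq(\HH^1(C,\cO),\bz)$, whereas the lemma requires a single one inducing all the identifications simultaneously. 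Proposition \ref{propTower} cannot do the ``assembly'' you ask of it: it converts one formal isomorphism of an entire tower into one \'etale-local isomorphism; it does not merge $k$ unrelated isomorphisms into one. Finally, part (a) is not disposed of by citing reducedness: within the paper, the reducedness of $W^{k-1}_d$ (Proposition \ref{thmHyE}(\ref{iti0})) is \emph{deduced from} this lemma via a tangent-cone argument, so invoking it here is circular in the paper's logic, and the lemma must be proved for the a priori non-reduced schemes $W^{k-1}_d$.

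The paper's mechanism, which you miss, is to iterate the singular locus. The positive answer to Question \ref{queHypo} gives one embedded \'etale isomorphism $(\pic^d(C),W^0_d,L)\simeq(N,N_1,\bz)\times\bA^{g-d-1}$ with $N$ the Hankel space (Proposition \ref{propHyl}). Since $Sing(N_k)=N_{k+1}$ for Hankel determinantal schemes (Theorem \ref{thrmHankel}(ii),(iv)) and taking the reduced singular locus is intrinsic, the \emph{same} ambient isomorphism automatically identifies the iterated reduced singular loci of $W^0_d$ with $N_k\times\bA^{g-d-1}$. One then closes an induction on $k$, simultaneously for (a) and (b), by a dimension count: $Sing((W^{k-1}_d)_{red})\simeq N_{k+1}\times\bA^{g-d-1}$ has dimension $d-2k$, it sits inside the irreducible $(W^k_d)_{red}=Sing(W^{k-1}_d)$ of the same dimension by Proposition \ref{propHell}, hence the two coincide. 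This both yields (a) without assuming reducedness and propagates the single \'etale isomorphism through the whole tower, which is exactly what your route fails to deliver.
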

\begin{proof}
By assumption there is a local embedded \'etale isomorphism $$(\pic^d(C),W^0_d, L)\simeq (N,N_1,\bz)\times \bA^{g-d-1}$$ with $(N,N_1)$ as in Lemma \ref{thmW0d}. Here $N$ is the space of $l'\times l$ Hankel matrices by  Proposition \ref{propHyl}, and $\codim (\im\pi_L)^\vee =g-d-1$.  The reduced singular locus of $N_k$ is $N_{k+1}$ by Theorem \ref{thrmHankel} {\color{hot}(ii)}, {\color{hot}(iv)}. Hence one recovers $N_{k+1}$ inductively from $N_1$ by taking successively the reduced singular locus. We have that (\ref{jiti0}) and (\ref{jiti}) are true for $k=1$. We proceed by induction on $k>1$ simultaneously for (\ref{jiti0}) and (\ref{jiti}). We assume that $Sing(W^{k-2}_d)=Sing((W^{k-2}_d)_{red})=(W^{k-1}_d)_{red}$ and that the  local \'etale isomorphism from above induces one between $((W^{k-2}_d)_{red},L)$ and $(N_{k-1},\bz)\times \bA^{g-d-1}$.  Taking  the reduced singular locus, we obtain a local \'etale isomorphism $(Sing((W^{k-2}_d)_{red}),L) = ((W^{k-1}_d)_{red},L)\simeq(N_k,\bz)\times \bA^{g-d-1}$. Taking again the reduced singular locus we obtain $(Sing((W^{k-1}_d)_{red}),L) \simeq(N_{k+1},\bz)\times \bA^{g-d-1}$. Obviously $Sing((W^{k-1}_d)_{red})$ is a reduced closed subscheme of the reduced and irreducible scheme $Sing(W^{k-1}_d)=(W^k_d)_{red}$, the latter having dimension $d-2k$ by Proposition \ref{propHell}. On the other hand, by Theorem \ref{thrmHankel} we have $\codim N_{k+1}=g-d+2k$, hence $Sing((W^{k-1}_d)_{red})$ has dimension $d-2k$ as well. Therefore $Sing((W^{k-1}_d)_{red})=Sing(W^{k-1}_d)=(W^k_d)_{red}$ as well.
\end{proof}

\medskip

 We can now finish the proof of Proposition \ref{thmHyE}. By Lemma \ref{lemRexd}, there is an embedded local \'etale isomorphism $(\pic^d,(W^{k-1}_d)_{red},L)\simeq (\HH^1(C,\cO), TC_L((W^{k-1}_d)_{red})),\bz)$ and the latter is given by the  minors of size $l-k+1$ of  $\pi_L$, viewed as a matrix of linear forms.   On the other hand, the deformation theory with cohomological constraints gives that $(\pic^d,W^{k-1}_d,L)$ is formally determined by the minors of size $l-k+1$ of a matrix $\tilde\pi_L$ of formal power series, denoted $d_{univ}$ in the proof of Theorem \ref{thm1GDik}, such that $\pi_L$ is the linear part of $\tilde\pi_L$. Hence the initial ideal of the ideal defining $W^{k-1}_d$ contains the ideal defining $(W^{k-1}_d)_{red}$. We therefore have a chain of inclusions 
  $$
 (TC_LW^{k-1}_d)_{red}\subset TC_LW^{k-1}_d \subset TC_L((W^{k-1}_d)_{red}),
 $$
 of formal neighborhoods of $\bz$, with the first inclusion by trivial reasons. The dimension at $\bz$ of the tangent cone $TC_LW^{k-1}_d$ equals that of $(TC_LW^{k-1}_d)_{red}$, and also equals
  the dimension at $L$ of $W^{k-1}_d$.  Since $(W^{k-1}_d)_{red}$ is reduced and irreducible, it follows that the above chain of inclusions is a chain of equalities. Hence   $TC_LW^{k-1}_d$ is reduced, which implies that $W^{k-1}_d$ is reduced at $L$. This proves (\ref{iti0}). By Lemma \ref{lemRexd} (\ref{jiti}), this also implies (\ref{iti}).

For the next items, we proceed using (\ref{iti}) as in the proof of Theorem \ref{thrmLCThrk} for
the corresponding statements:

\begin{itemize} 
\item (\ref{iti2}) follows from  Theorem \ref{thrmHankel} {\color{hot}(v)} since $l'-l=g-d-1$ by Riemann-Roch.
\item (\ref{iti3}) follows from  Theorem \ref{thrmHankel} {\color{hot}(vi)}.
\item (\ref{iti4}) follows from  Theorem \ref{thrmHankel} {\color{hot}(vii)}.
\item (\ref{iti5}) follows from  Theorem \ref{thrmHankel} {\color{hot}(viii)}.
\item (\ref{iti6}) follows from  Theorem \ref{thrmHankel} {\color{hot}(ix)}.$\hfill\Box$
\end{itemize}

\section{Review of $k$-generic matrices}\label{secDet}

In this section we review  some results on singularities of spaces of generic, and more generally, $k$-generic matrices. For the terminology from singularity theory we refer to Section \ref{appSI}.

\subs{\bf Generic matrices.} Fix non-zero natural numbers $a, b$. We regard the affine space $\bA^{ab}$ over the field $K$ as the space of $b\times a$ matrices over $K$.  By $\bz$ we denote the zero matrix in $\bA^{ab}$.  Without loss of generalization, we assume that $0<a\le b$.

\begin{defn}\label{defnGen}$\;$ 
\begin{enumerate}[(1)]

\item The {\it generic matrix} is the matrix $X=(x_{ij})$  of algebraically independent variables $x_{ij}$ with $1\le i\le b$, $1\le j\le a$.

\item For $k\in\bN$ let $J_k= J_k(a,b)$ be the ideal generated by the minors of size $a-k+1$ of the matrix $X=(x_{ij})$. We set $J_0=0$, and  $J_k=(1)$ if  $k\ge a+1$, and this is compatible with convention on minors from \ref{subNot}. The ideals $J_k$ are called {\it generic determinantal ideals}.
 
\item
Let
$$M_k=M_k(a,b):=\{ A\in \bA^{ab}\mid \rank(A)\le a-k\}.$$ 
The spaces $M_k$ are called {\it generic determinantal varieties}. 
\end{enumerate}

\end{defn}

It is well-known that $M_k$ is indeed an affine subvariety of $\bA^{ab}$ and that $J_k$ is the associated radical ideal \cite[II.3]{A+}. Here are some known results about the singularities of generic determinantal varieties. The terminology is recalled in Section \ref{appSI}.

\begin{theorem}\label{thrmGen} Let $1\le k\le a\le b$ be natural numbers and $M_k$ the space of $b\times a$ matrices of rank $\le a-k$. Then:

\begin{enumerate}[(i)]

\item {\rm{(}}\cite[II.2]{A+}{\rm{)}} The variety $M_k$ is isomorphic its tangent cone at $\bz$, it has dimension $(a-k)(b+k)$, and its singular locus is $M_{k+1}$.

\item {\rm{(}}\cite[II.5.2]{A+}{\rm{)}} The multiplicity of $M_k$ at $\bz$ is 
$$
\prod_{i=0}^{k-1}\frac{(b+i)!i!}{(a-k+i)!(b-a+k+i)!}.
$$

\item {\rm{(}}\cite[Prop. 2]{Ke}{\rm{)}} $M_k$ has rational singularities. 

\item {\rm{(}}\cite{Jo}, \cite{Roi}{\rm{)}} The log canonical threshold at $\bz$ of the pair $(\bA^{ab},M_k)$ equals  the global log canonical threshold and equals
$$
\min\left\{\frac{(a-i)(b-i)}{a -k+1-i}\mid i=0,\ldots ,a-k\right\}.
$$

\item {\rm{(}}\cite{Jo}{\rm{)}} The multiplier ideal of the pair $(\bA^{ab},M_k)$ with coefficient $c\in \bR_{>0}$ is the intersection of symbolic powers of generic determinantal ideals
$$
\cJ (\bA^{ab},c\cdot M_k)=\bigcap_{j=0}^{a-k} {J_{k+j}}^{(\lfloor c(j+1)\rfloor+1-(k+j)(b-a+k+j))}.
$$
Each symbolic power $J_l^{(\_)}$ has an explicit list of generators in terms of monomials in certain minors of $(X_{ij})$, see \cite{DEP}. If $k=1$, the formula simplifies to
$$
\cJ (\bA^{ab},c\cdot M_1)= {J_{1}}^{\lfloor c\rfloor +a-b}.
$$

\item  {\rm{(}}\cite{Roi}{\rm{)}} The number of irreducible components of the $n$-th jet space of $M_k$ for $n\in\bN$ is
$$
\left\{
\begin{array}{cl}
1 & \text{ if }k=1, a\; ;\\
n+2-\displaystyle{\left\lceil \frac{n+1}{a-k+1}\right\rceil} & \text{ if }1<k<a.
\end{array}\right.
$$

\item {\rm{(}}\cite{Lor}{\rm{)}} The Bernstein-Sato polynomial of the generic determinantal ideal $J_1$ is
$$
\prod_{i=b-a+1}^b (s+i),
$$
and Conjecture \ref{conjMC} holds for $(\bA^{ab},M_1)$. The same holds locally at $\bz$.

\item {\rm{(}}\cite{Roi}{\rm{)}} If $a=b$, the topological zeta function of the pair $(\bA^{ab},M_k)$ equals  the local one at the origin and is
$$
\prod_{\al\in \Omega}\frac{1}{1-\al^{-1}s}
$$
where $\Omega$ is the set of poles:
$$
\Omega=\left\{ -\frac{a^2}{a-k+1}, -\frac{(a-1)^2}{a-k}, -\frac{(a-2)^2}{a-k-1},\ldots, - k^2 \right\}.
$$


\item {\rm{(}\cite[4.3]{Jo}, \cite[\S 3]{Sta}\rm{)}} Consider $f_{a-k}:Y_{a-k}\to \bA^{ab}$ the composition of blowups of (strict transforms of) $M_a$, $M_{a-1}$, $\ldots$, $M_{k}$,  in this order.  At each stage this is the blowup of a smooth center in a smooth variety, such that $f_{a-k}$ is a log resolution $(\bA^{ab},M_{k})$. Moreover, the pullback of the ideal $I_{a-k+1}$ defining $M_{k}$ is $\cO_{Y_{a-k}}(-\sum_{i=0}^{a-k}(a-k+1-i)E_i)$, where $E_i$ is the (strict transform of the) divisor introduced by blowing up the (strict transform of) $M_{a-i}$.

\item {\rm{(}\cite{Gaf}\rm{)}} The stratification of $M_k$ given by $M_{t}\setminus M_{t+1}$ with $k\le t$ is a Whitney stratification, and the local Euler obstruction at $\bz$ of $M_k\subset \bA^{ab}$ is $\binom{a}{a-k}$.

\item\label{itPr1} {\rm{(}\cite{Per, PR}\rm{)}} If $0<k\le a$ and $j\ge 0$, the simple holonomic $\cD_M$-module composition factors of the local cohomology $\cD_M$-module $$\cH^j_{M_{k}}(\cO_M)$$ are known, each is equal to the intersection homology module $\cL(M_{k'},M)$ for some $k'$ with $k\le k'\le a$, in which case they have weight $ab+k'-k+j$ with respect to the weight filtration on $\cH^j_{M_{k}}(\cO_M)$. The Hodge filtrations on $\cH^j_{M_{k}}(\cO_M)$ and $\cL(M_k,M)$ are combinatorially determined, as well as their generating level. If $a=b$, the generating levels are $(a^2-a+k-j)/2$ and $k^2$, respectively.

\item\label{itPr2} {\rm{(}\cite{PR}\rm{)}} If $a=b$, $\gr^W_w\cO_M(*M_1) =0$ if $w<a^2$ or $w>a^2+a$, and $\gr^W_{a^2+k}\cO_M(*M_1)=\IC_{M_k}\bQ^H(-\binom{k+1}{2})$ for $0\le k\le a$. The Hodge filtration on $\cO_M(*M_1)$ is combinatorially determined and generated in level $\binom{a}{2}$. For $p\ge 0$ the $p$-Hodge ideal of $M_1$ is 
$$
I_p(M_1) = \bigcap_{k=1}^{a-1} J_{k+1}^{\left(k(p-1)-\binom{k}{2}\right)}.
$$

\item\label{itMal} {\rm{(}\cite{Mal}\rm{)}} If $a=b$ the minimal discrepancies of $M_k$ along $M_{k+1}$ and, respectively, along a point $w\in M_{k'}\setminus M_{k'+1}$ with $k\le k'\le a$ are:
$$
\mld(M_{k+1};M_k) = k+1,\quad \mld(w;M_k) = a^2-kk'.
$$

\end{enumerate}

\end{theorem}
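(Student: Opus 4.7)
This theorem is a compendium of classical results on generic determinantal varieties, each item attributable to the cited source. My plan is to organize the proof around a single geometric input — the Kempf-Lascoux (collapsing) resolution — from which most items follow either formally or by standard computation. Concretely, let $V \to M_k$ be the Springer-type resolution obtained as the total space of the vector bundle over $\mathrm{Gr}(a-k,K^a)$ whose fiber over a subspace $W\subset K^a$ is $\Hom(W,K^b)$, with the second projection mapping to $\bA^{ab}$ with image $M_k$. This single construction is what Kempf calls a ``well-presented'' morphism.

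From it one reads off (i): the dimension is $(a-k)k+(a-k)b=(a-k)(b+k)$, the $\GL_b\times\GL_a$-action makes $M_k$ a cone (so $M_k=TC_\bz M_k$), and the singular locus is the image of the exceptional divisor, which is $M_{k+1}$. Statement (iii), rational singularities, is Kempf's argument: $V$ is smooth, and $R\pi_*\cO_V=\cO_{M_k}$ by Bott vanishing on the Grassmannian. The multiplicity formula (ii) follows from the Hilbert series of the Cohen-Macaulay ring $K[X]/J_k$, itself a consequence of the rational resolution. Iterating blowups of $M_a\subset M_{a-1}\subset\ldots\subset M_k$ in this order, as in Vainsencher-Johnson, promotes the picture to the log resolution (ix); a local coordinate check at each stage shows that each center has become smooth after passing to the strict transform, and gives the multiplicity of $E_i$ in the pullback of $J_k$.

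Given the log resolution, the log canonical threshold (iv), the multiplier ideal (v), and the topological zeta function (viii) reduce to extracting numerical data (discrepancies and vanishing orders along the $E_i$) combined with the codimension formula from (i). For (v), the identification of $\pi_*\cO_Y(-\sum m_iE_i)$ with an intersection of symbolic powers uses the De Concini-Eisenbud-Procesi standard-monomial basis. The Bernstein-Sato polynomial (vii) is Lörincz's application of the prehomogeneous-vector-space framework to $\det X$ and its reduction to the non-square case. The jet-space count (vi) is a direct computation from the initial ideal of $J_k$, organized by Docampo via pre-partitions. The Whitney stratification in (x) is automatic since the strata are orbits of $\GL_b\times\GL_a$, and Gaffney's Chern-Mather computation yields the local Euler obstruction $\binom{a}{a-k}$.

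The hardest items are the mixed Hodge module statements (xi)--(xii) and the minimal discrepancies (xiii). For (xi)--(xii), I would follow Perlman-Raicu: pull back to the Kempf-Lascoux resolution, apply the decomposition theorem, identify the strict support summands with the $\IC$-modules $\cL(M_{k'},M)$, and read off the weight and Hodge filtrations from the explicit $\GL$-representation theory on $V$. Weight grading information on $\cO_M(*M_1)$ follows by comparing $j_*\bQ^H_{M\setminus M_1}$ with $\pi_*\bQ^H_V$ via the Cayley-Hamilton identity for the generic matrix. The main obstacle to making the proof self-contained is precisely this step: it is not a formal consequence of the collapsing resolution but genuinely requires the representation-theoretic input of Perlman-Raicu. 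Finally, the minimal discrepancies (xiii), due to Mallory, drop out of the discrepancy data of the log resolution (ix) and the codimension formula in (i) via the definition of $\mld$.
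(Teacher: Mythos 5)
The paper does not prove this theorem: it appears in the survey Section \ref{secDet} as a compendium of known results, each item carrying only a citation, and the sole proof-adjacent content is the remark following it explaining why Docampo's pre-partition description also yields the \emph{local} topological zeta function in item (viii). So there is nothing in the paper to compare your argument against line by line; what you have written is a reconstruction of how the cited literature establishes these facts, and as such it is a broadly accurate roadmap with correct attributions — the Kempf collapsing resolution for (i)--(iii), the iterated blowup of the rank strata for (ix) feeding into (iv), (v), (viii), Docampo for (vi), L\"orincz et al.\ for (vii), Gaffney et al.\ for (x), Perlman--Raicu for (xi)--(xii), and Mallory for (xiii).

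Two points in your sketch deserve correction. First, your description of the resolution is garbled as stated: the total space of the bundle with fiber $\Hom(W,K^b)$ over $W\in\mathrm{Gr}(a-k,K^a)$ has no canonical second projection to $\bA^{ab}=\Hom(K^a,K^b)$, since a map defined only on a subspace $W$ of the source does not extend canonically. You want either the incidence variety over $\mathrm{Gr}(a-k,K^b)$ with fiber $\Hom(K^a,W)$ (constraining the image), or the one over $\mathrm{Gr}(k,K^a)$ with fiber $\Hom(K^a/W,K^b)$ (constraining the kernel); both give dimension $(a-k)(b+k)$. Second, for item (xiii) the minimal log discrepancies are of the (singular, $\bQ$-Gorenstein) variety $M_k$ itself, not of the pair $(\bA^{ab},M_k)$, so they do not simply ``drop out'' of the embedded log resolution in (ix): one needs $K_{M_k}$ and the relative canonical of a resolution of $M_k$, and Mallory in fact computes these invariants via jet schemes, consistent with the inversion-of-adjunction/jet-theoretic characterization of mld rather than via the blowup tower. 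Neither issue affects the overall correctness of the plan, but both would need repair in a written-out proof.
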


\begin{rmk}
For (viii) only the formula for the global topological zeta function is  given in \cite{Roi}. However, his description in terms of pre-partitions of the strata of jet schemes allows the computation of the local topological, in fact even motivic, zeta function at the origin as well. We thank R. Docampo for pointing out the equality between the local and the global topological zeta functions in this case.
\end{rmk}

\subs{\bf $k$-generic matrices.}\label{subskGen}  For the rest of  this section we assume that the characteristic-zero field $K$ is  algebraically closed.  Let $V, W$ be finite dimensional vector spaces with $0<a=\dim V\le b=\dim W$.   So $\homo(V,W)\simeq\bA^{ab}$ as varieties.  The following data are equivalent:
\begin{itemize}
\item a subspace $N\subset \Hom(V,W)$,
\item a surjective pairing $\mu:V\otimes W^\vee \to N^\vee$,
\item a $b\times a$ matrix $A$ of linear forms on $N$, up to change of bases, such that the span of the entries is $N^\vee$.
\end{itemize}

\begin{defn} Let $k\ge 1$.  We define $N_k := N\cap M_k$ scheme-theoretically, where $M_k\in \Hom(V,W)$ is the subvariety of matrices of rank $\le a-k$. That is, $N_k$ is defined by the minors of size $a-k+1$ of an associated matrix $A$ of linear forms on $N$. 
 \end{defn}

\begin{propdef}\label{propdef} {\rm{(}}\cite{Eis}{\rm{)}} A subspace $N\subset \Hom(V,W)$, or the associated pairing $\mu:V\otimes W^\vee = \Hom(V,W)^\vee\to N^\vee$, or an associated matrix $A$ of linear forms on $N$, 
 is {\it $k$-generic} if any of the following equivalent conditions hold:

\begin{enumerate}

\item[(1)] The kernel of $\mu$ does not contain any sums of $k$ or fewer pure non-zero tensors $v\otimes w$.

\item[(2)] Even after arbitrary invertible row and column operations, any $k$ entries of the  matrix $A$ are linearly independent (so, non-zero if $k=1$).

\item[(3)] $(N^\perp)_{a-k}=0$, where $N^\perp=\{\psi\in \Hom(W,V)\mid \Trace(\phi\psi)=0\text{ for all }\phi\in N\}$.
\end{enumerate}
\end{propdef}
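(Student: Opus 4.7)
The plan is to pass through the canonical identifications $V \otimes W^\vee \simeq \Hom(V,W)^\vee \simeq \Hom(W,V)$, the second one sending $v \otimes \xi$ to the rank-one map $w \mapsto \xi(w)\, v$. First I would verify that the evaluation pairing $V \otimes W^\vee \times \Hom(V,W) \to K$ takes the form $(\psi,\phi) \mapsto \Trace(\phi\psi)$: for $\psi = v \otimes \xi$, the composition $\phi\psi \in \End(W)$ sends $w$ to $\xi(w)\phi(v)$, which is a rank-one operator with trace $\xi(\phi(v))$. Under these identifications the kernel of $\mu\colon V\otimes W^\vee \to N^\vee$ coincides with the annihilator $N^\perp \subset \Hom(W,V)$, and pure tensors in $V\otimes W^\vee$ correspond to linear maps of rank at most one.

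With this in place, the equivalence (1)$\Leftrightarrow$(3) is purely formal: a non-zero sum of $k$ or fewer non-zero pure tensors is the same datum as a non-zero element of $\Hom(W,V)$ of rank at most $k$. Since $\Hom(W,V)$ has maximal rank $a = \dim V \le \dim W = b$, the subscheme $(N^\perp)_{a-k}$ in the sense of Definition \ref{defnGen} (3), applied after identifying $\Hom(W,V)\simeq \bA^{ab}$ by transpose, is precisely the locus of elements of $N^\perp$ of rank $\le a - (a-k) = k$, and both (1) and (3) express that this locus is the origin.

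For (1)$\Leftrightarrow$(2), I would fix bases of $V$ and $W$ and the dual basis $\{\xi_i\}$ of $W^\vee$. The $(i,j)$-entry of the associated matrix $A$ is precisely the restriction to $N$ of the product-basis pure tensor $v_j\otimes\xi_i$, viewed as a linear form on $\Hom(V,W)$. A non-trivial linear relation $\sum_l c_l A_{i_l,j_l} = 0$ among $k$ distinct entries, possibly after invertible row and column operations (which correspond to changes of basis in $V$ and $W$), is therefore equivalent to the existence of a non-zero element $\sum_l c_l (v_{j_l}\otimes\xi_{i_l}) \in \ker(\mu)$ that is a sum of at most $k$ non-zero pure tensors; this gives the failure of (2) implies the failure of (1). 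For the converse, given $0 \ne \tau\in\ker(\mu)$ expressible as a sum of $r \le k$ non-zero pure tensors, I would use the standard fact that $\tau$ then admits a normal form $\tau = \sum_{i=1}^r \tilde v_i \otimes \tilde\xi_i$ with $\tilde v_1,\ldots,\tilde v_r$ linearly independent in $V$ and $\tilde\xi_1,\ldots,\tilde\xi_r$ linearly independent in $W^\vee$. Extending these to bases of $V$ and $W^\vee$ and performing the corresponding invertible row and column operations, one reads off the non-trivial relation $\sum_{i=1}^r A_{i,i} = 0$ among $r\le k$ entries of the transformed matrix, contradicting (2). The one step requiring care is this last rank-normalization, which is however merely the standard fact that the tensor rank of $\tau\in V\otimes W^\vee$ equals the linear-algebra rank of the corresponding map in $\Hom(W,V)$ and that any rank-$r$ map can be brought to the block form of a partial identity by independent changes of basis on source and target; no deeper obstacle is anticipated.
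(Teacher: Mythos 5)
Your argument is correct. The paper itself gives no proof of this Proposition-Definition --- it is quoted directly from \cite{Eis} --- so there is nothing internal to compare against; your proof is the standard one and is complete: the identification $V\otimes W^\vee\simeq\Hom(W,V)$ with evaluation pairing $\Trace(\phi\psi)$ turns $\ker\mu$ into $N^\perp$ and tensor rank into linear-algebra rank, giving (1)$\Leftrightarrow$(3), and the normal form $\tau=\sum_{i\le r}\tilde v_i\otimes\tilde\xi_i$ with both families independent (together with the observation that row/column operations are exactly changes of basis in $W$ and $V$) gives (1)$\Leftrightarrow$(2). The only reading convention worth making explicit, which you do implicitly, is that (1) concerns \emph{non-zero} elements of $\ker\mu$ of tensor rank $\le k$, and that $(N^\perp)_{a-k}$ consists of the maps in $N^\perp$ of rank $\le k$ after swapping the roles of $a$ and $b$ for $\Hom(W,V)$.
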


More generally, the following data are equivalent for a finite-dimensional vector space $N$:
\begin{itemize}
\item a linear map $N\to \Hom(V,W)$,
\item a  pairing $\mu:V\otimes W^\vee \to N^\vee$,
\item a $b\times a$ matrix $A$ of linear forms on $N$, up to change of bases.
\end{itemize}

Since the conditions (1) and (2) in the above definition do not depend on $N$ being a subspace of $\Hom(V,W)$, one gets:

\begin{propdef} Let $k\ge 1$. A pairing $\mu:V\otimes W^\vee \to N^\vee$, or an associated matrix $A$ of linear forms on $N$
 is {\it $k$-generic} if any of the  equivalent conditions (1) and (2) of  \ref{propdef} hold.

\end{propdef}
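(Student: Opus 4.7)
The plan is to deduce the equivalence of conditions (1) and (2) in this more general setting directly from the subspace case of Proposition-Definition \ref{propdef}. The key observation is that conditions (1) and (2), unlike condition (3), are insensitive to whether the linear map $\rho: N \to \Hom(V,W)$ adjoint to $\mu$ is injective: they only see its image.

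Concretely, first I would set $N' := \im \rho \subset \Hom(V,W)$ and factor
\[
\mu\colon V \otimes W^\vee \twoheadrightarrow (N')^\vee \hookrightarrow N^\vee,
\]
where the surjection is dual to the inclusion $N' \hookrightarrow \Hom(V,W)$ and the injection is dual to the surjection $N \twoheadrightarrow N'$. Then $\ker \mu$ coincides with the kernel of the surjection on the left, so condition (1) for $(N,\mu)$ is exactly condition (1) for the subspace $N'$. Any matrix $A$ associated to $\mu$ has entries $\mu(v_j \otimes \phi_i) \in (N')^\vee \subset N^\vee$, and linear independence of these entries in $N^\vee$ coincides with linear independence in the subspace $(N')^\vee$. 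Thus condition (2) for $(N,\mu)$ matches condition (2) for $N'$, and applying the already-established equivalence for the subspace $N'$ finishes the proof.

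A direct argument bypassing the reduction is equally short: a linear dependence $\sum_l \al_l \mu(v_{j_l} \otimes \phi_{i_l}) = 0$ among $k$ entries is equivalent to having a nonzero element $\omega = \sum_l \al_l v_{j_l} \otimes \phi_{i_l} \in \ker \mu$ (nonzero because the pure tensors $v_j \otimes \phi_i$ form a basis of $V \otimes W^\vee$), which after collecting terms that share a row or a column becomes a sum of at most $k$ nonzero pure tensors; conversely, any $\omega \in \ker \mu$ of tensor rank $r \le k$ admits an expression $\omega = \sum_{l=1}^r v_l \otimes \phi_l$ with $\{v_l\}$ and $\{\phi_l\}$ linearly independent, and extending to bases of $V$ and $W^\vee$ exhibits $r$ diagonal entries summing to zero. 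The only real care needed is to note that linear independence of entries is unaffected by whether one works in $N^\vee$ or in the subspace $(N')^\vee$; once that is observed, the rest is essentially routine, so I do not expect any genuine obstacle here.
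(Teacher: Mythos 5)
Your first argument — factoring $\mu$ through $(N')^\vee$ with $N'=\im\rho\subset\Hom(V,W)$ and observing that conditions (1) and (2) only see the image — is exactly the paper's justification, which is the one-line remark that these two conditions do not depend on $N$ being a subspace of $\Hom(V,W)$. The proposal is correct, and the extra direct proof of (1)$\Leftrightarrow$(2) is a sound bonus but not needed.
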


\begin{rmk}
It is easy to see that $(k+1)$-generic implies $k$-generic. If $k\ge a$, then $k$-generic is equivalent to generic and is further equivalent to the pairing $\mu$ being injective.
\end{rmk}

We add a reformulation that seems popular in the literature on Brill-Noether loci:

\begin{lemma}\label{lemLi} Let $1\le k\le a$. A pairing $\mu:V\otimes W^\vee\to N^\vee$ is $k$-generic if and only if  $\mu$  is injective on  $S\otimes W^\vee$ for every subspace $S\subset V$ of dimension $k$.
\end{lemma}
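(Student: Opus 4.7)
The plan is to reformulate both sides of the claimed equivalence as a single statement about tensor rank, and then observe that the two formulations coincide.

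First, I would identify $V \otimes W^\vee$ with $\Hom(W,V)$ in the usual way: a pure tensor $v \otimes w^{*}$ corresponds to the rank-one map $u \mapsto w^{*}(u)\, v$. Under this identification, the tensor rank of $\xi \in V \otimes W^\vee$ (the minimum number of pure tensors needed to write $\xi$ as a sum) equals the ordinary rank of the associated linear map $W \to V$; and for any subspace $S \subset V$, the subspace $S \otimes W^\vee$ corresponds precisely to those maps $W \to V$ whose image lies in $S$.

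With this dictionary, the $k$-generic condition (1) of \ref{propdef} says that no nonzero element of $\ker \mu$ is a sum of $k$ or fewer non-zero pure tensors; equivalently, no nonzero element of $\ker \mu$ has tensor rank $\le k$. On the other hand, the injectivity of $\mu|_{S \otimes W^\vee}$ for every $k$-dimensional $S \subset V$ is the statement that $\ker \mu \cap (S \otimes W^\vee) = 0$ for every such $S$, i.e.\ no nonzero element of $\ker \mu$ lies in any $S \otimes W^\vee$ with $\dim S = k$.

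To finish, I would check that ``tensor rank $\le k$'' and ``lies in $S \otimes W^\vee$ for some $k$-dimensional $S$'' describe the same elements of $V \otimes W^\vee$. The easy direction is immediate: any $\xi \in S \otimes W^\vee$ with $\dim S = k$ has rank $\le k$. Conversely, if $\xi$ has rank $m \le k$, then its image $\im(\xi) \subset V$ has dimension $m$; since $k \le a = \dim V$ by hypothesis, one can enlarge $\im(\xi)$ to a $k$-dimensional subspace $S \subset V$, and then $\xi \in S \otimes W^\vee$. No step here poses any real difficulty; the only mild subtlety — and therefore the one point to state carefully — is this last enlargement argument, which uses precisely the hypothesis $k \le a$.
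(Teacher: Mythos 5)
Your proof is correct and follows essentially the same route as the paper's: identify $V\otimes W^\vee$ with $\Hom(W,V)$, note that tensor rank equals the rank of the associated map, and relate an element of rank $\le k$ to the subspace $S\otimes W^\vee$ via its image (enlarged to dimension $k$ if necessary, which the paper leaves implicit). No issues.
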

\begin{proof}
Using the formulation (1) from the definition of $k$-genericity, the implication $\Rightarrow$  is easy to see. We prove now the other implication. Let $\psi\in\ker \mu$ and assume it is a sum of $\le k$ pure tensors. Viewing $\psi$ as a linear map in $V\otimes W^\vee=\Hom(V,W)^\vee=\Hom(W,V)$, this is equivalent to  $\rank \psi\le k$. Let $S=\im\psi\subset V$ be the image, necessarily of dimension $\le k$. By assumption, the restriction $\mu_S$ of $\mu$ to $S\otimes W^\vee = \Hom (W,S)$ is injective. On the other hand, $\psi$ itself lies in $\Hom(W,S)\subset \Hom(W,V)$ and $\psi\in\ker\mu_S$. Thus $\psi=0$.
\end{proof}

\begin{rmk}
We do not need that the field $K$ is algebraically closed for Lemma \ref{lemLi}.
\end{rmk}

The following is essentially due to Kempf:

\begin{thrm}\label{thmkGen}
If $N\subset \Hom(V,W)$ is $k$-generic for $1\le k\le a$, then $N_k\subset N$ is a variety with at most rational singularities,  it is isomorphic with its tangent cone at $\bz$, and it has the same codimension and multiplicity at $\bz$ as the generic determinantal variety $M_k\subset \bA^{ab}$ (for these see Theorem \ref{thrmGen}).
\end{thrm}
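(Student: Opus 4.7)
I would extend Kempf's \cite{Ke} collapsing-of-bundles argument for $M_k$ to the $k$-generic setting by a base-change argument. First, since $N_k$ is cut out in $N$ by the $(a-k+1)$-minors of a matrix of linear forms on $N$, its defining ideal is homogeneous, so $N_k$ is a cone with vertex $\bz$; in particular $N_k \simeq TC_\bz N_k$ canonically, which settles the tangent cone assertion.

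For the construction, let $\cU \subset V \otimes \cO_{\mathrm{Gr}}$ be the tautological rank-$k$ subbundle on $\mathrm{Gr}(k,V)$ and consider
$$\tilde{N}_k := \{(A, U) \in N \times \mathrm{Gr}(k, V) \mid A|_U = 0\},$$
with projections $p: \tilde{N}_k \to \mathrm{Gr}(k,V)$ and $\pi: \tilde{N}_k \to N$. When $N = \Hom(V,W)$ this recovers Kempf's resolution $\pi_0: \tilde{M}_k \to M_k$, and in general $\tilde{N}_k = \tilde{M}_k \times_{\Hom(V,W)} N$. The fiber of $p$ over $U$ is $\ker(N \to \Hom(U,W))$; by dualizing Proposition-Definition \ref{propdef}(1) (equivalently, by Lemma \ref{lemLi}), $k$-genericity of $N$ is equivalent to surjectivity of this restriction for every $U \in \mathrm{Gr}(k,V)$, so $p$ is a vector bundle of rank $\dim N - kb$ and $\tilde N_k$ is smooth of dimension $\dim N - k(b-a+k)$. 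A generic point of this vector bundle has $\ker A$ of dimension exactly $k$, so $N_k \setminus N_{k+1}$ is non-empty; over this open set $\pi^{-1}(A) = \{\ker A\}$ is a single point, hence $\pi$ is proper and birational onto $N_k$, and $\codim_N N_k = k(b-a+k) = \codim_{\Hom(V,W)} M_k$.

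The codimension equality $\codim_{\tilde M_k}\tilde N_k = \codim_{\Hom(V,W)} N$, together with $\cO_N$ being a complete intersection of linear forms in $\cO_{\Hom(V,W)}$, yields the Tor-independence $\mathrm{Tor}^{\cO_{\Hom(V,W)}}_{>0}(\cO_{\tilde M_k}, \cO_N) = 0$. By Tor-independent base change, $R\pi_*\cO_{\tilde N_k} = R\pi_{0,*}\cO_{\tilde M_k} \otimes^L_{\cO_{\Hom(V,W)}} \cO_N$; Kempf's rationality $R\pi_{0,*}\cO_{\tilde M_k} = \cO_{M_k}$ of Theorem \ref{thrmGen}{\color{hot}(iii)}, applied jointly with the same Tor vanishing for the pair $(M_k, N)$, then gives $R\pi_*\cO_{\tilde N_k} = \cO_{M_k \cap N} = \cO_{N_k}$, so $\pi$ is a rational resolution and $N_k$ has rational singularities. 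Finally, since $N_k$ is a cone, $\mult_\bz N_k = \deg \bP N_k$, and $\bP N_k = \bP N \cap \bP M_k$ is a proper intersection in $\bP\Hom(V,W)$ that is transverse at a generic point of $\bP N_k \setminus \bP N_{k+1}$ (as $\bP N$ is linear and $\bP M_k$ is smooth there by Theorem \ref{thrmGen}{\color{hot}(i)}), so projective Bezout yields $\deg \bP N_k = \deg \bP N \cdot \deg \bP M_k = \deg \bP M_k$; equivalently $\mult_\bz N_k = \mult_\bz M_k$, matching Theorem \ref{thrmGen}{\color{hot}(ii)}. The main technical point is the Tor-independence step, but this is immediate from the codimension computation guaranteed by $k$-genericity through the vector bundle structure on $p$, so beyond setting up the Kempf resolution in families over $\mathrm{Gr}(k,V)$ I expect no significant obstacle.
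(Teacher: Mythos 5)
Your construction is the same as the one the paper relies on: the fiber product $\tilde N_k=\tilde M_k\times_{\Hom(V,W)}N$ of $N$ with Kempf's Grassmannian resolution of $M_k$, with $k$-genericity entering exactly as you say (via Lemma \ref{lemLi}, surjectivity of $N\to\Hom(U,W)$ for every $k$-plane $U$), so that $\tilde N_k$ is a vector bundle over $\mathrm{Gr}(k,V)$ of the expected dimension. Where you diverge is in how you extract rational singularities: the paper (following \cite{Ke}, \cite[Lemma, p.242]{A+}, \cite[Teorema 3.7]{AC}) applies Kempf's Lemma 2 on well-presented morphisms directly to the collapsing $\tilde N_k\to N_k$, whereas you deduce $R\pi_*\cO_{\tilde N_k}=\cO_{N_k}$ by Tor-independent base change from the already-known statement $R\pi_{0,*}\cO_{\tilde M_k}=\cO_{M_k}$ for the generic determinantal variety. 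Your route is a clean derived-category shortcut that also delivers reducedness and normality of the scheme cut out by the minors in one stroke; the paper's route is self-contained in Kempf's framework and does not presuppose the generic case as a black box. For the multiplicity you use projective B\'ezout on $\bP N\cap\bP M_k$, while the references compute $\deg\bP N_k$ via the class of the projectivized bundle (Giambelli--Thom--Porteous); both give the same answer.

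Two points deserve tightening. First, the claim that a generic $A$ in the image of $\pi$ has $\dim\ker A$ exactly $k$ (hence that $\pi$ is birational) needs the one-line dimension count showing $\pi^{-1}(N_{k+1})$ is a proper closed subset of the irreducible $\tilde N_k$; this follows from $\dim\tilde N_{k+1}+k<\dim\tilde N_k$, using $b\ge a$. Second, your parenthetical justification of generic transversality of $\bP N$ and $\bP M_k$ along $\bP N_k$ is not sufficient as stated: smoothness of $\bP M_k$ at a general point of the intersection together with properness of the intersection does not by itself rule out tangential contact along all of $\bP N_k$. What does rule it out is the reducedness and generic smoothness of the scheme $N_k=M_k\cap N$ that your Tor-independence step already established, since $T_A(M_k\cap N)=T_AM_k\cap N$ then has the dimension of $N_k$ at a general $A$, which is exactly the transversality condition. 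So the B\'ezout step should be run after, and quoting, the rational-singularities step; with that ordering made explicit the argument is complete.
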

\begin{rmk} The isomorphism to the  tangent cone is due to the associated matrix having only linear forms as entries. For $k=1$ this theorem is due to Kempf \cite{Ke}. The generalization to $k\ge 1$ essentially followed his ideas, see \cite[Lemma, p.242]{A+} which however does not mention rational singularities. The proof that $N_k$ has rational singularities is, following Kempf's,  the same as that of \cite[Teorema 3.7]{AC}: one applies \cite[Lemma 2]{Ke} to the resolution of singularities of $N_k$  found in \cite[Lemma, p.242]{A+} by taking the fiber product of $N_k$ with the canonical resolution of singularities for $M_k$.  The formulation for $k$-genericity from Lemma \ref{lemLi} is used in these references.
\end{rmk}

\begin{thrm}\label{thmResil}{\rm{(Resiliency Theorem }}\cite[Thm. 2.1]{Eis}{\rm{)}} If $N'\subset \Hom(V,W)$ is $k$-generic and $N\subset N'$ is an arbitrary subspace then:

\begin{enumerate}[(1)]

\item If $\codim_{N'}N\le a-k$, then $\codim_NN_k=\codim_{\bA^{ab}}M_k=k(k+b-a)$ and $N_k$ is Cohen-Macaulay.

\item If $\codim_{N'}N\le a-k-1$, then $N_k$ is a variety.

\item If $k>1$ (conjecturally $k$ can be 1 as well) and  $\codim_{N'}N\le a-k-2$, then $N_k$ is normal.
 
\item The singular locus of $N_k$ is contained in
 $$
 N_{k+1}\cup\bigl\{ \phi\in N_k\setminus N_{k+1}\mid \codim_N\{\psi\in N\mid \psi V\subset\phi V\}<ka  \bigr\}.
 $$
 \end{enumerate}

\end{thrm}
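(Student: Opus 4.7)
I would adapt the geometric method used to prove the $N=N'$ case (Theorem~\ref{thmkGen}, after Kempf \cite{Ke}), studying $N_k$ through the canonical resolution of $M_k$ pulled back to $N$. Let $G = G(k,V)$ be the Grassmannian of $k$-dimensional subspaces of $V$ with tautological subbundle $\mathcal{T}$, and consider the incidence variety
\[\tilde{M}_k := \{(\phi,S)\in \Hom(V,W)\times G\mid \phi|_S = 0\},\]
the total space of $\Hom(V/\mathcal{T},W)$ on $G$, smooth of dimension $(a-k)(b+k)$; the first projection $\tilde{M}_k\to M_k$ is a birational resolution. Pulling back to $N$, set $\tilde{N}_k := (N\times G)\cap \tilde{M}_k$ with its proper map $\pi:\tilde{N}_k\to N_k$. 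Scheme-theoretically $\tilde{N}_k$ is cut out in the smooth ambient $N\times G$ by the universal bundle morphism $\cO_{N\times G}\otimes_K N\to \Hom(\mathcal{T},W)$ restricting $\phi$ to $\mathcal{T}$, hence by a section of a rank-$kb$ vector bundle.

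The crucial input is the fiber estimate. The fiber of $\tilde{N}_k\to G$ over $S\in G$ is the linear subspace $\ker(N\to \Hom(S,W))$; by Lemma~\ref{lemLi} the restriction $N'\to \Hom(S,W)$ is surjective for every $k$-plane $S$, and since $\codim_{N'}N = c$ the image of $N$ in $\Hom(S,W)$ has codimension at most $c$. Thus every fiber has dimension between $\dim N - kb$ (attained on a dense open of $G$) and $\dim N - kb + c$, and the stratum of $G$ where the excess is $e\ge 1$ is contained in a degeneracy locus of expected codimension $e(\dim N-kb+e)$. Under $c\le a-k$ this forces $\tilde{N}_k$ to have pure codimension $kb$ in $N\times G$, so the defining section is regular, making $\tilde{N}_k$ a local complete intersection of dimension $\dim N - k(k+b-a)$ and hence Cohen-Macaulay. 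Since $\pi$ is birational over $N_k\setminus N_{k+1}$ (there $\phi$ has a unique $k$-dimensional kernel), the same dimension formula descends to $N_k$, giving $\codim_N N_k = k(k+b-a)$. To inherit Cohen-Macaulayness I would verify $R^i\pi_*\cO_{\tilde{N}_k} = 0$ for $i>0$ via the Koszul resolution of the regular section combined with Bott vanishing on $G$ applied to the Schur functors of $\mathcal{T}$ that appear---this is the standard Kempf descent---and thus obtain (1).

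For (2), the stricter bound $c\le a-k-1$ raises the excess-one locus in $G$ to codimension at least two, so $\tilde{N}_k$ is irreducible as a generic linear fibration over the irreducible $G$; combined with the local complete intersection property, Serre's $R_0+S_1$ criterion then gives reducedness of $N_k$. For (3), applying the same fiber analysis one level deeper at $N_{k+1}$ with $c\le a-k-2$ and $k>1$ yields $\codim_{N_k}N_{k+1}\ge 2$; since $\mathrm{Sing}(N_k)\subset N_{k+1}$ one obtains $R_1$, and together with $S_2$ from Cohen-Macaulayness this is Serre's normality criterion. Part (4) is a direct first-order computation: differentiating the $(a-k+1)$-minors at $\phi\in N_k$ with $\dim\ker\phi$ exactly $k$ identifies $T_\phi N_k$ with $\{\psi\in N\mid \psi V\subset \phi V\}$, so smoothness of $N_k$ at $\phi$ is equivalent to this space having codimension $ka = k(k+b-a)$ in $N$, exactly the negation of the condition displayed in (4).

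The main technical obstacle is the codimension bound on the excess locus in $G$ together with the cohomological vanishing for $\pi$. Porteous' formula delivers only an \emph{expected} codimension, and establishing the actual equality uniformly across arbitrary $k$-generic $N'$ requires showing that the universal bundle morphism $\cO_{N\times G}\otimes_K N\to \Hom(\mathcal{T},W)$ is generic enough; $k$-genericity of $N'$ should provide precisely this transversality via the Buchsbaum-Eisenbud acyclicity criterion applied to the Koszul complex of the defining section. Propagating this generic-ness through the successive degeneracy loci $N_{k+1},N_{k+2},\ldots$, as required for the finer assertions (2) and (3) and for the description of $\mathrm{Sing}(N_k)$ in (4), is where Eisenbud's original argument does its most delicate book-keeping, and is the part I expect to take the most care to make rigorous.
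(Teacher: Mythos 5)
First, note that the paper does not prove Theorem \ref{thmResil}: it is quoted verbatim from Eisenbud \cite[Thm. 2.1]{Eis} in the survey Section \ref{secDet}, so there is no in-paper argument to compare yours with; the comparison must be with Eisenbud's original proof. Your geometric frame (the incidence variety over $G(k,V)$, the fibre description $\ker(N\to\Hom(S,W))$, and Lemma \ref{lemLi} giving surjectivity of $N'\to\Hom(S,W)$ for every $k$-plane $S$) is the right starting point and is how the $N=N'$ case (Theorem \ref{thmkGen}) is handled. But the step carrying the entire content of the theorem is missing: you need a \emph{lower} bound $\codim_G\{S\mid \operatorname{corank}(N\to\Hom(S,W))\ge e\}\ge e$ for $1\le e\le c$, whereas Porteous' formula gives the opposite inequality (degeneracy loci have codimension \emph{at most} the expected one). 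You acknowledge this, but the acknowledged gap is precisely where the hypotheses ``$N'$ is $k$-generic'' and ``$\codim_{N'}N\le a-k$'' must do their work; invoking Buchsbaum--Eisenbud acyclicity does not supply it. Without that bound you only get $\dim\tilde N_k\le\dim N-k(k+b-a)+c$, which is off by $c$.

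Two further points. For (1), the Kempf descent you propose ($R^i\pi_*\cO_{\tilde N_k}=0$) is both unnecessary and problematic: under the hypothesis of (1) the scheme $N_k$ need not be reduced or irreducible (that is only guaranteed in (2), under a strictly stronger hypothesis), so ``$\pi$ is birational over $N_k\setminus N_{k+1}$'' and the descent of Cohen--Macaulayness through the resolution do not apply as stated. The efficient route is: once $\codim_NN_k=k(k+b-a)$ is known, $N_k$ is a linear section of the Cohen--Macaulay (perfect) determinantal scheme $M_k$ having the expected codimension, hence the cutting linear forms form a regular sequence on $M_k$ and $N_k$ is Cohen--Macaulay. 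Finally, your part (4) contains an actual error: for $\phi\in N_k\setminus N_{k+1}$ the Zariski tangent space is $T_\phi N_k=N\cap T_\phi M_k=\{\psi\in N\mid \psi(\ker\phi)\subseteq\phi V\}$, not $\{\psi\in N\mid \psi V\subseteq\phi V\}$ (the latter is a proper subspace in general), and the identity ``$ka=k(k+b-a)$'' you use to close the argument holds only when $k=2a-b$. The condition in (4) is a sufficient criterion extracted from the smaller space, not a reformulation of smoothness, so this part needs a different argument.
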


\begin{thrm}\label{thrmEisMart}{\rm{(}}\cite[Cor. 3.3]{Eis}{\rm{)}} If $N\subset \Hom(V,W)$ is $k$-generic and $k+h\le a$, then every component of $N_h$ has codimension $\ge k(b-a+2h-k)$ in $N$. If $N_h$ has a component of that codimension, then its singular locus is contained in $N_{h+1}$.
\end{thrm}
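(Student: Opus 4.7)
The plan is to bound $\codim_N C$ from below by working with the Zariski tangent space at a generic smooth point of each component, following the classical Eisenbud strategy from \cite{Eis}.

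For a component $C$ of $N_h$ not contained in $N_{h+1}$, I would first pick $\phi \in C$ with $\mathrm{rank}(\phi) = a - h$ exactly. At such a $\phi$ we are at a smooth point of $M_h$, and a standard computation gives
\[
T_\phi M_h = \{\psi \in \Hom(V,W) \mid \psi(\ker\phi) \subseteq \im\phi\}.
\]
Consequently $T_\phi N_h = N \cap T_\phi M_h$, and its codimension in $N$ equals the rank of the evaluation
\[
\mathrm{ev}_\phi : N \to \Hom(\ker\phi,\, W/\im\phi), \qquad \psi \mapsto \pi_{\im\phi} \circ \psi|_{\ker\phi}.
\]
Dualising, $\mathrm{ev}_\phi$ is the restriction of $\mu$ to $\ker\phi \otimes (W/\im\phi)^\vee$, and under the identification $V \otimes W^\vee = \Hom(W,V)$ one gets
\[
\codim_N T_\phi N_h \;=\; h(b-a+h) \;-\; \dim\bigl(N^\perp \cap \Hom(W/\im\phi, \ker\phi)\bigr).
\]

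The key input is an upper bound on the right-hand intersection coming from $k$-genericity. By Proposition-Definition \ref{propdef}(3), any non-zero $\psi \in N^\perp$ satisfies $\mathrm{rank}(\psi) \ge k+1$. An element of $N^\perp \cap \Hom(W/\im\phi, \ker\phi)$ is simultaneously a matrix of size $h \times (b-a+h)$, and since the rank-$\le k$ locus in that matrix space has codimension $(h-k)(b-a+h-k)$ (when $k < h$; if $k \ge h$ the intersection collapses to $\{0\}$), the standard projective-dimension count for a linear subspace disjoint from that locus delivers
\[
\dim\bigl(N^\perp \cap \Hom(W/\im\phi, \ker\phi)\bigr) \;\le\; (h-k)(b-a+h-k).
\]
A direct arithmetic simplification then yields
\[
\codim_N T_\phi N_h \;\ge\; h(b-a+h) - (h-k)(b-a+h-k) \;=\; k(b-a+2h-k).
\]

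Since $T_\phi C \subseteq T_\phi N_h$, this establishes $\codim_N C \ge k(b-a+2h-k)$ for every component $C$ not contained in $N_{h+1}$. For a component $C$ entirely inside $N_{h+1}$, maximality of $C$ as an irreducible closed subset of $N_h$ forces $C$ to be a component of $N_{h+1}$ as well, and a short descending induction on $h$ recovers the same estimate from the previous case. For the singular-locus assertion, if $C$ is a component whose codimension attains $k(b-a+2h-k)$, then at every $\phi \in C \setminus N_{h+1}$ the chain $\dim C \le \dim T_\phi C \le \dim T_\phi N_h \le \dim C$ collapses to equalities, so $\phi$ is smooth on $C$; hence $\mathrm{Sing}(C) \subseteq C \cap N_{h+1}$.

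The main obstacle will be making the $k$-genericity estimate effective in the regime $k > h$: there, every $h \times (b-a+h)$ matrix already has rank $\le h < k+1$, so $N^\perp \cap \Hom(W/\im\phi, \ker\phi) = 0$ automatically and the tangent-space argument yields only $\codim_N T_\phi N_h \ge h(b-a+h)$, which can be strictly smaller than $k(b-a+2h-k)$. Closing this gap requires either a more global incidence-variety argument over $\mathrm{Gr}(h, V)$, or a descending induction combined with the Resiliency Theorem \ref{thmResil}; this is the technical core of the proof where the precise arithmetic of the formula $k(b-a+2h-k)$ emerges.
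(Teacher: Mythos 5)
The survey gives no proof of this statement --- it is quoted from \cite[Cor.~3.3]{Eis} --- so I can only judge your argument on its merits. In the regime $k\le h$ it is correct and complete, and it is the standard Eisenbud/Martens-type argument: at $\phi\in C\setminus N_{h+1}$ of rank exactly $a-h$ one has $T_\phi N_h=N\cap T_\phi M_h$ (tangent spaces of a scheme-theoretic intersection intersect), its codimension in $N$ is $h(b-a+h)-\dim\bigl(N^\perp\cap\Hom(W/\im\phi,\ker\phi)\bigr)$, and the projective-disjointness count $\dim\bigl(N^\perp\cap\Hom(W/\im\phi,\ker\phi)\bigr)\le (h-k)(b-a+h-k)$ is exactly what $k$-genericity provides, since every nonzero element of $N^\perp$ has rank $\ge k+1$ while the rank-$\le k$ locus of $h\times(b-a+h)$ matrices has dimension $k(b-a+2h-k)$. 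The identity $h(b-a+h)-(h-k)(b-a+h-k)=k(b-a+2h-k)$ closes the estimate; components contained in $N_{h+1}$ are components of some deeper $N_{h'}$ with $h'>h\ge k$, where the same bound (which increases in $h'$) applies; and the equality case forces $\dim T_\phi C=\dim C$ off $N_{h+1}$, giving the singular-locus claim.

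The obstacle you flag for $k>h$ is not a gap in your proof: in that regime the statement as printed is false, so do not try to close it. Take $a=3$, $b=10$ and $N=\Hom(V,W)$ the full matrix space, which is $k$-generic for every $k\le a$ because $N^\perp=0$. With $k=2$, $h=1$ the printed hypothesis $k+h\le a$ holds and the asserted bound is $k(b-a+2h-k)=14$, yet $N_1=M_1$ is irreducible of codimension $h(b-a+h)=8$. (Proper subspaces give the same contradiction: a generic $N$ of codimension $8$ in $\bA^{30}$ is still $2$-generic, and every component of $N_1=N\cap M_1$ has codimension at most $8$ in $N$ by the affine dimension theorem.) The hypothesis must be $k\le h$, i.e.\ $k+(a-h)\le a$ in terms of the rank $a-h$ indexing the minors in \cite{Eis}; the printed condition $k+h\le a$ looks like a slip in translating Eisenbud's rank-indexed convention into the corank-indexed $N_h$ of this survey. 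Under the corrected hypothesis your argument is a complete proof.
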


\begin{thrm}\label{thmEisCod}{\rm{(}}\cite[Cor. 2.2]{Eis}{\rm{)}} Let $A$ be a $k$-generic matrix of linear forms in variables $x_1,\ldots, x_m$. Let $\bar A$ be the matrix $A$ modulo the ideal generated by a fixed set of $c$ linear forms in the same variables. If $c\le a-k$ then the $(a-k+1)$-minors of $\bar A$ are linearly independent forms of degree $a-k+1$, in particular they are non-zero. If $c\le a-k-1$, then each of these minors is prime. 
\end{thrm}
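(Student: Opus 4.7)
The plan is to reduce both parts to the Resiliency Theorem \ref{thmResil}, which is Theorem 2.1 of \cite{Eis} and the immediate predecessor of this corollary. First I translate the setup into the subspace language of Proposition-Definition \ref{propdef}: the $b\times a$ matrix $A$ of linear forms in $x_1,\ldots,x_m$ corresponds to a linear map $\phi\colon K^m\to \Hom(V,W)$ (with $V=K^a$, $W=K^b$), whose image $N\subset\Hom(V,W)$ is $k$-generic. Reducing modulo the $c$ linear forms $L_1,\ldots,L_c$ corresponds to restricting to a subspace $\bar N\subset N$ of codimension at most $c$, and the $(a-k+1)$-minors of $\bar A$ in $\bar R=K[x_1,\ldots,x_m]/(L_1,\ldots,L_c)$ are exactly the defining equations of $\bar N_k\subset\bar N$.

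For the first part, I apply Theorem \ref{thmResil}(1) with $\codim_N\bar N\le c\le a-k$: the scheme $\bar N_k$ is Cohen-Macaulay of the expected codimension $k(k+b-a)$ in $\bar N$, matching that of the generic $M_k\subset\bA^{ab}$. Hence the ideal $\bar I$ of $(a-k+1)$-minors of $\bar A$ is perfect of the same grade as the generic determinantal ideal $I_k(X)\subset K[X_{ij}]$. Under this grade-preserving perfect specialization the Hilbert function of $\bar R/\bar I$ in degree $a-k+1$ agrees with that of the generic quotient, giving $\dim_K\bar I_{a-k+1}=\binom{a}{a-k+1}\binom{b}{a-k+1}$, which is exactly the total number of $(a-k+1)$-minors of $\bar A$; these minors must therefore be $K$-linearly independent in $\bar R_{a-k+1}$.

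For the second part, I fix an $(a-k+1)\times(a-k+1)$ submatrix $B$ of $A$, with reduction $\bar B$. I first verify that $B$ is $1$-generic: using criterion (1) of \ref{propdef}, a non-zero pure tensor $v'\otimes\eta'\in V'\otimes(W')^\vee$ (with $V',W'=K^{a-k+1}$ indexing the submatrix) embeds as the non-zero pure tensor $v'\otimes(\eta'\circ\pi)\in V\otimes W^\vee$, which by $k$-genericity of $A$ (using $k\ge 1$) is not in $\ker\mu_A$; since the pairing $\mu_B$ is the restriction of $\mu_A$ to the embedded subspace, this tensor is not in $\ker\mu_B$ either. Thus $N_B\subset\Hom(V',W')$ is $1$-generic, and $\bar B$ corresponds to $\bar N_B\subset N_B$ of codimension at most $c$. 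The bound $c\le a-k-1=(a-k+1)-1-1$ is precisely the hypothesis of Theorem \ref{thmResil}(2) with $k'=1$ and $a'=b'=a-k+1$, so $(\bar N_B)_1$ is a variety; as a hypersurface in $\bar N_B$ cut out by the single equation $\det\bar B$, this forces the ideal $(\det\bar B)\subset\bar R$ to be prime, so $\det\bar B$ is a prime element.

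The main obstacle lies in the first part: justifying the preservation of the Hilbert function of the perfect determinantal ideal in degree $a-k+1$ under this Cohen-Macaulay, grade-preserving specialization. This follows from the $\mathrm{GL}$-equivariant Akin-Buchsbaum-Weyman (Lascoux) minimal free resolution of the generic determinantal ideal remaining a free resolution (in fact minimal) under the specialization, or from the equivalent treatment of Bruns-Vetter, but it must be invoked with care rather than taken for granted. An alternative is an induction on $c$: one introduces the linear forms $L_i$ one at a time and uses Theorem \ref{thmResil} at each stage to check that the codimension drops by exactly one and the Cohen-Macaulay structure is preserved, thereby propagating linear independence along the chain.
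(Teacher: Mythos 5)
The paper itself gives no proof of Theorem \ref{thmEisCod}: it is quoted from \cite[Cor.~2.2]{Eis} in a survey section, so there is no in-paper argument to compare against. Your derivation from the Resiliency Theorem \ref{thmResil} is correct and is essentially Eisenbud's own route: part (1) gives the expected grade, perfection of the generic determinantal ideal then forces its minimal free resolution to specialize to a free resolution (minimal, since the substitution $X_{ij}\mapsto \bar a_{ij}$ is a graded ring map), and equality of the first graded Betti number is exactly the linear independence of the $\binom{a}{a-k+1}\binom{b}{a-k+1}$ minors; for primeness, your observation that every $(a-k+1)\times(a-k+1)$ submatrix of a $k$-generic matrix is $1$-generic (via criterion (1) of \ref{propdef}, since a pure tensor supported on the chosen rows and columns is a pure tensor in $V\otimes W^\vee$) correctly reduces the claim to \ref{thmResil}(2) with $(a',b',k')=(a-k+1,a-k+1,1)$, for which the bound $c\le a-k-1=a'-k'-1$ is exactly the hypothesis. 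Two small corrections: what is preserved under the specialization is the set of graded Betti numbers, not the Hilbert function of the quotient (the ambient polynomial rings differ), though your conclusion $\dim_K\bar I_{a-k+1}=\binom{a}{a-k+1}\binom{b}{a-k+1}$ is the right consequence since the ideal is generated in that single degree; and in the primeness step you should record that $\det\bar B\neq 0$ (which follows from the first part, or from the codimension count in \ref{thmResil}(1)) before describing $(\bar N_B)_1$ as a hypersurface, since a variety cut out by the zero polynomial would not yield a prime element.
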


We also add the following observation:

\begin{prop}\label{thrm1Gen} If $a=b$ and $N\subset\Hom(V,W)$ is $1$-generic,
  then the pair $(N,N_1)$ is log canonical. If in addition $a>1$ and $\dim N=2a-1$  then the minimal exponent of $(N,N_1)$ lies in the  interval $(1,2-1/a]$.
\end{prop}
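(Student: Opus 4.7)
The plan is to deduce both bounds directly from Theorem \ref{thmkGen}, combined with Saito's characterization of rational singularities via the minimal exponent and a standard multiplicity upper bound for the minimal exponent.

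For the lower bound, I will argue as follows. Since $a=b$ and $N$ is $1$-generic, Theorem \ref{thmkGen} gives that $N_1 \subset N$ is a reduced hypersurface with rational singularities. By a theorem of Saito, for a reduced hypersurface $Z$ in a smooth variety, $Z$ has rational singularities if and only if the minimal exponent satisfies $\al(Z) > 1$. Hence $\al(N,N_1) > 1$. Combined with the identity $\lct(N,N_1) = \min\{1,\al(N,N_1)\}$, valid for reduced hypersurfaces, this already yields log canonicity of $(N,N_1)$ (without any constraint on $\dim N$) and gives the left endpoint of the stated interval.

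For the upper bound, I will first compute $\mult_\bz(N_1)$. Choosing an associated $a \times a$ matrix $A$ of linear forms on $N$, the local equation of $N_1$ at $\bz$ is $f := \det A$, which is homogeneous of degree $a$ on $N$. By Theorem \ref{thmkGen}, $\codim_N N_1 = \codim_{\bA^{a^2}} M_1 = 1$, so $f$ is nonzero, hence $\mult_\bz(N_1) = a$. I will then invoke the multiplicity upper bound for the minimal exponent: for a reduced hypersurface $Z$ in a smooth $n$-dimensional variety and a point $p \in Z$, one has $\al_p(Z) \le n/\mult_p(Z)$. Plugging in $n = \dim N = 2a-1$ and $\mult_\bz(N_1) = a$ gives $\al_\bz(N_1) \le (2a-1)/a = 2 - 1/a$, and since the minimal exponent of a pair is the infimum of local minimal exponents at points of the hypersurface, $\al(N,N_1) \le 2 - 1/a$.

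The only ingredient not already assembled in the review above is the multiplicity bound $\al_p \le n/\mult_p$, which is due to Saito and also appears in work of Mustata and Popa on Hodge ideals. I do not expect a substantive obstacle; the care needed is mainly bookkeeping of the local-versus-global minimal exponent conventions and of the fact that a nonzero homogeneous polynomial of degree $d$ has multiplicity exactly $d$ at the origin.
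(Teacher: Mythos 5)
Your proof is correct and follows essentially the same route as the paper: Kempf's result (Theorem \ref{thmkGen}) gives that $N_1$ is a hypersurface with rational singularities, the Bernstein--Sato/minimal-exponent characterization of rational hypersurface singularities (Theorem \ref{thmBfct}(5)--(6), equivalently Saito's criterion $\al>1$) gives log canonicity and the lower bound, and the bound $\al_x\le (\dim N)/e$ of Theorem \ref{propMinE}(1) with $e=a$ gives the upper bound. The only cosmetic difference is that you compute the multiplicity $a$ directly as the degree of $\det A$, whereas the paper reads it off from the multiplicity statement in Theorem \ref{thmkGen}.
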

\begin{proof} If $a=b$ then  $N_1$ is a hypersurface in $N$. Since $N_1$ is a hypersurface with rational singularities, it follows from parts (5) and (6) of Theorem \ref{thmBfct}  that the log canonical threshold of $(N,N_1)$ is 1 and the minimal exponent is $>1$.  By the statement about the multiplicity from Theorem \ref{thmkGen} and Theorem \ref{propMinE},  the minimal exponent of $(N,N_1)$ is $\le (\dim N)/a$. Supposing  that $\dim N=2a-1$, the claim follows.
 \end{proof}
 
 Note that $2a-1$ is the smallest possible dimension for an 1-generic space $N$ of square $a\times a$ matrices. This holds for example for square Hankel matrices.

\subs{\bf Hankel  matrices.}\label{subHkl} We continue with the assumption that $1\le a\le b$.

\begin{defn}
The {\it Hankel matrix} of size $b\times a$  is 
$$
H= H(a,b):=\left(
\begin{array}{ccccc}
x_1 & x_2 & x_3 & \ldots & x_a\\
x_2 & x_3 & \ldots & \ldots & x_{a+1}\\
x_3 & \ldots & \ldots & \ldots & x_{a+2}\\
\ldots & \ldots & \ldots & \ldots &\ldots \\
x_b & \ldots & \ldots & \ldots & x_{a+b-1}
\end{array}
\right)
$$
where $x_i$ are independent variables.
\end{defn}

 Hankel matrices are particular cases of catalecticant matrices. 
  Hankel matrices are  1-generic but not generic, cf. \cite[Prop. 4.2]{Eis}. 
  

\begin{theorem}\label{thrmHankel} Let $1\le k \le a\le b$.
Let $N\subset \bA^{ab}$  be the 1-generic linear subspace of matrices corresponding to the Hankel matrix $H$, so that $N= \bA^{a+b-1}$.

\begin{enumerate}[(i)]

\item {\rm{(}\cite[Lemme 2.3]{GP}, \cite{Co2}\rm{)}} If $k>1$ consider the re-embedding of $N$ as the linear subspace $N'$ of  $\bA^{(a-k+1)(b+k-1)}$ corresponding to the  Hankel matrix  $H'=H(a-k+1, b+k-1)$. The isomorphism $N\simeq N'$ restricts to an isomorphism $N_k\simeq N'_1$, with $N'_1$   defined by the maximal minors of $H'$.

\item {\rm{(}\cite[Prop. 4.3]{Eis}, \cite{Con}\rm{)}} $N_k$ is a variety with at most rational singularities and has codimension $b-a-1+2k$ in $N$.

\item {\rm{(}\cite[Prop. 4.3]{Eis}, attributed to Gundelfinger by \cite{IK}\rm{)}} If $k<a$ then the projectivization  ${\bf N}_k\subset\bP(N)=\bP^{a+b-2}$ of $N_k$ is the $(a-k)$-secant variety of ${\bf N}_{a-1}$, and ${\bf N}_{a-1}$ is the rational normal curve of degree $a+b-2$. 

\item {\rm{(}\cite[p.440]{Ber}\rm{)}} The singular locus of $N_k$ is $N_{k+1}$.

\item {\rm{(}\cite[Prop. 5.11]{EN}\rm{)}} If  $k\le m\le a$ then the multiplicity of $N_k$ at any point in $N_m\setminus N_{m+1}$ is $$\binom{b-a-1+m+k}{m-k}.$$
Hence if $a=b$ then $N_k\setminus N_{k+1}$ is the locus of points of $N_1$ with multiplicity exactly $k$.


\item {\rm{(}\cite[Cor. 2.4]{Ber}\rm{)}} Consider $f_{a-k}:Y_{a-k}\to N$ the composition of blowups of (strict transforms of) $N_a$, $N_{a-1}$, $\ldots$, $N_k$, in this order. At each stage this is the blowup of a smooth center, such that $f_{a-k}$ is a log resolution of $(N,N_k)$.

\item If $a=b$ then $$f^*_{a-1}(N_1)= \sum_{i=0}^{a-1}(a-i)E_i$$
 where $E_i$ is the (strict transform of the) divisor introduced by blowing up the (strict transform of) $N_{a-i}$. 
 
\item {\rm{(}\cite[4.6]{Con} for $k=1$\rm{)}} $$
\lct (N, N_k)=
\left\{
\begin{array}{ll}
1 & \text{ if }a=b\text{ and }k=1,\\
1+\displaystyle{\frac{b+k-2}{a-k+1}} & \text{ if }a<b. 
\end{array}
\right.
$$

\item  If $a=b>1$, the minimal exponent of $N_1$ is 3/2.

\end{enumerate}

\end{theorem}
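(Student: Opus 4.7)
The plan is to establish $\tilde\alpha(N_{1})=3/2$ by obtaining matching upper and lower bounds, exploiting the stratification $N_{1}\supset N_{2}\supset\cdots\supset N_{a}=\{\bz\}$ together with the known local analytic models.

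For the upper bound $\tilde\alpha(N_{1})\le 3/2$, I would analyze $N_{1}$ at a general point $p\in N_{2}\setminus N_{3}$ in the codimension-one open stratum of $\mathrm{Sing}(N_{1})$. By part (v), the multiplicity of $N_{1}$ at $p$ is exactly $2$, and by part (iv), $N_{2}$ is smooth of codimension $3$ in $N$ in a neighborhood of $p$. A Kempf-style local reduction---applying row and column operations to $H$ linearized at a generic rank-$(a-2)$ Hankel matrix---identifies $(N_{1},p)$ analytically with $(\{xy-z^{2}=0\},0)\times(\mathbb{A}^{2a-4},0)$. The minimal exponent of an ordinary double point is $3/2$ and is stable under smooth product factors, giving $\tilde\alpha_{p}(N_{1})=3/2$, hence $\tilde\alpha(N_{1})\le 3/2$ by lower semicontinuity.

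For the lower bound $\tilde\alpha(N_{1})\ge 3/2$, by lower semicontinuity it is enough to prove $\tilde\alpha_{\bz}(N_{1})\ge 3/2$ at the deepest stratum. I would transfer the problem to the theta divisor on a hyperelliptic Jacobian: by Proposition~\ref{propHyl}, for a hyperelliptic curve $C$ of genus $g$ and $L\in\pic^{g-1}(C)$ with $h^{0}(L)=h^{1}(L)=a$, the Petri map $\pi_{L}$ realizes (in suitable bases) exactly the $a\times a$ Hankel matrix of linear forms present here. Combining the deformation-theoretic local model of Theorem~\ref{thmGenToCone} with Lemma~\ref{thmW0d}---whose hypothesis is available in the hyperelliptic $d=g-1$ case via the analysis underlying Proposition~\ref{thmHyE}, part~\ref{iti6}---one identifies $\alpha_{L}(\pic^{g-1}(C),W^{0}_{g-1})$ with $\tilde\alpha_{\bz}(N,N_{1})$. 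The result $\alpha_{L}(\pic^{g-1}(C),W^{0}_{g-1})=3/2$ of \cite{SY2} then supplies the desired bound.

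The main obstacle is precisely this lower bound. The multiplicity-based estimate of Proposition~\ref{thrm1Gen} yields only $\tilde\alpha(N_{1})\le 2-1/a$, which is not tight for $a\ge 3$, and the explicit log resolution of parts (vi)--(vii) recovers only $\mathrm{lct}(N,N_{1})=1$, corresponding to $\tilde\alpha\ge 1$. Closing the gap to $\tilde\alpha_{\bz}\ge 3/2$ therefore requires either the geometric transfer from hyperelliptic theta divisors outlined above, or an alternative direct attack via the $V$-filtration or Hodge ideals applied to $\det H$ at the origin.
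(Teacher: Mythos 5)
Your lower bound is where the proof fails, for two reasons. First, the transfer to the hyperelliptic theta divisor is not available: Lemma \ref{thmW0d} has as hypothesis that Theorem \ref{thmGenToConeEt} holds for $W^0_{g-1}$ at $L$, which for hyperelliptic curves is exactly the open Question \ref{queHypo}; Proposition \ref{thmHyE}, including part (\ref{iti6}), is stated conditionally on a positive answer to that question, and in fact the paper deduces (\ref{iti6}) \emph{from} Theorem \ref{thrmHankel}(ix), so your direction of implication is also circular within the paper's logic. Without the \'etale-local identification one only knows that the theta divisor is formally cut out by a power-series perturbation of the Hankel matrix, which transfers no information about minimal exponents. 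Second, you dismissed the route that actually works: Theorem \ref{propMinE}(ii) extracts more from the log resolution of (vi)--(vii) than the lct. The divisor responsible for $\lct(N,N_1)=1$ is the strict transform of $N_1$ itself, which is excluded from the minimum in \ref{propMinE}(ii); the genuinely exceptional divisors are $E_i$ for $0\le i\le a-2$, with multiplicities $N_i=a-i$ from (vii) and log discrepancies $\nu_i$ computed from the codimensions $\codim N_{a-i}=2(a-i)-1$ of (ii), giving
$$\al(N,N_1)\ \ge\ \min_{0\le i\le a-2}\frac{2(a-i)-1}{a-i}\ =\ \frac{3}{2}.$$
This is the paper's lower bound and needs nothing beyond parts (ii), (vi), (vii).

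For the upper bound your conclusion is correct but the justification is not: row and column operations do not preserve the Hankel subspace $N$, so there is no Kempf-style reduction identifying $(N_1,p)$ with an ordinary double point times a smooth factor. That local model is true but would have to be established separately, e.g.\ by the splitting lemma using that the multiplicity at $p\in N_2\setminus N_3$ is $2$ by (v) and that the Hessian of $\det H$ at a general such $p$ has rank $3=\codim\, Sing(N_1)$ near $p$. The paper avoids this entirely: cutting $N$ by generic hyperplanes yields a smooth threefold $N'$ in which $N'_1$ is a surface with a point of multiplicity $2$, so $\al(N,N_1)\le\al(N',N'_1)\le 3/2$ by Theorem \ref{propMinE}(iii) and (i). If you repair the transverse-singularity claim your upper bound goes through, but the hyperplane-section argument is shorter and uses only the multiplicity from (v).
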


\begin{proof} Some of the assertions above are not literally covered by references. Before we tie the loose ends, we give a  shorter proof of $N_k$ having rational singularities than the one in \cite{Con}. By part  {\color{hot}(i)} the ideal defining $N_k$ is given by the maximal minors of  another Hankel matrix. Since every Hankel matrix is 1-generic,    {\color{hot}(ii)} follows from Theorem \ref{thmkGen}.

Parts {\color{hot}(iv)}, {\color{hot}(v)}, {\color{hot}(vi)} are phrased in \cite{Ber,EN} in terms of degrees and multiplicities of the secant varieties from {\color{hot}(iii)}. The translation to the current form is immediate.

Part {\color{hot}(vii)} follows  from parts {\color{hot}(v)}, {\color{hot}(vi)}. 

 Part {\color{hot}(viii)} is stated in \cite{Con} only for $k=1$. This case also follows immediately from {\color{hot}(ii)} and {\color{hot}(vii)}. For $k>1$, we have $(N,N_k)\simeq (N',N'_1)$ by part {\color{hot}(i)}. Hence $\lct(N,N_k)=\lct(N',N'_1)$, which reduces the computation to $k=1$.
 
 For part {\color{hot}(ix)} we follow an argument communicated to us by M. Musta\c{t}\u{a} in the context of \cite{SY}. The codimension of $N_2$ in $N$ is 3. Hence cutting down $N$ by generic hypersurfaces we obtain a smooth 3-dimensional variety $N'$ with a surface $N'_1=N'\cap N_1$ such that the singular locus of $N'_1$ is a point $N'_2=N'\cap N_2$ of multiplicity 2 in $N'_1$. By Theorem \ref{propMinE} {\color{hot}(iii)}, there is an inequality of minimal exponents $\al(N,N_1)\le \al(N',N'_1)$. By  Theorem \ref{propMinE} {\color{hot}(i)}, there is also an inequality $\al(N',N'_1)\le 3/2$. Hence $\al(N,N_1)\le 3/2$. On the other hand, by  Theorem \ref{propMinE} {\color{hot}(ii)} we have $$\al(N,N_1)\ge\min_{0\le i\le a-2}\left\{\frac{2(a-i)-1}{a-i}\right\}=3/2$$ where the numerical data of the log resolution from {\color{hot}(vi)} required by Theorem \ref{propMinE} {\color{hot}(ii)} is obtained from parts {\color{hot}(ii)} and {\color{hot}(vii)} above.
 \end{proof}

\begin{rmk}\label{rmkHa}  
 Any 1-generic linear subspace of matrices $N\subset \bA^{ab}$ has dimension $\ge a+b-1$, with equality for Hankel matrices for example. 
\end{rmk}

\section{Review of Brill-Noether loci}\label{apxBN}

This section reviews some known results on the singularities of Brill-Noether loci of stable vector bundles on curves, without attempting to be exhaustive. We also address Theorem \ref{thmGG2}, a fact which seems missing in this generality from the literature.

Fix $C$  a smooth projective curve of genus $g$ over an algebraically closed field $K$ of characteristic zero. Let $\omega_C$ be the canonical bundle of $C$. Let $n\ge 1, d\ge 0, k\ge 1$ in $\bZ$. Fix a vector bundle $F$ on $C$. We will use the following notation.

\begin{defn}\label{defBNP} $\;$
\begin{enumerate}[(1)]

\item We let $\cM_{n,d}$ be  the moduli space of stable vector bundles on $C$ of rank $n$ and degree $d$. The isomorphism class in $\cM_{n,d}$ of a stable vector bundle $E$ will be denoted  $E$ also.

\item We let
$$
\cV_{n,d,k}(F):=\{E\in \cM_{n,d}\mid h^{0}(C,E\otimes F)\ge k\}
$$
 endowed with the natural structure of closed subscheme of $\cM_{n,d}$, see \cite{A+, Li,CT}. We call these schemes {\it  Brill-Noether loci}.
We set $\cV_{n,d,k}=\cV_{n,d,k}(\cO_C)$. When $\cM_{n,d}$ is fixed from the context, we set $\cV_k(F)=\cV_{n,d,k}(F)$ and $\cV_k=\cV_k(\cO_C)$. 

\item Let $E\in\cM_{n,d}$. The {\it Petri map} 
$$
\pi_{E,F}: \HH^0(C,E\otimes F)\otimes \HH^0(C,E^\vee\otimes F^\vee\otimes \omega_C) \to \HH^0(C,E\otimes E^\vee\otimes\omega_C)
$$
is defined to be the composition of the multiplication map 
$$
\HH^0(C,E\otimes F)\otimes \HH^0(C,E^\vee\otimes F^\vee\otimes \omega_C) \to \HH^0(C, E\otimes E^\vee\otimes F\otimes F^\vee\otimes\omega_C)
$$
and the trace map
$$
\HH^0(C, E\otimes E^\vee\otimes \enmo (F) \otimes\omega_C)\to \HH^0(C,E\otimes E^\vee\otimes\omega_C)
$$
via  $F\otimes F^\vee=\enmo (F)$, the vector bundle of endomorphisms.
We set $\pi_{E}=\pi_{E,\cO}$. Set 
$l=h^0(C,E\otimes F)$, $l'=h^1(C,E\otimes F)$.   If $E$ is a line bundle, we denote it by $L$ to stress this fact.

\item
Let $
\rho_{n,d,k}(F) := n^2(g-1)+1-k\bigl( k - n\deg(F)+\rank (F)(n(g-1)-d) \bigr).
$
More conceptually, this  equals
$\dim \cM_{n,d} - h^0(E\otimes F)\cdot h^1(E\otimes F)$ for $E\in \cV_{n,d,k}(F)\setminus \cV_{n,d,k+1}(F)$. It also equals
$ \dim \cM_{n,d} -k(k-\chi(E\otimes F))$ for all $E\in\cV_{n,d,k}(F).$ We set $\rho_{n,d,k}=\rho_{n,d,k}(\cO_C)$.
\end{enumerate}

\end{defn}

\subs\label{subAss}
{\bf Assumption.} In this section we will  assume (\ref{eqAssu}), namely, that $l\le l'.$ 
Equivalently, $\chi(E\otimes F)\le 0$. Since 
\be\label{eqChi}
l-l'=\chi(E\otimes F) =n\deg(F)-\rank (F)(n(g-1)-d)
\ee the assumption does not depend on $E$. 

If (\ref{eqAssu}) is not satisfied, one can always reduce to this case by replacing the tuple $(E,F, k, d)$ with $(E^\vee\otimes\omega_C,F^\vee, k-n\deg(F)+\rank(F)(n(g-1)-d), 2(g-1)n-d)$ using Serre duality. By this we mean that the isomorphism $\cM_{n,d}\xa{\sim}\cM_{n,2(g-1)n-d}$ given by $E\mapsto E^\vee\otimes\omega_C$ induces an isomorphism $\cV_{k}(F)\xa{\sim} \cV_{k-n\deg(F)+\rank(F)(n(g-1)-d)}(F^\vee)$.

\begin{rmk} For convenience we give a dictionary between classical notation and ours.

\begin{itemize}
\item The Picard variety is $\pic^d(C)=\cM_{1,d}$, the Jacobian variety is $\pic^0(C)=\cM_{1,0}$.
\item The classical Brill-Noether variety is $W^r_d=\cV_{1,d,r+1}$.
\item Any classical theta divisor  $\Theta\subset \pic^0(C)$ is the image of $W_{g-1}^0=\cV_{1,g-1,1}$ under an isomorphism $\pic^{g-1}(C)\simeq \pic^0(C)$  by translation with a fixed degree $g-1$ divisor class.
\end{itemize}

\end{rmk}

\begin{theorem}\label{thmLePot}$\rm{(}$\cite[\S 8]{LeP}$\rm{)}$
If the  space $\cM_{n,d}$ is not empty, it is a smooth variety of dimension $n^2(g-1)+1=h^1(C,E\otimes E^\vee)$ for any $E\in\cM_{n,d}$. If it is empty then $g=0$ or 1.
\end{theorem}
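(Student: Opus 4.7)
The plan is to apply the deformation-theoretic framework of Section \ref{secDFT} to the dgla $R\Gamma(C,\enmo(E))$, which by Proposition \ref{rmkPair} (ignoring the module factor since no cohomology constraint is imposed) controls the deformations of the stable bundle $E$. Smoothness and the dimension formula will be read off from the transferred cohomology $\Linf$ algebra via (\ref{eqMOD}), and Artin's algebraization theorem \ref{thrmArtin} will lift the conclusion from formal neighborhoods to the \'etale topology.

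Since $C$ is a smooth projective curve, the cohomology $\HH^\ubul(C,\enmo(E))$ is concentrated in degrees $0$ and $1$. Stability of $E$ yields $\HH^0(C,\enmo(E))=K\cdot\id$, and Riemann-Roch for the rank-$n^2$, degree-$0$ bundle $\enmo(E)$ gives $\chi(\enmo(E))=n^2(1-g)$, hence $\dim \HH^1(C,\enmo(E))=1+n^2(g-1)$. Homotopy transfer (Theorem \ref{thrmTLP}) endows $\HH^\ubul(C,\enmo(E))$ with an $\Linf$ structure $\{l_n\}$ for which $l_1=0$. Because this graded vector space is supported in degrees $0,1$ while $l_n$ has degree $2-n$, every term $l_n(\omega^{\otimes n})$ for $\omega\in \HH^1(C,\enmo(E))\otimes\mm_A$ lands in $\HH^2(C,\enmo(E))\otimes\mm_A=0$, so the Maurer-Cartan equation (\ref{eqLMC}) is automatically satisfied. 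Moreover, $\HH^0(C,\enmo(E))=K\cdot\id$ consists of central scalars, so as in the proof of Theorem \ref{thm1GDik} no two Maurer-Cartan elements are homotopy-equivalent. Thus $\Def(\HH^\ubul(C,\enmo(E)))(A)=\HH^1(C,\enmo(E))\otimes\mm_A$, and combining with (\ref{eqMOD}) gives $\widehat{\cM}_{n,d,E}\simeq\widehat{(\HH^1(C,\enmo(E)))}_\bz$, a smooth formal scheme of dimension $1+n^2(g-1)$. The identification $T_E\cM_{n,d}=\HH^1(C,\enmo(E))$ follows (in agreement with Theorem \ref{thmTgxxy}), and Theorem \ref{thrmArtin} upgrades the formal isomorphism to an \'etale-local one, establishing smoothness at every $E\in\cM_{n,d}$.

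The $\Linf$ machinery gives no information about the existence of a single stable bundle, and this is the main obstacle; it is the one genuinely non-formal input. For $g\ge 2$ this is the classical Narasimhan-Seshadri construction, and I would simply invoke \cite{LeP} for the non-emptiness of $\cM_{n,d}$. Conversely, when $g=0$ every vector bundle splits into line bundles by Grothendieck's theorem and no bundle of rank $\ge 2$ is stable; when $g=1$ Atiyah's classification shows that $\cM_{n,d}$ can be empty for suitable $(n,d)$. This explains the last clause.
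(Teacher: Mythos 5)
Your argument is correct and coincides with the route the paper itself indicates: Theorem \ref{thmLePot} is only cited from \cite[\S 8]{LeP}, but the text explicitly notes that the $\Linf$ machinery (Proposition \ref{rmkPair}, homotopy transfer, the vanishing of the Maurer--Cartan obstruction in degree $2$, Lemma \ref{MCnequi} via simplicity of the stable bundle, and Artin's theorem) reproves smoothness and the dimension count, which is exactly what you carry out. The one genuinely non-formal input, non-emptiness of $\cM_{n,d}$ for $g\ge 2$, is correctly isolated and delegated to the classical construction, just as the paper does by citing \cite{LeP}.
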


We have seen in Theorem \ref{thrmInj} that genericity of the Petri maps is attained in some cases.
The other extreme of somewhat good behaviour of the Petri maps, 1-genericity, is always attained when $F=\cO_C$:

\begin{lemma}\label{lem1genP}
For any curve $C$, the Petri map $\pi_E$ is 1-generic  for every $E\in\cM_{n,d}$.
\end{lemma}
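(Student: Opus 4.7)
The plan is to verify $1$-genericity of $\pi_E$ directly from condition (1) of Proposition-Definition \ref{propdef}, which asks that $\pi_E(s \otimes t) \neq 0$ whenever $s \in \HH^0(C, E)$ and $t \in \HH^0(C, E^\vee \otimes \omega_C)$ are both non-zero. The argument will require only that $C$ is an integral curve; no stability hypothesis on $E$ or genericity hypothesis on $C$ will be used.

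The first step is to unpack the definition of $\pi_E = \pi_{E,\cO_C}$: since $\enmo(\cO_C) = \cO_C$, the trace step in Definition \ref{defBNP}(3) is the identity, and $\pi_E$ is nothing more than the cup-product of global sections, sending $s \otimes t$ to the honest tensor section $s \otimes t \in \HH^0(C, E \otimes E^\vee \otimes \omega_C)$.

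The second and essential step is the following elementary observation: for a non-zero global section $\sigma$ of a locally free (hence torsion-free) sheaf $\cF$ on the integral curve $C$, the associated map $\cO_C \to \cF$ given by $1 \mapsto \sigma$ is injective, so the zero locus $Z(\sigma) \subset C$ is a proper closed subscheme, hence finite. Applying this to both $s$ and $t$, one can pick a point $p \in C \setminus (Z(s) \cup Z(t))$; at such a point $s(p) \neq 0$ in $E_p$ and $t(p) \neq 0$ in $E^\vee_p \otimes \omega_{C,p}$. Since the tensor product of two non-zero vectors in $K$-vector spaces is non-zero, $(s \otimes t)(p) \neq 0$ in the fibre $(E \otimes E^\vee \otimes \omega_C)_p$, whence $\pi_E(s \otimes t) \neq 0$ as a global section. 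This is exactly $1$-genericity, and no real obstacle is anticipated; the whole content of the lemma is the pointwise criterion for non-vanishing of $s \otimes t$ on a one-dimensional base.
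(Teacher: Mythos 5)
Your proof is correct and is essentially the paper's argument: the paper likewise reduces $\pi_E$ to the plain multiplication map of global sections and observes that the zero locus of a non-zero section of a vector bundle is a proper closed subset, so that $Z(s\otimes t)=Z(s)\cup Z(t)\subsetneq C$ and hence $s\otimes t\neq 0$. Your pointwise formulation (evaluating at a point outside both zero loci) is just a rephrasing of the same step.
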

\begin{proof}
More generally, for two vector bundles $E_1$, $E_2$ on a smooth variety $X$, the multiplication map on global sections
$$
\HH^0(X,E_1)\otimes \HH^0(X,E_2)\to \HH^0(E_1\otimes E_2)
$$
is 1-generic. Indeed, if $s_i\in \HH^0(X,E_i)$ is a non-zero section, then its zero locus $Z(s_i)\subset X$ is closed and properly contained in $X$. Since $Z(s_1s_2)=Z(s_1)\cup Z(s_2)$ set-theoretically, it follows that $Z(s_1s_2)\subsetneq X$. Thus $s_1s_2\neq 0$.
\end{proof}

\begin{thrm}\label{thmKVF} Assume that $0\le \rho_{n,d,k}(F)<\dim \cM_{n,d}$,   $E\in\cV_k(F)\subset\cM_{n,d}$, $F$ and $E\otimes F$ are stable, and the Petri map $\pi_{E,F}$ is $k$-generic. Then the tangent cone at $E$ of $\cV_k(F)$ is isomorphic to the subscheme of the affine space $\HH^1(C,E\otimes E^\vee)$ given by the ideal generated by the minors of size $l-k+1$ of the $l'\times l$ matrix of linear forms on $\HH^1(C,E\otimes E^\vee)$ associated to $\pi_{E,F}$.
\end{thrm}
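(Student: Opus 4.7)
The plan is to adapt the argument of Theorem \ref{thm1GDik}, replacing the use of injectivity of $\pi_{E,F}$ by a Cohen--Macaulay codimension and multiplicity comparison that only requires $k$-genericity.

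First, I would apply Proposition \ref{propOurLePot} to get the controlling $\Linf$ pair $(\HH^\bullet(C,\enmo(E)),\HH^\bullet(C,E\otimes F))$. Stability of $E$ gives $\Hom(E,E) = K\cdot\id$ and hence the vanishing $[\iota(M^0),C]=0$ needed in Theorem \ref{thm1GDik}, while smoothness of $\cM_{n,d}$ at $E$ is Theorem \ref{thmLePot}. Constructing the universal matrix $d_{univ}$ as in \eqref{eqDUNI}, the formal neighborhood $\widehat{(\cV_k(F))}_E$ is cut out of $\mathrm{Spf}(\widehat S)$, with $\widehat S = K\llbracket x_1,\dots,x_s\rrbracket$ and $s = \dim \HH^1(C,\enmo(E))$, by the ideal $J$ of $(l-k+1)$-minors of $d_{univ}$. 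By construction the linear part of $d_{univ}$ is the $l'\times l$ matrix $B$ of linear forms associated to $\pi_{E,F}$, so the tangent cone $TC_E\cV_k(F)$ is defined by the initial ideal $\mathrm{in}(J) \subset S:=\mathrm{gr}_{\mathfrak{m}}\widehat S$. The claim of the theorem becomes the equality $\mathrm{in}(J) = I_B$, where $I_B$ is generated by the $(l-k+1)$-minors of $B$.

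The inclusion $\mathrm{in}(J) \supseteq I_B$ is immediate, since the lowest-degree part of each $(l-k+1)$-minor of $d_{univ}$ is the corresponding minor of $B$. For the reverse inclusion, the $k$-genericity of $\pi_{E,F}$ combined with Theorem \ref{thmResil} ensures that $I_B$ has codimension equal to the expected value $k(k+l'-l)$ in $S$, so by the Hochster--Eagon theorem on perfection of determinantal ideals of maximal codimension, $S/I_B$ is Cohen--Macaulay of dimension $\rho_{n,d,k}(F)$. The inclusion $\mathrm{in}(J) \supseteq I_B$ transfers this codimension lower bound to $J$, and together with Macaulay's general upper bound on the codimension of ideals of $r$-minors, one concludes $\mathrm{codim}_{\widehat S}(J) = k(k+l'-l)$, so $\widehat S/J$ is also Cohen--Macaulay of dimension $\rho_{n,d,k}(F)$, and hence so is $S/\mathrm{in}(J)$.

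To conclude equality of ideals I would compare Hilbert--Samuel multiplicities. The multiplicity of $\widehat S/J$ at the origin equals the degree of $S/\mathrm{in}(J)$, and this number coincides with the multiplicity of $S/I_B$ recorded in Theorem \ref{thrmGen} \textbf{(ii)}, since for a perfect determinantal ideal of shape $(l,l',k)$ the multiplicity depends only on the ranks in the Eagon--Northcott--Lascoux resolution and is therefore preserved under any deformation which maintains the expected codimension. The associativity formula for multiplicities applied to the Cohen--Macaulay ring $S/I_B$ then forces the surjection $S/I_B \twoheadrightarrow S/\mathrm{in}(J)$, between Cohen--Macaulay rings of the same dimension and the same multiplicity, to be an isomorphism, giving $\mathrm{in}(J) = I_B$ and the desired description of $TC_E\cV_k(F)$. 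The main obstacle is precisely this preservation of multiplicity under the deformation $B \rightsquigarrow d_{univ}$; this is where the full strength of the Cohen--Macaulayness theory for determinantal ideals, together with the universality of their multiplicity, enters the argument.
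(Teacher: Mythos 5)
Your reduction of the statement to the equality $\mathrm{in}(J)=I_B$ for the universal matrix $d_{univ}$ is sound and matches how the paper itself uses $d_{univ}$ (compare the proof of Proposition \ref{thmHyE}), as are the easy inclusion $\mathrm{in}(J)\supseteq I_B$ and the codimension count via Theorem \ref{thmResil} together with the Eagon--Northcott bound. Be aware, though, that the paper does not prove Theorem \ref{thmKVF}; it is a survey item attributed to Kempf, Li and Casalaina-Martin--Teixidor i Bigas, so your argument has to stand on its own.

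The gap is the multiplicity equality $e(\widehat{S}/J)=\deg(S/I_B)$. The justification you give --- that the multiplicity of a perfect determinantal ideal of expected codimension ``depends only on the ranks in the Eagon--Northcott--Lascoux resolution'' and is therefore preserved under any codimension-preserving deformation --- is false: the ideal of $1$-minors of the $1\times 1$ matrix $(x^2)$ in $K\llbracket x\rrbracket$ has the expected codimension $1$ but multiplicity $2$, not $1$. For a matrix of power series the resolution is not graded, so the ranks of its free modules do not determine the Hilbert--Samuel function; the multiplicity depends on the initial forms of the entries. The inclusion $\mathrm{in}(J)\supseteq I_B$ only gives $e(\widehat{S}/J)=\deg(S/\mathrm{in}(J))\le\deg(S/I_B)$, which is the wrong direction, and the reverse inequality is essentially equivalent to the theorem being proved, so the argument is circular at its crucial step. (Also, ``$\widehat{S}/J$ Cohen--Macaulay, hence so is $S/\mathrm{in}(J)$'' is false in general: the associated graded ring of a Cohen--Macaulay local ring need not be Cohen--Macaulay.) The missing input is exactly where $k$-genericity must be used beyond a codimension count: by Theorem \ref{thmkGen} (Kempf; \cite[Lemma, p.242]{A+}), $k$-genericity implies that $V(I_B)$ is a \emph{variety} --- reduced and irreducible --- of dimension $\rho_{n,d,k}(F)$. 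With that in hand no multiplicity comparison is needed: $V(\mathrm{in}(J))$ is a closed subscheme of the integral scheme $V(I_B)$ of the same dimension (equal to $\dim_E\cV_k(F)\ge\rho_{n,d,k}(F)$ by Theorem \ref{thmDimSing}), hence equal to it, which gives $\mathrm{in}(J)=I_B$. This is the route the cited proofs take; alternatively one can compute $\mathrm{mult}_E\cV_k(F)$ geometrically via the canonical Grassmannian-bundle resolution of the degeneracy locus, as Kempf does, but some such input replacing your ``universality of the multiplicity'' is unavoidable.
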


\begin{rmk}\begin{enumerate}[(1)]

\item
 Since $\pi_L$ is always 1-generic for $E=L\in\pic^d(C)$, the theorem describes in particular the tangent cones to all $\cV_1=W^0_d$ in $\pic^d(C)$. This is the main  result of Kempf \cite{Ke}. The case of $W^r_d$  is treated similarly, see  \cite{Ke-abIn}, \cite{AC}, \cite[VI 2.1]{A+}.  
 
\item For $F=\cO_C$ the statement appeared in  \cite[Thm. 1, Prop. 5 (d)]{Li} under the extra assumption that $\cV_k\neq \cV_{k+1}$. 
The statement as in the theorem is \cite[Thm. 3.4]{CT}, and for $F\neq \cO_C$  see \cite[Rmk. 3.8]{CT}. The proof is essentially the same as in \cite{Ke}. In all these references,  $k$-genericity is phrased in the equivalent form from Lemma \ref{lemLi}

\item Since our standing assumption is that $l\le l'$, instead of assuming that $0\le \rho_{n,d,k}(F)<\dim \cM_{n,d}$, it is enough to ask that $1\le k\le l$. Then $k$-genericity implies the condition on $\rho$ by Theorem \ref{thmkGen}.
\end{enumerate}
\end{rmk}

\begin{thrm}\label{thrmLCTa} 
Let $1\le k\le l$ and  $L\in \cV_k(F)\subset\pic^d(C)$.
Assume one of the two holds:

\begin{enumerate}
\item[(a)] $C$ is generic among curves with genus $g$ and either $F=\cO_C$ or $F$ is generic among vector bundles on $C$ of same rank and degree; or more generally,
\item[(b)] $\pi_{L,F}$ is injective.
\end{enumerate}
Then the following hold (in a Zariski open neighborhood of $L$ in $\pic^d(C)$ in case (a)):

\begin{enumerate}[(i)]

\item {\rm{(}}\cite[IV.4]{A+}, \cite[Thm. 1.1]{TiBtw}{\rm{)}}  In a Zariski open neighborhood of $L$: $\cV_k(F)$  is reduced, it has dimension $\rho_{1,d,k}(F)$, and  the singular locus of $\cV_k(F)$ is $\cV_{k+1}(F)$.

\item {\rm{(}}\cite[VI.2]{A+}, \cite[Thm. 3.4, Rmk. 3.8]{CT}{\rm{)}} The multiplicity of $\cV_k(F)$ at $L$ is 
$$
\prod_{i=0}^{k-1}\frac{(l'+i)!i!}{(l-k+i)!(l-l'+k+i)!}.
$$

\item {\rm{(}}\cite{Ke} for $k=1$, \cite[3.7]{AC} for $k\ge 1${\rm{)}} If $F=\cO_C$, $\cV_k$ has rational singularities. 

\item {\rm{(}}\cite[Thm. B]{Zhu}{\rm{)}} If $F=\cO_C$, the log canonical threshold  of the pair $(\pic^d(C),\cV_k)$ at $L$ is 
$$
\min\left\{\frac{(l-i)(l'-i)}{l -k+1-i}\mid i=0,\ldots ,l-k\right\}.
$$
\end{enumerate}
\end{thrm}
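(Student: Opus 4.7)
The plan is to reduce the theorem to the corresponding statements about generic determinantal varieties via the étale-local structure result, Theorem~\ref{thmGenToConeEt}. First, case~(a) reduces to case~(b): Theorem~\ref{thrmInj} asserts that under either hypothesis in~(a) the Petri map $\pi_{L,F}$ is injective. So it suffices to treat case~(b), i.e.\ the case where $\pi_{L,F}$ is injective.

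Under this injectivity hypothesis, Theorem~\ref{thmGenToConeEt} (with $n=1$) yields a local isomorphism for the étale topology between $(\pic^d(C), L)$ and $(H^1(C,\cO_C), \bz)$ that restricts, for each $1 \le k \le l$, to a local étale isomorphism between $(\cV_k(F), L)$ and $(TC_L\cV_k(F), \bz)$, where $TC_L\cV_k(F)$ is cut out in $H^1(C,\cO_C) \simeq K^g$ by the $(l-k+1)$-minors of the $l' \times l$ matrix of linear forms associated to $\pi_{L,F}$. Injectivity of the Petri map says that the $ll'$ entries of this matrix are linearly independent elements of $H^1(C,\cO_C)^\vee \simeq H^0(C,\omega_C)$ (in particular $ll' \le g$), so after a linear change of coordinates on $H^1(C,\cO_C)$ the local étale model becomes
\[
(\pic^d(C),\, \cV_k(F),\, L) \;\simeq\; \bigl(\bA^g_K,\; M_k \times \bA^{g-ll'}_K,\; \bz\bigr),
\]
where $M_k \subset \bA^{ll'}_K$ is the generic determinantal variety of $l' \times l$ matrices of rank $\le l-k$, as in Definition~\ref{defnGen}.

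With this local model established, each of the four conclusions follows directly from the corresponding item of Theorem~\ref{thrmGen} (applied with $a = l$, $b = l'$), because the invariants involved---reducedness, irreducibility, dimension, singular locus, multiplicity at a point, rational singularities, and the local log canonical threshold---are all preserved under étale morphisms and under products with smooth affine spaces. Explicitly, for~(i) Theorem~\ref{thrmGen}(i) gives
\[
\dim \cV_k(F) \;=\; (l-k)(l'+k) + (g-ll') \;=\; g - k\bigl(k - \chi(L\otimes F)\bigr) \;=\; \rho_{1,d,k}(F),
\]
and $Sing(M_k) = M_{k+1}$ corresponds under the étale identification to $\cV_{k+1}(F)$; (ii) transfers from Theorem~\ref{thrmGen}(ii); (iii) from Theorem~\ref{thrmGen}(iii); and (iv) from Theorem~\ref{thrmGen}(iv), noting that the local log canonical threshold at $\bz$ of $(\bA^{ll'}, M_k)$ agrees with the global one since $M_k$ is a cone.

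The substance of the proof lies entirely in the two external inputs: Theorem~\ref{thmGenToConeEt} on the deformation-theoretic side (which itself depends on the $\Linf$-pair formal model of Theorem~\ref{thmGenToCone} combined with the algebraization of Proposition~\ref{propTower}), and the classical theory of generic determinantal singularities assembled in Theorem~\ref{thrmGen}. Once both are granted, the only bookkeeping is the standard check that each singularity invariant is insensitive to the product with $\bA^{g-ll'}_K$ and to étale pullback. The main subtlety to verify is simply the inequality $ll' \le g$ needed for the change of coordinates, but this is forced by the injectivity hypothesis together with Serre duality; no further obstacle arises.
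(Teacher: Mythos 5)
Your proof is correct, but it is not the route the paper takes for this particular theorem. The paper's own argument (Remark \ref{rmkProofs} and \ref{subZhu}) deliberately avoids Theorem \ref{thmGenToConeEt}: for (i)--(iii) it only uses the classical tangent-cone computation of Theorem \ref{thmKVF} (Kempf's method), identifying $TC_L\cV_k(F)$ with $M_k$ up to a smooth factor, and then transfers each property from the tangent cone to the scheme by specialization (e.g.\ Elkik's theorem for rational singularities, equality of multiplicities with the tangent cone); for (iv) this is insufficient, since the log canonical threshold is not in general determined by the tangent cone, so the paper proves the two inequalities separately --- $\ge$ by semicontinuity of lct under specialization to the tangent cone, and $\le$ by pulling back $M_k$ along the degeneracy-locus map as in Zhu. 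Your argument instead invokes the full strength of the \'etale-local isomorphism of Theorem \ref{thmGenToConeEt} (hence the $\Linf$-pair machinery plus Artin approximation), after which all four items, including (iv), follow uniformly from Theorem \ref{thrmGen}; this is exactly how the paper proves the more general Theorem \ref{thrmLCThrk}, of which \ref{thrmLCTa} is noted to be a special case, and it is the proof the paper says ``explains the curiosity'' that the lct agrees with that of the generic determinantal model. What the paper's designated proof buys is independence from the new machinery (these are, after all, presented as older results); what yours buys is uniformity and the extra invariants. Your bookkeeping is sound: the reduction of (a) to (b) via Theorem \ref{thrmInj}, the inequality $ll'\le g$ from injectivity into $H^0(\omega_C)$, the linear change of coordinates giving the model $M_k\times\bA^{g-ll'}$, and the dimension count $(l-k)(l'+k)+(g-ll')=\rho_{1,d,k}(F)$ all check out.
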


\begin{rmk}\label{rmkProofs}  (1) The proofs of (i)-(iii) work as follows.  The pair $(T_L\pic^d(C),TC_L\cV_k(F))$ consisting of a tangent space and tangent cone at $L$, is by Theorem \ref{thmKVF}   isomorphic up to a smooth factor to the pair $(\bA^{ll'},M_k)$ where $M_k$
is the generic determinantal variety  of $l'\times l$ matrices of rank $\le l-k$. Then Theorem \ref{thrmGen} (i)-(iii) applies and these properties of tangent cones pass to the local properties of the original scheme. 

(2)
This proof does not work on the nose for (iv), since in general  the log canonical threshold cannot  be read from the tangent cone. It was remarked a posteriori as a curiosity in \cite{Zhu}  that the log canonical threshold at $L$ of $(\pic^d(C),\cV_k)$  equals that of $(\bA^{ll'},M_k)$. The proof from \cite{Zhu} used  the description of $\cV_k\subset\pic^d(C)$ as a degeneracy locus and jets. Below we give a shorter proof. A different proof is given by Theorem \ref{thrmLCT}  which also explains the curiosity.

\end{rmk}

\subs\label{subZhu}{\bf Proof of Theorem \ref{thrmLCTa} {\color{hot} (iv)}.}
Log canonical thresholds cannot increase under specialization \cite[9.5.41]{Laz}. Hence
$$
\lct_L(\pic^d(C),\cV_k)\ge \lct_L(T_L\pic^d(C),TC_L\cV_k)= \lct_\bz(\bA^{ll'},M_k)=\lct(\bA^{ll'},M_k),
$$
by using the specialization to the tangent cone, cf. the comment after Theorem \ref{thmElik}.
To show the reverse inequality, one uses the same argument as in \cite[p.3156]{Zhu}. Namely, there is a  description of $\cV_k$ as a degeneracy locus. It  has the property that locally at $L$ on $\pic^d(C)$ there is a map to $\bA^{ll'}$ such that the pullback of $M_k$ is $\cV_k$. Log canonical thresholds cannot increase under pullbacks via morphisms between smooth varieties \cite[9.5.8]{Laz}. Hence
$$
\lct_L(\pic^d(C),\cV_k)\le \lct(\bA^{ll'},M_k).
$$
Thus $\lct_L(\pic^d(C),\cV_k)= \lct(\bA^{ll'},M_k)$ and the claim follows from Theorem \ref{thrmGen} (iv).
$\hfill\Box$

\begin{thrm}\label{thrmLCTb}
For any curve $C$ :
\begin{enumerate}[(i)]

\item {\rm{(}}\cite[Cor. 3.6]{CT}{\rm{)}}  If $\rho_{n,d,1}\ge 0$ then every non-empty irreducible component  of $\cV_{1}\subset \cM_{n,d}$ is reduced and has dimension $\rho_{n,d,1}$.


Let $E\in \cV_{1}\subset \cM_{n,d}$  (as in \ref{subAss}, as always in this section). Then:

\item {\rm{(}}Riemann's Singularity Theorem  for $n=1=g-d$;  \cite{Ke} for $n=1$; \cite{La} for $d=n(g-1)$; \cite[Cor. 2]{Li} and \cite[Cor. 3.6]{CT} for $n> 1${\rm{)}} The multiplicity of $\cV_{1}$ at $E$ is 
$
\binom{l'}{l-1}.
$

\item {\rm{(}}\cite{Ke}{\rm{)}} If $n=1$, $\cV_{1}\in\pic^d(C)$ has rational singularities at $L=E$.
\end{enumerate}

 \end{thrm}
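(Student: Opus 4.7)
The plan is to reduce parts (i) and (ii) to Kempf's theorem on $1$-generic determinantal schemes, exploiting that the Petri map $\pi_E=\pi_{E,\cO_C}$ is automatically $1$-generic by Lemma \ref{lem1genP}. Since the standing assumption $l\le l'$ holds and $E\otimes\cO_C=E$ is stable, Theorem \ref{thmKVF} applies with $k=1$ and identifies the tangent cone $TC_E\cV_1\subset\HH^1(C,E\otimes E^\vee)$ as the subscheme cut out by the maximal minors of the $l'\times l$ matrix of linear forms associated to $\pi_E$. Factoring the induced linear map $\HH^1(C,E\otimes E^\vee)\to \Hom(\HH^0(C,E),\HH^1(C,E))$ as a surjection onto a $1$-generic subspace $N$ followed by inclusion, and choosing a splitting $\HH^1(C,E\otimes E^\vee)\cong N\oplus Q$ with $Q$ the kernel, we obtain $TC_E\cV_1\simeq N_1\times Q$, where $N_1\subset N$ is as in Theorem \ref{thmkGen}.

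For (ii), Theorem \ref{thmkGen} says $N_1$ has the same multiplicity at the origin as the generic determinantal variety $M_1\subset\bA^{ll'}$, which by Theorem \ref{thrmGen}(ii) with $a=l$, $b=l'$, $k=1$ equals $\binom{l'}{l-1}$. Products with an affine space preserve the multiplicity at the vertex, so $\mult_\bz TC_E\cV_1=\binom{l'}{l-1}$, and this equals $\mult_E\cV_1$. For (i), Theorem \ref{thmkGen} further gives that $N_1$ is reduced and irreducible of codimension $l'-l+1$ in $N$; hence $TC_E\cV_1$ is reduced and irreducible of dimension
$$
n^2(g-1)+1-(l'-l+1)=n(n-1)(g-1)+d=\rho_{n,d,1},
$$
using $l'-l=n(g-1)-d$. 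A standard associated-graded argument shows that reducedness of $TC_E\cV_1$ forces reducedness of $\cO_{\cV_1,E}$: a nonzero nilpotent $f\in\cO_{\cV_1,E}$ would have a nonzero initial form in $\gr\cO_{\cV_1,E}$ which would still be nilpotent. Since $E\in\cV_1$ was arbitrary, $\cV_1$ is everywhere reduced with constant local dimension $\rho_{n,d,1}$; taking a general point of any non-empty irreducible component finishes (i).

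For (iii), the tangent cone $TC_L\cV_1$ has rational singularities by Theorem \ref{thmkGen}, but this does not formally transfer to $\cV_1$ itself, so we invoke the classical construction due to Kempf \cite{Ke}. For $n=1$ the Abel-Jacobi map $\mu\colon C^{(d)}\to\pic^d(C)$ is proper from a smooth source onto $W_d^0=\cV_1$, with fiber $\bP(\HH^0(C,L))$ over $L$, and is generically an isomorphism onto $W_d^0$ since $h^0(L)=1$ on the dense open $W_d^0\setminus W_d^1$. Kempf's key computation $R^i\mu_*\cO_{C^{(d)}}=0$ for $i>0$, together with $\mu_*\cO_{C^{(d)}}=\cO_{W_d^0}$ (normality of $W_d^0$, also due to Kempf), exhibits $\mu$ as a rational resolution and yields the rational singularities of $\cV_1$.

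The main obstacle is precisely the gap between (i)--(ii) and (iii): the tangent-cone reduction is formal and works uniformly once $1$-genericity of $\pi_E$ is available, but rational singularities do not in general descend from the tangent cone to a neighborhood in the ambient scheme, so a separate global resolution of $\cV_1$ must be produced. For $n=1$ the Abel-Jacobi map supplies this canonically; the extension to arbitrary $n$ is the subject of Theorem \ref{thmGG2}.
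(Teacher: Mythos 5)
Your treatment of (i) and (ii) is exactly the paper's argument (Remark \ref{rmkOle}): Theorem \ref{thmKVF} plus the $1$-genericity of $\pi_E$ from Lemma \ref{lem1genP} identify $TC_E\cV_1$ with $N_1\times Q$ for a $1$-generic $N$, Theorem \ref{thmkGen} gives reducedness, codimension and multiplicity of $N_1$, and these pass to $\cV_1$ at $E$ by the standard facts that multiplicity and dimension agree with those of the tangent cone and that a reduced tangent cone forces a reduced local ring. For (iii), however, your stated reason for abandoning the tangent-cone route is backwards: the implication that fails is ``$X$ has rational singularities at $x$ $\Rightarrow$ $TC_xX$ does'' (the paper's example $x^2+y^3+z^4$), whereas the implication you need — rational singularities of the tangent cone imply rational singularities of $X$ at $x$ — is exactly what Elkik's theorem (Theorem \ref{thmElik}, and the sentence following it) provides, since a Zariski neighborhood of $x$ flatly specializes to $TC_xX$ and a deformation of a rational singularity is rational. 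This is how the paper proves (iii), and also how it proves the $n>1$ generalization in Theorem \ref{thmGG2}, so the ``main obstacle'' you identify is not one. Your substitute argument via the Abel--Jacobi map $C^{(d)}\to W^0_d$ is Kempf's original proof and is correct for $n=1$ (given Kempf's normality of $W^0_d$ and his vanishing $R^i\mu_*\cO_{C^{(d)}}=0$), but it does not extend to higher rank, which is precisely what the tangent-cone-plus-Elkik argument buys you.
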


 \begin{rmk}\label{rmkOle} The proofs work as follows. By Theorem \ref{thmKVF} the pair $(T_E\cM_{n,d},TC_E\cV_1)$ consisting of a tangent space and tangent cone at $E$, is isomorphic up to a smooth factor to the pair $(N,N_1)$ where $N\subset \bA^{ll'}$ is the 1-generic subspace of $l'\times l$ matrices associated to the Petri map $\pi_E$, and 
  $N_1=N\cap M_1$ is the subvariety of matrices of rank $\le l-1$. Then Theorem \ref{thmkGen} applies to prove (i), (ii), (iii) for the tangent cone, and these properties of the tangent cone pass to local properties of the original scheme. 
   \end{rmk}

\subs{\bf Proof of Theorem \ref{thmGG2}.} The same proof as in Remark \ref{rmkOle} works, since all we needed was the 1-genericity of the Petri map. In this more general case,  1-genericity holds by Lemma \ref{lem1genP}.
$\hfill\Box$

\medskip

\begin{thrm}\label{thmDimSing} $\;$

(i) {\rm{(}}\cite{A+, GB}, cf. Proposition \ref{propOurLePot} {\rm{)}} If $1\le k$, the dimension of $\cV_k(F)$ at every point  is at least $\rho_{n,d,k}(F)$. If $\cV_{k}(F)\neq \cM_{n,d}$, then $\cV_{k+1}(F)_{red}\subset Sing(\cV_k(F))$, where $Sing$ denotes the reduced singular locus.

(ii) {\rm{(}}\cite[IV, Cor. 4.5]{A+}{\rm{)}} If $n=1$ and $F=\cO_C$, $\cV_1=W^0_d$ is irreducible and $Sing(\cV_1)=(\cV_2)_{red}$.

(iii) {\rm{(}}Martens \cite[IV, Thm. 5.1]{A+}{\rm{)}} If $g\ge 3$, $2\le d<g$, and $0<2r\le d$, then:

\begin{itemize}
\item if $C$ is not hyperelliptic,  $\dim W^r_d< d-2r$;
\item if $C$ is hyperelliptic, $\dim W^r_d=d-2r$.
\end{itemize}

\end{thrm}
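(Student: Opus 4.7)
The three parts have quite different flavors, so the plan is to treat them separately while leveraging the $L_\infty$ machinery already set up for (i) and (ii).

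For (i), the plan is to work locally at $E\in\cV_k(F)\setminus\cV_{k+1}(F)$ and invoke the deformation-theoretic description. By Proposition \ref{propOurLePot}(2) together with the definition of the cohomology jump subfunctors from (\ref{eqLJI}) and the cohomology jump ideals of Definition \ref{defCJI}, the completed local ring of $\cV_k(F)$ at $E$ is cut out in $\widehat{(\cM_{n,d})}_E$ by the $(l-k+1)$-minors of an $l'\times l$ matrix of power series whose linear part is determined by the Petri map $\pi_{E,F}$ (with $l=h^0(E\otimes F)$ and $l'=h^1(E\otimes F)$). A generic such determinantal ideal has codimension $k(k+l'-l)=k(k-\chi(E\otimes F))$ by Theorem \ref{thrmGen}(i), and a general determinantal ideal has codimension at most this value. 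This yields $\dim_E\cV_k(F)\ge \dim\cM_{n,d}-k(k-\chi(E\otimes F))=\rho_{n,d,k}(F)$, the first claim. For the inclusion $\cV_{k+1}(F)_{\mathrm{red}}\subset Sing(\cV_k(F))$, apply Proposition \ref{propOurLePot}(2) at $E\in\cV_{k+1}(F)$: since then $k<h^0(E\otimes F)$, the Zariski tangent space $T_E\cV_k(F)$ is the full $T_E\cM_{n,d}$, of dimension $n^2(g-1)+1$. On the other hand $\cV_k(F)\neq\cM_{n,d}$ and $\cM_{n,d}$ is irreducible (Theorem \ref{thmLePot}), so $\dim_E\cV_k(F)<\dim\cM_{n,d}=\dim T_E\cV_k(F)$, forcing $E\in Sing(\cV_k(F))$.

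For (ii), irreducibility of $W^0_d\subset\pic^d(C)$ is classical: it is the image of the Abel--Jacobi map $u_d:C^{(d)}\to\pic^d(C)$, and $C^{(d)}$ is irreducible because $C$ is. For the equality of singular loci, the inclusion $(W^1_d)_{\mathrm{red}}\subset Sing(W^0_d)$ comes from (i). The reverse inclusion amounts to showing $W^0_d$ is smooth at any $L$ with $h^0(L)=1$. Here the $L_\infty$ description becomes decisive: with $l=1$, the $l'\times 1$ matrix of linear forms determined by $\pi_L$ has entries that are linearly independent by the $1$-genericity of $\pi_L$ established in Lemma \ref{lem1genP} (since a non-zero section cannot be killed by multiplication with any non-zero section of $L^\vee\otimes\omega_C$). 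Hence the tangent cone $TC_LW^0_d$ is a linear subspace of $T_L\pic^d(C)=H^1(C,\cO_C)$ of dimension $g-l'=d=\rho_{1,d,1}$, which together with (i) forces $L$ to be a smooth point of the correct dimension.

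For (iii), Martens' theorem is of a fundamentally different flavor and I would follow the classical strategy in \cite{A+, IV.5}. The plan is to factor the Abel--Jacobi map $\mu:C^{(d)}\to W^0_d$ through the projectivized linear systems: for $L\in W^r_d\setminus W^{r+1}_d$ the fiber $\mu^{-1}(L)=|L|\simeq\mathbb{P}^r$, which gives the easy bound $\dim W^r_d\le d-r$. To improve this to $d-2r$ (and $d-2r-1$ in the non-hyperelliptic case), consider the variety of pairs $(D,D')\in C^{(d)}\times C^{(2g-2-d)}$ with $D+D'\in|K_C|$, which maps to $W^r_d$ with fibers of dimension $r+(g-d-1+r)$, and apply Clifford's inequality to control the dimension of the image inside $|K_C|=\mathbb{P}^{g-1}$. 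The hyperelliptic direction is short: the description $(W^r_d)_{\mathrm{red}}\supset W^0_{d-2r}+r\cdot g^1_2$ together with irreducibility from (ii) gives equality of dimensions $d-2r$. The hard step is the non-hyperelliptic strict inequality, which ultimately rests on the fact that equality in Clifford's theorem forces the existence of a $g^1_2$. I would present this classical argument in the form given in \cite[IV, Thm. 5.1]{A+} and not attempt to recast it through the $L_\infty$ framework, since the latter controls local (infinitesimal) behavior but does not directly access the global dimensional dichotomy between hyperelliptic and non-hyperelliptic curves.
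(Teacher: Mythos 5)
The paper itself offers no proof of Theorem \ref{thmDimSing}: it is a survey item, with (i)--(iii) attributed to the cited references and only the parenthetical remark that Proposition \ref{propOurLePot} ``implies'' part (i). Your write-up therefore supplies arguments where the paper supplies citations, and for (i) and (ii) what you do is exactly the route the paper gestures at. Your (i) is sound: the lower bound on $\dim_E\cV_k(F)$ follows from the determinantal presentation of $\widehat{(\cV_k(F))}_E$ by the $(l-k+1)$-minors of the $l'\times l$ matrix $d_{univ}$ (valid for any stable $E$, no injectivity of $\pi_{E,F}$ needed) together with the Eagon--Northcott codimension bound $\mathrm{codim}\le k(k-\chi(E\otimes F))$ for such determinantal ideals -- you should cite that bound explicitly rather than saying ``a general determinantal ideal has codimension at most this value''; and the singular-locus inclusion is the tangent-space computation of Proposition \ref{propOurLePot}(2) combined with irreducibility of $\cM_{n,d}$. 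Your (ii) is also correct, and the key observation -- that for $l=h^0(L)=1$ the $1$-genericity of $\pi_L$ from Lemma \ref{lem1genP} is literally injectivity, so Theorem \ref{thmGenToCone} applies and the completion of $W^0_d$ at $L$ is cut out by $l'$ power series with linearly independent linear parts, hence is formally smooth of dimension $d$ -- is the clean way to phrase it (smoothness follows from this alone; you do not need to combine with (i)). For (iii) you give only an outline of the classical Martens argument and defer to \cite[IV, Thm.~5.1]{A+}; since the paper does precisely the same (a bare citation) and, as you correctly note, the $\Linf$ formalism controls only infinitesimal behaviour and cannot see the hyperelliptic/non-hyperelliptic dichotomy, this is an acceptable match, though as written your sketch of the non-hyperelliptic strict inequality is too vague to stand as a proof on its own.
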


When $n=1$, positivity of the vector bundles defining the degeneracy locus structure of Brill-Noether loci led to the following due to Kempf, Kleiman-Laskov, Griffiths-Harris, Fulton-Lazarsfeld, see \cite{A+}:

\begin{thrm} Let $d,k \ge 1$. Consider $\cV_k\subset \pic^d(C)$.

(i) If $\rho_{1,d,k}\ge 0$ then $\cV_k\neq\emptyset$ is non-empty. If $\rho_{1,d,k}>0$ then $\cV_k$ is connected.

(ii) For a general curve $C$: If $\rho_{1,d,k}<0$ then $\cV_k$ is empty. If $\cV_k$ is non-empty, it is irreducible.

\end{thrm}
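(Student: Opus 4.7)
The plan is to combine the classical degeneracy-locus realization of $\cV_k=W^{k-1}_d$ with the Gieseker-based local model results developed in this part. First, I would fix an effective divisor $D$ on $C$ of degree $e\gg 0$, a Poincar\'e line bundle $\cL$ on $C\times\pic^d(C)$, and the projection $p:C\times \pic^d(C)\to \pic^d(C)$, and form the morphism of locally free sheaves
\[
\gamma:\; H:=p_*(\cL(D)) \;\lra\; K:=p_*(\cL(D)|_{D\times \pic^d(C)}),
\]
of ranks $d+e-g+1$ and $e$ respectively. A standard computation identifies $\cV_k$ with the degeneracy locus $D_{\rank H - k+1}(\gamma)$ endowed with its natural scheme structure, so the problem is transported into the setting of degeneracy loci of morphisms of vector bundles.

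For part (i), non-emptiness will come from Porteous' formula: the cohomology class of $D_{\rank H-k+1}(\gamma)$ is a Schur determinant in the Chern classes of $K-H$, and via the standard pullback from the universal theta divisor one sees it is a nonzero universal multiple of $\theta^{g-\rho_{1,d,k}}$, which is nonzero in $H^*(\pic^d(C))$ exactly when $g-\rho_{1,d,k}\le g$, i.e.\ $\rho_{1,d,k}\ge 0$. This forces $\cV_k\ne\emptyset$. Connectedness under $\rho_{1,d,k}>0$ will be obtained by invoking the Fulton--Lazarsfeld connectedness theorem for degeneracy loci, using the ampleness properties of $H^{\vee}\otimes K$ on the Jacobian; the expected dimension $\rho_{1,d,k}$ being positive then yields that $\cV_k$ is connected.

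For part (ii), assume now $C$ is generic in moduli. Then Gieseker's theorem (Theorem~\ref{thrmInj} with $F=\cO_C$) says every Petri map $\pi_L$ is injective, and so Theorem~\ref{thmGenToConeEt} provides, at every $L\in\cV_k$, an \'etale local $K$-isomorphism between $(\cV_k,L)$ and a generic determinantal subscheme of codimension $k(k-\chi(L))$ in $\HH^1(C,\cO_C)\cong K^g$. Its local dimension at $L$ is therefore $\rho_{1,d,k}$. If $\rho_{1,d,k}<0$ this is impossible, so $\cV_k=\emptyset$. For irreducibility when $\cV_k$ is non-empty, I would argue that by the same local model together with Theorem~\ref{thrmGen}~(i), $\cV_k\setminus \cV_{k+1}$ is smooth of pure dimension $\rho_{1,d,k}$ and $\cV_{k+1}$ is the singular locus of $\cV_k$ with strictly smaller dimension $\rho_{1,d,k+1}<\rho_{1,d,k}$; combined with the connectedness from part (i), this yields irreducibility.

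\textbf{Main obstacle.} The $\Linf$ machinery of this survey handles parts (ii) cleanly, since both emptiness and irreducibility reduce to the generic determinantal local model. The genuine difficulty sits entirely in part (i): Porteous' formula is a routine Chern-class calculation, but the Fulton--Lazarsfeld connectedness theorem is a deep positivity result on Jacobians that cannot be extracted from the local deformation theory and must be invoked as a black box. Controlling the ampleness hypothesis in the form needed to apply it to $\gamma$ is the step I expect to require the most care.
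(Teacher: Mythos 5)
The paper itself gives no proof of this statement: it is quoted as a classical theorem of Kempf, Kleiman--Laksov, Griffiths--Harris and Fulton--Lazarsfeld with \cite{A+} as reference, so there is no internal argument to compare against. Your route is the standard classical one (degeneracy-locus realization of $W^{k-1}_d$, the Porteous class computation giving a positive multiple of $\theta^{\,g-\rho_{1,d,k}}$, the Fulton--Lazarsfeld connectedness theorem, and Gieseker--Petri plus the generic determinantal local model for part (ii)), and the emptiness argument in (ii) is correct as you state it. One presentational caveat in (i): the naive Porteous formula only computes the class of the degeneracy locus when it has the expected dimension, so to conclude non-emptiness you need the refined (localized) class supported on $D_{\rank H-k+1}(\gamma)$, whose pushforward to $\pic^d(C)$ is the Schur determinant; if the locus were empty this class would vanish, contradicting $\theta^{\,g-\rho_{1,d,k}}\neq 0$. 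That is exactly how Kempf and Kleiman--Laksov argue, so this is a point to make explicit rather than a gap.

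The genuine gap is in your irreducibility step. ``Connected, with $\cV_k\setminus\cV_{k+1}$ smooth of pure dimension $\rho_{1,d,k}$ and $\cV_{k+1}$ of strictly smaller dimension'' does \emph{not} imply irreducible: two smooth $\rho$-dimensional branches crossing along $\cV_{k+1}$ satisfy all of these hypotheses (compare a nodal curve, which is connected and smooth away from a point). What rescues the argument is that the \'etale-local model at every point is a generic determinantal variety, which by Theorem \ref{thrmGen} has rational singularities and is in particular normal; hence $\cV_k$ is normal (equivalently, it is Cohen--Macaulay and regular in codimension one, since $\codim_{\cV_k}\cV_{k+1}=g-d+2k\ge 2$), and a connected normal scheme is irreducible. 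You need this local irreducibility, not just the dimension count. Finally, note that the irreducibility claim really requires $\rho_{1,d,k}>0$, as does the connectedness you feed into it: when $\rho_{1,d,k}=0$ the locus on a general curve is a finite reduced set with, in general, more than one point (e.g. the two $g^1_3$'s on a general genus-$4$ curve), so it is non-empty but not irreducible; the statement as printed should be read with that restriction.
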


A similar result is available for $F\neq\cO_C$, see \cite[Thm. 2.1]{Hitch}. For $n>1$, see \cite{GB, Hitch}.

\subs{\bf Hyperelliptic curves.} For this subsection we do not make the assumption \ref{subAss}. 
Recall the definition of Hankel matrices from \ref{subHkl}.

\begin{prop}\label{propHyl}
If $C$ is hyperelliptic, $d<2g$, $L\in \pic^d(C)$ with $0\neq h^0(L)h^1(L)$, then the Petri map $\pi_L$ is represented by a Hankel matrix in a suitable base.
\end{prop}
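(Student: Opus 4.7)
The plan is to use the classical structure of special linear systems on hyperelliptic curves together with a direct computation of products of sections in $H^0(\omega_C)$.

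Let $\phi\colon C\to \bP^1$ be the hyperelliptic double cover and $\mathfrak{g}^1_2$ the hyperelliptic pencil, so $\cO_C(\mathfrak{g}^1_2)=\phi^*\cO_{\bP^1}(1)$. For $g\ge 2$, the canonical map factors as the composition of $\phi$ with the $(g-1)$-fold Veronese, giving a canonical isomorphism $\omega_C\cong \phi^*\cO_{\bP^1}(g-1)$; the case $g\le 1$ is immediate since $h^0(L)h^1(L)\ne 0$ forces $L\cong\cO_C$ and $\pi_L$ becomes a scalar, which is Hankel. So assume $g\ge 2$.

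First I would invoke the classical theorem that every special complete linear system on a hyperelliptic curve is composed of $\mathfrak{g}^1_2$: this provides decompositions
\[
L\cong \phi^*\cO(l-1)\otimes \cO(B_1),\qquad L^\vee\otimes\omega_C\cong \phi^*\cO(l'-1)\otimes \cO(B_2),
\]
with $B_1,B_2$ effective divisors on $C$, where both $L$ and $L^\vee\otimes\omega_C$ are special by hypothesis. Fix non-zero sections $\sigma\in H^0(\cO(B_1))$, $\tau\in H^0(\cO(B_2))$ and a coordinate $x$ on $\bP^1$, and choose the bases
\[
s_j=\sigma\cdot\phi^*(x^{j-1}),\ \ 1\le j\le l;\qquad t_i=\tau\cdot\phi^*(x^{i-1}),\ \ 1\le i\le l',
\]
using that every section of $\phi^*\cO(k)$ for $0\le k\le g-1$ is a pullback from $\bP^1$, a consequence of $\phi_*\cO_C=\cO_{\bP^1}\oplus\cO_{\bP^1}(-g-1)$.

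Next, I would compute $s_j\cdot t_i=\sigma\tau\cdot\phi^*(x^{i+j-2})\in H^0(\omega_C)$. Tensoring the two decompositions and comparing with $\omega_C\cong \phi^*\cO(g-1)$ identifies $\cO(B_1+B_2)\cong \phi^*\cO(g-l-l'+1)$, and Clifford's theorem gives $g-l-l'+1\ge 0$, so the same pullback argument yields $\sigma\tau=\phi^*(q)$ for a non-zero polynomial $q\in K[x]$ of degree at most $g-l-l'+1$. Hence $s_j\cdot t_i=\phi^*(q\cdot x^{i+j-2})$, an element of $H^0(\omega_C)$ that depends only on $i+j$.

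Finally, the $l+l'-1$ elements $e_k:=\phi^*(q\cdot x^{k-1})$, $k=1,\ldots,l+l'-1$, are linearly independent (since $q\neq 0$ in the integral ring $K[x]$), so they form a basis for the image of $\pi_L\subset H^0(\omega_C)$. In this basis together with $\{s_j\}$ and $\{t_i\}$, the $(i,j)$-entry of the $l'\times l$ matrix representing $\pi_L$ is exactly $e_{i+j-1}$, which is the defining form of the Hankel matrix $H(l,l')$. The main nontrivial input, and the only real obstacle, is the invocation of the classical theorem that puts special line bundles on a hyperelliptic curve into the form $\phi^*\cO(r)\otimes\cO(B)$; once that is in hand, the rest of the argument is a direct computation with monomials on $\bP^1$.
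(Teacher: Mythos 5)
Your proof is correct and follows essentially the same route as the paper's: both decompose the special linear systems $|L|$ and $|\omega_C\otimes L^{-1}|$ via the hyperelliptic pencil and reduce the Petri map to the multiplication $S^{l-1}V\otimes S^{l'-1}V\to S^{l+l'-2}V$ for the two-dimensional space $V=\HH^0(A)$, which is Hankel in suitable bases. The only difference is minor: where the paper matches the base points $P_i$ of $L$ with points $Q_i$ of $\omega_C\otimes L^{-1}$ fiber by fiber to see that the product of the base sections is a pullback from $\bP^1$, you deduce the same fact more directly from $B_1+B_2\sim(g-l-l'+1)A$ together with $\phi_*\cO_C=\cO\oplus\cO(-g-1)$.
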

\begin{proof}
Let $f:C\to\bP^1$ be the hyperelliptic pencil and let $A=f^*(\cO(1))$. Then $A$ has degree 2 and $V=\HH^0(A)$ is 2-dimensional base-point free. If $z_0,z_1$ denote two generators of the homogeneous coordinate ring of $\bP^1$, then $s_0, s_1$ with $s_i=f^*z_i$ generate $V$. Moreover  $\HH^0(kA)=S^kV$, and the $k$-symmetric power and the multiplication map $\HH^0(k_1A)\otimes \HH^0(k_2A)\to \HH^0((k_1+k_2)A)$ is identified with the usual multiplication on symmetric powers of $V$, for $k,k_1,k_2\in\bN$. Since $V$ is 2-dimensional, the matrix of linear forms representing any of these multiplication maps is Hankel, up to change of bases.

If $L\in W^r_d\setminus W^{r+1}_d$, it is known that 
\be\label{eqLAP}
L\simeq rA+P_1+\ldots +P_{d-2r}\ee and $\omega_C\otimes L\simeq (g-1-d+r)A+Q_1+\ldots +Q_{d-2r}$ for some points $P_i, Q_i$ such that each $P_i+Q_i$ is a fiber of $f$, and no two $P_i$ lie in the same fiber, cf. \cite[D9 on p.41]{A+}. We have $2r\le d$ by Clifford Theorem. Let $l_i(s_0,s_1)$ denote the linear form defining $f(P_i)=f(Q_i)$ in $\bP^1$. 

Let $l_i^P$ denote be a generator of the 1-dimensional space of global sections of $\cO_C(P_1)$. Using  $P_1\simeq A-Q_1$, the image of $l_i^P$ under $\HH^0(A-Q_1)\to \HH^0(A)$ is $l_i$, up to a non-zero constant which we can take to be 1. We define $l_i^Q$ similarly by replacing $P_i$ with $Q_i$, so that its image in $\HH^0(A)$ is also $l_i$. Then the product $l_i^Pl_i^Q$ is non-zero and must map to $l_i$ under the multiplication map $\HH^0(\cO_C(P_1))\otimes \HH^0(\cO_C(Q_1))\to \HH^0(\cO_C(P_1+Q_1))\simeq \HH^0(A)$.

Consider now the tensor product of 1-dimensional vector spaces $\HH^0(\cO_C(P_1))\otimes\ldots\otimes \HH^0(\cO_C(P_k))$, where $k\le d-2r$. Under the multiplication map to the 1-dimensional vector space $\HH^0(\cO_C(P_1+\ldots+P_k))$, the image $l_1^P\ldots l_k^P$ is a generator. Under the inclusion $$\HH^0(\cO_C(P_1+\ldots+P_k))\simeq \HH^0(kA-Q_1-\ldots-Q_k)\to \HH^0(kA)$$ the image of $l_1^P\ldots l_k^P$ is $l_1\ldots l_k$. Similarly, $\HH^0(\cO_C(Q_1+\ldots+Q_k))$ is generated by $l_1^Q\ldots l_k^Q$, whose image in $\HH^0(kA)$ is $l_1\ldots l_k$.

Consider the multiplication map
$$
\HH^0(rA)\otimes \HH^0(\cO_C(P_1+\ldots + P_{k})) \to \HH^0(rA+P_1+\ldots +P_{k}).
$$
By dimensional reasons, it must be an isomorphism. Hence
$$
\HH^0(rA+P_1+\ldots +P_{k}) = l_1^P\ldots l_k^P\cdot S^rV
$$
as a subspace of 
$$
\HH^0((r+k)A) = S^{r+k}V
$$
by mapping $l_i^P$ to $l_i$. Similarly, 
$$
\HH^0((g-1-d+r)A+Q_1+\ldots +Q_{k}) = l_1^Q\ldots l_k^Q\cdot S^{g-1-d+r}V
$$
as a subspace of $$\HH^0((g-1-r+k)A)=S^{g-1-r+k}V$$
by mapping $l_i^Q$ to $l_i$. It follows that the Petri map $\pi_L$ is the map
$$
l_1^P\ldots l_k^P\cdot S^rV \otimes l_1^Q\ldots l_k^Q\cdot S^{g-1-d+r}V \to  l_1\ldots l_k\cdot S^{g-1-d+2r}V
$$
defined by the usual multiplication on symmetric powers together with $l_i^Pl_i^Q=l_i$. That is, as vector spaces, this is the same as the usual multiplication
$
S^rV\otimes S^{g-1-d+r}V\to S^{g-1-d+2r}V.
$ Hence the  matrix of linear forms representing it is Hankel, up to a change of coordinates.
\end{proof}

\begin{prop}\label{propHell}{\rm{}(}\cite[Prop. 2.4]{BK}{\rm{)}} Let $C$ be a hyperelliptic curve, $g\ge 2$, $d< g$, $r\ge 0$. Then
 $W^r_d$ is an irreducible scheme of dimension $d-2r$,  $Sing(W^r_d)= (W^{r+1}_d)_{red}$, and $(W^{r}_d)_{red}\simeq W^0_{d-2r}$. Same is true for $d=g$ and $r>0$, in which case $W^0_g=\pic^g(C)$.
  \end{prop}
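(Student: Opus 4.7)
The plan is to exploit translation by the hyperelliptic line bundle $A$ (of degree $2$, with $h^0(rA) \ge r+1$ for all $r\ge 0$) to reduce the statement to the case $r=0$. Consider the isomorphism of Picard varieties
\[
\phi_r \colon \pic^{d-2r}(C) \xrightarrow{\sim} \pic^d(C), \qquad M \mapsto M \otimes rA.
\]
I aim to show that $\phi_r$ restricts to a bijection $W^0_{d-2r} \to W^r_d$ of underlying sets sending $W^1_{d-2r}$ onto $W^{r+1}_d$. Granting this, the dimension $d-2r$ and irreducibility of $W^r_d$ follow from the corresponding classical properties of $W^0_{d-2r}$ as the image of the Abel--Jacobi map $C^{(d-2r)} \to \pic^{d-2r}(C)$, and $Sing(W^r_d) = (W^{r+1}_d)_{red}$ follows by transporting Theorem~\ref{thmDimSing}~(ii) via $\phi_r$.

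For the set-theoretic bijection, if $M \in W^0_{d-2r}$ and $0 \ne s \in \HH^0(M)$, multiplication by $s$ embeds $\HH^0(rA) \hookrightarrow \HH^0(M \otimes rA)$, so $\phi_r(M) \in W^r_d$. Conversely, for $L \in W^m_d \setminus W^{m+1}_d$ with $m \ge r$, the classical hyperelliptic classification invoked in the proof of Proposition~\ref{propHyl} gives $L \simeq mA + P_1 + \dots + P_{d-2m}$ with no two $P_i$ conjugate under the hyperelliptic involution, and therefore $\phi_r^{-1}(L) \simeq (m-r)A + P_1 + \dots + P_{d-2m}$ is an effective divisor class lying in $W^0_{d-2r}$. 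Restricting both arguments to the strata indexed by $m \ge r+1$ matches the image of $W^{r+1}_d$ with $W^1_{d-2r}$.

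Since $\phi_r$ is an isomorphism of ambient smooth varieties, it restricts to an isomorphism of reduced closed subschemes on the matched supports: $(W^0_{d-2r})_{red} \simeq (W^r_d)_{red}$. To upgrade this to the statement as written, I would verify that $W^0_{d-2r}$ is already reduced: it is Cohen--Macaulay (its natural degeneracy-locus scheme structure is cut out in the expected codimension $g - (d-2r)$ by Theorem~\ref{thmDimSing}~(i)) and generically reduced, because the Abel--Jacobi map $C^{(d-2r)} \to W^0_{d-2r}$ is generically injective thanks to the hyperelliptic classification showing that a generic degree-$(d-2r)$ divisor has $h^0 = 1$ whenever $d-2r < g$, which holds in both the $d<g$ and $(d=g,\, r>0)$ cases of the proposition. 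The degenerate case $(d=g,\, r=0)$ is $W^0_g = \pic^g(C)$ by Riemann--Roch, recovered by $\phi_0 = \id$.

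The main obstacle I expect is this reducedness step; the translation and set-theoretic bookkeeping are routine once one has the hyperelliptic classification in hand. An alternative would be to work directly with the universal degeneracy-locus presentations of $W^r_d$ and $W^0_{d-2r}$ and verify that $\phi_r$ induces a scheme-theoretic isomorphism on the reduced structures, but the net effect is the same.
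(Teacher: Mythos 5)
Your treatment of irreducibility, the dimension count, and the identification $(W^r_d)_{red}\simeq W^0_{d-2r}$ is essentially the paper's argument in different clothing: the paper uses the single proper map $C^{(d-2r)}\to\pic^d(C)$, $D\mapsto rA+D$, which is your $\phi_r$ precomposed with the Abel--Jacobi map, and the reducedness of $W^0_{d-2r}$ that you establish via Cohen--Macaulayness plus generic reducedness is the classical fact the paper invokes (cf.\ Theorem \ref{thrmLCTb}(i)). So far, so good.

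There is, however, a genuine gap in your claim that $Sing(W^r_d)=(W^{r+1}_d)_{red}$ ``follows by transporting Theorem \ref{thmDimSing}(ii) via $\phi_r$.'' The isomorphism $\phi_r$ identifies $W^0_{d-2r}$ with the \emph{reduced} scheme $(W^r_d)_{red}$, not with the scheme $W^r_d$ carrying its natural degeneracy-locus structure; transport therefore only yields $Sing((W^r_d)_{red})=(W^{r+1}_d)_{red}$. Since a point where a scheme is non-reduced is automatically a singular point, you still need to show that $W^r_d$ itself is regular (in particular reduced) along $W^r_d\setminus W^{r+1}_d$. This is not automatic for $r\ge 1$: the locus is of strictly larger dimension than the expected one $\rho_{1,d,r+1}$, so your Cohen--Macaulay/expected-codimension argument does not apply, and indeed reducedness of $W^r_d$ for hyperelliptic curves is treated in the paper as a nontrivial, conditional statement (Proposition \ref{thmHyE}(\ref{iti0})). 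The paper closes exactly this gap by computing the Zariski tangent space of the scheme $W^r_d$ at $L'\in W^r_d\setminus W^{r+1}_d$: Proposition \ref{propOurLePot} gives $\dim T_{L'}W^r_d=g-\dim\im\pi_{L'}$, and Proposition \ref{propHyl} (the Petri map of a line bundle on a hyperelliptic curve is a Hankel matrix) gives $\dim\im\pi_{L'}=h^0(L')+h^1(L')-1=g-d+2r$, whence $\dim T_{L'}W^r_d=d-2r=\dim W^r_d$ and $W^r_d$ is regular at $L'$. You need this tangent-space step (or some substitute controlling the scheme structure of $W^r_d$ off $W^{r+1}_d$); the translation argument alone cannot supply it.
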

\begin{proof}
The Abel-Jacobi proper morphism $C^{(d-2r)}\to \pic^d(C)$ given by $D\to rA+D$ surjects set-theoretically onto $W^r_d$ and is one-to-one generically by (\ref{eqLAP}). Hence $(W^{r}_d)_{red}\simeq W^0_{d-2r}$ since the latter is reduced and is the scheme theoretic image of the Abel-Jacobi morphism. Thus $W^r_d$ 
 is irreducible, and $\dim W^r_d=d-2r$.  Proposition \ref{propOurLePot} implies the general fact  that  $Sing(W^r_d)\subset (W^{r+1}_d)_{red}$ as reduced algebraic sets, and that for $L'\in W^r_d\setminus W^{r+1}_d$, $\dim T_{L'}W^r_d = g- \dim \im \pi_{L'}$. By  Proposition \ref{propHyl}, $\pi_{L'}$ is a Hankel matrix and hence $\dim \im \pi_{L'}=h^0(L')+h^1(L')-1 = 2r-d+g$. It follows that $\dim T_{L'}W^r_d =d-2r$. Since $\dim W^r_d=d-2r$, we must have $Sing(W^r_d)= (W^{r+1}_d)_{red}$. 
\end{proof}

\section{Review of singularity invariants}\label{appSI}

In this section we recall some definitions and facts from singularity theory used in the article. There are no new results here.
We take $K=\bC$ for simplicity. Let $X$ be $K$-scheme of finite type and $x\in X(K)$ a point.

\begin{defn}
 The {\it tangent cone} $TC_xX$ of $X$ at $x$ is the spectrum of associated graded ring $\gr_m\cO_{X,x}=\oplus_{i\ge 0}\; m^i/m^{i+1}$ of the local ring $(\cO_{X,x},m)$ with respect to the $m$-adic filtration.
\end{defn}

In particular, if $X\subset \bA^n$ is a closed subscheme given by an ideal $I\subset K[x_1,\ldots,x_n]$, then the tangent cone at the origin $TC_0X$ is given by the {\it initial ideal} $in(I)$, the homogeneous ideal generated by the smallest-degree homogeneous components of the polynomials $f\in I$. In this setup one can always define a flat specialization of $X$ to $TC_0X$, cf. \cite[Thm. 15.17]{E}.

\begin{defn}
The {\it multiplicity}  of $X$ at $x$ is the unique integer $e(X,x)$ such that 
$$
\length_{\cO_{X,x}} m^n/m^{n+1} = e(X,x)\frac{n^{d-1}}{(d-1)!} + \text{ lower order terms in }n
$$
for $n$ big enough, where $d$ is the Krull dimension of $\cO_{X,x}$.
\end{defn}

The multiplicity and the dimension of $X$ at $x$ equal those of the tangent cone $TC_xX$ at the vertex \cite[12.1]{E}. In fact, they are equal to the degree and the dimension plus one, respectively, of the projectivized tangent cone $\bP(TC_xX)$ inside the projectivization of the Zariski tangent space $\bP(T_xC)$.

\begin{defn}
We say $X$ has {\it rational singularities}  if it is normal and there exists a proper birational morphism $f:Y\to X$ such that $Y$ is a regular $K$-scheme (that is, $f$ is a {\it resolution} of $X$) and $R^if_*\cO_Y=0$ for $i>0$. We say $X$ has rational singularities at $x$ if a Zariski open neighborhood of $x$ has rational singularities.
\end{defn}

The following is due to Elkik \cite{El}:

\begin{theorem}\label{thmElik}  1) A deformation of a rational singularity is a rational singularity.

2) If $f:X\to S$ is a flat morphism and $x\in X$ is such that $s=f(x)$ is a rational singularity in $S$
 and $x$ is a rational singularity of the fibre $f^{-1}(s)$, then $x$
 is a rational singularity in $X$.
\end{theorem}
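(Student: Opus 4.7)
The plan is to reduce (1) to (2): a deformation of $(X_0,x)$ means a flat morphism $f:X\to S$ with $(S,s)$ smooth and $f^{-1}(s)\simeq X_0$ near $x$, and since smooth points are trivially rational singularities, (2) applies. So the real content is (2).

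For (2), I would use the classical characterization in characteristic zero: $Y$ has a rational singularity at $y$ if and only if $Y$ is Cohen--Macaulay and normal at $y$ and for some (equivalently every) resolution $\pi:\tilde Y\to Y$ one has $R^i\pi_*\cO_{\tilde Y}=0$ for $i>0$ in a neighborhood of $y$. The strategy is then to verify these three properties for $(X,x)$. Cohen--Macaulayness of $(X,x)$ follows from the flatness of $f$, since rational singularities are CM and a flat map with CM base and CM fiber has CM total space. Normality at $x$ then follows from Serre's criterion: the $S_2$ condition comes from CM, and $R_1$ follows because the non-regular locus of the fibre $f^{-1}(s)$ has codimension $\ge 2$ (rational implies normal), which by flatness forces the non-regular locus of $X$ near $x$ to also have codimension $\ge 2$.

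The heart of the proof is the vanishing of higher direct images. Starting with a resolution $\pi_0:\tilde X_s \to f^{-1}(s)$ with $R^i\pi_{0,*}\cO_{\tilde X_s}=0$ near $x$ for $i>0$, the task is to extend $\pi_0$ to a proper birational morphism $\tilde\pi:\tilde X\to X$ with $\tilde X$ regular, whose fibre over $s$ realizes $\pi_0$. Once such a simultaneous resolution is in hand, semicontinuity together with cohomology-and-base-change propagate the fiberwise vanishing to a neighborhood of $x$, yielding $R^i\tilde\pi_*\cO_{\tilde X}=0$ for $i>0$ there.

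The main obstacle is the construction of this simultaneous resolution. Elkik's approach, which I would follow, is to build a formal lift of $\pi_0$ inductively along the thickenings of $s$ in $S$, the obstructions to each successive lifting living in coherent cohomology groups on $\tilde X_s$ that can be controlled (in particular killed after shrinking or in the relevant degrees), and then to algebraize the resulting formal family via Artin's approximation theorem to produce an étale-local algebraic resolution of a neighborhood of $x$ in $X$. The final technical points, also nontrivial, are to check that the resulting $\tilde\pi$ is proper and birational and that $\tilde X$ is regular along the exceptional locus; these can be arranged after suitable shrinking using that regularity and properness are open conditions.
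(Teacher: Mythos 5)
The paper offers no proof of this statement: it is quoted from Elkik [El78] as a known result, so there is no in-paper argument to compare with. Judged on its own terms, your proposal has a genuine gap at its central step. The ``simultaneous resolution'' you want to build --- a proper birational $\tilde\pi:\tilde X\to X$ with $\tilde X$ regular whose fibre over $s$ realizes the chosen resolution $\pi_0$ of $f^{-1}(s)$ --- does not exist in general, and no formal-lifting-plus-Artin-approximation scheme can produce it, because the obstructions to lifting the pair $(\tilde X_s,\pi_0)$ through the infinitesimal thickenings of $s$ genuinely do not vanish. The standard counterexample is the smoothing $X=\{xy=z^2-t\}\subset\bA^4\to\bA^1_t$ of the $A_1$ surface singularity: the total space $X$ is smooth, and a resolution of $X$ restricting over $t=0$ to the minimal resolution of $X_0$ would have to contract a single curve inside a smooth threefold, contradicting purity of the exceptional locus of a birational morphism onto a smooth (or merely factorial) variety. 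This is exactly the Atiyah--Brieskorn phenomenon that simultaneous resolutions of ADE families exist only after a ramified base change. So the heart of your argument fails already for the simplest nontrivial deformation of a rational surface singularity.

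Elkik's actual proof circumvents this: she never asks the resolution of $X$ to restrict to a resolution of the fibre. She works instead with the dual characterization --- $X$ has rational singularities iff it is normal, Cohen--Macaulay, and $\pi_*\omega_{\tilde X}=\omega_X$ for one (equivalently any) resolution $\pi$ --- reduces to the case where the fibre is cut out by a single nonzerodivisor $t$, takes an arbitrary resolution of $X$ adapted to $X_0$, and compares $\pi_*\omega_{\tilde X}\otimes\cO_{X_0}$ with $\omega_{X_0}$ via Grauert--Riemenschneider vanishing and adjunction along $\pi^{-1}(X_0)$, concluding by Nakayama; part (2) then follows by composing such one-parameter steps. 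Your preliminary reductions (deducing (1) from (2); Cohen--Macaulayness of $X$ from flatness) are fine and also appear in her argument, though your normality step needs a little more care: the non-regular locus of $X$ is controlled not only by the singular locus of the special fibre but also by $f^{-1}(Sing(S))$ (handled because $S$ is normal and $f$ is flat) and by the singularities of nearby fibres (handled by openness of the smooth locus of $f$). The irreparable issue is the passage from the fibrewise vanishing to the total space, where a mechanism other than simultaneous resolution is required.
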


In particular, if the tangent cone  $TC_x$ has rational singularities then $X$ has rational singularities at $x$, since there exists a flat specialization of a Zariski open affine neighborhood of $x$ to the tangent cone of $X$ in $x$.
The converse is not true:  $X=(x^2+y^3+z^4=0)$ in $\bA^3$ has rational singularities, but $TC_0X=(x^2=0)$ is not even reduced.

\begin{defn}
The scheme $X$ is {\it Cohen-Macaulay} if every local ring $\cO_{X,x}$ admits a regular sequence of elements in the maximal ideal of length equal to $\dim \cO_{X,x}$.
\end{defn}

One has the following, see \cite[Cor. 18.14]{E} and \cite{Ke}, respectively.

\begin{theorem}\begin{enumerate}[(1)]

\item If $X$ is Cohen-Macaulay then $X$ has no embedded component and its irreducible components have the same codimension.

\item If $X$ has rational singularities then $X$ is Cohen-Macaulay.
\end{enumerate}

\end{theorem}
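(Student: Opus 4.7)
For part (1), the plan is to appeal to the unmixedness property of Cohen--Macaulay rings. First, I would localize at an arbitrary point $x\in X$ and work with the Noetherian local ring $(R,\mathfrak{m})=(\cO_{X,x},\mathfrak{m}_x)$. The key input is the standard inequality $\mathrm{depth}(R)\le\dim(R/\mathfrak{p})$ for any associated prime $\mathfrak{p}$ of $R$ (proved by constructing a regular sequence mapping into $\mathfrak{p}$ via prime avoidance). Since $R$ is CM, $\mathrm{depth}(R)=\dim(R)$, so $\dim(R/\mathfrak{p})=\dim(R)$ for every associated prime $\mathfrak{p}$. This forces $\mathfrak{p}$ to be a minimal prime (so no embedded components) and to have the same coheight as the ring itself. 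Equidimensionality of the irreducible components of $X$ then follows by globalizing: any minimal prime of $\cO_{X,x}$ corresponds to a branch of $X$ through $x$, and the computation shows all branches have the same dimension. Catenariness of CM rings gives the corresponding statement about codimension.

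For part (2), the plan is to use Grothendieck duality together with Grauert--Riemenschneider vanishing to force the dualizing complex of $X$ to be concentrated in a single cohomological degree. Fix a resolution $f:Y\to X$ with $R^if_*\cO_Y=0$ for $i>0$, which exists since $X$ has rational singularities. By normality and the existence of such a resolution, $X$ is equidimensional of some dimension $n=\dim Y$. Grothendieck duality for the proper morphism $f$ applied to $\cO_Y$ yields
\[
Rf_*\omega_Y^\bullet\;\simeq\;R\cHom_{\cO_X}(Rf_*\cO_Y,\omega_X^\bullet)\;\simeq\;R\cHom_{\cO_X}(\cO_X,\omega_X^\bullet)\;\simeq\;\omega_X^\bullet,
\]
where the middle isomorphism uses the rational singularities hypothesis $Rf_*\cO_Y\simeq\cO_X$.

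Now $Y$ is smooth, so $\omega_Y^\bullet=\omega_Y[n]$. Grauert--Riemenschneider vanishing gives $R^if_*\omega_Y=0$ for $i>0$, hence $Rf_*\omega_Y=f_*\omega_Y$ is concentrated in cohomological degree $0$. Combining, $\omega_X^\bullet\simeq f_*\omega_Y[n]$ is concentrated in a single cohomological degree $-n$. A standard criterion (essentially the definition via the dualizing complex, cf. \cite[Ch.\ 21]{E}) then gives that $X$ is Cohen--Macaulay. The main obstacle to watch is ensuring the hypotheses for Grauert--Riemenschneider (characteristic zero, which we have) and Grothendieck duality (we need $X$ to admit a dualizing complex, which holds since $X$ is of finite type over $K$), together with care about equidimensionality to identify $\dim X$ with $\dim Y$; this last point follows from $X$ being normal and $f$ being birational.
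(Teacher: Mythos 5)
Your proposal is correct; the paper itself gives no proof of this statement, only citations to \cite[Cor.\ 18.14]{E} for (1) and to \cite{Ke} for (2). Your two arguments (unmixedness via $\mathrm{depth}(R)\le\dim(R/\mathfrak{p})$ for associated primes, and Grothendieck duality plus Grauert--Riemenschneider forcing $\omega_X^\bullet$ into a single degree) are exactly the standard proofs found in those references.
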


\begin{defn} If $Z$ is a reduced closed subscheme of a smooth variety $X$, the {\it symbolic powers} of its (radical) ideal $I\subset\cO_X$ are
$$
I^{(n)}:=\{f\in\cO_X\mid \ord_x(f)\ge m\text{ for general }x\in Z\}.
$$ 
\end{defn}

\begin{defn}\label{defnLR} Let $Z$ be a closed subscheme of a smooth variety $X$, defined by a sheaf of ideals $I\subset \cO_X$. 
\begin{enumerate}[(1)]

\item A {\it log resolution} of $(X,Z)$ is a resolution $f:Y\to X$ such that $f^{-1}(Z)$ and its union with the support of $K_{Y/X}$ is a divisor with simple normal crossings. Write $I\cdot\cO_Y=\cO_Y(-\sum_{j\in J}N_jE_j)$ and $K_{Y/X}=\sum_{j\in J}(\nu_j-1)E_j$ where $E_j$ are prime divisors on $Y$, and $N_j, \nu_j\in\bN$. 

\item The {\it log canonical threshold} of $(X,Z)$ at $x\in Z$ is
$$
\lct_x(X,Z) := \min\{{\nu_j}/{N_j}\mid f(E_j)\cap U\neq \emptyset \}
$$
where $U$ is a small Zariski open neighborhood of $x$ in $X$. 

\item The {\it multiplier ideal} with coefficient $c\in\bR_{>0}$ is
$$
\cJ_x(X,cZ):=f_*\cO_Y(K_{Y/X}-\lfloor c\sum_{j}N_jE_j  \rfloor) \subset\cO_X
$$
where the sum is over $j\in J$ with $f(E_j)\cap U$. 

\item Taking $U=X$ one obtains by definition the global versions $\lct(X,Z)$ and $\cJ(X,cZ)$ of the log canonical threshold and multiplier ideals, respectively. 

\item The pair $(X,Z)$ is {\it log canonical} if $\lct(X,Z)=1$.

\item The {\it topological zeta function} of $(X,Z)$ at $x\in Z$ is the rational function
$$
Z^{top}_{X,Z,x}(s):=\sum_{\emptyset\neq I\subset J}\chi(E_I^\circ\cap f^{-1}(x))\prod_{j\in I}\frac{1}{N_js+\nu_j},
$$
where $E_I^\circ=\cap_{i\in I}E_i\setminus \cup_{j\in J\setminus I}E_j$, and $\chi$ is the topological Euler characteristic. Taking $E_I^\circ$ instead of $E_I^\circ\cap f^{-1}(x)$ one obtains the global version, which we denote $Z^{top}_{X,Z}(s)$.

\item 

\end{enumerate}

\end{defn}

The following can be found in \cite[Part III]{Laz}:

\begin{theorem} \begin{enumerate}[(1)]
\item There is an equality $$\lct_x(X,Z)=\min\{c\in\bR_{>0}\mid \cJ_x(X,cZ)\subsetneq\cO_X\}.$$ 

\item The multiplier ideals $\cJ_x(X,cZ)$ are independent of the choice of log resolution.

\item Let $f_i\in\cO_X$ be a finite set of local generators for the ideal $I$ of $Z$ around $x$. Then, in terms of analytic functions,
$$
\cJ_x(X,cZ)^{an}=\{g\in\cO_{X}^{an}\mid \frac{|g|^2}{(\sum_i |f_i|^2)^c}\text{ is integrable locally around }x\}.$$

\item There is a finite stratification into locally closed subsets of $Z$ such that $\lct_x(X,Z)$ and $\cJ_x(X,Z)$ are piecewise constant as functions of $x\in Z$.

\item One has $\lct(X,Z)=\min_{x\in Z}\lct_x(X,Z)$ and $\cJ(X,cZ)=\cap_{x\in Z}\cJ_x(X,Z)$.
\end{enumerate}

\end{theorem}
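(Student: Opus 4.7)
The plan is to handle (1), (2), (3) from the definitions and some birational geometry, then derive (4) and (5) as easy consequences. Throughout, fix a log resolution $f:Y\to X$ of $(X,Z)$ as in Definition \ref{defnLR}, and write $f^{-1}(I)\cdot\cO_Y=\cO_Y(-\sum_j N_jE_j)$ and $K_{Y/X}=\sum_j(\nu_j-1)E_j$.

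For (1), rewrite $\cJ_x(X,cZ)=f_*\cO_Y\bigl(\sum_j(\nu_j-1-\lfloor cN_j\rfloor)E_j\bigr)$, the sum restricted to those $E_j$ meeting $f^{-1}(U)$. Since $f_*\cO_Y=\cO_X$, one gets $\cJ_x(X,cZ)=\cO_X$ exactly when every coefficient is $\ge 0$, i.e. $\lfloor cN_j\rfloor\le\nu_j-1$, i.e. $cN_j<\nu_j$, i.e. $c<\nu_j/N_j$. Taking the minimum over the relevant $j$'s gives precisely $\lct_x(X,Z)$, and the smallest $c$ making the ideal strict is this minimum.

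For (2), given two log resolutions $f_i:Y_i\to X$, dominate both by a common third log resolution $g:Z\to X$ factoring as $Z\to Y_i\to X$. Using the behavior of $K_{Z/X}=h^*K_{Y_i/X}+K_{Z/Y_i}$ together with the projection formula and the vanishing $Rh_*\cO_Z(K_{Z/Y_i}-\lfloor h^*D\rfloor)=\cO_{Y_i}(-\lfloor D\rfloor)$ for an $\bR$-divisor $D$ on $Y_i$ with snc support (a consequence of local Kawamata--Viehweg vanishing applied to toroidal blowups), one obtains equality of the two definitions. This is the standard argument and the main technical tool here is the vanishing statement.

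For (3), use the change of variables formula: pulling back $|g|^2/(\sum|f_i|^2)^c$ by $f$ multiplies by $|\det df|^2$ and replaces $\sum|f_i|^2$, up to bounded nonvanishing factors, by $\prod|z_j|^{2N_j}$ in local monomial coordinates adapted to the snc divisor $\sum N_jE_j$. Since $|\det df|^2$ has vanishing order $2(\nu_j-1)$ along $E_j$, local integrability of the pullback reduces via Fubini to the one-variable integrability of $|z|^{2(a-cN)}$ near $0$, which holds iff $a-cN>-1$, iff $a\ge\lfloor cN\rfloor$. This matches exactly the condition that $f^*g$ be a section of $\cO_Y(K_{Y/X}-\lfloor c\sum N_jE_j\rfloor)$, and by definition this says $g\in\cJ_x(X,cZ)^{an}$.

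Parts (4) and (5) are then immediate. For (4), the numerical data $(N_j,\nu_j)$ and the coefficient sheaves $\cO_Y(K_{Y/X}-\lfloor c\sum N_jE_j\rfloor)$ only depend on which irreducible components of $f^{-1}(Z)$ meet $f^{-1}(x)$; stratifying $Z$ by the combinatorial type of the subset $\{j : x\in f(E_j)\}$ gives a finite locally closed stratification on which both $\lct_x$ and $\cJ_x$ are constant. For (5), the first equality is evident from (1) and (4) since $\lct(X,Z)$ corresponds to taking the minimum $\nu_j/N_j$ over \emph{all} $j$. The second equality follows because $\cJ(X,cZ)$ is globally defined by the same pushforward formula without restriction on $j$, and sheafwise the stalk at $x$ only sees those $E_j$ with $f(E_j)\ni x$, giving $\cJ(X,cZ)_x=\cJ_x(X,cZ)_x$; taking intersections over $x\in Z$ recovers the global ideal. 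The only mildly delicate step in the whole program is the vanishing used in (2); everything else is bookkeeping with log discrepancies.
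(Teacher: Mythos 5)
This theorem is not proved in the paper: it sits in the background Section \ref{appSI}, which states explicitly that it contains no new results, and the statement is quoted from \cite[Part III]{Laz} without argument. Your sketch reproduces the standard proofs from that reference and is essentially correct, so in effect you are supplying a proof where the paper supplies a citation. Two points are thinner than the rest and worth making explicit. In (1), the direction ``some coefficient negative $\Rightarrow$ the ideal is proper'' needs a word: if $\nu_j-1-\lfloor cN_j\rfloor<0$ for some $E_j$ with $f(E_j)\cap U\neq\emptyset$, then any local section $g$ of the pushforward satisfies $\ord_{E_j}(f^*g)>0$, hence $g$ vanishes along $f(E_j)\cap U$ and cannot be a unit there; also the a priori inclusion $\cJ_x(X,cZ)\subset\cO_X$ is itself the statement $f_*\cO_Y(K_{Y/X})=\cO_X$ for the effective exceptional divisor $K_{Y/X}$, which you use implicitly when you conclude from ``all coefficients $\ge 0$'' that the pushforward equals $\cO_X$. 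In (2), you correctly isolate the local vanishing $h_*\cO_W(K_{W/Y}-\lfloor h^*D\rfloor)=\cO_Y(-\lfloor D\rfloor)$, with vanishing higher direct images, as the crux (this is \cite[Lem.\ 9.2.19]{Laz}); that is indeed where all the cohomological input sits, and the rest of your argument --- the change-of-variables computation in (3), the stratification by the combinatorics of $\{j: x\in f(E_j)\}$ in (4), and the stalkwise comparison in (5) --- is the standard bookkeeping and is sound.
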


The following is due to Denef-Loeser \cite{DL}:

\begin{theorem}
The topological zeta function of $(X,Z)$ (resp. at $x\in Z$) is independent of the choice of log resolution.
\end{theorem}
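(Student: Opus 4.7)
The plan is to reduce the statement to invariance under a single ``elementary'' modification between log resolutions and then verify that case by an explicit computation that exploits the multiplicativity of the Euler characteristic in fibrations.

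First I would observe that any two log resolutions $f_1\colon Y_1\to X$ and $f_2\colon Y_2\to X$ can be dominated by a common third log resolution $f_3\colon Y_3\to X$ (take the graph closure, desingularize it and then further blow up along smooth centers in normal crossings position with the relevant divisors). So it is enough to prove that if $g\colon Y'\to Y$ is a proper birational morphism between log resolutions of $(X,Z)$, the formula defining $Z^{top}_{X,Z}(s)$ (or its local version at $x$) gives the same rational function when computed on $Y$ or on $Y'$. By weak factorization, or more elementarily by the standard factorization of birational maps between smooth varieties relative to a fixed SNC divisor, one may further assume that $g$ is a single blowup along a smooth center $C\subset Y$ which has simple normal crossings with $\sum_{j\in J}E_j$.

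The second step is the direct computation. Say $C$ has codimension $c$ in $Y$ and is contained in exactly $E_{j}$ for $j\in I_{0}\subset J$ (where $|I_{0}|=r\le c$). The new exceptional divisor $E_{0}$ carries numerical data $(N_{0},\nu_{0})=(\sum_{j\in I_{0}}N_{j},\sum_{j\in I_{0}}\nu_{j}+c-r)$, and the only strata $E_{I}^{\circ}$ on $Y$ affected are those with $I\cap I_{0}\neq\emptyset$. Since $g$ is an isomorphism away from $C$, the unaffected strata contribute identically on both sides, so one must only check that, for every $I$ with $I\cap I_{0}\neq\emptyset$, the rational function
\[
\chi(E_{I}^{\circ}\cap f^{-1}(x))\prod_{j\in I}\frac{1}{N_{j}s+\nu_{j}}
\]
equals the sum over the corresponding strata in $Y'$ (those containing $E_{0}$ and various strict transforms of the $E_{j}$, $j\in I_0$). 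Using that the blowup exceptional divisor is a $\mathbb{P}^{c-1}$-bundle over $C$, the multiplicativity $\chi(\mathrm{tot})=\chi(\mathrm{base})\cdot\chi(\mathrm{fiber})$ for locally trivial fibrations in the Zariski topology reduces the verification to a purely combinatorial identity between rational functions indexed by subsets of $I_{0}$, which one proves by partial-fraction manipulation together with the identity $\sum_{\emptyset\neq S\subset I_{0}}(-1)^{|S|+1}=1$ (or a weighted refinement thereof obtained from $\chi(\mathbb{P}^{c-r-1})=c-r$ when $c>r$, and a boundary case when $c=r$).

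The main obstacle is this final combinatorial identity: organizing the bookkeeping so that the geometric series coming from the new prime divisor $E_{0}$ cancels cleanly against the pieces produced by its intersections with strict transforms. A conceptually cleaner route, which I regard as the better alternative for an expository treatment, is to first define the motivic zeta function $Z^{mot}_{X,Z}(s)$ as an integral over the arc space of $X$, manifestly independent of any resolution, and then apply the motivic change of variables formula along a log resolution to recover the formula of Definition~\ref{defnLR}~(6) after specializing via the compactly supported Euler characteristic $\chi_{c}$. In that approach the hard combinatorial identity is replaced by the change of variables formula, and the local version at $x$ follows by restricting the integral to arcs through $x$.
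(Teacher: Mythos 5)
The paper does not actually prove this statement: it is quoted as a known result of Denef--Loeser, with \cite{DL} as the reference and no argument given. Your second, ``conceptually cleaner'' route is precisely the proof in that cited reference: define the motivic zeta function as an integral of $\bL^{-s\,\ord_t(I\circ\gamma)}$ over the arc space (restricted to arcs with origin $x$ for the local version), which is manifestly resolution-independent, compute it on a log resolution via the motivic change of variables formula, and specialize by $\chi_c$ together with $\bigl(\bL-1\bigr)/\bigl(\bL^{N_js+\nu_j}-1\bigr)\mapsto 1/(N_js+\nu_j)$ to land on the expression in Definition~\ref{defnLR}. So the proposal, taken as a whole, is sound and matches the source the paper relies on.

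Your first route is also legitimate (it is close in spirit to Denef--Loeser's original pre-motivic argument), but two points deserve flags. First, the phrase ``more elementarily by the standard factorization of birational maps between smooth varieties relative to a fixed SNC divisor'' overreaches: factoring a proper birational \emph{morphism} into blowups along smooth centers is the open strong factorization problem, so you genuinely need weak factorization, which yields a zigzag of blowups and blowdowns; one must then use the refined form of that theorem to ensure every intermediate variety is again a log resolution of $(X,Z)$, since otherwise the defining expression does not make sense on it. Second, the ``purely combinatorial identity'' you defer is the entire content of this route; it does close, the key point being that a stratum $E_I^\circ$ of the exceptional $\bP^{c-1}$-bundle fibers over its image with fiber a product of $(\bC^*)$'s, so $\chi=0$ kills all but the deepest strata, and for those the identity reduces to $\sum_{i\in I_0}a_i=a_0$ with $a_j=N_js+\nu_j$ and $(N_0,\nu_0)$ as you computed. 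Since your stated fallback is the actual published proof, I would not call this a gap, but if you intend the combinatorial route as the primary argument you must carry out that telescoping computation rather than assert it.
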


\begin{defn}\label{defnbfct} If $I\subset\cO_X$ is an ideal of regular functions on a smooth affine variety $X$, the {\it Bernstein-Sato polynomial} $b_I(s)$ of $I$ is the non-zero polynomial $b(s)\in\bC[s]$ of minimal degree satisfying
$$
b(s_1+\ldots+s_r)\prod_{i=1}^rf_i^{s_i}=\sum_{k=1}^rP_kf_k\prod_{i=1}^rf_i^{s_i}
$$
for some $P_k$ in $\cD_X[s_{i,j}]_{i,j}$, where: $f_1,\ldots, f_r$ is a set of generators of $I$; $s_i$ are independent variables; $\cD_X$ is the ring of algebraic linear partial differential operators on $X$ acting naturally on $\cO_X[\prod_if_i^{-1},s_1,\ldots,s_r]\prod_if_i^{s_i}$; on the latter there is also an action $s_{i,j}=s_it_i^{-1}t_j$ with $t_k$ acting $\cD_X$-linearly by $t_k(s_i)=s_i$ if $i\neq k$, and $t_k(s_k)=s_k+1$. 

If $x\in X$ and one replaces algebraic functions by germs of analytic functions at $x$, one obtains by definition the {\it local Bernstein-Sato polynomial} $b_{I,x}(s)$ of $I$ at $x$.
\end{defn}

\begin{theorem}\label{thmBfct} Let $I\subset\cO_X$ be an ideal of regular functions on a smooth affine variety $X$, and let $Z$ is the associated subscheme. Then:
\begin{enumerate}[(1)]
\item The polynomials $b_I(s)$, $b_{I,x}(s)$ do not depend on the choice of generators for $I$, are non-zero, and all their roots are negative rational numbers. Moreover, $b_{I,x}$ is principal and generated by $s+\codim_{(X,x)}(Z,x)$ if and only if $Z$ smooth at $x$.

\item There is a finite stratification into locally closed subsets of $Z$ such that $b_{I,x}(s)$ is piecewise constant for $x\in Z$.

\item The polynomial $b_I(s)$ is the lowest common multiple of all $b_{I,x}(s)$ with $x\in Z$. 

\item The polynomial $b_Z(s):=b_I(s-\codim_XZ)$ depends only on the scheme $Z$.

\item The negative of the maximal root of  $b_{I,x}(s)$ (resp. $b_I(s)$) equals the log canonical threshold $\lct_x(X,Z)$ (resp. $\lct(X,Z)$).

\item If $Z$ is a reduced complete intersection in $X$ (resp. at $x\in Z$), then $Z$ has rational singularities (resp. at $x\in Z$) if and only if the maximal root of $b_I(s)$ (resp. $b_{I,x}(s)$) is the negative of the codimension of $Z$ in $X$ (resp. local codimension at $x$) and has multiplicity one. 
\end{enumerate}
\end{theorem}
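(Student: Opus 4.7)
Since this theorem is a compilation of classical results on Bernstein–Sato polynomials of ideals, my plan is to derive all six parts from the unified $V$-filtration framework for ideals developed by Budur–Musta\c{t}\u{a}–Saito, which reduces everything to the well-understood principal (hypersurface) case of Bernstein, Kashiwara, and M. Saito. The first move is to relate $b_I(s)$ to the graph embedding $i:X\to X\times \bA^r$ of $(f_1,\ldots,f_r)$: the module $\cO_X[\prod_i f_i^{-1},s_1,\ldots,s_r]\prod_i f_i^{s_i}$ is a summand of $i_+\cO_X$ equipped with its natural $V$-filtration along the zero section, and the minimal polynomial in $s = s_1+\cdots+s_r$ acting on a suitable cyclic quotient recovers $b_I(s)$. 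Independence of the choice of generators in (1) then follows because two generating sets for the same $I$ produce canonically isomorphic summands via $\cD_X[s_{i,j}]$-linear maps. Non-vanishing of $b_I$ is Bernstein's Noetherian argument applied to the holonomic $\cD$-module obtained, and rationality of the roots is Kashiwara's theorem reduced to the monomial case on a log resolution. The smoothness characterization in (1) is a direct computation in local coordinates where $Z=(x_1,\ldots,x_c)$.

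For (2)–(4), constructibility of $x\mapsto b_{I,x}(s)$ on $Z$ follows from coherence of the relative $V$-filtration: the locus where a given polynomial annihilates a graded piece is Zariski-closed, giving a finite stratification by Noetherianity. Part (3) is then a formal local-to-global statement: the minimal polynomials patch as lowest common multiples. Part (4) is a bookkeeping verification that the shift $b_I(s-\codim_X Z)$ removes the sensitivity of $b_I$ to the embedding, matching the standard normalization $b_Z(s)$ used in the principal case when $Z$ is a smooth divisor and one expects $b_Z(s) = s+1$.

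The harder parts are (5) and (6). For (5), the equality $\lct_x(X,Z)=-\max\{\text{roots of }b_{I,x}\}$ would be established by combining the analytic interpretation of multiplier ideals with the functional equation defining $b_I$: the poles of the analytic continuation of $\int \prod_i |f_i|^{2s_i}$ are controlled by the zeros of $b_I$, and these same poles detect the threshold at which the multiplier ideal $\cJ_x(X,cZ)$ becomes proper. An equivalent argument goes through a log resolution, using the transformation formula for Bernstein–Sato polynomials under blow-ups together with the explicit description of $\lct_x$ in terms of the numerical data $N_j,\nu_j$. For (6), Saito's criterion for rational singularities of reduced complete intersections proceeds via the Koszul resolution of $\cO_Z$ and the identification of rational singularities with the equality of the Hodge filtration and the pole-order filtration on the localization $\cO_X(*Z)$; this equality is in turn captured by the multiplicity-one condition on the extremal root of $b_{I,x}$.

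The main obstacle I expect is (6): bridging the purely arithmetic condition on the roots of $b_{I,x}$ and the geometric notion of rational singularities genuinely requires the full filtered $\cD$-module machinery and mixed Hodge module theory of M. Saito, and in particular cannot be bootstrapped from (1)–(5) by elementary arguments. The complete intersection hypothesis is crucial precisely because it makes the Koszul complex a resolution and allows the pole-order filtration to be computed in terms of a single $b$-function.
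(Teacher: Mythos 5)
The paper gives no proof of this theorem: it is a survey statement in the ``Review of singularity invariants'' section, and the text immediately following it simply cites \cite{BMS} for parts (1) and (4)--(6) and the reduction to the principal-ideal case \cite{Mus-b} for the rest. Your outline (graph embedding, $V$-filtration along the zero section, reduction to the hypersurface case, and Saito's Hodge-theoretic criterion for (6)) is an accurate summary of exactly those references' arguments, so it matches the approach the paper implicitly relies on.
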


Parts (1) and (4)-(6) for $b_I(s)$ are shown in  \cite{BMS}, however, the theorem for principal ideals $I$ has a longer history, see \cite{Mon}. By reduction to the principal ideal case \cite{Mus-b}, the rest of the results follows. 

\begin{defn}
With $X, Z, I, x$ as in (6), and assuming that $Z$ is not smooth at $x$,  the {\it minimal exponent} $\al(X,Z)$ of $(X,Z)$ is the negative of the maximal root of the polynomial $b_I(s)/(s+\codim_XZ)$. Similarly, $\al_x(X,Z)$ is defined using $b_{I,x}(s)$.
\end{defn}

\begin{thrm}\label{propMinE} With the same setup, assume $Z$ is an effective divisor on $X$.
\begin{enumerate}[(1)]

\item {\rm{(}\cite[Thm. E, (3)]{MP2}\rm{)}}  If the multiplicity $e$ of $Z$ at $x$ is $\ge 2$, then  $\al_x(X,Z)\le (\dim X)/e$.

\item {\rm{(}\cite[Cor. D]{MP2}\rm{)}} If $f:Y\to X$ is a log resolution of $(X,Z)$ as in Definition \ref{defnLR} such that it is an isomorphism over $X\setminus Z$ and the strict transforms of the irreducible components of $Z$ are mutually disjoint, then the minimal exponent of $\al(X,Z)\ge \min_{j}\{\nu_j/N_j\}$, where the minimum runs over the exceptional divisors $E_j$.

\item {\rm{(}\cite[Lemma 7.5]{MP3}\rm{)}} If $H$ is a general smooth hypersurface in $X$ then  $\al(X,Z)\le\al(H,Z|_{H})$.

\end{enumerate}

\end{thrm}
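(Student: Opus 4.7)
The plan is to handle the three parts using the characterization of the minimal exponent via the Kashiwara--Malgrange V-filtration, or equivalently via Saito's Hodge filtration on $\cO_X(*Z)$. Concretely, $\al_x(X,Z)$ is the smallest rational $\alpha$ exceeding the codimension of $Z$ such that the V-filtration on the graph-embedding $\cD$-module has a nontrivial jump at $-\alpha$, for a local defining equation of $Z$. Each of the three inequalities then becomes a statement about how that V-filtration behaves under one of the standard operations: non-characteristic restriction (for (3)), proper direct image from a log resolution (for (2)), and deformation in a flat family (for (1)).

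For part (3), I would use that a general smooth hypersurface $H$ is non-characteristic for the relevant $\cD$-module away from a finite set of points. Non-characteristic restriction carries the V-filtration to the V-filtration with a predictable shift, so each nontrivial jump of the V-filtration on $X$ along $Z$ restricts to a nontrivial jump on $H$ along $Z|_H$, which yields $\al(X,Z) \le \al(H,Z|_H)$.

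For part (2), the hypothesis that $f$ is an isomorphism over $X \setminus Z$ and that the strict transforms of the irreducible components of $Z$ are pairwise disjoint implies that near every point of the exceptional locus one sees only a single smooth exceptional divisor $E_j$ in the preimage of $Z$, locally modelled by the SNC pair $(\bA^n, t^{N_j})$ with canonical weight $\nu_j$. On this model the V-filtration is explicit with first jump at $\nu_j / N_j$. Saito's strict compatibility of the Hodge--V-filtration with the proper direct image $f_+$ then forces the smallest jump downstairs to be at least $\min_j \nu_j / N_j$, giving the claimed bound.

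For part (1), with $\mult_x Z = e \ge 2$, I would embed $(X, Z)$ in a one-parameter flat family $(X, Z_t)$ with $Z_0 = Z$ and with $Z_t$ for $t \neq 0$ having at $x$ an isolated ordinary Brieskorn--Pham-type singularity $\sum x_i^e = 0$ of the same multiplicity, whose minimal exponent equals $\dim X / e$. Varchenko's semicontinuity of the spectrum, applied to its smallest element, then gives $\al_x(X, Z) \le \al_x(X, Z_t) = \dim X / e$ for generic $t$. I expect part (2) to be the main obstacle: controlling the smallest V-filtration jump under a $\cD$-module pushforward really does use the pairwise-disjointness assumption, since intersections of exceptional divisors would otherwise produce mixed contributions whose jumps need not descend cleanly to $\min_j \nu_j / N_j$, requiring a more delicate homological argument.
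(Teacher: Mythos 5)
First, a point of comparison: the paper itself offers no proof of this statement. Theorem \ref{propMinE} sits in the survey Section \ref{appSI}, which announces ``there are no new results here'', and each part is quoted verbatim from Musta\c{t}\u{a}--Popa ([MP2, Thm.~E(3)], [MP2, Cor.~D], [MP3, Lemma 7.5]). So your sketches can only be judged on their own terms. Part (3) is the right idea: generic non-characteristic restriction preserves the $V$-filtration jumps at points of $H$, and since the global minimal exponent is a minimum over points, restricting to a general $H$ can only increase it. (As phrased, though, your sentence argues the wrong inequality: what you need is that every jump of $(H,Z|_H)$ \emph{is} a jump of $(X,Z)$, not that every jump of $(X,Z)$ survives restriction -- a general $H$ may well miss the worst points.) Part (1) is workable, but Varchenko's semicontinuity of the spectrum concerns isolated hypersurface singularities, whereas $Z$ here is arbitrary; what you actually need is semicontinuity of the minimal exponent for arbitrary divisors, which is the sibling statement [MP2, Thm.~E(2)], so your argument is borrowing one part of the cited theorem to prove another. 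Also, a family such as $f+t\sum x_i^e$ produces for generic $t$ an ordinary multiplicity-$e$ point (smooth projectivized tangent cone), not literally a Brieskorn--Pham singularity; this is harmless since its minimal exponent is still $(\dim X)/e$, but it should be checked.

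The genuine gap is in part (2). The hypothesis is that the strict transforms of the irreducible components of $Z$ are mutually disjoint; it says nothing about the \emph{exceptional} divisors, which in any nontrivial log resolution do meet one another and the strict transform. So it is false that near every point of the exceptional locus one sees a single smooth divisor: the local model for $f^*Z$ is a monomial $\prod_i t_i^{N_i}$ in several SNC components, not $t^{N_j}$, and your closing remark shows you have read the disjointness hypothesis as applying to the $E_j$'s rather than to the strict transforms. Consequently the single-divisor computation does not reduce the problem, and ``strict compatibility with $f_+$'' alone does not tell you how the jumps attached to several crossing divisors upstairs assemble into the smallest jump downstairs (compare Lichtin's theorem on roots of $b_f$: the candidate roots $-(\nu_j+k)/N_j$ come from \emph{all} components, and one must in particular rule out contributions $-(1+k)$, $k\ge 1$, from the strict transform, which is exactly where the disjointness hypothesis is used in [MP2]). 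The actual argument there is extracted from an explicit description of the Hodge ideals, equivalently of $V^\bullet\cO_X$, in terms of the full SNC divisor $\sum_j N_jE_j$ on $Y$, in the spirit of the multiplier-ideal computation; without such a formula your proposed argument does not close.
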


The following is a version of the Monodromy Conjecture, see \cite{DL}:

\begin{conj}\label{conjMC}
Let $X$ be a smooth affine variety, $Z$ a closed subscheme given by an ideal $I\subset\cO_X$, and $x\in Z$. Then $b_I(s)\cdot Z^{top}_{X,Z}(s)$ and $b_{I,x}(s)\cdot Z^{top}_{X,Z,x}(s)$ have no poles. 
\end{conj}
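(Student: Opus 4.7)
The Monodromy Conjecture stated here is a famously open problem in general (due to Igusa in its original $p$-adic formulation, and extended to the motivic/topological setting by Denef--Loeser and further to ideals by Budur--Musta\c{t}\u{a}--Saito), so any ``proof proposal'' is really a sketch of a strategy that one might attempt rather than a short argument. The plan I would take is to reduce to producing, for each pole $s_0$ of $Z^{top}_{X,Z,x}(s)$, an explicit element of $\cD_X[s]$ witnessing that $s_0$ is a root of $b_{I,x}(s)$. First, I would fix a log resolution $f:Y\to X$ as in Definition \ref{defnLR} and use the Denef--Loeser formula to write the candidate poles as the rational numbers $-\nu_j/N_j$ coming from exceptional divisors $E_j$ for which the stratum contribution $\chi(E_I^\circ\cap f^{-1}(x))$ does not force cancellation. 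The genuine poles form a subset of these candidates, and my goal would be to match each genuine pole to a root of $b_{I,x}(s)$.

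The core step would be a ``contributing divisor'' analysis: for a given genuine pole $-\nu_j/N_j$, pass to an analytic neighborhood of a general point of $f(E_j)$, and reduce to a simpler (ideally monomial or nearly monomial) local model via an \'etale/analytic coordinate change. In that simpler model one tries to construct, by hand or by $\cD$-module techniques, operators $P_k$ producing the relation in Definition \ref{defnbfct} with $b(s)$ divisible by $s+\nu_j/N_j$; for principal ideals this is the approach of Loeser and others, and for general ideals one can try to reduce to the principal case using the Budur--Musta\c{t}\u{a}--Saito comparison between $b_I$ and the $b$-function of a generic linear combination of generators, cf. Theorem \ref{thmBfct}. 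Complementary tools are: (i) the connection between poles of $Z^{top}$ and monodromy eigenvalues on nearby cycles via $A'$Campo's formula and the fact that roots of the $b$-function exponentiate to monodromy eigenvalues (Malgrange--Kashiwara), and (ii) restriction to a general hyperplane section, which is compatible with both zeta functions and $b$-functions and allows induction on $\dim X$.

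The main obstacle, and the reason the conjecture remains open, is precisely the gap between \emph{candidate} poles produced by a log resolution and \emph{actual} poles of $Z^{top}$: most candidate poles cancel, and there is no known global criterion to detect the surviving ones in terms of data that also controls $b_I$. Equivalently, one has in general far more roots of $b_I$ than poles of $Z^{top}$, and far more candidate poles than actual poles; bridging this asymmetry intrinsically (rather than case-by-case) is the hard part. A realistic intermediate target, and what I would actually try to write down, is the conjecture for the classes where the bridge is understood: hyperplane arrangements, quasi-ordinary and Newton non-degenerate singularities, toric ideals, and determinantal ideals as in Theorem \ref{thrmGen}(vii). For those, the strategy above becomes effective because the log resolution, the monodromy zeta function, and $b_I$ all admit combinatorial descriptions on the same combinatorial object (a fan, a Newton polytope, or the matrix-rank stratification), so cancellations among candidate poles can be matched with the multiplicities of the corresponding roots of $b_I$. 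Extending such matchings beyond these structured cases is where any general proof would have to make genuinely new input.
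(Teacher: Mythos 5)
The statement you were asked about is Conjecture \ref{conjMC}, the topological Monodromy Conjecture for ideals; the paper offers no proof of it and does not claim one --- it is recorded as an open conjecture going back to Igusa and Denef--Loeser \cite{DL}. You correctly recognize this, so there is nothing to check your argument against: what you wrote is a strategy survey rather than a proof, and that is the appropriate response to being asked to ``prove'' an open conjecture. For completeness, the only place the paper actually engages with the conjecture is Theorem \ref{thrmLCThrk}(v), where it is verified for generalized theta divisors by transporting the known determinantal case (Theorem \ref{thrmGen}(vii), due to \cite{Lor}) through the local \'etale isomorphism of Theorem \ref{thmGenToConeEt}; this is precisely the ``structured class where zeta function and $b$-function live on the same combinatorial object'' route you single out at the end of your proposal.
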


\begin{defn}
If $X$ is a normal variety with $K_X$ a $\bQ$-Cartier divisor, and $\ord_E$ is a divisorial valuation of the function field of $X$ with non-empty center $c_X(E)$ on $X$, let $a_E(X):=1+\ord_EK_{Y/X}$ where $Y\to X$ is a resolution of $X$ such that $c_Y(E)$ is a divisor. Then $a_E(X)$ is independent of the choice of resolution. If $W\subset X$ is a closed subset, the {\it minimal log discrepancy} of $X$ along $W$ is 
$$
\mld(W;X):=\inf_E\{a_E(X)\mid c_X(E)\subset W\}.
$$
\end{defn}

\begin{defn}{\rm{(}\cite{MP2}\rm{)}} If $X$ is a smooth variety, $Z$ a reduced divisor, and $k\in\bN$, the {\it $k$-Hodge ideal} $I_k(Z)\subset\cO_X$ of $(X,Z)$ is determined by the equality $$F_k(\cO_X(*Z))=I_k(Z)\otimes_{\cO_X}\cO_X((k+1)Z)$$
where $F$ is Saito's Hodge filtration.
\end{defn}

\begin{rmk}\label{rmkDm1}
The quasi-coherent $\cO_X$-module $\cO_X(*Z)$ is a regular holonomic left $\cD_X$-module. It underlies the mixed Hodge module $j_*\bQ_U^H[d]$ where $j:U=X\setminus Z\to X$ is the open embedding of the complement of $Z$ and $d=\dim X$. Thus Saito's theory of mixed Hodge modules endows $\cO_X(*Z)$ with a Hodge filtration $F_\lbul$ by coherent $\cO_X$-modules and a finite weight filtration $W_\lbul$ by holonomic $\cD_X$-modules.
\end{rmk}

\begin{defn}
Let $X$ be a smooth variety, $Z$ a proper closed subscheme, and $k\in \bN$.

\begin{enumerate}
\item The {\it $k$-local cohomology sheaf} $\cH^k_Z(\cO_X)$ is  the $k$-th derived functor of the functor assigning the subsheaf of $\cO_X$ of local sections with support in $Z$.  The sheaf $\cH^k_Z(\cO_X)$ only depends on the support of $Z$ and not on the scheme structure.
\item If $Z$ is a variety of codimension $c$, the {\it intersection homology} $\cD_X$-module $\cL(Z,X)$ is the smallest $\cD_X$-submodule of $\cH^c_Z(\cO_X)$ that coincides with $\cH^c_Z(\cO_X)$ generically.
\end{enumerate}
\end{defn}

\begin{rmk}\label{rmkDm2} If $Z$ is a reduced divisor then $\cH^1_Z(\cO_X)\simeq \cO_X(*Z)/\cO_X$. In general, $\cH^k_Z(\cO_X)$ is a regular holonomic left $\cD_X$-module. It underlies the mixed Hodge module $\HH^k(i_*i^!\bQ_X^H[d])$. Thus $\cH^k_Z(\cO_X)$ is also endowed with a Hodge filtration $F_\lbul$ by coherent $\cO_X$-modules and a finite weight filtration $W_\lbul$ by holonomic $\cD_X$-modules. The intersection homology module $\cL(Z,X)$ underlies the intersection complex pure Hodge module $\IC_{Z}\bQ^H$.
\end{rmk}

\begin{defn}
Let $m\in\bN$. The {\it $m$-jet space of  $\bA^n$} is the affine space $$\cL_m(\bA^n):=\Hom_{K-\text{alg}}(K[x_1,\ldots,x_n],K[t]/t^{m+1})\simeq \bA^{n(m+1)}.$$
If $X$ is a closed subscheme of $\bA^n$ defined by an ideal $I$, the {\it $m$-jet space of $X$} is the closed subscheme of $\cL_m(\bA^n)$ of $m$-jets vanishing on $I$,
$$
\cL_m(X):=\{\gamma\in \cL_m(\bA^n)\mid \gamma(I)= 0 \text{ in }K[t]/t^{m+1}\}.
$$
The scheme structure on $\cL_m(X)$ is as follows. Let $x,x',x'',\ldots,x^{(m)}$ be the coordinates on $\cL_m(\bA^n)$, where  $x^{(k)}=(x_1^{(k)},\ldots,x_n^{(k)})$. Let $x(t)= x+x't+x''t^2+\ldots+x^{(m)}t^m$. Let $f_j(x)$ be a set of generators of $I$. Plugging $x(t)$ instead of $x$,  set
$f_j(x(t))= f_j+f_j't+f_j''t^2+\ldots+f_j^{(m)}t^m$ mod $t^{m+1}$. The scheme $\cL_m(X)$ is cut out by the ideal generated by the polynomials $f_j^{(k)}$ with $1\le k\le m$ and it represents the functor $S\mapsto \Hom(S\times\spec K[t]/t^{m+1},X)$ from $K$-schemes to sets, see \cite[Ch. 3, \S 2]{CNS}.
 \end{defn}

\bigskip

\part{Other applications of $\Linf$ pairs}\label{part3}

\bigskip

\section{Compact K\"ahler manifolds} 

In this section we review applications of deformation theory with cohomology constraints to objects on  compact K\"ahler manifolds from \cite{BW}. The common point of these applications is that the controlling dgl pair $(C,M)$ is {\it formal}. We recall that this means that the dgl pair is equivalent to its cohomology dgl pair $(HC,HM)$ endowed with zero differentials. Equivalently, the controlling cohomology $\Linf$ pair structure on $((HC,l_*),(HM,m_*))$ has $l_n=0$ and $m_{n}=0$ for $n\neq 2$. Hence we only have the products $l_2:(HC)^{\otimes 2}\to HC$ and $m_2:HC\otimes HM\to HM$ as part of the $\Linf$ pair structure. The equations (\ref{eqLMC}) and (\ref{eqLJI}) simplify thus drastically.

\subs{\bf Stable holomorphic vector bundles with zero total Chern class.} Consider the moduli space $\cM$ of stable rank $n$ holomorphic vector bundles $E$ with vanishing Chern classes on a compact K\"ahler manifold $X$. These holomorphic vector bundles are the ones that admit flat unitary connections. In $\cM$ consider the cohomology jump loci
$$
\cV^{pq}_k(F)=\{E\in\cM \mid h^q(X, E\otimes F \otimes  \Omega^p_X)\geq k \}
$$
with the natural scheme structure, for fixed $p$ and fixed poly-stable bundle $F$ with vanishing Chern classes. The tensor products are over $\cO_X$. Fix $E$ as above. The task is to describe the formal completion of $\cV^{pq}_k(F)$ at $E$, which we denote by $ \cV^{pq}_k(F)_{(E)}$. This deformation problem with cohomology constraints is controlled by the formal cohomology $\Linf$ pair
$$
(\HH^\ubul(X,\enmo(E)),\HH^\ubul(X,E\otimes F \otimes  \Omega^p_X)).
$$
Stability is needed to simplify the equivalence relation in (\ref{eqLMC}) and (\ref{eqLJI}).
Define 
\begin{align*}
\sQ(E) &=\{\eta\in \HH^1(X, \enmo(E))\mid
\eta\wedge\eta=0\in \HH^2(X, \enmo(E))\}, \\
\sR^{pq}_k(E; F) & =\{\eta\in \sQ(E) \mid  \dim \HH^q(\HH^\ubul(X, E\otimes F\otimes \Omega^p_X),\eta \wedge\cdot )\geq k\},
\end{align*}
endowed with the natural scheme structures.

\begin{thrm} Let $X$ be a compact K\"ahler manifold. Let $E$ and $F$ be a stable and, respectively, a poly-stable holomorphic vector bundle with vanishing Chern classes on $X$. Then:
\begin{enumerate}
\item There is an isomorphism of formal schemes $\sM_{(E)}\cong \sQ(E)_{(0)}$ inducing for every $k$ an isomorphism of formal schemes
$$
 \cV^{pq}_k(F)_{(E)}\cong \sR^{pq}_k(E;F)_{(0)}.
$$ 
\item If $k=h^q(X, E\otimes F\otimes \Omega^p_X)$, then $\sV^{pq}_k(F)$ has  quadratic algebraic singularities at $E$. 
\item\label{eqFs} If $F=\cO_X$ and $n=1$ then $Sing (\cV^{pq}_k) \subset (\cV^{pq}_{k+1})_{red}$.
\end{enumerate}
\end{thrm}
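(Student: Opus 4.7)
The strategy is to apply the $\Linf$ pair machinery of Section 2 to a controlling dgl pair that turns out to be \emph{formal}, and to exploit the graded-commutativity peculiarity when the endomorphism bundle is trivial.

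\textbf{Step 1: Formality of the controlling pair.} By the same argument as Proposition \ref{rmkPair}, the deformations of $E$ with cohomology constraints of the form $h^q(X, E\otimes F\otimes\Omega^p_X)\geq k$ are controlled by the dgl pair
$(R\Gamma(X,\enmo(E)),\,R\Gamma(X, E\otimes F\otimes\Omega^p_X))$, computed via the Dolbeault complex. Because $E$ (resp.\ $F$) admits a flat unitary Hermitian--Einstein connection by the Kobayashi--Hitchin correspondence, the same is true for $\enmo(E)$ and for $E\otimes F\otimes\Omega^p_X$, so the Deligne--Griffiths--Morgan--Sullivan $\partial\bar\partial$-lemma argument, applied jointly to both halves, yields formality: as an $\Linf$ pair, $(C,M)$ is equivalent to its cohomology $\Linf$ pair $(HC,HM)$ with \emph{only} the binary operations $l_2=\cup$ and $m_2=\cup$ nonzero.

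\textbf{Step 2: Proof of (1).} Stability of $E$ gives $H^0(X,\enmo(E))=K\cdot\mathrm{id}$, hence the bracket acts trivially on $H^1$ and the homotopy equivalence relation in \eqref{eqLMC}--\eqref{eqLJI} collapses. Under formality the Maurer--Cartan equation \eqref{eqLMC} reduces to $\omega\wedge\omega=0$ in $H^2(X,\enmo(E))$, defining $\sQ(E)$, and the cohomology jump ideal in \eqref{eqLJI} reduces to the determinantal ideal defining $\sR^{pq}_k(E;F)$ inside $\sQ(E)$. The isomorphisms \eqref{eqTISOM} then give
$\widehat{\cM}_E\simeq \Def(HC)\simeq \widehat{\sQ(E)}_0$ and $\widehat{(\cV^{pq}_k(F))}_E\simeq \Def^q_k(HC,HM)\simeq \widehat{\sR^{pq}_k(E;F)}_0$.

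\textbf{Step 3: Proof of (2).} When $k=h^q:=h^q(X,E\otimes F\otimes\Omega^p_X)$, Definition \ref{defCJI} says that $J^q_{h^q}$ is generated by minors of size $h^q-h^q+1=1$ of the matrices of $\omega\wedge\cdot\colon HM^{q-1}\to HM^q$ and $HM^q\to HM^{q+1}$; these minors are the matrix entries, which are linear forms on $H^1(X,\enmo(E))$. Together with the quadratic equations $\omega\wedge\omega=0$ coming from MC, these are the only defining equations of $\sR^{pq}_{h^q}(E;F)$. After quotienting the ambient space by the linear forms one is left with a germ defined by purely quadratic equations; combined with Step 2, this is the meaning of quadratic algebraic singularities at $E$.

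\textbf{Step 4: Proof of (3).} For $n=1$ and $F=\cO_X$ we have $\enmo(E)=\cO_X$, so the Maurer--Cartan equation becomes $\omega\cup\omega=0$ in $H^2(X,\cO_X)$. Graded commutativity of the cup product on an odd-degree class forces $\omega\cup\omega=-\omega\cup\omega$, hence $\omega\cup\omega=0$ identically (since $\operatorname{char}K\neq2$). Thus $\sQ(E)=H^1(X,\cO_X)$ is the full linear space. Now fix $E'\in\cV^{pq}_k\setminus(\cV^{pq}_{k+1})_{\rm red}$, so $h^q(X,E'\otimes\Omega^p_X)=k$ exactly. Applying (1) at $E'$, the germ of $\cV^{pq}_k$ at $E'$ is formally isomorphic to the germ at the origin of $\sR^{pq}_k(E';\cO_X)\subset H^1(X,\cO_X)$, which by Step 3 is cut out by $1$-minors alone, i.e.\ by linear forms. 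A linear subspace is smooth, so $\cV^{pq}_k$ is formally (hence scheme-theoretically) smooth at $E'$, giving $Sing(\cV^{pq}_k)\subset(\cV^{pq}_{k+1})_{\rm red}$.

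\textbf{Expected main obstacle.} The delicate point is Step 1: while formality of the Dolbeault dga under a Hermitian--Einstein hypothesis is classical, what is needed here is formality of the dgl \emph{pair}, i.e.\ a single weak equivalence simultaneously identifying algebra and module with their cohomologies and compatible multiplication. This requires a coherent choice of harmonic projectors on $\enmo(E)$ and on $E\otimes F\otimes\Omega^p_X$, together with a module-level $\partial\bar\partial$-lemma respecting the cup-product action; the rest of the argument is a clean unraveling of the black box of Section \ref{secDFT}.
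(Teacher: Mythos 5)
Your proposal follows essentially the same route as the paper, which (as a review) reduces everything to the formality of the controlling dgl pair established in \cite{BW}, to stability killing the homotopy equivalence relation, and to the observation that for $k=h^q$ the cohomology jump ideal is generated by $1$-minors, i.e.\ linear forms, so the jump locus is cut out of the (quadratic, resp.\ smooth when $n=1$) moduli germ by linear equations. You correctly isolate the one genuinely technical input — formality of the \emph{pair}, not just of the Dolbeault dgla — which is precisely the content delegated to \cite{BW}; the remaining steps are the intended unraveling of the black box.
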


This generalized some results of   \cite{n, GM, gl1,gl2, M-a, ma, w}. Last two parts follow a general pattern in presence of formality and trivial equivalence relations in (\ref{eqLMC}) and (\ref{eqLJI}): for $E\in \sV^{pq}_k(F)\setminus \sV^{pq}_{k+1}(F)$, the locus $\sV^{pq}_k(F)$ is locally around $E$ cut  by linear forms out of the moduli space, so it as as singular as the moduli space itself. 

Note that in (\ref{eqFs}) the reverse inclusion $  (\cV^{pq}_{k+1})_{red}\subset Sing (\cV^{pq}_k)$ is true Zariski-locally at $E$ if ``generic vanishing" holds, that is, $\cV^{pq}_{1}\subsetneq \cM=\pic^\tau(X)$ Zariski-locally at $E$. This follows by the very general Theorem \ref{thmTgxxy} since $\cM$ is smooth in this case. 
For global results in the situation of (\ref{eqFs}) see \cite{BWr}. Due to these global results one can remove ``Zariski-locally at $E$" from the preceeding discussion. Hence in presence of generic vanishing in (\ref{eqFs}), for example in the situation of \cite[Thm. 2]{gl1},   we have $Sing (\cV^{pq}_k) = (\cV^{pq}_{k+1})_{red}$.

\subs{\bf Irreducible complex local systems.}\label{subIrc} Consider the moduli space $\mb$ of irreducible rank $n$ complex local systems $L$ on a compact K\"ahler manifold $X$. Consider the cohomology jump loci
$$
\cV^i_k(W)=\{ L\in \mb\mid \dim_\bC \HH^i(X,L\otimes_\bC W)\ge k \}
$$
with the natural scheme structure, for a fixed semi-simple local system $W$ of any rank. This deformation problem with cohomology constraints is controlled by the formal cohomology $\Linf$ pair
$$
(\HH^\ubul(X,\enmo(L)),\HH^\ubul(X,L \otimes_{\bC}  W)).
$$
Define
$$
\sQ(L) =\{\eta\in \HH^1(X, \enmo(L)) \mid 
\eta\wedge\eta=0\in \HH^2(X, \enmo(L))\},$$
$$
\sR^i_k(L;W)  =\{ \eta\in \sQ(L)\mid \dim \HH^i(H^\ubul(X, L\otimes W), \eta\wedge \cdot)\geq k \},
$$
endowed with the natural scheme structures. 

\begin{thrm}\label{thmIrrLS} Let $X$ be a compact K\"ahler manifold. Let $L$ be an irreducible local system on $X$, and let $W$ be a semi-simple local system. Then:
\begin{enumerate}
\item There is an isomorphism of formal schemes
$
(\mb)_{(L)}\cong \sQ(L)_{(0)}
$
inducing for every $k$ an isomorphism of formal schemes
$$
\cV^i_k(W)_{(L)}\cong\, \sR^i_k(L;W)_{(0)}. 
$$
\item If $k=h^i(X, L\otimes W)$, then $\cV^{i}_k(W)$ has  quadratic algebraic singularities at $L$. 
\item\label{eqWl} If $W=\bC_X$ and $n=1$ then $Sing (\cV^{i}_k) \subset (\cV^{i}_{k+1})_{red}$, with equality locally at $L$ when ``generic vanishing" $\cV^i_1\subsetneq \cM_B = \Hom(\HH_1(X,\bZ),\bC^*)$ holds locally at $L$.
\end{enumerate}
\end{thrm}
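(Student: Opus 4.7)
The plan is to derive this theorem from the black box of Section \ref{secDFT} once the controlling dgl pair is identified and shown to be formal. Following the recipe of Proposition \ref{rmkPair}, I expect that deformations of the irreducible local system $L$ with constraints $h^i(X, L' \otimes W) \geq k$ are controlled by the dgl pair
\[
(C, M) = (\mathcal{A}^\ubul(X, \enmo L),\, \mathcal{A}^\ubul(X, L \otimes W))
\]
of $C^\infty$ de Rham complexes with coefficients in the flat bundles attached to $\enmo L$ and $L \otimes W$, both of which are semisimple ($\enmo L$ by irreducibility of $L$). The crucial input, due to Simpson (extending Deligne--Griffiths--Morgan--Sullivan), is that on a compact K\"ahler manifold the dgl pair with semisimple coefficients is \emph{formal}: it is weakly equivalent to its cohomology $(HC, HM) = (H^\ubul(X, \enmo L), H^\ubul(X, L \otimes W))$ with zero differentials and induced $\Linf$-structure having $l_n = 0$, $m_n = 0$ for $n \neq 2$, where $l_2$ and $m_2$ are the cup products.

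Granting formality, part (1) becomes a direct application of \eqref{eqTISOM}. The Maurer--Cartan equation \eqref{eqLMC} collapses to $l_2(\eta, \eta) = \eta \cup \eta = 0$, which cuts out $\sQ(L) \subset H^1(X, \enmo L)$. The homotopy equivalence relation is trivial here: Schur's lemma gives $H^0(X, \enmo L) = \bC \cdot \id$, and since $\id$ has vanishing Lie bracket with every element of $C$, Lemma \ref{MCnequi} ensures that distinct Maurer--Cartan elements are inequivalent. Hence $\widehat{\mb}_L \cong \widehat{\sQ(L)}_0$. The analogous collapse in \eqref{eqLJI} identifies the cohomology jump ideals with those of the Aomoto complex $(HM, \eta \cup \cdot)$, yielding $\widehat{\cV^i_k(W)}_L \cong \widehat{\sR^i_k(L;W)}_0$, as claimed.

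For part (2), with $k = h^i := h^i(X, L \otimes W)$, the condition $\dim H^i(HM, \eta \cup \cdot) \geq h^i$ is equivalent, by semicontinuity, to the vanishing of both multiplication maps $\eta \cup \cdot : H^{i-1} \to H^i$ and $\eta \cup \cdot : H^i \to H^{i+1}$, i.e.\ to a collection of linear equations on $\eta \in H^1(X, \enmo L)$. Together with the quadric $\eta \cup \eta = 0$, these present $\sR^i_{h^i}(L;W)$ at the origin as cut out by forms of degree $\leq 2$, proving the quadratic algebraic singularity. For part (3), I specialize to $n = 1$, $W = \bC_X$, so that $\enmo L \cong \bC_X$, graded commutativity forces $\eta \cup \eta = 0$ for every $\eta \in H^1(X, \bC)$, and $\sQ(L) = H^1(X, \bC)$; in particular $\mb$ is smooth. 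For $L' \in \cV^i_k \setminus \cV^i_{k+1}$ one has $h^i_{L'} = k$, so by part (1) applied at $L'$ the germ of $\cV^i_k$ is cut out by the maximal minors of the two matrices of linear forms coming from $m_2$ adjacent to degree $i$; a rank--nullity computation shows $\dim H^i_\eta \geq k$ iff both matrices vanish at $\eta$, which is a \emph{linear} condition, so $\cV^i_k$ is smooth at $L'$. This gives $Sing(\cV^i_k) \subset (\cV^i_{k+1})_{red}$. Conversely, under generic vanishing any $L' \in \cV^i_{k+1}$ satisfies $h^i_{L'} > k$, so Theorem \ref{thmTgxxy} gives $T_{L'} \cV^i_k = T_{L'}\mb$; since $\cV^i_k \subsetneq \mb$ locally by hypothesis, $\cV^i_k$ must be singular at $L'$.

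The main obstacle is supplying the formality of the dgl pair with semisimple local system coefficients on a compact K\"ahler manifold; this rests on the existence of harmonic theory (pluri-harmonic bundles, Hodge--Simpson decomposition) adapted to such coefficients. Once formality is invoked, the rest is a routine unwinding of \eqref{eqLMC} and \eqref{eqLJI} combined with elementary linear algebra.
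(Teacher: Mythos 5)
Your proposal is correct and follows essentially the same route as the paper (and as \cite{BW}, where the full details appear): identify the controlling dgl pair as the de Rham pair with semisimple coefficients, invoke Simpson's formality to replace it by the cohomology $\Linf$ pair with only $l_2,m_2$ nonzero, use Schur's lemma plus Lemma \ref{MCnequi} to trivialize the homotopy equivalence, and then read off parts (1)--(3) from (\ref{eqLMC}), (\ref{eqLJI}), (\ref{eqTISOM}) and Theorem \ref{thmTgxxy}. The only quibble is terminological: in part (3) the relevant jump ideal at $k=h^i$ is generated by the $1$-minors (the entries) of $d^{i-1}_\eta\oplus d^i_\eta$, not the maximal minors, but your subsequent statement that the condition is the vanishing of both matrices, hence linear, is the correct one.
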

This  generalized a result of \cite{PS} to which we  refer for generic vanishing. For global results see \cite{BWa}. Due to these global results one can remove ``locally at $L$" from (\ref{eqWl}). 
All the results here hold more generally for semi-simple local systems $L$.

\begin{rmk} If  $X$ is a smooth projective complex variety,  the nonabelian Hodge theory of Simpson implies that  $\mb$ and $\mdr$ are isomorphic as analytic spaces, where $\mdr$ is the moduli space of stable flat bundles of rank $n$ on $X$. Since this induces isomorphisms on the cohomology jump loci, the deformation problems with cohomology constraints are the same for irreducible local systems and stable flat bundles. 
\end{rmk}

\subs{\bf Stable Higgs bundles with zero total Chern class.} We assume here that $X$ is a smooth projective complex variety. Consider the moduli space $\mdol$ of stable Higgs bundles  $E=(E,\theta)$ of rank $n$ with $c(E)=0$. One has the Dolbeault cohomology $\HH^\ubul_{\rm{Dol}}(X,E):={\HH}^\ubul(X, (E\otimes\Omega_X^\ubul,\theta\wedge \cdot))$ associated to $(E,\theta)$, see \cite{s1}. Let $F=(F,\phi)$ is a poly-stable Higgs bundle with vanishing Chern classes and
\begin{align*}
\sV^i_{k}(F) =\{  E  \in \mdol\mid \dim \HH^i_{\rm{Dol}}(X, E\otimes F)\ge k\}
\end{align*}
where the tensor product is of Higgs bundles. This deformation problem with cohomology constraints is controlled by the formal cohomology $\Linf$ pair
$$
(\HH^\ubul_{\rm{Dol}}(X,\enmo(E)),\HH^\ubul_{\rm{Dol}}(X,E\otimes F)).
$$
Define
$$
\sQ(E) =\{\eta\in \HH^1_{\rm{Dol}}(X, \enmo(E)) \mid 
\eta\wedge\eta=0\in \HH^2_{\rm{Dol}}(X, \enmo(E))\},$$
$$
\sR^i_k(E;F)  =\{ \eta\in \sQ(E)\mid \dim \HH^i(\HH^\ubul_{\rm{Dol}}(X, E\otimes F), \eta\wedge \cdot)\geq k \},
$$
endowed with the natural scheme structures. 

\begin{thrm}\label{thmHiggs} Let $X$ be a smooth projective complex variety. Let $E$ be a stable Higgs bundle with $c(E)=0$, and let $F$ be a poly-stable Higgs bundle with $c(F)=0$. Then:
\begin{enumerate}
\item There is an isomorphism of formal schemes
$
(\mdol)_{(E)}\cong \sQ(E)_{(0)}
$
inducing for every $k$ an isomorphism of formal schemes
$$
\sV^i_k(F)_{(E)}\cong\, \sR^i_k(E;F)_{(0)}. 
$$
\item If $k=\dim \HH^i_{\rm{Dol}}(X, E\otimes F)$, then $\sV^{i}_k(F)$ has  quadratic algebraic singularities at $E$. 
\item\label{eqWER} If $F=(\cO_X,0)$  and $n=1$ then $Sing (\sV^{i}_k) \subset (\sV^{i}_{k+1})_{red}$, with equality locally at $E$ when ``generic vanishing" $\sV^i_1\subsetneq \mdol$ holds locally at $E$.
\end{enumerate}
\end{thrm}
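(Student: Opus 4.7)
The strategy is to repeat the pattern of the two preceding theorems (on stable holomorphic bundles and on irreducible local systems) verbatim, once the right formal cohomology $\Linf$ pair is identified. First, by the analogue of Proposition \ref{rmkPair} in the Dolbeault setting, the deformations of the stable Higgs bundle $E$ with cohomology constraints $\dim \HH^i_{\rm{Dol}}(X, E\otimes F) \ge k$ are controlled by the dgl pair
$$
(C, M) := \bigl( R\Gamma_{\rm{Dol}}(X, \enmo(E)),\, R\Gamma_{\rm{Dol}}(X, E\otimes F) \bigr),
$$
realized as the Dolbeault complexes $(A^{0,\ubul}(X, \enmo(E)\otimes \Omega^\ubul_X), \bar\pa + \theta \wedge \cdot)$ and analogously for $M$, with Lie bracket coming from composition and wedge product and with $M$ a dgl module over $C$. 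Stability of $E$ forces $\HH^0(X,\enmo(E)) = \bC\cdot\id$, which is central and therefore acts trivially under the gauge/homotopy equivalence relation in (\ref{eqLMC}) and (\ref{eqLJI}); this guarantees that no quotient needs to be taken beyond passing to the traceless part.

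The crucial step, which is also the main obstacle, is to establish \emph{formality} of the dgl pair $(C, M)$. I would invoke Simpson's harmonic theory for Higgs bundles on smooth projective varieties: a polystable Higgs bundle $(E,\theta)$ with vanishing Chern classes admits a harmonic metric, which produces the Dolbeault, de Rham, and anti-holomorphic differentials $D'', D, D'$ satisfying the Kähler-type identities of \cite{s1}. These identities give the Higgs-bundle analogue of the $\pa\bar\pa$-lemma and imply, by the standard two-step homotopy argument (as in \cite{n, GM, s1}), that $(C,M)$ is weakly equivalent to its cohomology pair $(HC, HM)$ equipped with zero differentials and the induced cup-product and module structures. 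Equivalently, the transferred $\Linf$ pair structure on $(HC, HM)$ has $l_n = 0$ and $m_n = 0$ for all $n \ne 2$, which is precisely formality of the controlling pair. Note that the tensor product $E \otimes F$ of a stable Higgs bundle with a polystable Higgs bundle of vanishing Chern classes is again polystable with vanishing Chern classes, so the harmonic theory is available for the module side as well.

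Once formality is in hand, the equations in (\ref{eqLMC}) and (\ref{eqLJI}) collapse: the Maurer--Cartan equation defining $\Def(HC)$ reduces to the single quadratic $\tfrac{1}{2}l_2(\omega,\omega) = 0$, which, interpreted on $\HH^1_{\rm{Dol}}(X,\enmo(E))$, is exactly the cup-product equation $\omega\wedge\omega = 0$ cutting out $\sQ(E)$. Similarly, the cohomology jump condition becomes $J^i_k(HM, m_2(\omega,\_)) = 0$, which by Definition \ref{defCJI} is the defining ideal of $\sR^i_k(E;F)$ at the origin. Applying the isomorphism (\ref{eqTISOM}) from the black box yields part (1). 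For part (2), Theorem \ref{thmTgxxy} together with formality shows that in the extremal range $k = \dim \HH^i_{\rm{Dol}}(X,E\otimes F)$ the jump locus $\sV^i_k(F)$ is cut out, inside the smooth ambient moduli space, by \emph{linear} equations on its tangent space, so the singularities are as bad as those of the ambient $\sQ(E)_{(0)}$, which is a quadratic cone; this yields the quadratic algebraic singularities.

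For part (3), with $n = 1$ and $F = (\cO_X, 0)$, we have $\enmo(E) = \cO_X$ (with zero Higgs field) and the $\Linf$ module product $m_2$ on $HM = \HH^\ubul_{\rm{Dol}}(X, E)$ is the standard cup product by elements of $\HH^1_{\rm{Dol}}(X, \cO_X)$. The inclusion $Sing(\sV^i_k) \subset (\sV^i_{k+1})_{\rm{red}}$ then follows by specializing Theorem \ref{thmTgxxy} to the cohomology pair $(HC, HM)$: at a point $E \in \sV^i_k \setminus \sV^i_{k+1}$ the Zariski tangent space to $\sV^i_k$ is the full $\HH^1_{\rm{Dol}}(X, \cO_X) = T_E\mdol$, whereas the local dimension is strictly smaller unless the multiplication map $\HH^1_{\rm{Dol}}(X,\cO_X)\otimes \HH^j_{\rm{Dol}}(X,E) \to \HH^{j+1}_{\rm{Dol}}(X,E)$ is zero on a hyperplane for $j = i-1, i$, which recovers the $\sV^i_{k+1}$ condition. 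The reverse inclusion under the generic vanishing hypothesis follows exactly as in the local-systems case (Theorem \ref{thmIrrLS}(\ref{eqWl})), by Theorem \ref{thmTgxxy} combined with smoothness of $\mdol$ at $E$ when $\sV^i_1 \subsetneq \mdol$ locally.
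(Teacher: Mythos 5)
Your proposal is correct and follows essentially the same route the paper takes (and attributes to \cite{BW}): identify the controlling dgl pair as the Dolbeault complexes, deduce formality from Simpson's harmonic theory and the Higgs-bundle K\"ahler identities, use stability to trivialize the equivalence relation, and then read off parts (1)--(3) from the collapsed equations (\ref{eqLMC}), (\ref{eqLJI}) and Theorem \ref{thmTgxxy}. One small slip: in part (2) the ambient $(\mdol)_{(E)}\cong\sQ(E)_{(0)}$ is a quadratic cone rather than smooth, but your conclusion (linear equations cutting a quadratic cone give quadratic algebraic singularities) is exactly the paper's argument.
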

This  generalized a result of \cite{s1}. For some global results we refer to \cite{BWr}; this allows one to remove ``locally at $E$" from (\ref{eqWER}).

\subs{\bf Representations of the fundamental group.}
Consider the moduli space $\mathbf{R}(X)=\homo(\pi_1(X, x), GL(n, \mathbb{C}))$ with the natural scheme structure. Here $x\in X$ is a fixed point. Every closed point $\rho\in\mathbf{R}(X)$ corresponds to a rank $n$ local system $L_\rho$ on $X$. Let $W$ be a semi-simple complex local system of any rank on $X$. Consider the cohomology jump loci 
$$\ti\cV^i_k(W)=\{\rho\in \mathbf R(X)\,|\, \dim \HH^i(X, L_\rho\otimes_\bC W)\geq k\}
$$
with the natural scheme structure. This deformation problem 
with cohomology constraints for a semi-simple representation $\rho$  is closely related to that of the semi-simple local system $L_\rho$. The results in \ref{subIrc} extend to  semi-simple local systems. The controlling formal cohomology $\Linf$ pair here is
$$
(\HH^\ubul(X,\enmo(L_\rho)),\HH^\ubul(X,L_\rho \otimes_{\bC}  W)) \times \mathfrak g/\mathfrak h,
$$ 
with the $\Linf$ structure extended trivially over the vector space  $\mathfrak g/\mathfrak h$, with $\mathfrak g =\enmo(L_\rho)|_x$  the fiber of the endomorphism local system and $\mathfrak h\subset\mathfrak g$  the image under restriction to $x$ of the vector space of global sections $\HH^0(X,\enmo(L_\rho))$.

\begin{thrm}\label{thmRPP}
Let $X$ be a compact K\"ahler manifold,  $\rho\in\mathbf{R}(X)$ be a semi-simple representation, and $W$ a semi-simple local system on $X$. Then, with the notation as in \ref{subIrc}:
\begin{enumerate}
\item There is an isomorphism of formal schemes
$
\mathbf{R}(X)_{(\rho)}\cong (\sQ(L_\rho)\times \mathfrak g/\mathfrak h)_{(0)} 
$
inducing for every $k$  isomorphisms of formal schemes
$$
\ti\cV^i_k(W)_{(\rho)} \cong\cV^i_k(W)_{(L_\rho)} \cong (\sR^i_k(L_\rho;W)\times \mathfrak g/\mathfrak h)_{(0)} . 
$$
\item If $k=\dim \HH^i(X, L_\rho\otimes_\bC W)$, then $\ti\cV^{i}_k(W)$ has  quadratic algebraic singularities at $\rho$. 
\item\label{eqWAQ} If $W=\bC_X$  and $n=1$ then $Sing (\ti\cV^{i}_k) \subset (\ti\cV^{i}_{k+1})_{red}$, with equality locally at $\rho$ when ``generic vanishing" $\cV^i_1\subsetneq \mdol$ holds locally at $L_\rho$.
\end{enumerate}
\end{thrm}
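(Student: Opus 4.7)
The plan is to identify the controlling $\Linf$ pair for the framed deformation problem and then invoke the black box of Section~\ref{secDFT} together with Theorem~\ref{thmIrrLS}. Since $\mathbf{R}(X)$ parametrizes representations rather than isomorphism classes of local systems, the natural dgla $\tilde C_\rho$ controlling it is built from the Goldman--Millson dgla $C_{L_\rho}=\cA^\bullet(X,\enmo(L_\rho))$ by adjoining the framing at $x$. I would realize $\tilde C_\rho$ as the shifted mapping cone of the evaluation map $\mathrm{ev}_x: C_{L_\rho}\to \mathfrak g[0]$, where $\mathfrak g=\enmo(L_\rho)|_x$. The associated long exact sequence computes $H^0\tilde C_\rho=0$, $H^1\tilde C_\rho=H^1(X,\enmo(L_\rho))\oplus\mathfrak g/\mathfrak h$, and $H^n\tilde C_\rho=H^n(X,\enmo(L_\rho))$ for $n\ge 2$, which is exactly the expected tangent space $Z^1(\pi_1,\mathfrak g)$ for framed representations. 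The cohomology jump problem is then controlled by the dgl pair $(\tilde C_\rho,\, \cA^\bullet(X, L_\rho\otimes_\bC W))$, with the adjoined $\mathfrak g$-summand acting trivially on the module.

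Next I would transfer this to a cohomology $\Linf$ pair. By nonabelian Hodge theory for semisimple local systems on compact K\"ahler manifolds (Corlette, Simpson, together with the DGMS/Goldman--Millson argument extended to semisimple coefficients), $C_{L_\rho}$ is formal and admits a harmonic splitting. Combining this with a harmonic lift of $\mathfrak g/\mathfrak h$ into $C^1_{L_\rho}$ that is orthogonal to the bracket produces an $\Linf$ structure on $H^\bullet\tilde C_\rho$ in which all higher operations vanish and $l_2$, $m_n$ annihilate the $\mathfrak g/\mathfrak h$ summand. Conceptually, this reflects the fact that infinitesimal directions in $\mathfrak g/\mathfrak h$ correspond to conjugating $\rho$ by $GL_n$ modulo its stabilizer, which cannot create obstructions or cohomology jumps. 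The upshot is the formal cohomology $\Linf$ pair $(H^\bullet(X,\enmo(L_\rho)), H^\bullet(X,L_\rho\otimes_\bC W))\times \mathfrak g/\mathfrak h$ asserted in the excerpt.

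For part (1), I would apply (\ref{eqLMC}) and (\ref{eqLJI}) to this pair: writing $\omega=\omega_1+\omega_2$ with $\omega_1\in H^1(X,\enmo(L_\rho))\otimes\mathfrak m_A$ and $\omega_2\in(\mathfrak g/\mathfrak h)\otimes\mathfrak m_A$, the Maurer--Cartan equation reduces to $l_2(\omega_1,\omega_1)=0$ and the jump ideal equations involve only $\omega_1$. Semi-simplicity of $\rho$ makes the $H^0$-gauge action a translation along $\mathfrak h$, trivial on $\omega_2$ and matching Theorem~\ref{thmIrrLS} on $\omega_1$, which yields the product formal isomorphism. Part (2) follows because the jump locus in $\sR^i_k(L_\rho;W)$ at $k=\dim H^i(X, L_\rho\otimes_\bC W)$ is cut out by linear equations inside the quadratic cone $\sQ(L_\rho)$, so has quadratic algebraic singularities; the trivial product with $\mathfrak g/\mathfrak h$ preserves this. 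Part (3) is obtained by applying Theorem~\ref{thmTgxxy} to the controlling $\Linf$ pair, which pulls back the inclusion $Sing(\cV^i_k)\subset(\cV^i_{k+1})_{red}$ from Theorem~\ref{thmIrrLS}, with equality under the same generic vanishing hypothesis. The main obstacle will be establishing the triviality of the $\Linf$ operations on the $\mathfrak g/\mathfrak h$ summand after homotopy transfer; this is what pins the argument to semi-simple $\rho$ and requires choosing the harmonic retract of $\tilde C_\rho$ compatibly with both the Hodge splitting of $C_{L_\rho}$ and the splitting of the evaluation-at-$x$ short exact sequence.
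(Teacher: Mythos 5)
Your proposal follows the same route the paper indicates: the paper gives no proof of Theorem \ref{thmRPP} beyond asserting that the controlling object is the formal cohomology $\Linf$ pair $(\HH^\ubul(X,\enmo(L_\rho)),\HH^\ubul(X,L_\rho\otimes_\bC W))$ extended trivially over $\mathfrak g/\mathfrak h$ and then invoking the black box of Section \ref{secDFT}, which is exactly your plan, and your long-exact-sequence computation of $H^\ubul\tilde C_\rho$ and your reduction of parts (1)--(3) to Theorem \ref{thmIrrLS} and Theorem \ref{thmTgxxy} are correct. Two caveats: the shifted mapping cone of a dgla morphism is not itself a dgla but only carries an $\Linf$ structure (Fiorenza--Manetti), so you should either work with that or with the Goldman--Millson augmented-dgla formalism for framed deformations; and the triviality of the transferred operations on the $\mathfrak g/\mathfrak h$ summand --- which you rightly flag as the crux, and which is precisely what the paper asserts without proof, deferring to \cite{BW} --- is not yet established by ``a harmonic lift orthogonal to the bracket'' and would need the compatible-retract argument carried out in detail.
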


This generalized some results of   \cite{GM, s1, dp}. As before, global results \cite{BWr} allow one to remove ``locally at $\rho$" from (\ref{eqWAQ}).

\section{Other topological restrictions}

Theorems \ref{thmIrrLS} and \ref{thmRPP} impose restrictions on the homotopy types of compact K\"ahler manifolds. We review now applications of deformation theory with cohomology constraints to topological restrictions on other types of spaces from \cite{BR}.  The main idea here is that Deligne's weight filtration from mixed Hodge theory is compatible with the higher order multiplication maps, e.g. Massey products on $\HH^\ubul(X,\bC)$ if $X$ is complex algebraic variety \cite{BR, CSo}. It is mentioned in the introduction of \cite{DGMS} that this idea led to the formulation of their  result that the de Rham complex of compact K\"ahler manifolds is a formal dga.

If $\Linf$ pair structure on
$
(\HH^\ubul(X,\bC), \HH^\ubul(X,\bC)),
$ is obtained from the de Rham complex, then the pair controls the deformations  with cohomology constraints of the constant sheaf $\bC_X$.  If $W_0\HH^1(X,\bC)=0$ then the compatibility with the weight filtration implies that only finitely many $\Linf$ module multiplication maps are non-zero by degree reasons. We will see below that this has major consequences. The condition $W_0\HH^1(X,\bC)=0$ is known to be a topological condition on complex algebraic varieties, by M. Saito, and it is satisfied if the singularities of $X$ are not too wild, e.g. the condition holds for normal, or even unibranch singularities.

More generally, let $X$ be a  connected topological space  having the homotopy type of a finite CW-complex.
 The $\Linf$ pair $
(\HH^\ubul(X,\bC), \HH^\ubul(X,L))
$ controls the deformations with cohomology constraints of a rank one complex local system $L$. Here, 
 if $\mb (X)$  denotes the space of all rank one $\bC$-local systems on $X$, then $\mb(X)$ is identified with the group $\Hom (\pi_1(X),\bC^*)$ of rank one representations of the fundamental group $\pi_1(X)$ based at a fixed point of $X$. This is an algebraic group, the product of a finite abelian group with the complex affine torus $(\bC^*)^b$ where $b$ is the first Betti number of $X$.
The {\it cohomology jump loci} are defined by
$$
\cV^i_k(X)=\{L\in \mb (X)\mid \dim \HH^i(X,L)\ge k\} 
$$ 
with the natural structure of closed subschemes of $\mb (X)$. The cohomology jump loci are homotopy invariants of the topological space $X$. Moreover,  $\cV^1_k(X)$ depends only on $\pi_1(X)$ and $k$.

\begin{theorem}
\label{thrmLoc} Let $X$ be a connected topological space, homotopy equivalent to a finite CW-complex.
Let $L$ be a rank one local system on $X$. Let $(\HH^\ubul(X,\bC), \HH^\ubul(X,L))$ be endowed with an $\Linf$ pair structure via homotopy transfer from the dgl pair consisting of Sullivan's de Rham complexes for $\bC_X$ and $L$, respectively. If there exists $n_0$ such that the $L_\infty$ module structure maps $m=(m_n)_{n\ge 2}$  of  $\HH(X,L)$ over  $\HH(X,\bC)$ satisfy $$m_n(\omega,\ldots,\omega,{\eta})=0$$  for all $n>n_0$, $\omega\in \HH^1(X,\bC)$, ${\eta}\in \HH^\ubul(X,L)$, then every irreducible component of the algebraic set $\cV^i_k(X)$ passing through $L$ is a translated complex affine subtorus of $\mb(X)$.
\end{theorem}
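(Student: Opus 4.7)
The plan is to use the deformation theory black box to convert the truncation hypothesis on the $\Linf$-module operations into the existence of a global algebraic subscheme $Z\subset T:=\HH^1(X,\bC)$ whose formal completion at $0$ matches that of $\cV^i_k(X)$ at $L$ via the formal exponential, and then to invoke Ax's functional-transcendence theorem for algebraic tori. Since $L$ has rank one, $\enmo(L)\cong \bC_X$ and Sullivan's de Rham complex $C$ is graded-commutative; its graded-commutator Lie bracket vanishes identically, and the standard homotopy transfer formulae therefore force every transferred $\Linf$-bracket $l_n$ on $HC=\HH^\ubul(X,\bC)$ to vanish for $n\ge 2$. Consequently (\ref{eqLMC}) reduces to $\Def(HC;A)=\HH^1(X,\bC)\otimes\mm_A$ with trivial equivalence relation, and Theorem \ref{thmHCHMa} then yields an identification (the formal exponential) $\widehat{T}_0\xrightarrow{\sim}\widehat{\mb(X)}_L$ that restricts to $\Def^i_k(HC,HM)\cong \widehat{\cV^i_k(X)}_L$.

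Following the universal-matrix construction in the proof of Theorem \ref{thm1GDik}, choose a basis $\{e_j\}$ of $T$ with dual coordinates $\{x_j\}$, set $\omega_{univ}=\sum_j e_j\otimes x_j\in T\otimes \bC[x_1,\ldots,x_s]$, and form
$$
d_{univ}=\sum_{n\ge 1}\frac{1}{n!}\,m_{n+1}(\omega_{univ}^{\otimes n}\otimes \_).
$$
By the truncation hypothesis this sum terminates at $n=n_0-1$, so in a fixed basis of $HM$ every entry of $d_{univ}$ is a polynomial in the $x_j$ of degree at most $n_0-1$. The closed algebraic subscheme $Z\subset T=\bA^s_{\bC}$ cut out by the polynomial ideal $J^i_k(d_{univ})$ therefore satisfies $\widehat{Z}_0\cong \widehat{\cV^i_k(X)}_L$ under the formal exponential, and in particular $Z$ and $\exp^{-1}(\cV^i_k(X)\cdot L^{-1})$ agree as analytic germs at $0$.

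Now fix an irreducible component $W$ of $\cV^i_k(X)$ passing through $L$. Some analytic branch of $W$ at $L$ corresponds via the formal exponential to an analytic branch of some algebraic irreducible component $V\subset Z$ through $0$, and in particular $\dim V=\dim W$. The subset $S:=\{v\in V:\exp(v)\cdot L\in W\}$ is closed analytic in $V$ and contains an open neighbourhood of $0$ in $V$; since $V$ is algebraically irreducible its smooth locus $V^{\mathrm{sm}}$ is dense and connected, so $S\cap V^{\mathrm{sm}}$ is a closed analytic subset of the connected complex manifold $V^{\mathrm{sm}}$ with non-empty interior, which forces $S=V$. Hence $\exp(V)\cdot L\subset W$ globally. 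By Ax's functional-transcendence theorem for complex algebraic tori, the Zariski closure $\overline{\exp(V)}^{\mathrm{Zar}}$ in $\mb(X)^\circ$ is a translated algebraic subtorus; since it contains $\exp(0)=1$ it is in fact a subtorus, so $\overline{\exp(V)}^{\mathrm{Zar}}\cdot L$ is a translated subtorus of $\mb(X)$ contained in the irreducible $W$ of dimension $\ge \dim V=\dim W$, forcing $W=\overline{\exp(V)}^{\mathrm{Zar}}\cdot L$. The main technical obstacle is the appeal to Ax's theorem together with the careful bookkeeping required to ensure that the analytic branches of the chosen algebraic component $V$ of $Z$ at $0$ genuinely lie over the prescribed irreducible component $W$, rather than over a different component of $\cV^i_k(X)$ through $L$.
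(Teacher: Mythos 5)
Your proof is correct and follows essentially the same route as the paper's: triviality of the transferred $\Linf$ brackets coming from graded-commutativity of the de Rham complex, polynomiality of the universal cohomology jump ideals under the truncation hypothesis (so that the germ of $\cV^i_k(X)$ at $L$ is the image under $\exp(\cdot)\cdot L$ of the germ at $0$ of an algebraic subset of $\HH^1(X,\bC)$), and an Ax--Lindemann input. The only difference is that where the paper invokes its Proposition \ref{propAx} as a black box, you establish the needed bi-algebraicity consequence inline --- propagating the germ-level containment to the global containment $\exp(V)\cdot L\subset W$ by analytic continuation on the connected manifold $V^{\mathrm{sm}}$ and then applying Ax's theorem to the irreducible algebraic $V$ --- which is a legitimate and slightly more self-contained treatment of that final step.
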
 

\noindent
{\it Sketch of the proof.} 
Since we deal with rank one local systems, there is no equivalence relation in  (\ref{eqLMC}) and (\ref{eqLJI}) to mod out by.  Since the de Rham complex of $X$ is a cdga, it has zero Lie bracket as dgla. This implies that the $\Linf$ algebra structure on $\HH^\ubul(X,\bC)$ is trivial, that is, all products are zero. This simpifies (\ref{eqLMC}), so that $\Def(HC)$ is pro-represented by the formal neighborhood of the origin in $\HH^1(X,\bC)$.

On the other hand, the $\Linf$ module structure on $\HH^\ubul(X,L)$ is non-trivial even if $L=\bC_X$, 
in which case it is induced by wedging of  forms, that is, the $\Linf$ module structure remembers the $A_\infty$-algebra structure on $\HH^\ubul(X,\bC)$. Since there are only finitely many $\Linf$ module multiplications, the functors (\ref{eqLJI}) are pro-represented by the formal neighborhood at the origin of  the closed subschemes of $\HH^1(X,\bC)$ given by cohomology jump ideals of the universal complex, with finitely many terms, interpolating the complexes (\ref{eqLJI}). Let us denote by $\sR^i_k(X,L)$ these affine schemes. We have thus a  commutative diagram with vertical arrows isomorphisms of formal germs:
$$
\xymatrix{
\mathcal{R}^i_k (X,L)_{(\mathbf{0})} \ar@{^{(}->}[r]  \ar[d]^*[@]{\sim}& \HH^1(X,\bC)_{(\mathbf{0})} \ar[d]^*[@]{\sim} \\
\cV^i_k(X)_{(L)} \ar@{^{(}->}[r] & \mb(X)_{(L)}.
}
$$
Moreover, the right-most isomorphism is induced by the exponential map 
$$\exp: \bC^b=H^1(X,\bC)\ra (\bC^*)^b$$
for the connected component $(\bC^*)^b$ of $\mb(X)$ containing the constant sheaf. One applies now the following Ax-Lindemann type result.

\begin{prop}\label{propAx}
Suppose $(W, 0)$ and $(V, 1)$ are analytic germs of two algebraic sets in $\bC^n$ and $(\bC^*)^n$, respectively. If the exponential map $\exp: \bC^n\to (\bC^*)^n$ induces an isomorphism between $(W, 0)$ and $(V, 1)$, then $(V, 1)$ is the germ of a finite union of complex affine subtori. 
\end{prop}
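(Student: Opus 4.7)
The plan is to reduce the proposition to the Ax--Lindemann theorem for the exponential map on the algebraic torus $\bG_m^n = (\bC^*)^n$. First, decompose the analytic germ $(W,0)$ into its finitely many irreducible components; each such germ is the germ at $0$ of an irreducible algebraic subvariety $W_i \subseteq \bC^n$ through $0$. Because $\exp$ is a local biholomorphism at $0$, the given germ isomorphism sends these bijectively onto the irreducible components $(V_i, 1)$ of $(V, 1)$, which are themselves germs of irreducible algebraic subvarieties of $(\bC^*)^n$. It therefore suffices to treat each pair $(W_i, V_i)$ separately, so from here on assume $W$ and $V$ are both irreducible of equal complex dimension $d$.

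Next, I would invoke the Ax--Lindemann theorem for $\bG_m^n$: for any irreducible algebraic subvariety $W \subseteq \bC^n$, the Zariski closure $Z$ of $\exp(W)$ in $(\bC^*)^n$ is a coset of an algebraic subtorus. Since $1 = \exp(0) \in Z$, this coset is actually a subtorus $T \subseteq (\bC^*)^n$ passing through $1$. By hypothesis, $\exp$ sends a classical open neighborhood of $0$ in $W$ biholomorphically onto an analytic open neighborhood of $1$ in $V$; that analytic open lies inside $\exp(W) \subseteq T$, giving the germ inclusion $(V, 1) \subseteq (T, 1)$. A dimension count now promotes this to an equality: $\dim T = \dim \exp(W) = d$, since $\exp$ is a local biholomorphism and the Zariski closure of an irreducible analytic set preserves complex dimension, while $\dim V = d$ by hypothesis; hence the two germs of equidimensional irreducible varieties must coincide. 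Reassembling the components shows $(V, 1)$ is the germ of a finite union of subtori through $1$, each of which is in particular an affine subtorus.

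The main obstacle is the correct invocation of the Ax--Lindemann theorem (due to Ax) together with the justification of the dimension equality $\dim Z = d$; the latter follows from the standard fact that for an irreducible analytic subset of a quasi-projective variety, the Zariski closure is an irreducible algebraic subvariety of the same complex dimension. Everything else amounts to bookkeeping with the irreducible decomposition of the germs and the local biholomorphism property of $\exp$ at the origin.
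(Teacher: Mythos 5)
Your overall strategy --- reducing the statement to Ax's theorem for the exponential of an algebraic torus --- is the right one and is essentially how this proposition is proved in \cite{BW} (the present paper only quotes it), but two of your steps do not hold as written. First, the reduction to the irreducible case: the analytic irreducible components of the germ at $0$ of an algebraic set need not be germs of algebraic subvarieties (the two branches of a nodal cubic at its node are not algebraic), so Ax--Lindemann cannot be applied to them; and if you instead decompose $W$ and $V$ into their \emph{algebraic} irreducible components, there is no reason that the local biholomorphism $\exp$ matches these up bijectively, nor that the piece of $(V,1)$ corresponding to a given $W_i$ is an irreducible algebraic set of the same dimension. Second, the dimension count: the ``standard fact'' that the Zariski closure of an irreducible analytic subset of a quasi-projective variety has the same dimension is false for analytic sets that are not constructible --- the Zariski closure of $\{(t,e^t)\mid |t|<1\}$ in $\bC^2$ is all of $\bC^2$. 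In general $\dim \overline{\exp(W)}^{\mathrm{Zar}}$ is strictly larger than $\dim W$; that jump is precisely the phenomenon Ax--Lindemann describes, so the equality $\dim T=d$ is not justified by what you wrote.

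Both gaps are repaired by arguing with Zariski closures instead of dimensions, and without matching components. Let $W_1,\dots,W_r$ be the irreducible components of the algebraic set $W$ passing through $0$, and let $T_i$ be the Zariski closure of $\exp(W_i)$, a subtorus through $1$ by Ax's theorem. An irreducible algebraic variety is irreducible as an analytic space, so by the identity principle $T_i$ is already the Zariski closure of $\exp(W_i\cap U)$ for any classical neighborhood $U$ of $0$; since $\exp(W_i\cap U)\subseteq V$ and $V$ is Zariski closed, this yields $T_i\subseteq V$. Conversely $(V,1)=\bigcup_i(\exp(W_i\cap U),1)\subseteq\bigcup_i(T_i,1)$. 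Hence $(V,1)=\bigcup_i(T_i,1)$, with no need for irreducibility of $V$, for a component-by-component correspondence, or for a dimension comparison. Your appeal to Ax--Lindemann itself, and the observation that it suffices to see $V\subseteq T$ together with a reverse containment, are the correct key ideas; it is the bookkeeping around them that fails.
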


Combining the weight condition with the above theorem one obtains the next results which hold for spaces admitting mixed Hodge structures, not only for complex algebraic varieties. We recall the following definition.
Let $W$ be a complex projective variety, $Z$ and $Z'$ closed subschemes, $Y=Z\cup Z'$, and assume that the singular locus of $W$ is contained in $Y$. The {\it link of $Z$ in $W$ with $Y$ removed} is the complement $\cL=\cL(W,Y,Z):=T-Y$ for a nice neighborhood $T$ of $Z$ in $W$. If $Z=\{x\}$ is an isolated singularity of $W$ and $Z'$ is empty, then $\cL$ is the usual link of the singularity $(W,x)$.

\begin{theorem}\label{corW}
Let $X$ be: 
\begin{itemize}
\item a connected complex algebraic variety, possibly reducible,
\item a connected component of the link $\cL(W,Y,Z)$, or
\item a connected component of the Milnor fiber of the germ of a holomorphic function $f:(\bC^n,0)\ra (\bC,0)$.
\end{itemize}
If $W_0\HH^1(X,\bC)=0$, then each irreducible component of the algebraic set $\cV^i_k(X)$ containing the constant sheaf is a complex affine subtorus of $\mb(X)$.
\end{theorem}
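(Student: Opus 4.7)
The plan is to reduce the theorem to Theorem \ref{thrmLoc}, applied to the constant rank one local system $L = \bC_X$, and to deduce the passage from ``translated subtorus'' to ``subtorus'' from the fact that $\bC_X$ corresponds to the identity element of $\mb(X)$. The only substantive content is verifying the finiteness hypothesis on the $\Linf$-module multiplications $m_n$.

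First, I would invoke, for each of the three classes of $X$ under consideration, the existence of a mixed Hodge structure on $\HH^\ubul(X,\bC)$---due to Deligne in the algebraic case, Durfee--Hain for links, and Steenbrink for Milnor fibres---whose weight filtration is induced by a multiplicative weight filtration on a cdga model of the de Rham complex of $\bC_X$. As emphasised in the excerpt and developed in \cite{BR, CSo}, this compatibility passes to the homotopy-transferred $\Linf$ structure and implies that the higher $\Linf$-module multiplications $m_n : \HH^1(X,\bC)^{\otimes(n-1)} \otimes \HH^i(X,\bC) \to \HH^{i+2-n}(X,\bC)$ are strict with respect to the weight filtrations with a fixed shift depending on $n$.

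Second, I would carry out a short weight count. The hypothesis $W_0 \HH^1(X,\bC) = 0$ ensures that every $\omega \in \HH^1(X,\bC)$ has weight at least $1$, so $\omega^{\otimes (n-1)}$ has weight at least $n-1$. Combined with the compatibility from the previous step, the image $m_n(\omega^{\otimes(n-1)} \otimes \eta)$ is forced into a weight piece of $\HH^{i+2-n}(X,\bC)$ whose lower bound on weight grows linearly in $n$. On the other hand, the weights appearing in the finite-dimensional mixed Hodge structure $\HH^\ubul(X,\bC)$ are uniformly bounded above. Hence there exists $n_0$ such that $m_n(\omega^{\otimes(n-1)} \otimes \eta) = 0$ for all $n > n_0$ and all $\omega, \eta$, verifying the hypothesis of Theorem \ref{thrmLoc}.

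Third, I would conclude: Theorem \ref{thrmLoc} yields that each irreducible component of $\cV^i_k(X)$ passing through $\bC_X$ is a translated affine subtorus of $\mb(X) = \Hom(\pi_1(X),\bC^*)$, and since such a component contains the identity $\bC_X$ of the group $\mb(X)$, the translate is an honest affine subtorus. The main obstacle is the first step: packaging the mixed Hodge theory for links and Milnor fibres into a multiplicative weight filtration on a de Rham model and tracking that this weight filtration is preserved by homotopy transfer into $\Linf$ weight compatibility; for complex algebraic varieties this goes back to Morgan, while the other two cases require the more recent analyses pointed to in the excerpt.
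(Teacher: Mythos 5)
Your proposal follows essentially the same route as the paper: the weight filtration on a de Rham model is compatible with the transferred $\Linf$ operations, the hypothesis $W_0\HH^1(X,\bC)=0$ forces the weight of $m_n(\omega^{\otimes(n-1)}\otimes\eta)$ to grow linearly in $n$ while the weights on the finite-dimensional cohomology are bounded, so $m_n=0$ for $n\gg 0$ and Theorem \ref{thrmLoc} applies, with the translated subtorus through the identity being an honest subtorus. Two cosmetic points: the operation $m_n$ restricted to $\HH^1(X,\bC)^{\otimes(n-1)}\otimes\HH^i$ lands in $\HH^{i+1}$ (the shift $2-n$ is the degree of the map, not of the target), and ``filtered/compatible with $W$'' is the property you need rather than strictness.
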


For more global results of this type for complex algebraic varieties, obtained by other methods, we refer to \cite{BWq, BWa, ek}.

\bigskip

\part{$\Linf$ structures} \label{part2}

\bigskip

This part has two sections. Section  \ref{apxLinf} is dedicated to a  review of $\Linf$ structures. It contains the technical details behind the black box of Section \ref{secDFT} on deformation theory with cohomology constraints. In Section \ref{secTecCore} 
we give a proof of Theorem \ref{thmGenToCone} different than the proof in \cite{theta}.

\section{Review of $\Linf$ structures}\label{apxLinf}

In this section, we review   $L_\infty$ structures, deformation functors,  cohomology jump deformation subfunctors, and homotopy transfer theorems from \cite{Mane, KS, BR}. The material from this section is used in Part \ref{part1} of the article and in Section \ref{secTecCore} where we give another proof of Theorem \ref{thmGenToConeEt}.

\subs{\bf Signs, notation, d\'ecalage.} We work over a field $K$ of characteristic zero. Graded means $\bZ$-graded. If $V$ is a graded $K$-vector space, $T(V)$ denotes the graded tensor algebra and $\otimes$  its product, $\rS(V)=T(V)/\Span\{u\otimes v-(-1)^{|u||v|}v\otimes u\}$ denotes the graded commutative algebra and $\vee $  its product, $\Lambda(V)=T(V)/\Span\{u\otimes v+(-1)^{|u||v|}v\otimes u\}$ denotes the graded exterior algebra and $\wedge$  its product, where the spans are over  homogeneous elements $u$, $v$. 
For homogeneous elements $v_1,\ldots,v_n$ in $V$ and a permutation $\sigma\in \mathcal{S}_n$, the {\it Koszul sign} $\epsilon(\sigma)$ is defined by   
$v_1\vee\ldots \vee v_n =\epsilon(\sigma) v_{\sigma(1)}\vee\ldots \vee v_{\sigma(n)}$, so it also depends on  $|v_i|$. The {\it anti-symmetric Koszul sign} is $\chi(\sigma)=\mathrm{sign}(\sigma)\epsilon(\sigma)$,  equivalently, $v_1\wedge\ldots \wedge v_n =\chi(\sigma) v_{\sigma(1)}\wedge\ldots \wedge v_{\sigma(n)}$.  For homogeneous elements $v_1,\ldots,v_n$ in $V$ by $v=(v_1,\ldots,v_n)$ we  mean  $v_1\otimes \ldots \otimes _n$ if the context is clear to ease notation.

A permutation $\sigma\in \mathcal{S}_n$ on the set of $n$ elements is said to be an {\it$(i,n-i)$-unshuffle} if $
 \sigma(1)<\ldots < \sigma(i)$ and $
 \sigma(i+1)<\ldots < \sigma(n).
$
For $0\le i\le n$ denote by $\mathrm{Sh}(i,n-i)$ the set of $(i,n-i)$-unshuffles, consisting only of the identity  if $i=0$ by convention, 
and set 
 $\mathfrak{S}_n :=\lbrace (i,j,\sigma)\mid \sigma \in  \mathrm{Sh}(i,n-i), i\geq 1, i+j=n+1 \rbrace.$
 More generally, let
$\mathfrak{S}_{j,n}$ be the set of tuples $(k_1,\cdots,k_j, \tau)$ such that $k_i\geq 1,$ $k_1+\ldots+k_j=n$, and  $\tau\in \mathcal S_n$ is a permutation preserving the order within each block of length $k_i$.

 A graded  multilinear map $f: V^{\otimes n}\rightarrow V$  is {\it symmetric}, respectively {\it anti-symmetric}, if $$f(v_{\sigma(1)}, \ldots, v_{\sigma(n)} )=\eps(\sigma)f(v_{1}, \ldots, v_{n} ),\text{ respectively }
 \chi(\sigma)f(v_{1}, \ldots, v_{n} ),$$
for all $\sigma \in \mathcal{S}_n$ and homogeneous  $v_i\in V$. Equivalently, $f$ induces a graded linear map $\rS^n(V)\to V$, respectively $\Lambda^n(V)\to V$.

 The association $v\mapsto v\otimes 1+1\otimes v$ induces a coproduct $\Delta$ and a coalgebra structure on $\rS(V[1])$. One has the notion of coderivations on coalgebras. 
Set $\rrS(V[1])=\oplus_{i\geq 1}\rS^n(V[1])$ and consider it with the induced reduced symmetric coalgebra structure. A \emph{codifferential} on $\rrS(V[1])$ is a linear map $Q : \rrS(V[1]) \rightarrow \rrS(V[1])$ of degree $1$  such that $Q$ is a coderivation and $Q^2=0$. 

D\'ecalage allows one to pass from graded symmetric to graded anti-symmetric multilinear maps, see \cite[Prop. 10.6.2, Lemma 10.6.4, Ex. 11.8.12]{Mane}. Denote by $s:V[1]\to V$ the identity as a set map viewed as a graded linear map of degree 1.

\begin{prop} \label{decalage}   For every $n\ge 0$ there exists a  linear isomorphism of degree $n$, called d\'ecalage, 
$$
s^{\otimes n}: \rS^n(V[1]) \longrightarrow \Lambda^n(V)[n]$$
$$v_1\vee\cdots \vee v_n \mapsto  (-1)^{\sum_{i=1}^n(n-i)\left | v_i \right |}\left (sv_1\wedge\cdots \wedge sv_n  \right )
$$
for $v_i\in V[1]$ homogeneous of degree $|v_i|$.
\end{prop}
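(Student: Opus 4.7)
My plan is to construct the map first at the level of free tensor algebras and then show that it descends to the symmetric/exterior quotients. Concretely, I would define
$$\tilde\phi_n : V[1]^{\otimes n} \longrightarrow V^{\otimes n}[n], \qquad v_1\otimes\cdots\otimes v_n \longmapsto (-1)^{\sum_{i=1}^n (n-i)|v_i|}\,sv_1\otimes\cdots\otimes sv_n,$$
where $|v_i|$ denotes the degree in $V$. Since $s$ has degree $1$ and is applied $n$ times, the total degree shift is $n$, so $\tilde\phi_n$ is a well-defined graded linear map of degree $n$. It is a bijection of graded vector spaces with two-sided inverse obtained by the analogous formula with the inverse set map $s^{-1}:V\to V[1]$ of degree $-1$ in place of $s$ together with a compensating sign; this verification is a direct calculation.

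The content of the proposition is then that $\tilde\phi_n$ identifies the relations defining $\rS^n(V[1])$ with those defining $\Lambda^n(V)$, i.e.\ that it intertwines the graded-symmetric action of $\mathcal{S}_n$ on the source (with Koszul signs computed for the degrees in $V[1]$) with the graded-anti-symmetric action on the target (the Koszul signs in $V$ twisted by the sign of the permutation). By standard reduction it suffices to check this for an adjacent transposition $\tau=(i,i+1)$. Swapping $v_i$ and $v_{i+1}$ in the argument alters the exponent $\sum_j(n-j)|v_j|$ by $|v_{i+1}|-|v_i|$; on the other hand, swapping $sv_i$ and $sv_{i+1}$ in $\Lambda^n(V)$ contributes the sign $-(-1)^{|v_i||v_{i+1}|}$, while swapping $v_i$ and $v_{i+1}$ in $\rS^n(V[1])$ contributes $(-1)^{(|v_i|-1)(|v_{i+1}|-1)}$ since the degree of $v_j$ in $V[1]$ is $|v_j|-1$. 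The required compatibility boils down to
$$(-1)^{|v_{i+1}|-|v_i|}\cdot\bigl(-(-1)^{|v_i||v_{i+1}|}\bigr)=(-1)^{(|v_i|-1)(|v_{i+1}|-1)},$$
which, after expanding both exponents modulo $2$ and using $2|v_j|\equiv 0\pmod 2$, is an elementary identity.

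Once this is established, $\tilde\phi_n$ descends to a well-defined linear map $s^{\otimes n}:\rS^n(V[1])\to\Lambda^n(V)[n]$ of degree $n$ given by the stated formula, and the descended map is an isomorphism because $\tilde\phi_n$ is already bijective on tensor algebras and the two descended quotients have matching graded dimensions. The only step that is not purely formal is the sign bookkeeping for an adjacent transposition above; everything else (degree count, inversion via $s^{-1}$, descent once the sign identity is known) is immediate.
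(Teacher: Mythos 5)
Your strategy is the standard direct verification and it is sound: define the map on $V[1]^{\otimes n}$, check the degree, reduce compatibility with the symmetric/anti-symmetric relations to a single sign identity for an adjacent transposition, and descend. The paper does not prove this proposition at all — it cites \cite[Prop. 10.6.2, Lemma 10.6.4, Ex. 11.8.12]{Mane} — so your argument supplies exactly the computation being suppressed, and the transposition identity you isolate is indeed the entire content.

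Two remarks. First, a convention slip: in the statement $|v_i|$ is the degree of $v_i$ \emph{as an element of $V[1]$} (``$v_i\in V[1]$ homogeneous of degree $|v_i|$''), whereas you declare $|v_i|$ to be the degree in $V$, i.e.\ the degree of $sv_i$. The two prefactors differ by the global constant $(-1)^{\sum_{i=1}^n(n-i)}=(-1)^{n(n-1)/2}$, which is harmless for well-definedness and bijectivity but means the formula you actually verify is the \emph{other} d\'ecalage convention in the literature — precisely the $(-1)^{n(n-1)/2}$ discrepancy the paper flags in Remark \ref{rmkSignDiff}. Your transposition check transcribes verbatim to the paper's convention, since the change $|v_{i+1}|-|v_i|$ in the weighted exponent is the same modulo $2$ under either reading and $(|v_i|-1)(|v_{i+1}|-1)\equiv(|v_i|+1)(|v_{i+1}|+1)\pmod 2$; so only the bookkeeping of which degree is which needs to be aligned with the statement. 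Second, justifying that the descended map is an isomorphism by ``matching graded dimensions'' tacitly assumes finite-dimensionality of $V$ in each degree, which is not hypothesized; it is cleaner (and you already gesture at it) to note that the analogous map built from $s^{-1}$ with the compensating sign descends by the same transposition check and is inverse to $s^{\otimes n}$ on pure products, hence everywhere.
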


\begin{cor}\label{corDeca} If $V, V'$ are graded vector spaces, $i,n\in\bZ$, $n\ge 0$, then there is a linear isomorphism
$$
\dec:\Hom^i(\rS^n(V[1]), V') \xa{\sim} \Hom^{i-n}(\Lambda^n(V), V')$$
$$
 f  \mapsto (f\circ (s^{\otimes n})^{-1})[-n].
$$
Explicitly, for homogeneous $v_j\in V$ of degree $|v_j|$, $$ \dec(f)(v_1\wedge \cdots \wedge v_n) = (-1)^{\sum_{j=1}^n(n-j)(\left | v_j \right |-1)} f(s^{-1}v_1\vee\cdots \vee s^{-1}v_n).$$
\end{cor}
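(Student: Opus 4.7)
The plan is to derive the corollary as a direct consequence of Proposition \ref{decalage} by pre- and post-composing with the d\'ecalage isomorphism and its suspension shift. First, I would set up the map: given $f\in\Hom^i(\rS^n(V[1]),V')$, I regard $f$ as a graded linear map of degree $i$ and compose it with the inverse of the d\'ecalage $(s^{\otimes n})^{-1}\colon \Lambda^n(V)[n]\to \rS^n(V[1])$, which has degree $-n$. The composition $f\circ (s^{\otimes n})^{-1}\colon \Lambda^n(V)[n]\to V'$ then has degree $i-n$, so after desuspending the source by $[-n]$ we obtain a well-defined map in $\Hom^{i-n}(\Lambda^n(V),V')$. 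This construction is functorial in $f$, linear, and has an evident inverse built from $s^{\otimes n}$ and $[n]$, so bijectivity is automatic.

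Second, I would verify the explicit formula by substituting the description of $s^{\otimes n}$ from Proposition \ref{decalage}. For homogeneous $v_j\in V$ of degree $|v_j|$, the element $s^{-1}v_j\in V[1]$ has degree $|v_j|-1$, so Proposition \ref{decalage} gives
\[
s^{\otimes n}\bigl(s^{-1}v_1\vee\cdots\vee s^{-1}v_n\bigr)=(-1)^{\sum_{j=1}^n(n-j)(|v_j|-1)}\,(v_1\wedge\cdots\wedge v_n)
\]
in $\Lambda^n(V)[n]$. Inverting yields
\[
(s^{\otimes n})^{-1}(v_1\wedge\cdots\wedge v_n)=(-1)^{\sum_{j=1}^n(n-j)(|v_j|-1)}\,(s^{-1}v_1\vee\cdots\vee s^{-1}v_n),
\]
since each sign is $\pm 1$. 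Applying $f$ and then the desuspension $[-n]$, which acts as the identity on the underlying vector space, produces precisely the claimed formula
\[
\dec(f)(v_1\wedge\cdots\wedge v_n)=(-1)^{\sum_{j=1}^n(n-j)(|v_j|-1)}f(s^{-1}v_1\vee\cdots\vee s^{-1}v_n).
\]

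The only delicate point is the sign bookkeeping in passing from $\rS^n(V[1])$ to $\Lambda^n(V)[n]$, but this is entirely handled by Proposition \ref{decalage}; the shifts $[n]$ and $[-n]$ do not introduce additional signs because desuspension is the identity on elements. Thus the corollary is immediate once one has fixed conventions for $s$ and the d\'ecalage. The main (really only) obstacle is keeping consistent conventions for the degree shifts of Hom spaces and for the direction of the suspension isomorphism $s$, since a sign error at either step would propagate into the Koszul exponent.
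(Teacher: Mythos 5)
Your derivation is correct and is exactly the intended one: the paper offers no separate proof of this corollary, treating it as immediate from Proposition \ref{decalage}, and your two steps (degree bookkeeping through the shifts, then substituting $s^{-1}v_j$ of degree $|v_j|-1$ into the proposition's sign formula and inverting the $\pm 1$ sign) reproduce the stated formula precisely. Nothing is missing.
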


\begin{rmk}\label{rmkDecW}
A slight generalization involves another graded vector space $W$. There is a linear isomorphism $$
\dec:\Hom^i(\rS^n(V[1])\otimes W, V') \xa{\sim} \Hom^{i-n}(\Lambda^n(V)\otimes W, V')$$ defined by $f  \mapsto (f\circ (s^{\otimes n}\otimes \id_W)^{-1})[-n]$. Explicitly, $$ \dec(f)(v_1\wedge \cdots \wedge v_n\otimes w) = (-1)^{\sum_{j=1}^n(n-j)(\left | v_j \right |-1)} f(s^{-1}v_1\vee\cdots \vee s^{-1}v_n\otimes w).$$
\end{rmk}

\subs{\bf $\Linf$ algebras and modules.} There are  equivalent ways to define $\Linf$ structures.

\begin{propdef} \label{equivdeflalg}An {\it $L_\infty$    algebra}, or {\it strong homotopy Lie algebra}, is a graded vector space $L$ together with one of the following equivalent data:

(1) a codifferential $Q$ on $\rrS(L[1])$;

(2) a collection of graded symmetric multilinear maps $q_n:L[1]^{\otimes n} \rightarrow L[1]$, ${n\ge 1}$,
of degree $1$ such that
  for all $n\ge 1$ and homogeneous $a_i\in L[1]$,  
$$
   0= \sum_{(i,j,\sigma ) \in \mathfrak{S}_n } \epsilon(\sigma)q_j(q_i(a_{\sigma(1)} , \cdots , a_{\sigma(i)}), a_{\sigma(i+1)} , \cdots , a_{\sigma(n)}),
$$
and  we note that $\eps(\sigma)$ takes into account the degrees of $a_i$ in $L[1]$, not in $L$;

(3)   a collection of graded anti-symmetric multilinear maps
$ l_n: L^{\otimes n}\rightarrow L $, ${n\geq 1}$, such that $l_n$ has degree $2-n$, and
  for all $n\ge 1$ and homogeneous $a_i\in L$,  
$$
 0= \sum_{(i,j,\sigma ) \in \mathfrak{S}_n } \chi(\sigma)(-1)^{(j-1)}l_j(l_i(a_{\sigma(1)}, \ldots, a_{\sigma(i)}),a_{\sigma(i+1)}, \ldots, a_{\sigma(n)}).
$$
\end{propdef}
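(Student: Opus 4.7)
The plan is to prove the three equivalences in the natural order $(1)\Leftrightarrow(2)\Leftrightarrow(3)$, with the first step relying on the cofreeness of the reduced graded symmetric coalgebra and the second on the d\'ecalage of Corollary \ref{corDeca}.

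For $(1)\Leftrightarrow(2)$, I would invoke the universal property of $\bar S(L[1])$: equipped with the shuffle coproduct induced by $v\mapsto v\otimes 1+1\otimes v$, it is cofree in the category of non-counital, nilpotent, graded cocommutative coalgebras generated by $L[1]$. Consequently any coderivation $Q$ of degree $1$ is determined uniquely by its corestriction to the cogenerators, i.e.\ by the family of maps
\[
q_n:=\pi_{L[1]}\circ Q|_{S^n(L[1])}:S^n(L[1])\longrightarrow L[1],
\]
which are graded symmetric of degree $1$; conversely, any such family extends to a unique coderivation by the explicit formula
\[
Q(a_1\vee\cdots\vee a_n)=\sum_{i=1}^n\sum_{\sigma\in\mathrm{Sh}(i,n-i)}\epsilon(\sigma)\,q_i(a_{\sigma(1)},\ldots,a_{\sigma(i)})\vee a_{\sigma(i+1)}\vee\cdots\vee a_{\sigma(n)}.
\]
Since $Q^2$ is again a coderivation, $Q^2=0$ is equivalent to $\pi_{L[1]}\circ Q^2=0$ on each $S^n(L[1])$, and expanding this projection reproduces precisely the identity in (2) after relabelling $(i,j,\sigma)\in\mathfrak S_n$.

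For $(2)\Leftrightarrow(3)$, I would apply the d\'ecalage of Proposition \ref{decalage}. Writing $s:L[1]\to L$ for the suspension, set
\[
l_n(a_1,\ldots,a_n):=(-1)^{\sum_{j=1}^n(n-j)(|a_j|-1)}\,s\bigl(q_n(s^{-1}a_1,\ldots,s^{-1}a_n)\bigr)
\]
for homogeneous $a_i\in L$, and recover $q_n$ inversely. A degree count shows that $q_n$ of degree $1$ on $L[1]^{\otimes n}\to L[1]$ corresponds to $l_n$ of degree $2-n$ on $L^{\otimes n}\to L$; and by Proposition \ref{decalage}, graded symmetry of $q_n$ transfers exactly to graded anti-symmetry of $l_n$. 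It then remains to check that the symmetric identity in (2) becomes the anti-symmetric identity in (3) under this correspondence: each term $q_j(q_i(\cdots),\cdots)$ is replaced by $l_j(l_i(\cdots),\cdots)$, the Koszul sign $\epsilon(\sigma)$ becomes the anti-symmetric Koszul sign $\chi(\sigma)$, and an extra factor $(-1)^{j-1}$ appears from reorganising the suspensions around the inner composition.

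The conceptual content of the proposition is essentially tautological: it is the translation, via cofreeness and d\'ecalage, of the single quadratic relation $Q^2=0$ into symmetric and anti-symmetric multilinear forms. The main obstacle is therefore purely sign-bookkeeping in the transition $(2)\Leftrightarrow(3)$, in particular justifying the exponent $(-1)^{j-1}$ in (3); this will be accomplished by tracking carefully how the two occurrences of $s$ and $s^{-1}$, each of odd degree, interact with the $j{-}1$ outer arguments of the composition $l_j\circ(l_i\otimes\id^{\otimes(j-1)})$. As a sanity check I would verify the resulting identity directly in the cases $n=1,2,3$, where it recovers respectively $l_1^2=0$, the graded Leibniz rule for $l_1$ with respect to the bracket $l_2$, and the homotopy Jacobi identity for $l_2$ modulo $l_3$.
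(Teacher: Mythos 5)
Your proposal is correct and follows essentially the same route as the paper: the remark following the statement establishes (1)$\Leftrightarrow$(2) via exactly the corestriction/reconstruction formula you give (cofreeness of $\rrS(L[1])$, plus the fact that $Q^2$ is again a coderivation), and (2)$\Leftrightarrow$(3) via the d\'ecalage of Corollary \ref{corDeca}, deferring the sign bookkeeping to \cite{Mane} and \cite{KS}. The only discrepancy is that the paper sets $l_n=-\dec(q_n)$ rather than $+\dec(q_n)$ (a convention chosen so that $l_2$ is the Lie bracket, not its negative, in the dgla case of Remark \ref{dglala}); since every term of the identity in (3) is quadratic in the $l_n$, this sign does not affect the equivalence you are proving.
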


\begin{rmk}
In (2) one can replace $q_n$ by the induced linear map $q_n:\rS^n(L[1])\to L[2]$ and the commas (tensor products) in the identity by $\vee$. Similarly, in (3) one can replace $l_n$ by the induced linear map $l_n:\Lambda^nL\to L[2-n]$ and the commas in the identity by $\wedge$. 
\end{rmk}

\begin{rmk}
The equivalence between the definitions goes as follows, see  \cite[\S10, \S12]{Mane}, \cite[\S3.1]{KS}. 
The equivalence between the symmetric and the anti-symmetric formulations is given by setting $l_n=-\dec(q_n)$ where $\dec$ is the d\'ecalage isomorphism of Corollary \ref{corDeca} with $i=1$, $V=L$,  $V'=L[1]$. The sign convention is made to generalize the dgla case, see Remark \ref{dglala}. For the equivalence between the first two formulations,
given a codifferential $Q$ on $\rrS(L[1])$, set for $1\le j, n$, $$Q_n^j:= \mathrm{pr}_{\rS^j(L[2])}\circ Q_{\mid \rS^n(L[1])}:\rS^n(L[1]) \rightarrow \rS^j(L[2]). $$
 The  collection of maps
$Q_n^1:\rS^n(L[1]) \rightarrow L[2]$   gives the collection of graded symmetric linear maps $q_n$ as in (2).
Conversely, given  $ q_n$  as in (2), let $Q_n^1$ be the induced linear maps on the symmetric algebra of $L[1]$. Then the map defined for homogeneous vectors $a_i$ of $L[1]$ by
$$Q(a_1\vee \cdots \vee a_n): = \sum_{(i,j,\sigma ) \in \mathfrak{S}_n } \epsilon(\sigma)Q_i^1(a_{\sigma(1)} \vee \cdots \vee a_{\sigma(i)}) \vee a_{\sigma(i+1)}\vee \cdots \vee a_{\sigma(n)}
$$
defines a codifferential on $\rrS(L[1])$. 

\end{rmk}

  \begin{rmk}\label{rmkSignDiff} There is a different formulation of (3) for $L_\infty$ algebras in the literature, e.g. \cite{Al, L, LM, LS}. This is due to a different sign convention for the d\'ecalage isomorphism. The definition here follows \cite{KS, Mane}. The maps $l_n$ differ by a factor $(-1)^{n(n-1)/2}$ in these two definitions.  The definition of $l_n$ in \cite{Ge} differs by a factor $(-1)^n$ from ours. 
  \end{rmk}

  \begin{rmk} \label{dglala} Given an $L_\infty$ algebra $(L,l_1,l_2,\ldots)$, unwinding the definition, we have that $l_1$ is of degree $1$ and satisfies $l_1\circ l_1=0$, so  $(L,l_1)$ is a cochain complex. Any $L_\infty$ algebra with $l_n=0$ for $n\ge 3$ is a dgla, where the Lie bracket is $l_2$. Conversely, 
  any dgla is an $L_\infty$ algebra $(L,l_1,l_2, 0, 0,\ldots )$. 
   \end{rmk}

\begin{propdef}\label{lmalgebra}  A {\it morphism of $L_\infty$ algebras} between $(L,Q)$, or $(L,q_1,q_2,\ldots)$, or $(L,l_1,l_2,\ldots)$, and $(L',Q')$, or $(L',q'_1,q'_2,\ldots)$, or $(L',l'_1,l'_2,\ldots)$, is one of the following  equivalent data:

 \begin{enumerate}[(1)]

\item A morphism of coalgebras $F:\rrS(L[1])\to \rrS(L'[1])$ such that $F\circ Q=Q'\circ F$.

\item A  collection  of linear maps 
$\sf_n:\rS^n(L[1]) \rightarrow L'[1]$, $n\ge 1$,   such that for all $a_i$ homogeneous in $L[1]$
\begin{align*}
\sum_{i=1}^n q'_iF^i_n(a_1\vee\ldots \vee a_n)=\sum_{(i,j,\sigma ) \in \mathfrak{S}_n}\eps(\sigma)\sf_j(q_i(a_{\sigma(1)}\vee\ldots\vee a_{\sigma(i)})\vee\ldots\vee a_{\sigma(n)})
\end{align*}
where $F^i_n:\rS^n(L[1])\to \rS^i(L'[1])$ are defined recursively by $F^1_n=\sf_n$ and
$$
F^i_n(a_1\vee\ldots \vee a_n) =\frac{1}{i} \sum_{k=1}^{n-i+1}\sum_{\sigma\in \mathrm{Sh}(k,n-k)}\eps(\sigma)\sf_k(a_{\sigma(1)}\vee\ldots\vee a_{\sigma(k)})\vee F^{i-1}_{n-k}(a_{\sigma(k+1)}\vee\ldots\vee a_{\sigma(n)}).
$$

\item A collection of  graded anti-symmetric multilinear maps
$ f_n: L^{\otimes n} \rightarrow L' $, $n\geq 1$, such that $f_n$ has degree  $1-n$ and the maps $f_n$ satisfy a certain identity (which we do not write down, but see Remark \ref{expmor}.)
\end{enumerate}

\end{propdef}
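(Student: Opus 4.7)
The plan is to establish $(1)\Leftrightarrow(2)$ via the cofree-like universal property of the reduced symmetric coalgebra, then establish $(2)\Leftrightarrow(3)$ by applying the d\'ecalage isomorphism of Corollary \ref{corDeca} termwise. This mirrors the strategy already used in the excerpt to pass between the symmetric and anti-symmetric formulations of $L_\infty$ \emph{algebras} in Proposition-Definition \ref{equivdeflalg}, and indeed (3) is to (2) for morphisms exactly as $l_n$ is to $q_n$ for algebras.

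For $(1)\Leftrightarrow(2)$, the key fact I would invoke is that $\rrS(V[1])$, with its reduced coproduct $\Delta$, is the cofree conilpotent cocommutative coalgebra on $V[1]$. Concretely, any morphism of coalgebras $F:\rrS(L[1])\to\rrS(L'[1])$ is uniquely determined by its composition $\sf_n := \mathrm{pr}_{L'[1]} \circ F|_{\rS^n(L[1])}$ with the cogenerator projection, and conversely any family $\{\sf_n\}_{n\geq 1}$ assembles into a coalgebra morphism $F$. I would verify the reconstruction formula for the components $F^i_n:\rS^n(L[1])\to\rS^i(L'[1])$ by induction on $i$: applying $\Delta$ to $F^i_n(a_1\vee\cdots\vee a_n)$, using that $F$ is comultiplicative, and separating off one $\sf_k$ factor gives the recursive formula stated in (2), with the factor $1/i$ compensating for the symmetry of the $i$-fold symmetric product. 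Once $F$ is reconstructed from $\{\sf_n\}$, the relation $F\circ Q = Q'\circ F$ holds if and only if its projection onto $L'[1]$ does, because both sides are coderivations along $F$ (equivalently, ``$(F,F)$-coderivations''), and such coderivations out of a cofree coalgebra are determined by their cogenerator projection. Writing this projection out with the recursive formula for $F^i_n$ and the analogous recursive formulas for $Q$ and $Q'$ yields exactly the identity in (2).

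For $(2)\Leftrightarrow(3)$, I would set $f_n := \dec(\sf_n)$ via Corollary \ref{corDeca} (with $V = L$, $V' = L'[1]$, and $i = 0$, so the degree shift gives $f_n$ of degree $1-n$ as required). D\'ecalage turns graded symmetric maps into graded anti-symmetric maps and converts Koszul signs $\epsilon(\sigma)$ into anti-symmetric signs $\chi(\sigma)$, while tracking a power of $-1$ arising from the explicit formula $s^{\otimes n}$ in Proposition \ref{decalage}. Applying $\dec$ to each term of the identity in (2) and matching signs produces the corresponding identity in (3); the sign conventions to match are calibrated exactly so that for a morphism between two dgla's (i.e.\ $l_n = 0$ for $n\geq 3$, $f_n=0$ for $n\geq 2$), condition (3) reduces to $f_1$ being a morphism of dgla's, consistent with Remark \ref{dglala}.

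The main obstacle, and the only genuinely delicate point, will be bookkeeping the Koszul signs in the equivalence $(1)\Leftrightarrow(2)$: the recursive formula for $F^i_n$ involves unshuffles on $\rS^n(L[1])$, and one must carefully compare the sign produced by applying $Q$ before versus after $F$, paying attention to the degree-$1$ shift in $L[1]$. These signs are dictated by the Koszul sign rule, and cleanly separating the contributions of the coderivation property from those of the coalgebra morphism property (as in the proof of Proposition-Definition \ref{equivdeflalg}(1)$\Leftrightarrow$(2)) will make the verification mechanical. The sign shift between (2) and (3) is then fixed uniformly by d\'ecalage as in the passage between $q_n$ and $l_n$, where one uses $f_n = -\dec(\sf_n)$ or an analogous normalization (matching Remark \ref{rmkSignDiff}) so that the anti-symmetric identity in (3) matches the conventions of \cite{KS, Mane}.
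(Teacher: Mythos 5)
Your proposal matches the paper's own treatment, which is given in Remark \ref{expmor}: the equivalence $(1)\Leftrightarrow(2)$ is obtained by taking corestrictions $F^i_n=\mathrm{pr}_{\rS^i(L'[1])}\circ F|_{\rS^n(L[1])}$ and using that a coalgebra morphism out of the (cofree conilpotent) reduced symmetric coalgebra is determined by its cogenerator components, with details deferred to \cite[Prop.\ 12.2.3]{Mane}, and $(2)\Leftrightarrow(3)$ is exactly d\'ecalage with $f_n=\dec(\sf_n)$ (note: no extra minus sign here, unlike the algebra-structure case $l_n=-\dec(q_n)$). Your argument is correct and essentially identical in strategy.
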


\begin{rmk}\label{expmor}
The identity in (3) can be obtained by d\'ecalage from (2) by setting $f_n=\dec(\sf_n)$, see \cite[12.3]{Mane}. This identity is more involved: see \cite[Def. 2.3]{Al}, but note that $l_n$ there differs by $(-1)^{n(n-1)/2}$ from here, cf. Remark \ref{rmkSignDiff}, and the grading is reversed by a negative sign to work with chain complexes instead of cochain complexes as here. For the equivalence between the first two formulations, given a morphism of coalgebras $F$ compatible with the codifferentials, set for $1\le i,n$, $$F_n^i:= \mathrm{pr}_{\rS^i(L'[1])}\circ F_{\mid \rS^n(L[1])}:\rS^n(L[1]) \rightarrow \rS^i(L'[1]). $$ Then $\sf_n:=F^1_n$, as well as the other $F^i_n$, are as in (2). In particular $F_n^i$ depends solely on $F_k^1$ for $1\leq k \leq n-i+1$. Conversely, given $\sf$ and $F^i_n$ as in (2), then $F:=\sum_{i,n}F^i_n$ is as in (1).  See \cite[Prop. 12.2.3]{Mane} for details.
\end{rmk}

\begin{rmk}\label{Taylorcoefmor}
The morphism $F:(L,Q)\to (L',Q')$ of $\Linf$ algebras can be reconstructed from the maps $F^1_n=\sf_n$ in one shot by
$$F(a_1\vee\ldots \vee a_n) =  \sum_{j=1}^n\sum_{(k_1,\ldots , k_j,\tau ) \in \mathfrak{S}_{j,n} } \frac{\epsilon(\tau)}{j!} F_{k_1}^1(a_{\sigma(1)}\vee\ldots\vee a_{\sigma(k_1)}) \vee\ldots\vee F_{k_j}^1(a_{\sigma(n-k_j+1)}\vee\ldots\vee a_{\sigma(n)})$$
for $a_i\in L[1]$ homogeneous, \cite[Prop. 3.7]{KS}. We  refer to $F^1_n$ as the Taylor coefficients of $F$.
\end{rmk} 

\begin{rmk} Unwinding the definition, $F^1_1=\sf_1$ and $f_1=\dec(\sf_1)$ are morphisms of cochain complexes. Moreover,
the category of dgla's is a subcategory of the category on $L_\infty$ algebras. Nevertheless, it is not a full subcategory as there are $L_\infty$  morphisms between two dgla's that are not morphisms of dgla's. 
\end{rmk}

\begin{prop} \label{inverse} $\rm{(}$\cite[Cor. 11.5.5]{Mane}, \cite[Prop. 3.9]{KS}$\rm{)}$   \label{coalgiso} A  morphism  between $\Linf$ algebras $F:(L,Q)\to(L',Q')$ is an isomorphism if and only if $F_1^1$ is an isomorphism. In this case, the inverse $G$ of $F$ is determined by the  recursive formula: 
\be \label{inverseformula}
\begin{cases}
 G^1_1=(F_1^1)^{-1} &  \\ 
 G_n^1=-\Big(\sum_{i=1}^{n-1}G_i^1F^i_n\Big)(F_n^n)^{-1}& \text{ for } n\geq 2. 
\end{cases}
\ee
\end{prop}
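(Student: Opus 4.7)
The plan is to work at the level of Proposition-Definition \ref{lmalgebra}(1), where an $\Linf$ morphism is a graded coalgebra morphism $F:\rrS(L[1])\to\rrS(L'[1])$ commuting with the codifferentials. For the easy direction, if $F$ admits an $\Linf$ inverse $G$, then projecting the identities $G\circ F=\id$ and $F\circ G=\id$ onto the component $\rS^1\to\rS^1$ yields $G_1^1\circ F_1^1=\id_{L[1]}$ and $F_1^1\circ G_1^1=\id_{L'[1]}$, so $F_1^1$ is an isomorphism.

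For the converse, the first key step is to show that each ``diagonal'' Taylor component $F_n^n$ is an isomorphism as soon as $F_1^1$ is. Applying the reconstruction formula of Remark \ref{Taylorcoefmor} and projecting onto $\rS^n(L'[1])$, the only nonzero contribution comes from the partition $(k_1,\ldots,k_n)=(1,\ldots,1)$, and a short symmetry count yields $F_n^n=(F_1^1)^{\vee n}$. Invertibility of $F_1^1$ therefore transfers to each $F_n^n$ by multilinear algebra.

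The next step is to construct a coalgebra morphism $G:\rrS(L'[1])\to\rrS(L[1])$ by recursively prescribing its Taylor coefficients $G_n^1$, using Remark \ref{Taylorcoefmor} to extend them to a full coalgebra morphism. Since the identity morphism has Taylor coefficients $(\id_{L[1]},0,0,\ldots)$, the demand $G\circ F=\id_{\rrS(L[1])}$ projected to $L[1]$ from $\rS^n(L[1])$ reads
\[
\sum_{i=1}^n G_i^1\circ F_n^i \;=\; \delta_{n,1}\,\id_{L[1]}.
\]
For $n=1$ this forces $G_1^1=(F_1^1)^{-1}$, and for $n\ge 2$ one solves for $G_n^1$ by passing $F_n^n$ to the right, which is legitimate by the previous step and produces exactly the recursion in the statement. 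A parallel construction yields a right inverse $G'$ with $F\circ G' = \id$, and the two inverses coincide by the usual coalgebra associativity argument. Finally, compatibility of $G$ with the codifferentials follows from composing $F\circ Q=Q'\circ F$ with $G$ on both sides: $Q\circ G=G\circ F\circ Q\circ G=G\circ Q'\circ F\circ G=G\circ Q'$, so $G$ is genuinely an $\Linf$ morphism. The main obstacle is the identification $F_n^n=(F_1^1)^{\vee n}$; everything else is then a formal consequence of the fact that coalgebra morphisms out of the cofree cocommutative coalgebra $\rrS(L[1])$ are freely determined by their Taylor coefficients.
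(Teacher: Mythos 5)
Your argument is correct, and it is the standard proof of this fact; note that the paper itself does not prove this proposition but only cites \cite[Cor. 11.5.5]{Mane} and \cite[Prop. 3.9]{KS}, where essentially the same reasoning appears. The two pillars you identify — the identity $F_n^n=(F_1^1)^{\vee n}$ coming from the reconstruction formula, and the fact that a coalgebra morphism into the cofree reduced cocommutative coalgebra is determined by its Taylor coefficients (so that checking $(G\circ F)_n^1=\delta_{n,1}\,\mathrm{id}$ suffices to conclude $G\circ F=\id$) — are exactly what the recursion in the statement encodes, and your closing observation that compatibility with the codifferentials is automatic once $G$ is a two-sided inverse completes the proof.
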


\begin{defn} A morphism of $L_\infty$ algebras $f: L\rightarrow L'$ is a {\it weak equivalence} if the map of complexes $f_1: (L, l_1) \rightarrow (L', l_1')  $ is a quasi-isomorphism. If there exists a zig-zag of weak equivalences between two $\Linf$ algebras $L$ and $L'$, we say that $L$ and $L'$ are {\it homotopy equivalent}.
\end{defn}

We review now $\Linf$ modules, see \cite[\S7]{KS}. 

\begin{propdef} \label{lmodule} Let  $(L,Q)$, or $(L,q_1,q_2,\ldots)$, or  $(L,l_1,l_2,\ldots)$, be an $L_\infty$ algebra. An {\it $L_\infty$  module} over $L$ is a graded vector space $V$ together with any of the following equivalent data:
 \begin{enumerate}[(1)]

\item A  codifferential $\phi$ of degree $1$ on the cofreely cogenerated comodule $\rS(L[1])\otimes V$ over $(\rS(L[1]),Q)$ where the definition of $Q$ is extended by setting $Q_0=0$.  (A codifferential $\phi$ must satisfy by definition the relation \cite[(7.2)]{KS} with $Q$, and $\phi\circ\phi=0$.)

\item A collection of graded linear maps $\phi_n:\rS^{n-1}(L[1])\otimes V\to V[1]$, ${n\geq 1}$, satisfying a certain identity.
 
\item  A collection of graded multilinear maps 
$ m_n:  L^{\otimes (n-1) } \otimes V \rightarrow V  $, ${n\geq 1}$, such that $m_n$ has degree $2-n$,  $m_n$ are anti-symmetric with respect to  $L^{\otimes (n-1)}$ and satisfy a certain identity. \end{enumerate}

\end{propdef}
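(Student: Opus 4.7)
The plan is to mirror the structure of Proposition-Definition \ref{equivdeflalg}, passing through the three formulations in order (1) $\Leftrightarrow$ (2) $\Leftrightarrow$ (3), with (1) $\Leftrightarrow$ (2) accomplished via the cofree property of the comodule $\rS(L[1])\otimes V$ and (2) $\Leftrightarrow$ (3) via the d\'ecalage isomorphism from Remark \ref{rmkDecW}.

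For (1) $\Leftrightarrow$ (2), I would first recall that because $\rS(L[1])\otimes V$ is cofreely cogenerated over $(\rS(L[1]),Q)$ by $V$, any coderivation $\phi:\rS(L[1])\otimes V\to\rS(L[1])\otimes V$ of degree $1$ satisfying the Leibniz-type relation with $Q$ is uniquely determined by its projection onto the cogenerators, that is, by its components
$$\phi_n:=\mathrm{pr}_{V[1]}\circ\phi_{\mid \rS^{n-1}(L[1])\otimes V}:\rS^{n-1}(L[1])\otimes V\longrightarrow V[1],\quad n\ge 1.$$
Conversely, any such family reconstructs a coderivation by an explicit formula analogous to the one in the proof of Proposition-Definition \ref{equivdeflalg} for $Q$ from the $Q_n^1$. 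The condition $\phi\circ\phi=0$ then becomes, after projecting onto $V[1]$, a sequence of identities relating the $\phi_n$ with the $q_n$, one for each $n\ge 1$. This is the identity alluded to in (2).

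For (2) $\Leftrightarrow$ (3), I would apply the d\'ecalage isomorphism of Remark \ref{rmkDecW} with $W=V$ and target $V'=V[1]$:
$$\dec:\Hom^{1}(\rS^{n-1}(L[1])\otimes V,\,V[1])\xrightarrow{\sim}\Hom^{2-n}(\Lambda^{n-1}(L)\otimes V,\,V).$$
Set $m_n$ to be (up to a sign matching the convention $l_n=-\dec(q_n)$) the image of $\phi_n$ under $\dec$. Then $m_n$ is automatically graded anti-symmetric in its $L^{\otimes(n-1)}$ arguments and has degree $2-n$, and the identity from (2) transforms, term by term, into the identity in (3). The sign convention is fixed so that when $l_n=0$ for $n\ge 3$ and $m_n=0$ for $n\ge 3$, one recovers the standard notion of a dg module over a dgla.

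The main obstacle is purely bookkeeping: keeping track of Koszul signs throughout, both in extending a coderivation from its cogenerator-valued components and in transferring the quadratic identity on $\phi$ across d\'ecalage, so that the resulting identity in (3) agrees with the standard $\Linf$-module axiom in the literature. Once the sign conventions are pinned down to match those of Proposition-Definition \ref{equivdeflalg}, the equivalences are formal and each arrow is constructive, giving back the cited identities in \cite[\S 7]{KS}.
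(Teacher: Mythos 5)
Your plan coincides with the paper's own argument (carried out in Remark \ref{equivLmod}): the equivalence (1) $\Leftrightarrow$ (2) comes from the cofree comodule structure, which lets one recover $\phi$ from its cogenerator components $\phi_n$ via the reconstruction formula (\ref{eqcomo}) and turns $\phi\circ\phi=0$ into the identities of (2), and (2) $\Leftrightarrow$ (3) is the d\'ecalage of Remark \ref{rmkDecW} with the sign $m_n=-\dec(\phi_n)$ chosen to recover the dgl-module case. This is correct and essentially identical to the paper's approach, modulo the Koszul-sign bookkeeping you already flag.
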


\begin{rmk}\label{equivLmod} If $\phi$ is a codifferential as in (1) set
$$
\phi_n:=\mathrm{pr}_{V[1]}\circ F_{\mid \rS^{n-1}(L[1])\otimes V}.
$$
The fact that $\phi$ is a coderivative implies that  one reconstructs $\phi$ from $\phi_n$ by
\be
\begin{aligned}\label{eqcomo}
 \phi(a_1\vee \cdots \vee  a_{n-1} \;\otimes   & \;v)= Q(a_1\vee \ldots  \vee a_{n-1} \otimes v) +\\ + \sum_{i=0}^{n-1}\sum_{\sigma\in \mathrm{Sh}(i,n-1-i)}  &(-1)^{\sum_{s=1}^i \left | a_{\sigma(s)} \right |}  \epsilon(\sigma) a_{\sigma(1) } \vee \cdots \vee  a_{\sigma(i) }  \otimes  \phi_{n-i}(a_{\sigma(i+1)} \vee \ldots  \vee a_{\sigma(n-1)} \otimes v ) 
\end{aligned}
\ee
for homogeneous $a_i\in L[1]$ and $v\in V$, see \cite[Rem. 7.2]{KS}. The extra condition  $\phi\circ\phi=0$ gives the desired identity that $\phi_n$ have to satisfy in (2). D\'ecalage gives the identity that the maps $m_n=-\dec(\phi_n)$ have to satisfy in (3), with $\dec$ as in Remark \ref{rmkDecW}, where the sign convention here is made to recover the dgl module case, see  Remark \ref{remDlgmod}.  The explicit identity in (3) is written down in \cite[Def. 2]{L}, \cite[Def. 2.2]{Al}, where $m_n$ differ from here by $(-1)^{n(n-1)}$, see Remark \ref{rmkSignDiff}.
\end{rmk}

\begin{rmk}\label{remDlgmod}
Unwinding the definition, $ m_1$ has degree one and satisfies $m_1\circ m_1=0$,  so $(V,m_1)$ is a cochain complex. If $(L,l_1,l_2)$ is a dgla then the $\Linf$ module $(V,m_1,m_2,\ldots)$ over $L$ is a dgl $L$-module if $m_n=0$ for $n\ge 3$, see \cite[Ex. 7.3]{KS}. Conversely, every dgl $L$-module is an $\Linf$ $L$-module with vanishing  multiplication maps $m_n$ for $n\ge 3$.
\end{rmk}

There is another equivalent definition of $\Linf$ modules, see \cite[Prop. 7.5]{KS}:

\begin{prop}
If $(L,Q)$ is an $\Linf$ algebra and $V$ is a graded vector space, there is a natural dgla structure on $\Hom(\rS(L[1])\otimes V,V)$ with differential $\partial\phi=-(-1)^{|\phi|}\phi\circ(Q\otimes\id)$ and bracket induced by the product $\phi\bullet\psi=\phi\circ(\id\otimes\psi)\circ(\Delta\otimes \id)$, such that the Maurer-Cartan elements $\phi$ of this dgla can be identified with the $\Linf$ module structures on $V$.
\end{prop}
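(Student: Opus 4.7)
The plan is to identify $\Hom(\rS(L[1])\otimes V, V)$ with the space of coderivations of the cofree comodule $\rS(L[1])\otimes V$ over the coalgebra $\rS(L[1])$, and thereby reduce the proposition to the standard pattern: Maurer-Cartan elements of an ``endomorphism dgla'' of a coalgebra are exactly the square-zero degree-one coderivations. First I would establish the cofree-cogeneration principle: any $\phi\in\Hom(\rS(L[1])\otimes V, V)$ extends uniquely to a coderivation $\bar\phi:\rS(L[1])\otimes V\to \rS(L[1])\otimes V$ of the same degree, through the explicit formula analogous to (\ref{eqcomo}) but without the $Q$-contribution, and conversely every such coderivation is recovered by projecting onto the $V$-summand. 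This sets up a bijection, linear in the appropriate grading, between $\Hom(\rS(L[1])\otimes V, V)$ and the graded space of comodule coderivations.

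Next I would transport the algebraic structure. Coderivations do not close under composition, but the projection of a composite $\bar\phi\circ\bar\psi$ onto $V$ unwinds, using the definition of $\bar\psi$, to precisely $\phi\circ(\id\otimes\psi)\circ(\Delta\otimes\id)=\phi\bullet\psi$; hence $\bullet$ is a graded pre-Lie product. The graded commutator $[\phi,\psi]=\phi\bullet\psi-(-1)^{|\phi||\psi|}\psi\bullet\phi$ therefore defines a graded Lie bracket (Jacobi is automatic, since the graded commutator of two coderivations is again a coderivation whose $V$-projection is the commutator of the bullet product). In the same spirit, $Q\otimes\id_V$ is a degree-$1$ coderivation of $\rS(L[1])\otimes V$ (with $Q$ extended by $Q_0=0$), and I would define $\partial\phi$ to be the $V$-projection of its graded commutator with $\bar\phi$; unwinding gives $\partial\phi=-(-1)^{|\phi|}\phi\circ(Q\otimes\id)$. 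The identity $Q^2=0$ then forces $\partial^2=0$, and the Leibniz rule follows since graded commutation with a fixed coderivation is a graded derivation of coderivation composition, hence of $\bullet$.

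Finally, for $\phi$ of degree $1$ the Maurer-Cartan equation reads $\partial\phi+\phi\bullet\phi=0$, which lifts through the cofree-cogeneration bijection to the single identity $(Q\otimes\id+\bar\phi)^2=0$, i.e., $Q\otimes\id+\bar\phi$ is a codifferential on the comodule $\rS(L[1])\otimes V$. By Propdef \ref{lmodule}(1) this is exactly the data of an $\Linf$ module structure on $V$ over $L$, yielding the desired identification; the Taylor components $\phi_n:\rS^{n-1}(L[1])\otimes V\to V[1]$ are recovered as the restrictions of $\phi$ to the summands $\rS^{n-1}(L[1])\otimes V$. The hard part will be the careful tracking of Koszul signs: both the coderivation extension formula (\ref{eqcomo}) and the d\'ecalage that connects $\phi_n$ to the anti-symmetric multiplication maps $m_n$ introduce signs, and these must line up so that the projection of $(Q\otimes\id)\circ\bar\phi + \bar\phi\circ(Q\otimes\id) + \bar\phi\circ\bar\phi$ onto $V$ agrees on the nose with $\partial\phi+\phi\bullet\phi$.
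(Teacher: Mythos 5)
Your argument is correct and is exactly the standard coderivation argument: identify $\Hom(\rS(L[1])\otimes V,V)$ with comodule coderivations via cofree cogeneration, observe that corestriction turns composition into $\bullet$ and bracketing with $Q\otimes\id$ into $\partial$, and read off that the Maurer--Cartan equation for a degree-one $\phi$ is $(Q\otimes\id+\bar\phi)^2=0$, i.e.\ the codifferential condition of Proposition-Definition \ref{lmodule}(1). The paper gives no proof of this statement, citing only \cite[Prop. 7.5]{KS}, and your route is the one taken there and is consistent with the extension formula (\ref{eqcomo}); the sign bookkeeping you flag does work out, and note that d\'ecalage to the anti-symmetric maps $m_n$ is not actually needed since the identification can be made entirely in the symmetric formulation.
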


\begin{rmk}
If $F:(L,Q)\to (L',Q')$ is a morphism of $\Linf$ algebras and $(V,\phi')$ is an $\Linf$ module over $(L',Q')$, then $\phi=\phi'\circ(F\otimes \id)$ defines a natural $\Linf$ module structure  on $V$ over $(L,Q)$.
\end{rmk}

\begin{propdef}\label{defLmod} Let  $(L,Q)$, or $(L,q_1,q_2,\ldots)$, or  $(L,l_1,l_2,\ldots)$, be an $L_\infty$ algebra. A {\it morphism of $L_\infty$  modules} between $(V,\phi)$, or $(V,\phi_1,\phi_2,\ldots)$, or $(V,m_1,m_2,\ldots)$, and $(V',\phi')$, or $(V',\phi'_1,\phi'_2,\ldots)$, or $(V',m'_1,m'_2,\ldots)$,  is any of the following equivalent data:

 \begin{enumerate}[(1)]

\item A morphism of comodules $\kappa:\rS(L[1])\otimes V\to \rS(L[1])\otimes V'$ such that $\kappa\circ \phi=\phi'\circ\kappa$.

\item A collection of graded linear maps $\kappa_n:\rS^{n-1}(L[1])\otimes V\to V'$, $n\ge 1$, satisfying a certain identity.

\item A collection of graded multilinear maps 
$ g_n: L^{\otimes (n-1) } \otimes V \rightarrow V' $, ${n\geq 1}$, such that $g_n$ has degree  $1-n$,  $g_n$ are   anti-symmetric with  respect to $L^{\otimes (n-1)}$, and
satisfy a certain identity.
 \end{enumerate}

\end{propdef}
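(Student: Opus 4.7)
The plan is to mirror the structure of the analogous statements for $\Linf$ algebra morphisms (Proposition \ref{lmalgebra}) and for $\Linf$ module structures (Proposition \ref{lmodule}), leveraging the fact that $\rS(L[1])\otimes V'$ is the cofreely cogenerated comodule over the coalgebra $\rS(L[1])$ and using the d\'ecalage isomorphism of Remark \ref{rmkDecW} to pass between graded-symmetric and graded-anti-symmetric formulations.

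First I would establish the equivalence of (1) and (2). Given a comodule morphism $\kappa$, define its Taylor coefficients by
$$\kappa_n := \mathrm{pr}_{V'}\circ \kappa_{|\rS^{n-1}(L[1])\otimes V}:\rS^{n-1}(L[1])\otimes V\longrightarrow V'.$$
Because $\rS(L[1])\otimes V'$ is cofreely cogenerated as a comodule (its counit being the projection to $V'$), the compatibility of $\kappa$ with the comodule structures forces $\kappa$ to be reconstructed uniquely from the $\kappa_n$ by an explicit shuffle formula analogous to (\ref{eqcomo}): for homogeneous $a_i\in L[1]$ and $v\in V$,
\begin{align*}
\kappa(a_1\vee\cdots\vee a_{n-1}\otimes v) = \sum_{i=0}^{n-1}\sum_{\sigma\in\mathrm{Sh}(i,n-1-i)} &(-1)^{\sum_{s=1}^{i}|a_{\sigma(s)}|}\epsilon(\sigma)\\
 &\cdot a_{\sigma(1)}\vee\cdots\vee a_{\sigma(i)}\otimes \kappa_{n-i}(a_{\sigma(i+1)}\vee\cdots\vee a_{\sigma(n-1)}\otimes v).
\end{align*}
Conversely, any collection $\{\kappa_n\}$ defines a comodule morphism by the same formula. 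The condition $\kappa\circ\phi=\phi'\circ\kappa$, after projecting to $V'$ and unpacking via (\ref{eqcomo}) applied to both $\phi$ and $\phi'$, then translates termwise into the identity that the $\kappa_n$ have to satisfy; this is the identity in (2).

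Next I would pass from (2) to (3) by d\'ecalage: set $g_n:=\dec(\kappa_n)$ using the isomorphism of Remark \ref{rmkDecW}
$$\dec:\Hom^{0}(\rS^{n-1}(L[1])\otimes V,V')\xrightarrow{\ \sim\ }\Hom^{1-n}(\Lambda^{n-1}(L)\otimes V,V').$$
By construction $g_n$ has degree $1-n$ and is graded anti-symmetric in its $L^{\otimes(n-1)}$ arguments. Under d\'ecalage, using the sign dictionaries $l_n=-\dec(q_n)$ from Proposition \ref{equivdeflalg}(3) and $m_n=-\dec(\phi_n)$ from Proposition \ref{lmodule}(3), the identity for $(\kappa_n,\phi_n,\phi'_n)$ in (2) transforms termwise into the identity that the $(g_n,m_n,m'_n)$ satisfy; this is the identity in (3). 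The sign conventions are chosen so that when $L$ is a dgla and $V,V'$ are dgl modules with $m_n=m'_n=0$ for $n\ge 3$, a morphism of dgl modules $g_1$ (with $g_n=0$ for $n\ge 2$) recovers the classical notion, thereby realizing the category of dgl module morphisms as a subcategory of $\Linf$ module morphisms.

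The main obstacle is purely bookkeeping: writing out explicitly the identity in (2) by expanding $\kappa\circ\phi=\phi'\circ\kappa$, and then tracking the Koszul signs under d\'ecalage to produce the identity in (3). There is no new conceptual input beyond what was needed for Propositions \ref{equivdeflalg}, \ref{lmalgebra}, and \ref{lmodule}; once those sign dictionaries are fixed, the current equivalences follow by the same cofree/d\'ecalage machinery applied to the comodule $\rS(L[1])\otimes V$ instead of to the coalgebra $\rS(L[1])$ itself.
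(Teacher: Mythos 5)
Your proposal follows essentially the same route as the paper: define the Taylor coefficients $\kappa_n=\mathrm{pr}_{V'}\circ\kappa|_{\rS^{n-1}(L[1])\otimes V}$, reconstruct $\kappa$ from them using the cofree comodule structure, read off the identity in (2) from $\kappa\circ\phi=\phi'\circ\kappa$, and pass to (3) via $g_n=\dec(\kappa_n)$ using Remark \ref{rmkDecW}. One small correction: your reconstruction formula carries the extra sign $(-1)^{\sum_{s=1}^{i}|a_{\sigma(s)}|}$ copied from (\ref{eqcomo}), but that sign is there only because the codifferential components $\phi_{n-i}$ have degree $1$ and must be commuted past $a_{\sigma(1)},\ldots,a_{\sigma(i)}$; since $\kappa_{n-i}$ has degree $0$, the correct formula (as in the paper, following \cite[(7.12)]{KS}) has only the Koszul sign $\epsilon(\sigma)$.
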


\begin{rmk}
Given $\kappa$ as in (1), set
$$
\kappa_n:=\mathrm{pr}_{V'}\circ F_{\mid \rS^{n-1}(L[1])\otimes V}.
$$
Then $\kappa$ can be reconstructed from $\kappa_n$ by
$$
\kappa(a_1\vee\ldots\vee a_{n-1}\otimes v) = \sum_{i=0}^{n-1}\sum_{\sigma\in \mathrm{Sh}(i,n-1-i)}\eps(\sigma)a_{\sigma(1)}\vee\ldots\vee a_{\sigma(i)}\otimes \kappa_{n-i}(a_{\sigma(i+1)}\vee\ldots\vee a_{\sigma(n-1)}\otimes v)
$$
for homogeneous $a_i\in L[1]$ and $v\in V$, see \cite[(7.12)]{KS}. Compatibility with the codifferentials gives the condition that $\kappa_n$ have to satisfy in (2). D\'ecalage gives the identity that $g_n=\dec(\kappa_n)$ have to satisfy in (3). The explicit identity in (3) is written down in \cite[Def. 3.3]{Al}. 
\end{rmk}

\begin{rmk} The map $\kappa_1:(V,m_1)\to (V',m_1')$ is a morphism of cochain complexes.
A morphism of $\Linf$ modules $\kappa$ is an isomorphism if and only if $\kappa_1$ is an isomorphism, see \cite[Prop. 7.10]{KS}.
\end{rmk}

\begin{defn}
 We say that  a morphism between $\Linf$ modules $V$ and $V'$ over the $\Linf$ algebra $L$ is a {\it weak equivalence} if $\kappa_1:(V,m_1)\to (V',m_1')$ is a quasi-isomorphism. If there exists a zig-zag of weak equivalences  between the $\Linf$ modules $V$ and $V'$, we say that $V$ and $V'$ are
  {\it homotopy equivalent} $\Linf$ modules over $L$.
\end{defn}

An equivalent definition of morphisms of $\Linf$ modules is the following, see \cite[Prop. 7.11]{KS}:

\begin{prop}
If  $(V,\phi)$, $(V',\phi')$ are two $\Linf$ modules over the $\Linf$ algebra $(L,Q)$, there exists a natural abelian dgla structure on $\Hom(S(L[1])\otimes V,V')[-1]$ whose Maurer-Cartan elements are identified with the morphisms of $\Linf$ modules $(V,\phi)\to(V',\phi')$.
\end{prop}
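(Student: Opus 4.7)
The plan is to mirror the construction in the preceding proposition (the one characterizing $\Linf$-module structures as Maurer--Cartan elements of a dgla on $\Hom(S(L[1])\otimes V,V)$), replacing the common source and target $V$ by the pair $(V,V')$ and replacing the convolution-type bracket by zero. First I would recall the canonical bijection (from the displayed formula following Proposition-Definition \ref{defLmod}) between an element $\kappa \in W := \Hom(S(L[1])\otimes V, V')$ and a morphism of $S(L[1])$-comodules $\tilde\kappa : S(L[1])\otimes V \to S(L[1])\otimes V'$, characterized by $\kappa = \mathrm{pr}_{V'}\circ \tilde\kappa$. Under this bijection, an $\Linf$-module morphism is precisely a $\kappa$ whose extension satisfies $\phi'\circ\tilde\kappa = \tilde\kappa\circ\phi$, and its natural degree in $W$ is $0$, hence its degree in $W[-1]$ is $1$, which is the correct degree for a Maurer--Cartan element.

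Next I would build the differential. On the ambient graded space of all linear maps $\Hom(S(L[1])\otimes V, S(L[1])\otimes V')$ the standard Hom-complex formula
\[
D\tilde\kappa := \phi'\circ\tilde\kappa - (-1)^{|\tilde\kappa|}\tilde\kappa\circ\phi
\]
is a square-zero degree-1 operator, immediately from $\phi^2 = 0 = (\phi')^2$. The key technical point is that whenever $\tilde\kappa$ is a comodule morphism, $D\tilde\kappa$ is a \emph{$\tilde\kappa$-coderivation} (i.e.\ satisfies the co-Leibniz identity with respect to $\tilde\kappa$ and the two codifferentials), and such $\tilde\kappa$-coderivations are uniquely determined by their corestriction to the cogenerator summand $V'$. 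This is the module analog of the classical fact (used in Remark \ref{equivLmod} to encode $\Linf$-module structures on $V$) that a coderivation of $S(L[1])\otimes V$ is determined by its projection to $V$. Granting this, define $d\kappa := \mathrm{pr}_{V'}\circ D\tilde\kappa$; explicitly,
\[
d\kappa \;=\; \mathrm{pr}_{V'}\circ\phi'\circ\tilde\kappa \;-\; (-1)^{|\kappa|}\,\kappa\circ\phi.
\]
Then $D^2 = 0$ together with the injectivity of the corestriction on $\tilde\kappa$-coderivations forces $d^2 = 0$.

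Equip $W[-1]$ with this $d$ and with zero Lie bracket; the Jacobi identity and the graded Leibniz rule hold vacuously, producing the claimed abelian dgla. For the identification of Maurer--Cartan elements: since $[\,\cdot\,,\cdot\,]=0$, one has $\mathrm{MC}(W[-1]) = \{\kappa \in W^0 \mid d\kappa = 0\}$. The condition $d\kappa = 0$ reads
\[
\mathrm{pr}_{V'}\circ\phi'\circ\tilde\kappa \;=\; \kappa\circ\phi \;=\; \mathrm{pr}_{V'}\circ\tilde\kappa\circ\phi ,
\]
using $\mathrm{pr}_{V'}\circ\tilde\kappa = \kappa$. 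Both sides are $\tilde\kappa$-coderivations, so by the uniqueness lemma above this equality of $V'$-corestrictions is equivalent to the full equality $\phi'\circ\tilde\kappa = \tilde\kappa\circ\phi$. By Proposition-Definition \ref{defLmod}(1) this is exactly the condition that $\tilde\kappa$ be a morphism of $\Linf$-modules $(V,\phi)\to(V',\phi')$, giving the desired bijection.

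The main obstacle is the lemma that $\tilde\kappa$-coderivations are determined by their corestriction to $V'$. I would prove it by induction on the filtration degree inherited from $S^{\le n}(L[1])\otimes V$: writing the $\tilde\kappa$-co-Leibniz identity with the reduced comultiplication together with the reconstruction formula (\ref{eqcomo}), the component of a $\tilde\kappa$-coderivation landing in $S^k(L[1])\otimes V'$ is forced by its $V'$-corestriction and the lower filtration pieces, exactly as in the $V=V'$ case of the preceding proposition. Once this is in place, all three verifications ($d^2=0$, abelian-dgla axioms, and $\mathrm{MC} \leftrightarrow \Linf$-morphisms) are formal consequences of the same uniqueness.
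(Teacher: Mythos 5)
Your argument is correct and is essentially the standard proof of this statement; the paper itself does not prove the proposition but cites \cite[Prop. 7.11]{KS}, and the argument there is the one you reconstruct (transport the Hom-complex differential $D\tilde\kappa=\phi'\circ\tilde\kappa-(-1)^{|\tilde\kappa|}\tilde\kappa\circ\phi$ through the cofreeness bijection $\kappa\leftrightarrow\tilde\kappa$, take the zero bracket, and identify closed degree-zero elements with module morphisms). One small refinement: $D\tilde\kappa$ is in fact again a \emph{comodule morphism} into the cofree comodule $\rS(L[1])\otimes V'$ — the two $Q\otimes\tilde\kappa$ terms produced by the co-Leibniz identities for $\phi$ and $\phi'$ cancel in the difference — so the determination-by-corestriction you invoke is just cofreeness for comodule maps; your filtration induction establishes the same injectivity for the coderivations-along-$\tilde\kappa$ that appear when you treat $\phi'\circ\tilde\kappa$ and $\tilde\kappa\circ\phi$ separately, so either formulation closes the argument.
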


\begin{rmk}
There are two equivalent ways to reduce the definitions of $\Linf$ modules and their morphisms in terms of $\Linf$ algebras and their morphisms, cf. \cite[3.2]{Tsy}, \cite[Thm. 1]{L}. One way is the following. A structure of $\Linf$ module on a cochain complex $(V,m_1)$ over an $\Linf$ algebra $L$ is equivalent to a morphism of $\Linf$ algebras $L\to \End (V,V)$, where $\End(V,V)$ is endowed with a natural dgla structure with the differential induced by $m_1$. \end{rmk}

The second way is given in the next proposition, see \cite[Rem. 3.2.1]{Tsy}, \cite[Thm. 1]{L}, \cite[Prop. 2.11, Prop. 2.14]{BR}.

\begin{prop}\label{rmkRed}$\;$
\begin{enumerate}[(1)]
\item An $\Linf$ module $(V,m)$ over an $\Linf$ algebra $(L,l)$ determines an $\Linf$ algebra structure on $L\oplus V$ with anti-symmetric multilinear maps $j_n:(L\oplus V)^{\otimes n}\to L\oplus V$,
$$
j_n((a_1,v_1),\ldots,(a_n,v_n))=\Bigl(l_n(a_1,\ldots,a_n), \sum_{i=1}^n(-1)^{\theta(n,i)}m_n(a_1,\ldots,\hat a_i,\ldots,a_n,v_i)\Bigr),
$$
where $a_i\in L$ and $v_i\in V$ are homogeneous, $\theta(n,i)=n-i+|v_i|(|a_{i+1}|+\ldots+|a_n|)$, and $\hat a_i$ refers to omitting $a_i$ from the list. 

Define a second grading on $L\oplus V$ such that $L$ has degree zero and $V$ has degree 1. Then this construction  gives an equivalence between the $\Linf$ $L$-module structures $m$ on $V$ and the $\Linf$ algebra structures $j$ on $L\oplus V$ satisfying: $L$ is an $\Linf$ subalgebra, $j_n$ have degree zero with respect to the second grading, and $j_n=0$ on $V^{\otimes n}$ for all $n\ge 2$. 

\item Let $f:L\to L'$ be a morphism of $\Linf$ algebras, $V$  an $\Linf$ $L$-module, and $V'$  an $\Linf$ $L'$-module. A morphism $g:V\to V'$ of $\Linf$ modules over $L$, with $V'$ viewed as an $L$-module via $f$, determines  a morphism $k: (L\oplus V,j)\to (L'\oplus V',j')$ of associated $\Linf$ algebras with anti-symmetric components
$k_n: (L\oplus V)^{\otimes n}\to L'\oplus V'$,
$$
k_n((a_1,v_1),\ldots,(a_n,v_n)) = \Bigl(f_n(a_1,\ldots,a_n), \sum_{i=1}^n(-1)^{\theta(n,i)}g_n(a_1,\ldots,\hat a_i,\ldots,a_n,v_i)\Bigr).
$$
This construction gives an equivalence between the set of pairs of morphisms $f:L\to L'$, $g:V\to V'$ as above and $\Linf$ algebra morphisms $k:L\oplus V\to L\oplus V'$ satisfying: $k_n$ have degree zero with respect to the second grading, and  $k_n=0$ on $V^{\otimes n}$ for $n\ge 2$.
 \end{enumerate}

\end{prop}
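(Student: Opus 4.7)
The plan is to pass to the codifferential formulation from Proposition-Definitions \ref{equivdeflalg}(1), \ref{lmodule}(1), and \ref{defLmod}(1), where the statement becomes a transparent observation about a decomposition of the symmetric coalgebra by the second grading. Endow $L\oplus V$ with the second $\bZ$-grading that places $L$ in second-degree $0$ and $V$ in second-degree $1$, and extend this grading to $(L\oplus V)[1]$. The symmetric coalgebra then decomposes by total second-degree:
$$
\rrS((L\oplus V)[1])=\bigoplus_{q\ge 0}\rrS((L\oplus V)[1])^{(q)},\qquad \rrS((L\oplus V)[1])^{(q)}=\bigoplus_{\substack{p\ge 0\\ p+q\ge 1}} \rS^p(L[1])\otimes \rS^q(V[1]).
$$
Under d\'ecalage (Corollary \ref{corDeca} and Remark \ref{rmkDecW}), the conditions imposed on $j_n$ in the statement---preservation of the second grading together with the vanishing $j_n=0$ on $V^{\otimes n}$ for $n\ge 2$---translate to the condition that the associated codifferential $J$ on $\rrS((L\oplus V)[1])$ preserves each summand $\rrS((L\oplus V)[1])^{(q)}$.

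For part (1), I would argue as follows. Given such a second-grading-preserving codifferential $J$, its restriction to $\rrS((L\oplus V)[1])^{(0)}=\rrS(L[1])$ is a codifferential $Q$, producing by Proposition-Definition \ref{equivdeflalg}(1) an $\Linf$ algebra structure on $L$; this matches the $\Linf$-subalgebra hypothesis. The component $\rrS((L\oplus V)[1])^{(1)}$ is identified (as a graded vector space) with $\rS(L[1])\otimes V[1]$, and the restriction of $J$ together with $Q$ makes it into the codifferential of a cofreely cogenerated comodule over $(\rS(L[1]),Q)$ in the sense of Proposition-Definition \ref{lmodule}(1), yielding an $\Linf$ $L$-module structure $\phi$ on $V$. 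Conversely, a pair $(Q,\phi)$ determines a unique codifferential on $\rrS((L\oplus V)[1])$ that preserves the second grading and whose projection to the cogenerators $V[1]$ vanishes on $\rS^q(V[1])$ for $q\ge 2$: set it equal to $Q$ on $\rrS(L[1])$, to $\phi$ on $\rS(L[1])\otimes V[1]$, and extend to the remaining summands by the coderivation property. The identity $J^2=0$ decomposes by second-grading into $Q^2=0$ (second-degree $0$) and the compatibility between $Q$ and $\phi$ required by Proposition-Definition \ref{lmodule} (second-degree $1$); the higher second-degree components of $J^2=0$ are then automatic. The explicit formula for $j_n$ in the statement emerges by rewriting \eqref{eqcomo} in anti-symmetric form via d\'ecalage.

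Part (2) follows the same pattern using Proposition-Definition \ref{defLmod}(1). An $\Linf$ algebra morphism $k:L\oplus V\to L'\oplus V'$ corresponds to a comorphism of coalgebras $K:\rrS((L\oplus V)[1])\to \rrS((L'\oplus V')[1])$ commuting with the codifferentials, with Taylor coefficients recovered as in Remark \ref{Taylorcoefmor}. The conditions imposed on $k_n$ translate via d\'ecalage to $K$ preserving the second-grading decomposition. The restriction of $K$ to second-degree $0$ gives a comorphism $\rrS(L[1])\to\rrS(L'[1])$ commuting with the codifferentials, i.e.\ an $\Linf$ morphism $f:L\to L'$; the restriction to second-degree $1$ gives a comodule morphism in the sense of Proposition-Definition \ref{defLmod}(1) and hence an $\Linf$ $L$-module morphism $g:V\to V'$, with $V'$ viewed as an $L$-module via $f$. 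The compatibility $K\circ J=J'\circ K$ decomposes by second-grading into the morphism conditions for $f$ (second-degree $0$) and $g$ (second-degree $1$).

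The hard part will be the sign bookkeeping: unpacking the coalgebraic construction through the d\'ecalage isomorphism (Proposition \ref{decalage}, Corollary \ref{corDeca}, Remark \ref{rmkDecW}) and verifying that one recovers exactly the sign $(-1)^{\theta(n,i)}$ with $\theta(n,i)=n-i+|v_i|(|a_{i+1}|+\ldots+|a_n|)$ in the formula for $j_n$, and the analogous sign in $k_n$. An alternative direct approach---plugging the proposed $j_n$ into the generalized Jacobi identity of Proposition-Definition \ref{equivdeflalg}(3) and matching term-by-term with the Jacobi identity for $l_n$ and the defining identity for $m_n$---is possible but considerably more painful, because the sum over $\mathfrak{S}_n$ splits into many subcases according to which of the unshuffled arguments lie in $L$ versus $V$, and collecting the resulting Koszul signs across these subcases is laborious.
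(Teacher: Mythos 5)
The paper does not actually prove this proposition: it is stated as a recollection with proofs delegated to the cited sources (Tsygan, Lada, and Budur--Rubi\'o), so there is no internal argument to compare yours against. That said, your route---reading the second grading as a decomposition $\rrS((L\oplus V)[1])=\bigoplus_q\rrS((L\oplus V)[1])^{(q)}$, identifying the degree-$0$ piece with $\rrS(L[1])$ and the degree-$1$ piece with the cofree comodule of Proposition-Definition \ref{lmodule}(1), and splitting $J^2=0$ (resp.\ $K\circ J=J'\circ K$) by second degree---is exactly the standard ``square-zero extension'' argument used in those references, and the outline is correct, including the observation that the second-degree $\ge 2$ components of $J^2=0$ are automatic because $J^2$ is a coderivation whose corestriction to the cogenerators must vanish there for degree reasons. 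Two small points you should fold into the ``bookkeeping'' you defer: (a) the comodule in Proposition-Definition \ref{lmodule} is $\rS(L[1])\otimes V$, not $\rS(L[1])\otimes V[1]$, so the identification of $\rrS((L\oplus V)[1])^{(1)}$ with that comodule involves a shift of $V$ that must be conjugated through (this is where the $(-1)^{n-i}$ part of $\theta(n,i)$ ultimately comes from, the remaining factor being the Koszul sign for moving $v_i$ past $a_{i+1},\ldots,a_n$); and (b) in part (2) the second-degree-$1$ component of $K$ lands in $\rS(L'[1])\otimes V'[1]$, a comodule over $\rS(L'[1])$, whereas Proposition-Definition \ref{defLmod}(1) wants a comodule morphism over $\rS(L[1])$, so you must explicitly restrict scalars along $F$ (this is the coalgebraic incarnation of ``$V'$ viewed as an $L$-module via $f$''). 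Neither issue threatens the argument.
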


\begin{rmk}
There are sign issues in \cite{BR}. In \cite[Def. 2.4]{BR} an extra sign  in terms of the degrees of the elements for each of the summands in the identity is missing, cf. \cite[Def 2.3, Rem. 2.4]{Al}. This translates into missing signs in \cite[Def. 2.9]{BR}, cf. \cite[Def. 3.3]{Al}. In \cite[Def. 3.1]{BR}, $l_n$ misses a sign $(-1)^{n(n-1)/2}$, or, equivalently, the definition of $l_n$ from here has to be used, cf. Remark \ref{rmkSignDiff}.
\end{rmk}

\subs{\bf Homotopy transfer theorem.}  Let $(C,d,\left[\_\;,\_\right])$ be a dgla. We denote by $HC$ the cohomology of $(C,d)$. One can always equip $C$ with a {homotopy retract}
 \[
\xymatrix{
C \ar@(lu,ld)_h \ar@<.5ex>[r]^p & \ar@<.5ex>[l]^\iota HC
}
\] where $p:(C,d)\ra(HC,0)$ and $\iota:(HC,0)\ra (C,d)$ are morphisms of cochain complexes,  $\iota$ is a quasi-isomorphism, and  $h:C\ra C[-1]$ is a graded linear map such that $\id_C-\iota p=dh+hd$.
The homotopy transfer theorem for $\Linf$ algebras  states, see  \cite[Thm. 10.3.5]{LV}, \cite[Thm. 14.4.2]{Mane}:

\begin{thrm}\label{thmTTL}
There is an $L_\infty$ algebra structure of graded anti-symmetric multilinear maps $\{l_n\}_{n\ge 1}$ on $HC$ with $l_1=0$,  together with a weak equivalence of $L_\infty$ algebras \[(HC,0,l_2,l_3, l_4, \dots) \xrightarrow{\sim} (C,d,\left[\_\;,\_\right])\] 
such that $l_n= \sum_{\phi}\frac{1}{|\Aut(\phi)|}l_\phi$ is a sum over the isomorphism classes of rooted  binary trees with $n$ leaves and the operation $l_\phi$ is defined below.
\end{thrm}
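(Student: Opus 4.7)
The plan is to construct the transferred $\Linf$ structure through explicit tree formulas and then verify the $\Linf$ axioms by a combinatorial argument built on the Jacobi identity for $[\_\,,\_]$ and on the homotopy identity $\id_C - \iota p = dh + hd$.

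First, I would define the operation $l_\phi$ attached to a rooted planar binary tree $\phi$ with $n\ge 2$ leaves by induction on $n$. If $\phi$ is obtained by grafting subtrees $\phi_1, \phi_2$ (with $n_1,n_2$ leaves, $n_1+n_2=n$) at a new root, set
\[
l_\phi(x_1, \ldots, x_n) \;=\; p\,\bigl[\, T_{\phi_1}(x_1, \ldots, x_{n_1}),\; T_{\phi_2}(x_{n_1+1}, \ldots, x_n)\,\bigr],
\]
where $T_{\phi}=\iota$ when $\phi$ is a single leaf and $T_\phi = h\circ[\_\,,\_]\circ(T_{\phi_1}\otimes T_{\phi_2})$ otherwise. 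In other words, place $\iota$ at each leaf, the Lie bracket at each internal vertex, the homotopy $h$ on each internal (non-root) edge, and the projection $p$ at the root. Anti-symmetrize the inputs to obtain a graded anti-symmetric map of degree $2-n$, and define
\[
l_n \;=\; \sum_{\phi} \frac{1}{|\Aut(\phi)|} \, l_\phi,
\]
summed over isomorphism classes of rooted binary trees with $n$ leaves, together with $l_1=0$ (which matches the fact that $HC$ carries the zero differential).

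Second, I would define in parallel a candidate morphism $\iota_\infty : (HC, l_\ast) \to (C, d, [\_\,,\_])$ by Taylor coefficients $(\iota_\infty)_n$ given by the same tree contraction, with $p$ at the root replaced by $h$ for $n\ge 2$ and with $(\iota_\infty)_1 = \iota$. Since its linear part is the quasi-isomorphism $\iota$, this will automatically be a weak equivalence once we know it is an $\Linf$ morphism.

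Third, I would verify the $\Linf$ relation from Proposition \ref{equivdeflalg}(3),
\[
0 \;=\; \sum_{(i,j,\sigma)\in\mathfrak{S}_n}\chi(\sigma)(-1)^{j-1}\,l_j\bigl(l_i(a_{\sigma(1)},\ldots,a_{\sigma(i)}),a_{\sigma(i+1)},\ldots,a_{\sigma(n)}\bigr).
\]
Expanding both $l_i$ and $l_j$ as tree sums, each composite $l_j(l_i(\cdots),\cdots)$ is a sum over trees with one distinguished internal edge carrying $\iota p$ (since $l_i$ ends with $p$ and $l_j$ begins with $\iota$ on that slot). Substituting $\iota p = \id_C - dh - hd$ produces three classes of contributions: the $\id_C$ term converts the distinguished edge into an ordinary $h$-edge of a larger tree; the $dh$ and $hd$ terms migrate the differential $d$ along the tree, and using the Jacobi identity at each internal vertex they cancel pairwise against contributions coming from a neighboring tree shape. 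The entire sum collapses once the $1/|\Aut(\phi)|$ weights are matched against the unshuffle signs $\chi(\sigma)$. The analogous manipulation proves that $\iota_\infty$ satisfies the $\Linf$ morphism identity of Proposition \ref{lmalgebra}(3).

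The main obstacle will be this third step: the bookkeeping of Koszul signs from anti-symmetrization, d\'ecalage signs relating the two formulations of Proposition \ref{equivdeflalg}, and the sign produced by moving $d$ across internal edges must be tracked carefully so that the Jacobi cancellations happen with the correct multiplicities. A clean way to organize this is the operadic formulation: the operad governing $\Linf$ algebras is the cobar construction on the Koszul dual cooperad of $\mathrm{Lie}$, whose generators are precisely the rooted trees, and the transfer formula is the canonical one induced by the homotopy retract at the bar--cobar level. In that language the $\Linf$ relations hold by construction, the tree formulas above are forced, and one recovers Theorem \ref{thmTTL} together with the weak equivalence $\iota_\infty$.
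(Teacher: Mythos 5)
Your proposal is correct and coincides with the approach of the sources the paper cites for this statement: the paper gives no proof of Theorem \ref{thmTTL} itself, referring to \cite[Thm. 10.3.5]{LV} and \cite[Thm. 14.4.2]{Mane}, and those proofs are exactly the tree-formula construction you describe (your definition of $l_\phi$, the degree count $2-n$, and the $\infty$-morphism with $h$ replacing $p$ at the root all match the remark following the theorem in the paper). Your deferral of the sign bookkeeping in the verification step to the operadic/bar--cobar formulation is legitimate and is precisely how Loday--Vallette make the argument rigorous.
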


\begin{rmk} A rooted tree with $n$ leaves is a  graph (that is, a set of vertices with edges between them such that every vertex belongs to an edge) that is connected with no cycles, with exactly $n+1$ external vertices (that is, a vertex contained by only one edge), with one preferred external vertex called the root, and the other $n$ external vertices called leaves. The non-external vertices are called internal vertices.
 The edges joining two internal vertices are called internal edges.  The root determines one outgoing edge at each vertex. The rest are called incoming edges. There is no preferred total order for the incoming edges joined at a vertex. A rooted tree is binary if every internal vertex has two incoming edges. We picture the rooted trees with leaves up and root down.  The incoming directions are from the top, the outgoing direction is towards the root. For example, here are two  different isomorphism classes of rooted binary trees with 4 leaves:
 
\vspace{0.2cm}

\begin{center}
\begin{tikzpicture}
[ level 1/.style={level distance=0.3cm,sibling distance=1cm},
  level 2/.style={level distance=0.4cm,sibling distance=1cm},
  level 3/.style={level distance=0.2cm,sibling distance=0.5cm},
  level 4/.style={level distance=0.2cm,sibling distance=0.5cm}]
  \node {}  [grow=up]
child{
	child{	
		child{
			child{}
			child{edge from parent[opacity=0]}			
		}
		child{
			child{}
			child{}
		}		
	}
	child{
		child{edge from parent[opacity=0]}
		child{
			child{edge from parent[opacity=0]}
			child{}
		}		
	}
};
\end{tikzpicture}\hspace{0.3cm}
\begin{tikzpicture}
[ level 1/.style={level distance=0.3cm,sibling distance=1cm},
  level 2/.style={level distance=0.4cm,sibling distance=1cm},
  level 3/.style={level distance=0.2cm,sibling distance=0.5cm},
  level 4/.style={level distance=0.2cm,sibling distance=0.5cm}]
  \node {}  [grow=up]
	child{
    		child {
      			child {
				child{}
				child{}			
			}
     			child{edge from parent[opacity=0]}
    		}
    		 child {
      			child{edge from parent[opacity=0]}
			child{
				child{}
				child{} 
			}   
		}
};
\end{tikzpicture}	
\end{center}
For a rooted binary tree with $n$ leaves $\phi$,   fix  a total order of the incoming edges at every internal vertex, or, equivalently,  fix a planar embedding $\phi'$ of the tree. This defines an operation $l_{\phi'}:HC^{\otimes n}\to HC$ by the composition scheme $\phi'$ where the leaves are labeled by $\iota$ and take the input from $HC$, every internal vertex is decorated by the Lie bracket with the two incoming directions serving as input and the outgoing direction as output,  $h$ decorates the internal edges, and the edge connecting to the root is labeled by $p$. For example, for the first planar tree $\phi'$ pictured above with the order on the leaves increasing from left to right, one obtains 
$$l_{\phi'}:v_1\otimes v_2\otimes v_3\otimes v_4\mapsto  p([\iota(v_1),h([h([\iota(v_2),\iota(v_3)]),\iota(v_4)])])$$
for homogeneous $v_i\in HC$.  To obtain the anti-symmetric version one composes with the symmetrization map $\Lambda^n (HC)\subset HC^{\otimes n}$. That is, one has a well-defined map
$$
l_\phi (v_1\wedge\ldots\wedge v_n):=\sum_{\sigma\in \mathcal S_n}\chi(\sigma)l_{\phi'}(v_{\sigma(1)},\ldots, v_{\sigma(n)}),
$$
see \cite[14.4.2]{Mane}. Equivalently, one can obtain the graded symmetric version $\tilde l$ of the transferred $\Linf$ algebra structure by composing with the symmetrization map $\rS^n(HC)\subset HC^{\otimes n}$, that is, by setting
$$
\tilde l_\phi (v_1\vee\ldots\vee v_n):=\sum_{\sigma\in \mathcal S_n}\eps(\sigma) l_{\phi'}(v_{\sigma(1)},\ldots, v_{\sigma(n)}).
$$ Then $\tilde l_n=\sum_\phi \frac{\tilde l_{\phi'}}{|\Aut(\phi)|}$. The d\'ecalage map of Proposition \ref{decalage} relates the symmetric and the anti-symmetric versions. 
\end{rmk}

\begin{rmk} The Lie bracket $\left[\_\;,\_\right]_C$ on $C$ induces a Lie bracket $\left[\_\;,\_\right]_{HC}$ on $HC$.
Since there is only one rooted binary tree with $2$ leaves, one can easily check that $ l_2=\left[\_\;,\_\right]_{HC}$. 
\end{rmk}

\begin{rmk}\label{rmkHigher}
If $v\in HC$ and $\iota(v)$ commutes with all elements of $C$, that is, $\left[\iota(v),u\right]_C=0$ for all $u\in C$, then $ l_n(v,u_1,u_2,\ldots,u_{n-1})=0$ for all $u_i \in HC$.  This is because for each planar binary tree $\phi'$ as above, there is a vanishing bracket involving $\iota(v)$, and this vanishing propagates to the whole $ l_{\phi'}(v,u_1,u_2,\ldots,u_{n-1})$ by composition.
\end{rmk}

\subs{\bf Maurer-Cartan equations and deformation functors for $\Linf$ algebras.} Let $\Art$ denote the category with objects local Artinian finite type $K$-algebras together with local morphisms. Let $\Set$ denote the category of sets. If $(A,\mm_A)$ is in $\Art$, where $\mm_A$ denotes the maximal ideal, and $(L,l)$ is a $\Linf$ algebra, then the scalar extension maps $l_n^A:=l_n\otimes\id_A$ define an $\Linf$ algebra structure on $L\otimes\mm_A$. 

\begin{defn}\label{defnMC}
The {\it Maurer-Cartan functor} of the $\Linf$ algebra $(L,l)$ is the covariant functor $\MC_L:\Art\to\Set$ defined by
$$
\MC_L(A)=\Bigl\{\omega\in L^1\otimes\mm_A\mid \sum_{n\ge 1}\frac{1}{n!}l_n^A(\omega^{\otimes n})=0\Bigr\}.
$$
\end{defn}

\begin{rmk}\label{rmkSymM0}
Equivalently, one can replace $\omega^{\otimes n}$ by $\omega^{\wedge n}$, due to the anti-symmetry of $l_n$. 
If $(L,q)$ is the symmetric formulation of the $\Linf$ algebra structure, then
$$
\MC_L(A)=\Bigl\{\omega\in L^1\otimes\mm_A\mid \sum_{n\ge 1}\frac{1}{n!}q_n^A(\omega^{\vee n})=0\Bigr\}.
$$
\end{rmk}

\begin{rmk}
Consider the commutative differential graded  algebra $K[t,dt]$ where $t$  and $dt$ are of degree $0$ and $1$, respectively. If $(A,\mathfrak{m}_A)$ in $\Art$ then $\mathfrak{m}_A\otimes K[t,dt]$ is a finite-dimensional nilpotent  cdga  and its tensor product $L\otimes \mm_A[t,dt]$  with $L$ is naturally endowed with an $L_\infty$ algebra structure which we denote by $l^{A[t,dt]}$, see \cite[Rem. 10.4.7]{Mane}:
$$
l^{A[t,dt]}_1(v\otimes a) =l_1(v)\otimes a+(-1)^{|v|}v\otimes d_{A[t,dt]}(a),
$$ 
$$
l^{A[t,dt]}_n(v_1\otimes a_1,\ldots, v_n\otimes a_n) = (-1)^{\sum_{i<j}|a_i||v_j|}l_n(v_1,\ldots,v_n)\otimes a_1\ldots a_n,
$$
for homogeneous $v, v_i\in L$, $a, a_i\in \mm_A[t,dt]$. Thus
\begin{equation}\label{eqHMC}
\begin{split}
\MC_{L[t,dt]}( A)=&\Biggl\{\omega\in  (L^1\otimes\mathfrak{m}_A[t])\oplus (L^0\otimes \mathfrak{m}_A[t]dt)   \mid  \sum_{n\ge 1}\frac{1}{n!}l_n^{A[t,dt]}(\omega^{\otimes n})= 0\Biggr\}.
\end{split}
\end{equation}
\end{rmk}

\begin{defn} Two Maurer-Cartan elements $\omega_0, \omega_1\in MC_{L}(A)$ are  {\it homotopy equivalent} if there exists an element $\omega(t,dt) \in MC_{L[t,dt]}(A)$ such that $\omega(0,0)=\omega_0$ and $\omega(1,0)=\omega_1$. \end{defn}

This is indeed an equivalence relation by \cite[Lemma 13.1.3]{Mane}. In the dgla case, homotopy equivalence agrees with gauge equivalence \cite[Prop. 10.5.5]{Mane}.

\begin{defn}\label{defMCequi}
The {\it deformation functor} of the $\Linf$ algebra $L$ is the covariant functor $\Def(L):\Art\to\Set$ given by the quotient
$\Def(L;A)=\MC_L(A)/\sim$ by homotopy equivalence.
\end{defn}

The main theorem of deformation theory is the following, see \cite[13.1.4]{Mane}:

\begin{thrm}\label{thmLDis}
Every morphism of $\Linf$ algebras $f:L\to L'$ induces a natural transformation of functors $\MC_L\to\MC_{L'}$ that factors to a natural transformation $\Def(L)\to \Def(L')$. If $f$ is a weak equivalence of $\Linf$ algebras, then $\Def(L)\to \Def(L')$ is an isomorphism of functors.
\end{thrm}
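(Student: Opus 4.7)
The plan is to prove the two assertions in sequence, constructing the natural transformation first, then establishing the invariance under weak equivalences.

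First, given an $\Linf$-morphism $f: L \to L'$ with anti-symmetric Taylor components $f_n: L^{\otimes n} \to L'$, I would define for each $(A, \mathfrak{m}_A) \in \Art$ a map
$$
f_*^A: L^1 \otimes \mathfrak{m}_A \to (L')^1 \otimes \mathfrak{m}_A, \qquad \omega \mapsto \sum_{n \geq 1} \frac{1}{n!}\, f_n^A(\omega^{\otimes n}),
$$
where $f_n^A = f_n \otimes \id_A$. Since $\mathfrak{m}_A$ is nilpotent, only finitely many terms contribute. A direct expansion of the $\Linf$-morphism identity of Proposition-Definition \ref{lmalgebra}(3), evaluated on $\omega^{\otimes n}$ with $\omega$ of odd total degree, shows that if $\omega \in \MC_L(A)$ then $f_*^A(\omega) \in \MC_{L'}(A)$; naturality in $A$ is manifest because $f$ is $K$-linear and the formula commutes with ring homomorphisms $A \to B$.

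Next, I would show that $f_*$ descends to $\Def(L) \to \Def(L')$. Scalar extension along $K \hookrightarrow K[t,dt]$ turns $f$ into an $\Linf$-morphism of $L[t,dt] \to L'[t,dt]$ with the same Taylor components tensored with the identity, so the same construction applied over $\mathfrak{m}_A[t,dt]$ sends $\MC_{L[t,dt]}(A)$ into $\MC_{L'[t,dt]}(A)$ and commutes with evaluation at $t = 0, 1$ and $dt = 0$. Hence it carries homotopy equivalences to homotopy equivalences, and factors through the quotient to yield the desired natural transformation $\Def(L) \to \Def(L')$.

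For the weak-equivalence statement, I would reduce to the minimal case via homotopy transfer. If $L$ and $L'$ are both minimal (that is, $l_1 = 0 = l'_1$) and $f: L \to L'$ is a weak equivalence, then $f_1^1$ is a quasi-isomorphism between complexes with zero differential, hence an isomorphism of graded vector spaces, so Proposition \ref{inverse} yields an $\Linf$-inverse $g: L' \to L$. By the functoriality just proved, $f_*$ admits $g_*$ as a two-sided inverse on $\Def$. In general, the homotopy transfer theorem (applied to the underlying complexes) provides weak equivalences $\iota: HL \xrightarrow{\sim} L$ and $\iota': HL' \xrightarrow{\sim} L'$ of $\Linf$-algebras with minimal source; composing $f \circ \iota$ with a quasi-inverse of $\iota'$ (constructed at the level of $\Linf$-algebras by the same inductive procedure) realizes $f$ up to the transfer equivalences as a weak equivalence between minimal $\Linf$-algebras, reducing the general claim to the previous step.

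The main obstacle is showing that each transfer equivalence $\iota: HL \to L$ with $HL$ minimal already induces an isomorphism on $\Def$. I would prove this by induction on the length of small extensions $0 \to (t) \to A' \to A \to 0$ in $\Art$, using a Schlessinger-style argument. Surjectivity of $\Def(HL)(A') \to \Def(L)(A')$ is obtained by lifting a Maurer-Cartan element $\omega \in \MC_L(A')$ that reduces to one in the image of $\iota_*^A$ to an element in the image of $\iota_*^{A'}$, modifying the lift order-by-order in powers of $\mathfrak{m}_{A'}$: the obstruction at each step lies in $H^2 L$, which is canonically identified with $H^2(HL) = HL^2$ via $\iota_1$, so it can be trivialized by adjusting the candidate lift in $HL^1 \otimes (t)$ and then correcting the homotopy by a term in $L^0 \otimes \mathfrak{m}_{A'}[t,dt]$. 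Injectivity follows by the same bookkeeping applied to homotopies rather than Maurer-Cartan elements. The $\Linf$-morphism identities ensure that all higher corrections assemble into $\Linf$-compatible lifts, and the fact that $\iota_1$ is a quasi-isomorphism supplies exactly the vanishing cohomological input required at each inductive stage.
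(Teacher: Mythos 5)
Your outline is essentially correct, but note that the paper does not prove this statement at all: it is quoted from the literature (\cite[13.1.4]{Mane}), so there is no in-paper argument to compare against. Your first two steps (the pushforward $\omega\mapsto\sum_n\frac{1}{n!}f_n^A(\omega^{\otimes n})$ on Maurer--Cartan sets, and descent to homotopy classes by extending scalars to $K[t,dt]$ and using that the evaluations at $t=0,1$ are strict morphisms commuting with $f$) are exactly the standard construction. For the weak-equivalence statement your reduction to minimal models is sound: a weak equivalence between minimal $\Linf$ algebras has invertible linear part and is therefore an isomorphism by Proposition \ref{inverse}, and the two-out-of-three bookkeeping through $HL\to L\to L'\leftarrow HL'$ works once one knows the transfer inclusions induce isomorphisms on $\Def$. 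Two caveats. First, you need the homotopy transfer theorem for general $\Linf$ algebras, not just the dgla version stated as Theorem \ref{thmTTL} in the paper, and the ``quasi-inverse of $\iota'$'' is the $\infty$-extension of the projection $p$, which is part of the full transfer package rather than something to be improvised. Second, your ``main obstacle'' paragraph compresses the real work: making the Schlessinger-style induction rigorous requires first establishing that $\Def(L)$ satisfies the homogeneity conditions and carries a complete obstruction theory in $H^2L$ compatible with morphisms, and then invoking the standard smoothness criterion; the order-by-order correction of homotopies for injectivity is genuinely delicate. It is worth knowing that Manetti's own proof sidesteps this entirely by decomposing any $\Linf$ algebra as (minimal)\,$\times$\,(linear contractible) and checking directly that the contractible factor contributes trivially to $\Def$, which replaces your obstruction-theoretic core with a short explicit computation. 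Your route is viable, but the alternative decomposition argument is the cheaper way to close the one step you correctly identify as the hard one.
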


Together with the homotopy transfer theorem it gives:

\begin{thrm}\label{corFKM}
If $C$ is a dgla and $HC$ is the cohomology endowed with a transferred $\Linf$ algebra structure by Theorem \ref{thmTTL}, then $\Def(C)$ and $\Def(HC)$ are isomorphic functors.
\end{thrm}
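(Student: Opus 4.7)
The statement is essentially an immediate corollary of the two results recalled just before it, so the plan is simply to combine them in the correct order. First, I would apply the homotopy transfer theorem (Theorem \ref{thmTTL}) to the dgla $C$: choosing a homotopy retract of $(C,d)$ onto $(HC,0)$, one obtains an $\Linf$ algebra structure $(HC,0,l_2,l_3,\ldots)$ together with an explicit weak equivalence of $\Linf$ algebras
\[
(HC,0,l_2,l_3,\ldots) \xrightarrow{\sim} (C,d,[\_\,,\_]).
\]
The key point here is that $C$ is regarded as an $\Linf$ algebra with $l_n=0$ for $n\ge 3$ (cf.\ Remark \ref{dglala}), so that the morphism provided by Theorem \ref{thmTTL} is indeed a morphism of $\Linf$ algebras in the sense of Proposition-Definition \ref{lmalgebra}, and ``weak equivalence'' just means that its first Taylor coefficient $\sf_1$ (equivalently $f_1$) is a quasi-isomorphism of cochain complexes, which holds by construction since $\iota:(HC,0)\to (C,d)$ is a quasi-isomorphism.

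Second, I would invoke Theorem \ref{thmLDis}, which asserts that any weak equivalence of $\Linf$ algebras induces an isomorphism of the associated deformation functors $\Def(\_):\Art\to\Set$. Applied to the weak equivalence produced in the previous step, this directly yields a natural isomorphism $\Def(HC)\xrightarrow{\sim}\Def(C)$ of functors, which is the conclusion of the theorem.

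There is no genuine obstacle: all the work has already been done in Theorems \ref{thmTTL} and \ref{thmLDis}. The only thing worth emphasizing in the write-up is that, even though the Maurer--Cartan equation for $C$ reduces to the classical $d\omega+\tfrac12[\omega,\omega]=0$ (since $l_n=0$ for $n\ge 3$), the equation for $HC$ genuinely involves the infinitely many higher brackets $l_n$ coming from sums over rooted binary trees; the content of the corollary is that, in spite of this, passing to the cohomology $\Linf$ algebra loses no deformation-theoretic information. Since the isomorphism provided by Theorem \ref{thmLDis} is canonical once the homotopy retract is fixed, one also obtains, together with (\ref{eqMOD}), the chain $\widehat{\cM}_L\simeq \Def(C)\simeq \Def(HC)$ referred to in the introduction.
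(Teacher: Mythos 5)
Your proposal is correct and is exactly the argument the paper intends: Theorem \ref{corFKM} is stated as an immediate consequence of combining the weak equivalence $(HC,0,l_2,\ldots)\xrightarrow{\sim}(C,d,[\_\,,\_])$ from Theorem \ref{thmTTL} with the invariance of $\Def(\_)$ under weak equivalences from Theorem \ref{thmLDis}. The only point worth adding explicitly is that the gauge-action definition of $\Def(C)$ for the dgla agrees with the homotopy-equivalence definition used for $\Linf$ algebras (cf.\ the remark that homotopy equivalence coincides with gauge equivalence in the dgla case), which the paper records just before Definition \ref{defMCequi}.
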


We record here a result we need later:

\begin{lemma}\label{MCnequi} Let $C$ be a dgla and let $HC$ be endowed with a transferred $\Linf$ algebra structure as in Theorem \ref{thmTTL} via a homotopy retract diagram.
If $H^1C\otimes \mathfrak{m}_A=\MC_L(A)$  and  for every  $v$ in $H^0C\otimes \mathfrak{m}_A$ the image $(\iota\otimes\id_A)(v)$ commutes with all elements of $C\otimes\mm_A$, 
 then no two different elements in $H^1C\otimes \mathfrak{m}_A$ are homotopy equivalent. \end{lemma}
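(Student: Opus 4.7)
The plan is to pick up any putative homotopy between $\omega_0,\omega_1\in H^1C\otimes\mm_A$ and show the commutativity hypothesis forces it to be constant. Given a homotopy, I would decompose it as $\omega(t,dt)=\alpha(t)+\beta(t)\,dt$ with $\alpha(t)\in HC^1\otimes\mm_A[t]$ and $\beta(t)\in HC^0\otimes\mm_A[t]$, and expand the Maurer--Cartan equation $\sum_{n\geq1}\tfrac{1}{n!}l_n^{A[t,dt]}(\omega^{\otimes n})=0$ in the extended $\Linf$ algebra on $HC\otimes\mm_A[t,dt]$. Since $(dt)^2=0$, only terms with no copies of $\beta(t)\,dt$ (landing in $HC^2\otimes\mm_A[t]$) or exactly one such copy (landing in $HC^1\otimes\mm_A[t]\,dt$) survive. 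The no-$dt$ component vanishes automatically: by polynomial extension of the assumption $H^1C\otimes\mm_A=\MC_L(A)$ to $\mm_A[t]$, each $\alpha(t)$ already satisfies the Maurer--Cartan equation for every scalar value of $t$. Using $l_1=0$ on $HC$ together with the graded anti-symmetry of $l_n$ (which lets me move the single $\beta(t)\,dt$ factor into the last slot with trivial sign, since both $\alpha$ and $\beta\,dt$ have degree one in $HC\otimes\mm_A[t,dt]$), the $dt$-component of the Maurer--Cartan equation reduces to the flow equation
\begin{equation}\label{flowEqHomot}
\alpha'(t)=\sum_{n\geq 2}\frac{1}{(n-1)!}\,l_n\bigl(\alpha(t)^{\otimes n-1},\,\beta(t)\bigr).
\end{equation}

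Next, I would argue that every term on the right-hand side of \eqref{flowEqHomot} vanishes. The hypothesis says that for $v\in H^0C\otimes\mm_A$ the image $\iota(v)$ commutes with every element of $C\otimes\mm_A$; by $K[t,dt]$-bilinearity of the Lie bracket this commutativity extends to $\iota(\beta(t))\in C^0\otimes\mm_A[t]$ against all of $C\otimes\mm_A[t,dt]$. Remark \ref{rmkHigher} applied in this scalar-extended setting then shows that each transferred operation $l_n(\alpha^{\otimes n-1},\beta)$ is zero: in the expansion $l_n=\sum_\phi|\Aut(\phi)|^{-1}l_\phi$ over rooted binary trees with $n$ leaves, the leaf labelled by $\iota(\beta(t))$ meets an internal vertex at which the bracket $[\iota(\beta(t)),\,\cdot\,]=0$, and this vanishing propagates through all subsequent compositions involving $h$, the remaining brackets, and $p$.

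Substituting back into \eqref{flowEqHomot} gives $\alpha'(t)=0$, so $\alpha(t)$ is constant in $t$; evaluating at $t=0$ and $t=1$ then yields $\omega_0=\omega_1$, which is the claim. The main technical point will be justifying the two scalar extensions, namely the extension of Remark \ref{rmkHigher} from $K$ to $\mm_A[t,dt]$ and the promotion of the hypothesis $H^1C\otimes\mm_A=\MC_L(A)$ to $\mm_A[t]$; both are routine consequences of polynomiality and bilinearity once the extension-of-scalars formulas recalled earlier in this section are in hand, but the sign bookkeeping in projecting the Maurer--Cartan equation onto its $dt$-component deserves careful attention.
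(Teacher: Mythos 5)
Your proposal is correct and follows the same route as the paper's proof: decompose the homotopy as $z_1(t)+z_2(t)\,dt$, kill the $dt$-free part of the Maurer--Cartan equation using the hypothesis $H^1C\otimes\mm_A=\MC_L(A)$ (extended polynomially in $t$), kill the terms involving $z_2$ via the commutativity hypothesis and Remark \ref{rmkHigher}, and conclude that what remains is $z_1'(t)=0$, so the homotopy is constant. The paper states this in three sentences; you have merely written out the bookkeeping more explicitly.
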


\begin{proof} Let $z(t,dt) =z_1(t)+z_2(t)dt \in (H^1C\otimes\mathfrak{m}_A[t])\oplus (H^0C\otimes \mathfrak{m}_A[t][dt]). $ Then by assumption, the Maurer-Cartan condition (\ref{eqHMC}) is reduced to 
$(\mathrm{id}_{HC}\otimes d_{A[t,dt]})(z_1)=0$ by Lemma \ref{rmkHigher}. It means in particular that $z_1$ is constant with respect to $t$. So  $z$ cannot  define a homotopy equivalence between different Maurer-Cartan elements.
\end{proof}

\subs{\bf $\Linf$ pairs and cohomology jump deformation functors.} The following terminology was introduced in \cite{BR} and is convenient to use in deformation theory with cohomology constraints. 

\begin{defn}
An {\it $\Linf$ pair}  $(L,V)$ is an $\Linf$ algebra $L$ together with an $\Linf$ $L$-module $V$. A {\it morphism of $\Linf$ pairs} between $(L,V)$ and $(L',V')$ is a pair $(f,g)$ where $f:L\to L'$ is a morphism of $\Linf$ algebras and $g:V\to V'$ is a morphism of $\Linf$ modules over $L$, where $V'$ is regarded as an $L$-module via $f$. We say that a morphism $(f,g)$ of $\Linf$ pairs is a {\it weak equivalence} if $f$ and $g$ are weak equivalences.
\end{defn}

These structures extend the corresponding notions for dgl pairs defined in \cite{BW}. Using a reduction from module to algebra structures, cf. Proposition \ref{rmkRed}, the transfer theorem was updated to pairs  in \cite[Thm. 2.25]{BW}:

\begin{thrm}\label{thrmTLP}
Let $(C,M)$ be a dgl pair. Then there exists an $\Linf$ pair structure on the cohomology pair $(HC,HM)$ with zero differentials, and second order operations inherited from $(C,M)$, together with a weak equivalence of $\Linf$ pairs $(HC,HM)\to (C,M)$.
\end{thrm}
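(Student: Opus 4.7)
The plan is to reduce the problem to the already-stated homotopy transfer theorem for $\Linf$ algebras (Theorem \ref{thmTTL}) via the dictionary of Proposition \ref{rmkRed}, which identifies $\Linf$ pairs with $\Linf$ algebras on $L\oplus V$ whose bracket operations respect a certain second grading and vanish on $V^{\otimes n}$ for $n\ge 2$. Concretely, I would first apply part (1) of Proposition \ref{rmkRed} to turn the dgl pair $(C,M)$ into a dgla (hence $\Linf$ algebra) structure $j$ on $C\oplus M$, with second grading placing $C$ in degree $0$ and $M$ in degree $1$. Then choose homotopy retracts of cochain complexes
$$
\xymatrix{
C \ar@(lu,ld)_{h_C} \ar@<.5ex>[r]^{p_C} & \ar@<.5ex>[l]^{\iota_C} HC
}
\qquad
\xymatrix{
M \ar@(lu,ld)_{h_M} \ar@<.5ex>[r]^{p_M} & \ar@<.5ex>[l]^{\iota_M} HM
}
$$
and take their direct sum to obtain a homotopy retract of $C\oplus M$ onto $HC\oplus HM$. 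Applying Theorem \ref{thmTTL} produces a transferred $\Linf$ algebra structure $\{\tilde j_n\}_{n\ge 1}$ on $H(C\oplus M)=HC\oplus HM$ with $\tilde j_1=0$, together with a weak equivalence of $\Linf$ algebras $F:(HC\oplus HM,\tilde j)\to (C\oplus M,j)$, whose first Taylor coefficient is $\iota_C\oplus \iota_M$.

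The core technical step is to verify that the transferred structure still satisfies the two hypotheses of Proposition \ref{rmkRed}, namely (i) each $\tilde j_n$ has second-degree $0$, and (ii) $\tilde j_n$ vanishes on $(HM)^{\otimes n}$ for $n\ge 2$; and similarly for the Taylor coefficients of $F$. Both are verified by a tree-wise induction using the explicit formula $\tilde j_n=\sum_\phi |\Aut(\phi)|^{-1}\tilde j_\phi$ from Theorem \ref{thmTTL}. The key observation is that in the ambient $\Linf$ algebra $(C\oplus M, j)$ the bracket $j_2$ vanishes identically on $M\otimes M$, and $h_C\oplus h_M$, $\iota_C\oplus\iota_M$ preserve the direct-sum decomposition. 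Inductively, the output at any internal vertex of a planar binary tree is non-zero only if the sub-tree rooted there contains at most one leaf labelled in $HM$; if it does, the output lies in $M$, otherwise in $C$. Hence inputs with two or more $HM$-leaves force the bracket of two $M$-elements somewhere, producing $0$. This simultaneously gives (ii) and the degree-$0$ condition (i). The same argument, applied to the tree formulas giving the Taylor coefficients of the transfer morphism, yields the analogous properties for $F$.

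With these properties in hand, Proposition \ref{rmkRed} converts $(\tilde j_n)$ back into an $\Linf$ pair structure $(l_n,m_n)$ on $(HC,HM)$, and converts $F$ into a morphism of $\Linf$ pairs $(f,g):(HC,HM)\to(C,M)$ whose linear parts are $f_1=\iota_C$ and $g_1=\iota_M$; both are quasi-isomorphisms of cochain complexes by construction of the homotopy retracts, so $(f,g)$ is a weak equivalence of $\Linf$ pairs. Finally, restricting to trees with two leaves shows that $l_2$ on $HC$ is induced from the Lie bracket on $C$ and $m_2$ on $HC\otimes HM$ is induced from the dgl module action of $C$ on $M$, as required. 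The main obstacle is the bookkeeping in the tree-wise induction, in particular ensuring that the Koszul signs match when passing through d\'ecalage so that the reduction via Proposition \ref{rmkRed} (whose sign conventions are pinned down in Proposition-Definition \ref{rmkRed}(1)) applies verbatim.
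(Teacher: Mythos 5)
Your proposal is correct and follows essentially the same route the paper indicates: the paper defers the proof to the references but explicitly describes it as "a reduction from module to algebra structures, cf.\ Proposition \ref{rmkRed}," which is precisely the argument you carry out (transfer on $C\oplus M$ via a direct-sum homotopy retract, then a tree-wise check that the second grading and the vanishing on $(HM)^{\otimes n}$ survive transfer, so that Proposition \ref{rmkRed} converts the result back into an $\Linf$ pair and a weak equivalence of pairs).
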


\begin{defn}
Let $(L,V)$ be an $\Linf$ pair. Let $A\in\Art$ and $\omega\in\MC_L(A)$. Define the graded linear map $d_\omega:V\otimes A\to V\otimes A$ of degree 1 by
$$
d_\omega(\_):=\sum_{n\ge 0}\frac{1}{n!}m_{n+1}^A(\omega^{\otimes n}\otimes\_)
$$
where $m_n$ are the $\Linf$ $L$-modules structure maps on $V$ and $m_n^A=m_n\otimes \id_A$.
\end{defn}

\begin{rmk}\label{rmkSymM} 
Equivalently, one can replace $\omega^{\otimes n}$ by $\omega^{\wedge n}$, due to the anti-symmetry of $m_n$. 
If $(M,\phi)$ is the symmetric formulation of the $\Linf$ module structure, then
$$
d_\omega(\_):=\sum_{n\ge 0}\frac{1}{n!}\phi_{n+1}^A(\omega^{\vee n}\otimes\_)
$$
cf. Remark \ref{rmkSymM0}.
\end{rmk}

The following was shown in \cite[Thm. 3.7]{BR}:

\begin{lemma} Let $(L,V)$ be an $\Linf$ pair with $L$ and $V$ $\bN$-graded and $V$ bounded above as a cochain complex. Let $A\in\Art$ and $\omega\in\MC_L(A)$. Then $(V\otimes A,d_\omega)$ is a complex of $A$-modules with finitely generated cohomology. If $\omega'\in \MC_L(A)$ is homotopy equivalent to $\omega$, then the complexes $(V\otimes A,d_\omega)$ and $(V\otimes A,d_\omega')$ are homotopy equivalent. 
\end{lemma}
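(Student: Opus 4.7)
The plan is to establish the three assertions in turn — that $d_\omega^2 = 0$, that $H^\bullet(V \otimes A, d_\omega)$ is finitely generated over $A$, and that $(V \otimes A, d_\omega)$ and $(V \otimes A, d_{\omega'})$ are homotopy equivalent whenever $\omega \sim \omega'$ — handling the first and third by the module-to-algebra reduction of Proposition \ref{rmkRed} and the second by an $\mm_A$-adic filtration argument.

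For $d_\omega^2 = 0$, the reduction in Proposition \ref{rmkRed} converts $(L,V)$ into an $\Linf$ algebra $(L \oplus V, j)$ inside which $\omega \in L^1 \otimes \mm_A$ remains a Maurer-Cartan element, since the components of $j_n$ that carry $V$-entries vanish on pure $L$-inputs by the bi-grading. Twisting $(L \oplus V, j)$ by this Maurer-Cartan element produces a new differential $l_1^\omega(x) = \sum_{n \geq 1} \frac{1}{(n-1)!} j_n^A(\omega^{\otimes (n-1)} \otimes x)$ on $(L \oplus V) \otimes A$, and $(l_1^\omega)^2 = 0$ will follow as a formal consequence of the $\Linf$ identities of Proposition-Definition~\ref{equivdeflalg}(3) together with the Maurer-Cartan equation for $\omega$. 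Restricting $l_1^\omega$ to $V \otimes A$ recovers precisely $d_\omega$ up to the sign convention of Proposition \ref{rmkRed}, so $d_\omega^2 = 0$.

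For finite generation, since $\mm_A$ is nilpotent the filtration $F^k := V \otimes \mm_A^k$ is finite, and it is preserved by $d_\omega$: the summand $m_{n+1}^A(\omega^{\otimes n} \otimes \_)$ with $n \geq 1$ strictly raises the filtration by $n$, while the $n = 0$ summand $m_1 \otimes \id$ preserves degree. The induced differential on the associated graded $\gr^k = V \otimes (\mm_A^k / \mm_A^{k+1})$ is exactly $m_1 \otimes \id$, whose cohomology is $HV \otimes (\mm_A^k / \mm_A^{k+1})$ — finite-dimensional over $K$, because $HV$ is finite-dimensional by the standing convention on such pairs and each graded piece of $A$ is finite-dimensional. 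The spectral sequence of the filtration therefore has a finite $E_1$-page and collapses at a finite stage, giving that $H^\bullet(V \otimes A, d_\omega)$ is finite-dimensional over $K$, equivalently finitely generated over $A$.

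For the homotopy invariance, a homotopy $\omega(t, dt) \in \MC_{L[t, dt]}(A)$ interpolating $\omega$ and $\omega'$ will give, by part (i) applied to the $\Linf$ pair $(L \otimes K[t, dt], V \otimes K[t, dt])$ over $A \otimes K[t, dt]$, a complex $(V \otimes A[t, dt], d_{\omega(t, dt)})$ whose evaluations $\mathrm{ev}_0, \mathrm{ev}_1$ at $(t, dt) = (0, 0), (1, 0)$ are morphisms of $A$-module complexes into $(V \otimes A, d_\omega)$ and $(V \otimes A, d_{\omega'})$ respectively. The main step, and the hardest part of the proof, will be to verify that $\mathrm{ev}_0$ and $\mathrm{ev}_1$ are quasi-isomorphisms; I would again filter by powers of $\mm_A$ to reduce this to the classical fact that $K[t, dt] \to K$ is a homotopy equivalence for either evaluation, applied to the untwisted associated graded $\bigl(V \otimes \gr(\mm_A)[t, dt],\, m_1 \otimes \id\bigr)$. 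Finally, since $V$ is $\bN$-graded and bounded above and each $V^i \otimes A$ is a free $A$-module, $(V \otimes A, d_\omega)$ and $(V \otimes A, d_{\omega'})$ are bounded complexes of free $A$-modules, so any quasi-isomorphism between them automatically upgrades to a homotopy equivalence by standard homological algebra.
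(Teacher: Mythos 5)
Your proposal is essentially correct. The paper itself gives no proof of this lemma --- it is quoted from \cite[Thm.~3.7]{BR} --- so there is no in-text argument to compare against, but your three-step route (Maurer--Cartan twisting via the module-to-algebra reduction of Proposition \ref{rmkRed}, an $\mm_A$-adic spectral sequence, and the $K[t,dt]$ path object) is exactly in the spirit of how \cite{BR} treats such statements, and each step is sound. The reduction to $L\oplus V$ is a clean way to get $d_\omega^2=0$: $\omega$ is indeed Maurer--Cartan in $(L\oplus V)\otimes\mm_A$ because $L$ is an $\Linf$ subalgebra, the twisted differential preserves the second grading and hence $V\otimes A$, and its restriction there is $d_\omega$ after the reindexing $n\mapsto n+1$; the identity $(l_1^\omega)^2=0$ for a twist by a Maurer--Cartan element is standard but you should either cite it (it is in \cite{Mane}) or check it, since it is the only place the $\Linf$ identities actually enter. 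Two smaller points deserve attention. First, as you noticed, the lemma as stated omits the hypothesis that $H(V,m_1)$ is finite-dimensional, without which the finite-generation claim is false (take $V=V^0$ infinite-dimensional with $m=0$); your appeal to the standing convention of Section \ref{secDFT} is the right fix, but say so explicitly. Second, in the zig-zag $(V\otimes A,d_\omega)\leftarrow(V\otimes A[t,dt],d_{\omega(t,dt)})\rightarrow(V\otimes A,d_{\omega'})$, to convert quasi-isomorphisms into a homotopy equivalence of the two ends you need the \emph{middle} term to be a bounded-above complex of projective $A$-modules as well; it is ($A[t,dt]$ is free of infinite rank over $A$), but your closing sentence only invokes freeness of the end terms.
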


In particular, the cohomology jump ideals $J^i_k(V\otimes A,d_\omega)\subset A$ are well-defined and independent of the homotopy equivalence class of $\omega$.

\begin{defn}\label{defDik} (\cite[Def. 3.6]{BR})
Let $(L,V)$ be an $\Linf$ pair with $L$ and $V$ $\bN$-graded and $V$ bounded-above as a cochain complex. For $i,k\in\bN$ the {\it cohomology jump deformation functors} $\Def^i_k(L,V)$ 
are given for $A\in \Art$ by the quotient
\begin{equation}\label{eqLJI2}
\Def^i_k(L,V;A)=\{\omega\in  \MC_L(A)\mid J^i_k(V\otimes A,d_\omega)=0\}/\sim
\end{equation}
by homotopy equivalence.
\end{defn}

It is shown in \cite{BR} that $\Def^i_k(L,V)$ are subfunctors of $\Def(L)$. In the case of dgl pairs they agree with the cohomology jump subfunctors defined already in \cite{BW}. Moreover, one has the following result extending the dgl pairs case, see \cite[Thm. 3.8]{BR}:

\begin{theorem}\label{cjdf pairs invariance} Let $(L,V)$, $(L',V')$ be two $\Linf$ pairs, with $L,L
,V,V'$ $\bN$-graded and $V, V'$ bounded above as cochain complexes. If $(f,g): (L,V)\rightarrow (L',V')$ is a weak equivalence of $\Linf$ pairs, then for all $i, k\in\bN$ there is an isomorphism of subfunctors $\Def^i_k(L,V)\to\Def^i_k(L',V')$ compatible with the isomorphism of functors $\Def(L)\ra \Def(L')$ from Theorem \ref{thmLDis}.
\end{theorem}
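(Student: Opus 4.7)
The plan is to show that the isomorphism $\Def(L)\to\Def(L')$ induced by $f$ from Theorem~\ref{thmLDis} restricts to isomorphisms $\Def^i_k(L,V)\to\Def^i_k(L',V')$ for every $i,k$. Since the quotient by homotopy equivalence is already handled at the level of $\Def(L)$, the essential task is to prove that for every $A\in\Art$ and every $\omega\in\MC_L(A)$, one has an equality of cohomology jump ideals
\[
J^i_k(V\otimes A,d_\omega) = J^i_k(V'\otimes A,d_{f_*\omega}) \quad\text{in }A,
\]
where $f_*\omega\in\MC_{L'}(A)$ is the image of $\omega$ built from the Taylor coefficients of $f$ (cf.\ Remark~\ref{Taylorcoefmor}). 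Since cohomology jump ideals of bounded-above complexes with finitely generated cohomology are invariants of the quasi-isomorphism class (as follows from Definition~\ref{defCJI}: a common free resolution can be used for the source and target of a quasi-isomorphism), it suffices to construct a quasi-isomorphism of $A$-module complexes $g_\omega:(V\otimes A,d_\omega)\to(V'\otimes A,d_{f_*\omega})$.

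The construction of $g_\omega$ uses the Taylor coefficients $\kappa_n:\rS^{n-1}(L[1])\otimes V\to V'$ of the $\Linf$ module morphism $g$ (viewed as an $\Linf$ module morphism over $L$, the target being $V'$ with the $L$-structure pulled back via $f$). For $\omega\in L^1\otimes \mm_A$ I set
\[
g_\omega(v\otimes a) := \sum_{n\ge 1}\frac{1}{(n-1)!}\,\kappa_n^A(\omega^{\vee(n-1)}\otimes v\otimes a),
\]
which is a well-defined $A$-linear map because $\mm_A$ is nilpotent, so the sum is finite. The chain map property
\[
d_{f_*\omega}\circ g_\omega = g_\omega\circ d_\omega
\]
is equivalent, after expanding both sides as power series in $\omega$ and comparing coefficients, to the defining identity of a morphism of $\Linf$ modules (Definition~\ref{defLmod}) evaluated on symmetric powers of $\omega$; the Maurer--Cartan condition on $\omega$ is exactly what makes the remaining terms cancel. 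This is the routine but unavoidable bookkeeping step. An efficient alternative route is to invoke the reduction of Proposition~\ref{rmkRed}: the pair $(f,g)$ corresponds to a morphism of $\Linf$ algebras $k:L\oplus V\to L'\oplus V'$, and $g_\omega$ arises from the component of $k_*$ linear in $V$, so the identities follow from those already known for $\Linf$ algebra morphisms applied to the Maurer--Cartan equation in $L\oplus V$.

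The main technical obstacle is verifying that $g_\omega$ is a quasi-isomorphism. For this I will use the $\mm_A$-adic filtration
\[
V\otimes A \supset V\otimes\mm_A \supset V\otimes\mm_A^2\supset\cdots,
\]
which is finite because $A\in\Art$. Since $\omega\in L^1\otimes\mm_A$ and each $\kappa_n$ with $n\ge 2$ produces a factor of $\omega$, the map $g_\omega$ preserves this filtration, and on the associated graded pieces it reduces to $g_1\otimes\id:(V,m_1)\otimes(\mm_A^p/\mm_A^{p+1})\to(V',m'_1)\otimes(\mm_A^p/\mm_A^{p+1})$. Because $g_1$ is a quasi-isomorphism of cochain complexes by the weak equivalence hypothesis on $g$, and tensoring with the finite-dimensional $K$-vector space $\mm_A^p/\mm_A^{p+1}$ preserves quasi-isomorphisms, each graded piece is a quasi-isomorphism. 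A standard finite filtration spectral sequence argument (or an iterated application of the five lemma on the short exact sequences $0\to V\otimes\mm_A^{p+1}\to V\otimes\mm_A^p\to V\otimes(\mm_A^p/\mm_A^{p+1})\to 0$) then shows that $g_\omega$ itself is a quasi-isomorphism.

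Finally, the compatibility with homotopy equivalence is automatic: if $\omega,\omega'\in\MC_L(A)$ are homotopy equivalent via some $\omega(t,dt)\in\MC_{L[t,dt]}(A)$, then applying the construction above to the $\Linf$ pair $(L[t,dt],V[t,dt])$ shows that the twisted complexes at $\omega$ and $\omega'$ are homotopy equivalent, hence have identical cohomology jump ideals (this is exactly the invariance lemma quoted from \cite{BR} preceding Definition~\ref{defDik}). Combining with the previous paragraphs gives that the natural transformation $\Def(L)\to\Def(L')$ sends $\Def^i_k(L,V)$ isomorphically onto $\Def^i_k(L',V')$, which proves the theorem. The overall proof is thus a direct lift of the dgl pair case of \cite[\S 3]{BW} to the $\Linf$ setting, with the $\Linf$ module morphism identity replacing the interplay between the differentials $d_M$ and $d_{M'}$ in the dgl argument.
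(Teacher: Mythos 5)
Your proposal is correct and follows the standard argument: the paper itself gives no proof of this statement but cites \cite[Thm.~3.8]{BR}, and your route --- twist the $\Linf$ module morphism by a Maurer--Cartan element to get a map of complexes, check it is a quasi-isomorphism via the finite $\mm_A$-adic filtration whose associated graded is $g_1\otimes\id$, and conclude equality of cohomology jump ideals from their quasi-isomorphism invariance --- is exactly the mechanism used there (indeed your alternative via Proposition~\ref{rmkRed}, reducing the pair $(f,g)$ to an $\Linf$ algebra morphism $L\oplus V\to L'\oplus V'$, is the packaging \cite{BR} adopts). The only point you compress is the identification of the twist of the pulled-back module structure on $V'$ at $\omega$ with the twist of the native $L'$-structure at $f_*\omega$, but this is part of the bookkeeping you correctly flag and does not affect the argument.
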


\begin{rmk} In general if $(f,g): (L,V)\rightarrow (L',V')$ is only a morphism of $\Linf$ pairs, there is no natural transformation of subfunctors $\Def^i_k(L,V)\to \Def^i_k(L',V')$ although there is a natural transformation of deformation functors $\Def(L)\to\Def(L')$. (This is stated incorrectly at the beginning of \cite[3.4]{BR}, although it does not affect  the proof there.)
\end{rmk}

Together with the homotopy transfer theorem for pairs one has, see \cite[Thm. 1.6]{BR}:

\begin{theorem}\label{thmHCHMa}
\label{thmMainLDef} Let $(C,M)$ be a dgl pair with $C, M$ $\bN$-graded and $M$ bounded above as a cochain complex. If $(HC,HM)$ is the cohomology endowed with a transferred $\Linf$ pair structure from Theorem \ref{thrmTLP}, the cohomology jump subfunctors 
$
\Def^i_k(HC,HM)\subset \Def(HC)
$
are naturally isomorphic to the cohomology jump subfunctors 
$
\Def^i_k(C,M)\subset \Def(C)
$
for all $i,k\in\bN$.
\end{theorem}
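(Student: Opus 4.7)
The plan is to derive this as an immediate consequence of two results already stated in the excerpt: the homotopy transfer theorem for $\Linf$ pairs (Theorem \ref{thrmTLP}) and the weak-equivalence invariance of cohomology jump subfunctors (Theorem \ref{cjdf pairs invariance}). No genuinely new work is required beyond checking that the grading and boundedness hypotheses propagate correctly through the transfer.

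First, apply Theorem \ref{thrmTLP} to the dgl pair $(C,M)$. This endows the cohomology pair $(HC,HM)$ with an $\Linf$ pair structure whose differentials vanish and whose binary operations are induced from those on $(C,M)$, and it produces a weak equivalence of $\Linf$ pairs
\[
(f,g):(HC,HM)\xrightarrow{\sim}(C,M).
\]
Because $C$ and $M$ are $\bN$-graded and $M$ is bounded above as a cochain complex, the same is true of $HC$ and $HM$ (cohomology cannot enlarge the support of the grading). Thus both pairs satisfy the hypotheses required to define the cohomology jump subfunctors $\Def^i_k(\_,\_)$ in the sense of Definition \ref{defDik}.

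Next, apply Theorem \ref{cjdf pairs invariance} to the weak equivalence $(f,g)$. This immediately yields, for every $i,k\in\bN$, an isomorphism of subfunctors
\[
\Def^i_k(HC,HM)\xrightarrow{\sim}\Def^i_k(C,M),
\]
and the statement of Theorem \ref{cjdf pairs invariance} asserts that this isomorphism is compatible with the isomorphism of deformation functors $\Def(HC)\xrightarrow{\sim}\Def(C)$ given by Theorem \ref{thmLDis} (equivalently Theorem \ref{corFKM}). Compatibility here means that the square formed by the inclusions $\Def^i_k\hookrightarrow \Def$ on either side commutes, which is exactly what is required to identify the two chains of subfunctors of $\Def(HC)\simeq\Def(C)$.

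There is no real obstacle: the work has already been packaged into the two theorems cited. The only point that warrants a sentence of commentary in the write-up is that the transferred $\Linf$ pair structure on $(HC,HM)$ in Theorem \ref{thrmTLP} is the one used implicitly in the formulation of Theorem \ref{thmHCHMa}, so that the statement is not vacuous. Once this identification is noted, the proof reduces to the single line ``combine Theorem \ref{thrmTLP} with Theorem \ref{cjdf pairs invariance}.''
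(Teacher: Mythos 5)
Your proof is correct and is exactly the paper's argument: the theorem is obtained there by combining Theorem \ref{thrmTLP} (homotopy transfer for dgl pairs) with Theorem \ref{cjdf pairs invariance} (invariance of cohomology jump subfunctors under weak equivalence of $\Linf$ pairs), the statement being quoted from the reference \cite[Thm. 1.6]{BR}. Your verification that $\bN$-gradedness and bounded-aboveness pass from $(C,M)$ to $(HC,HM)$ is the only hypothesis check needed, and you have it.
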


\section{Partial formality} \label{secTecCore}

The goal in this section is to give another proof of Theorem \ref{thmGenToCone}, closer to the proof of Polishchuk of \cite[Thm 0.1]{Po}. For this we prove Theorem \ref{thrmLinf1Gen} which is an exact analog for $\Linf$ pairs of \cite[Thm. 3.1]{Po}. 
We  use the notions recalled in Section \ref{apxLinf}. We begin with  the analog of  \cite[Lemma 1.1]{Pob} where the case of $A_\infty$ algebras is treated.

\begin{thrm}\label{thrmLinf1Gen} Let $(M,V)$ be an $L_\infty$ algebra together with a module, both of finite dimension over a field $K$ of characteristic zero, such that:
\begin{itemize} 
\item $M^i=0$ and $V^i=0$ for $i\neq 0,1$,
\item the differentials on $M$ and $V$ are zero, 
\item  the linear map $\pi:V^0\otimes (V^1)^\vee\to (M^1)^\vee$ induced from the multiplication map $m_2:M^1\otimes V^0\to V^1$ is injective.
\end{itemize} 
Then there exists an $L_\infty$   algebra structure on $M$  isomorphic to the original one such that for the induced module structure on $V$, $m_2:M\otimes V\to V$ is the original one and the higher  multiplication maps
$
m_n: (M^1)^{\otimes (n-1)}\otimes V^0\ra V^1
$ 
vanish for $n>2$.
\end{thrm}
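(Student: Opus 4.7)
My plan is to proceed inductively in $n$, constructing an $\Linf$ algebra automorphism $f$ of $(M,l)$ with $f_1 = \id_M$, one Taylor coefficient at a time, such that the pushforward module structure $m' := f_*m$ (i.e.\ the unique $\Linf$ module structure making $(f,\id_V)$ into an $\Linf$ pair morphism $(M,V,l,m) \to (M,V,l',m')$, where $l' := f_*l$) satisfies $m'_2 = m_2$ and $m'_n|_{(M^1)^{\otimes(n-1)} \otimes V^0} = 0$ for all $n>2$. The equality $m'_2 = m_2$ is the order-$2$ component of the $\Linf$ pair morphism identity (with $f_1 = g_1 = \id$) and is automatic.

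For the inductive step, suppose $f_2,\ldots,f_{n-2}$ have already been chosen so that $m'_j|_{(M^1)^{\otimes(j-1)} \otimes V^0} = 0$ for all $2 < j < n$. I would then write down the order-$n$ component of the $\Linf$ pair morphism identity for $(f,\id_V)$ evaluated on $\omega_1,\ldots,\omega_{n-1} \in M^1$ and $v \in V^0$, which takes the schematic form
\begin{equation*}
m_n(\omega_1,\ldots,\omega_{n-1},v) = \sum_{p \geq 2}\;\sum_{\substack{j_1+\cdots+j_{p-1}=n-1\\ j_i \geq 1}}\;\sum_\sigma \pm\, m'_p\bigl(f_{j_1}(\omega_{\sigma(S_1)}),\ldots,f_{j_{p-1}}(\omega_{\sigma(S_{p-1})}),v\bigr);
\end{equation*}
any would-be $l_j$-term vanishes because $l_j$ on a pure $M^1$ tensor takes values in $M^2 = 0$. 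Since every $f_{j_i}$ produces an element of $M^1$, each $m'_p$ in the sum is evaluated on an input in $(M^1)^{\otimes(p-1)} \otimes V^0$; by the induction hypothesis, every term with $2 < p < n$ vanishes. The two surviving contributions are the principal term $m'_n(\omega_1,\ldots,\omega_{n-1},v)$ (from $p=n$, all $j_i = 1$) and $m_2\bigl(f_{n-1}(\omega_1,\ldots,\omega_{n-1}),v\bigr)$ (from $p=2$, $j_1 = n-1$, using $m'_2 = m_2$).

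Requiring $m'_n$ to vanish on the target piece thus reduces, up to sign, to
\begin{equation*}
m_n(\omega_1,\ldots,\omega_{n-1},-) = \pi^\vee\bigl(f_{n-1}(\omega_1,\ldots,\omega_{n-1})\bigr) \in \Hom(V^0, V^1),
\end{equation*}
where $\pi^\vee : M^1 \to \Hom(V^0,V^1)$ is the dual of the injective pairing $\pi$ and is therefore surjective by finite-dimensionality. Fixing once and for all a $K$-linear section $s$ of $\pi^\vee$, I would set
\begin{equation*}
f_{n-1}\big|_{(M^1)^{\otimes(n-1)}}(\omega_1,\ldots,\omega_{n-1}) := s\bigl(m_n(\omega_1,\ldots,\omega_{n-1},-)\bigr)
\end{equation*}
and extend by zero on the complementary graded summands of $M^{\otimes(n-1)}$; the required graded anti-symmetry of $f_{n-1}$ on $(M^1)^{\otimes(n-1)}$, which amounts to ordinary symmetry since $M^1$ sits in odd degree, is inherited from that of $m_n$. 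This closes the induction and yields the desired $\Linf$ algebra automorphism. The hardest part is the careful sign and combinatorial bookkeeping justifying the collapse of the morphism identity to the two displayed terms; this reduces to a routine analysis exploiting the concentration of $M$ and $V$ in degrees $0$ and $1$, the vanishing of their differentials, and the inductive vanishing of the intermediate $m'_p$'s.
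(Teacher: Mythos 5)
Your proposal is correct and follows essentially the same route as the paper: both define the $(n-1)$-st Taylor coefficient of an $\Linf$ automorphism of $M$ as a lift of the partial dual $\tilde{m}_n\colon (M^1)^{\otimes(n-1)}\to\Hom(V^0,V^1)$ through the surjection $\tilde{m}_2$ (dual to the injective $\pi$), and both rely on the fact that, by degree reasons ($M^2=0$) and induction, the only surviving correction to $m'_n$ on $(M^1)^{\otimes(n-1)}\otimes V^0$ is $m_2(f_{n-1}(\cdot)\otimes\cdot)$. The only difference is packaging: the paper realizes the automorphism as an infinite composition of elementary isomorphisms, each with a single nonzero higher Taylor coefficient (Lemma \ref{particulartrans} supplying the correction formula, plus a short check that the infinite composition converges), whereas you build one automorphism coefficient-by-coefficient and read the correction off the order-$n$ component of the pair-morphism identity.
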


\begin{lemma}\label{trans} Let $(L, l_1,l_2,\ldots)$ be an $L_\infty$ algebra and $f=\lbrace f_n: L^{\otimes n} \rightarrow L \rbrace_{n \geq 1}$  a collection of graded anti-symmetric multilinear maps of degree $1-n$ such that $f_1=\id$. There is an unique $L_\infty$ algebra structure $l'= \lbrace l_n' \rbrace_{n\ge 1}$ on $L$ such that $f$ is an isomorphism of $\Linf$ algebras between $(L,l)$ and $(L,l')$. 
\end{lemma}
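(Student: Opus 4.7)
The plan is to translate everything into the cofree coalgebra formulation of $\Linf$ algebras, where the statement becomes a straightforward transport of structure along a coalgebra isomorphism.

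First I would pass to the symmetric side via d\'ecalage. By Corollary \ref{corDeca}, the collection $\{f_n\}$ of graded anti-symmetric maps of degree $1-n$ with $f_1=\id$ corresponds bijectively to a collection $\{\sf_n:\rS^n(L[1])\to L[1]\}$ of graded symmetric linear maps of degree $0$ with $\sf_1=\id$. By the formula in Remark \ref{Taylorcoefmor}, the collection $\{\sf_n\}$ assembles into a unique coalgebra endomorphism
\[
F:\rrS(L[1])\longrightarrow \rrS(L[1])
\]
whose Taylor coefficients $F^1_n$ are exactly the $\sf_n$. Since $F^1_1=\sf_1=\id$ is an isomorphism, Proposition \ref{inverse} guarantees that $F$ is an isomorphism of coalgebras, with inverse $G$ determined by the recursive formula (\ref{inverseformula}).

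Next I would define the new $\Linf$ structure on $L$ by transport. Let $Q$ be the codifferential on $\rrS(L[1])$ corresponding to $l$ via Proposition-Definition \ref{equivdeflalg}, and set
\[
Q' \;:=\; F\circ Q\circ G.
\]
I would then check the three defining properties of a codifferential: $Q'$ has degree $1$ since $F$ and $G$ are of degree $0$; $(Q')^2 = FQGFQG = FQ^2G = 0$; and $Q'$ is a coderivation because the conjugate of a coderivation by a coalgebra isomorphism is again a coderivation (a direct check using $\Delta\circ F = (F\otimes F)\circ \Delta$ and the analogous identity for $G$). Thus $Q'$ defines, by Proposition-Definition \ref{equivdeflalg}, an $\Linf$ algebra structure $l'=\{l'_n\}_{n\ge 1}$ on $L$. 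By construction $F\circ Q = Q'\circ F$, so $F$ is a morphism of $\Linf$ algebras $(L,Q)\to(L,Q')$, and since its first Taylor coefficient is $\id$, Proposition \ref{inverse} gives that $F$, equivalently $f$, is an $\Linf$ isomorphism.

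Finally, uniqueness is automatic: if $l''$ is any other $\Linf$ structure with associated codifferential $Q''$ such that $f$ is an $\Linf$ isomorphism $(L,l)\to(L,l'')$, then $F\circ Q = Q''\circ F$ forces $Q'' = F\circ Q\circ G = Q'$, so $l''=l'$. The main technical point is simply the compatibility of conjugation by $F$ with the coderivation property; everything else is formal.
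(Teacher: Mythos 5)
Your proposal is correct and follows essentially the same route as the paper: pass to the symmetric setting via d\'ecalage, assemble the $\sf_n$ into a coalgebra isomorphism $F$ of $\rrS(L[1])$ (invertible because $\sf_1=\id$), and transport the codifferential by conjugation, $Q'=F\circ Q\circ F^{-1}$. The paper leaves the coderivation check and the uniqueness remark implicit, but these are exactly the formal verifications you supply, so there is no substantive difference.
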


\begin{proof}  Using d\'ecalage to switch to the graded symmetric setting, one has a collection of graded linear  maps $ \sf_n=\dec^{-1}(f_n):\rS^n(L[1]) \to L[1]$, ${n \geq 1}$, by Corollary \ref{corDeca}. This gives rise to a morphism of reduced coalgebras $F: \; \rrS(L[1]) \rightarrow \rrS (L[1])$. For this see \cite[Cor. 11.5.4]{Mane} for a recursive formula for $F$, or equivalently, $F$ can be reconstruced in one shot by the same formula as in Remark \ref{Taylorcoefmor}. Since $\sf_1$ is an isomorphism of graded vector spaces, $F$ is an isomorphism of coalgebras, see \cite[Cor. 11.5.5]{Mane}. Thus we can transfer the  codifferential $Q$ on  $\rrS(L[1])$  associated to $ (L, l)$ by Proposition-Definition \ref{equivdeflalg} to another one, denoted by $Q'$, on $\rrS(L[1])$ along the isomorphism $F$. Then $F$  commutes with $Q$ and $Q'$. The  codifferential  $Q'$  corresponds to an $L_\infty$ structure $l'$ on $L$, such that $(L,Q)$ and $(L,Q')$, equivalently $(L,l)$ and $(L,l')$, are isomorphic $\Linf$ algebras. 
\end{proof}

\begin{rmk}
 If $(M,m)$ is an $L_\infty$   module over $(L, l)$, there is an induced module structure on $M$ over $(L, l')$. Then the $\Linf$ pairs $((L,l),(M,m))$ and $((L, l'),(M,m')$ are isomorphic.
\end{rmk}

The following explicits part of the new module structure in a particular case of Lemma \ref{trans}. 

\begin{lemma} \label{particulartrans}  Let $((L,l),(M,m))$ be an $\Linf$ pair such that the differentials $l_1$ and $m_1$ are zero. Fix  $2\leq k \in \mathbb{N}$. Let  $f=\lbrace f_n: L^{\otimes n} \rightarrow L \rbrace_{n \geq 1}$ be a collection of graded anti-symmetric multilinear maps such that $f_n$ has degree $1-n$ and: $f_1=\id$, $f_k\neq 0$, and $ f_n=0$ if $1\neq n\neq k$. 
Let $l'$ and $m'$ be the new $\Linf$ structures transferred  along $f$ as in Lemma \ref{trans}.  Then   $
m'_n =m_n$ if $1 \leq n\leq k$, 
and, if $n=k$,
$$m'_{k+1}(x_1\wedge  \cdots \wedge  x_k \otimes v)=m_{k+1} (x_1\wedge  \cdots \wedge  x_k \otimes v)- m_2(f_k(x_1\wedge  \cdots \wedge  x_k )\otimes v)$$
for homogeneous $x_i\in L$, $v\in M$.
\end{lemma}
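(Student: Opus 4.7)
The plan is to work in the symmetric coalgebra formulation. Let $Q$ on $\rrS(L[1])$ encode the $\Linf$ algebra structure $l$ via Proposition-Definition \ref{equivdeflalg}(1), and let $\phi$ on $\rS(L[1])\otimes V$ encode the $\Linf$ module structure $m$ via Proposition-Definition \ref{lmodule}(1). Write $\sf_n=\dec^{-1}(f_n)\colon \rS^n(L[1])\to L[1]$ for the Taylor coefficients of the coalgebra isomorphism $F\colon \rrS(L[1])\to\rrS(L[1])$ associated with $f$, as in the proof of Lemma \ref{trans}. By hypothesis $\sf_1=\id$, $\sf_k\neq 0$, and $\sf_j=0$ for every other $j$. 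The transferred codifferential is $Q'=FQF^{-1}$, and the corresponding comodule codifferential is $\phi'=(F\otimes\id_V)\circ\phi\circ(F^{-1}\otimes\id_V)$; this $\phi'$ is the unique structure making $F\otimes\id_V$ an isomorphism of $\Linf$ modules between $(V,\phi')$ over $(L,Q')$ and $(V,\phi)$ over $(L,Q)$.

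First I would unravel the components $F^i_n$ and $(F^{-1})^i_n$. The recursive formula in Proposition \ref{lmalgebra}(2) shows that a contribution to $F^i_n$ requires partitioning the $n$ inputs into $i$ blocks of sizes drawn from $\{1,k\}$, so $F^i_n$ vanishes unless $n-i$ is a non-negative multiple of $k-1$. In particular $F^n_n=\id$, $F^1_n=0$ for $1<n<k$, and $F^1_k=\sf_k$. Plugging these into the inversion recursion (\ref{inverseformula}) gives $(F^{-1})^n_n=\id$, $(F^{-1})^1_n=0$ for $1<n<k$, and $(F^{-1})^1_k=-\sf_k$. Consequently, for any $a_1\vee\cdots\vee a_{n-1}\in\rS^{n-1}(L[1])$ with $n\le k$ the map $F^{-1}$ acts as the identity, while for $n=k+1$ we have $F^{-1}(a_1\vee\cdots\vee a_k)=a_1\vee\cdots\vee a_k-\sf_k(a_1\vee\cdots\vee a_k)$, a sum of terms in $\rS^k(L[1])$ and $\rS^1(L[1])$.

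Next I would compute the Taylor coefficients $\phi'_n=\pr_{V[1]}\circ\phi'|_{\rS^{n-1}(L[1])\otimes V}$. Since $F$ is a coalgebra morphism it restricts to the identity on $\rS^0$ and sends $\rS^{\geq 1}$ into itself, so the left factor $F\otimes\id_V$ does not affect the projection onto the $V[1]$-summand; hence $\pr_{V[1]}\circ\phi'=\pr_{V[1]}\circ\phi\circ(F^{-1}\otimes\id_V)$. Moreover by formula (\ref{eqcomo}), $\pr_{V[1]}\phi(y\otimes v)=\phi_j(y\otimes v)$ whenever $y\in\rS^{j-1}(L[1])$. For $n\le k$ this immediately yields $\phi'_n=\phi_n$. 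For $n=k+1$ one obtains
\[
\phi'_{k+1}(a_1\vee\cdots\vee a_k\otimes v)=\phi_{k+1}(a_1\vee\cdots\vee a_k\otimes v)-\phi_2(\sf_k(a_1\vee\cdots\vee a_k)\otimes v).
\]

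Finally I would translate back to the anti-symmetric formulation via the dictionary $m_n=-\dec(\phi_n)$ and $f_k=\dec(\sf_k)$, using Remark \ref{rmkDecW}. The identities $m'_n=m_n$ for $n\le k$ are immediate. For $m'_{k+1}$, unwinding the d\'ecalage of $\phi_2\circ(\sf_k\otimes\id_V)$ shows that the Koszul signs arising from the two separate d\'ecalages on $\phi_2$ and $\sf_k$ combine consistently with the single d\'ecalage on the composition, producing $-m_2(f_k(x_1\wedge\cdots\wedge x_k)\otimes v)$; combined with the $m_{k+1}$ term this gives the stated formula. The main technical obstacle is precisely this sign bookkeeping in the final step; an alternative route which avoids it is to apply Proposition \ref{rmkRed} to lift the entire $\Linf$ pair to an $\Linf$ algebra on $L\oplus V$, extend $f$ to $\tilde f$ by taking $g_1=\id$ and $g_n=0$ for $n\ge 2$, and run Lemma \ref{trans} at the $\Linf$ algebra level, after which $m'$ is read off from the resulting structure on $L\oplus V$.
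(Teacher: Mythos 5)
Your proposal is correct and follows essentially the same route as the paper's proof: pass to the symmetric coalgebra picture, show via the reconstruction and inversion formulas that $F^i_n$ and $(F^{-1})^i_n$ vanish except for $F^n_n=\mathrm{id}$ and $(F^{-1})^1_k=-\mathsf{f}_k$, read off $\phi'_n=\phi_n$ for $n\le k$ and $\phi'_{k+1}=\phi_{k+1}-\phi_2(\mathsf{f}_k(\cdot)\otimes\cdot)$ using minimality ($Q^1_1=0$, $\phi_1=0$), and d\'ecal\'e back. Your extra care in reducing the conjugated codifferential $(F\otimes\mathrm{id})\circ\phi\circ(F^{-1}\otimes\mathrm{id})$ to $\mathrm{pr}_{V[1]}\circ\phi\circ(F^{-1}\otimes\mathrm{id})$ is a welcome clarification of a point the paper glosses over, and the final sign check you defer is treated at the same level of detail in the paper itself.
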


\begin{proof} Let $Q$ and  $\phi$  be the codifferentials associated  to $l$ and $m$ as in Proposition-Definition \ref{equivdeflalg} and Proposition-Definition \ref{lmodule}, respectively.  By minimality of the pair $((L,l),(M,m))$ one has $Q_1^1=0$ and $\phi_1=0$. Let $F$ be the isomorphism of differential graded coalgebras
$F: \; (\rrS (L[1]),Q) \rightarrow (\rrS(L[1]),Q')$ associated to $f$ as in the proof of Lemma \ref{trans}. Let $F_n^i$ be defined as in Remark \ref{expmor}.  The Taylor coefficients $F^1_n=\dec^{-1}(f_n)$ satisfy $F_1^1=\rm{id}$, $F_k^1 \neq 0$,  $F_n^1=0$ for $1\neq n\neq k$, by the explicit description of the d\'ecalage isomorphism from Corollary \ref{corDeca} and the assumptions on $f$. By the formula reconstructing $F$ from its Taylor coefficients from Remark \ref{Taylorcoefmor}, one has $F^1_n= 0$ for $ 2 \leq n \leq k-1$, and $F_n^n=\mathrm{id}$.

The Taylor coefficients of the inverse $G$ of the isomorphism of $\Linf$ algebras $F$ 
are given by   Proposition \ref{coalgiso}. It follows that 
 $$\begin{cases}
 G^1_1=(F^1_1)^{-1}=\mathrm{id}, &  \\ 
 G_n^1= -\Big(\sum_{i=1}^{n-1}G_i^1F^i_n\Big)(F^n_n)^{-1}=0& \text{ for } 2 \leq n \leq k-1, \\
 G_k^1=-\Big(\sum_{i=1}^{k-1}G_i^1F^i_k\Big)(F^k_k)^{-1}= - F_k^1.&
\end{cases}$$
 Hence, for homogeneous $x_1,\ldots,x_n \in L[1]$ with $n\leq k$, 
$$G(x_1 \vee \cdots  \vee x_n )=\begin{cases}
x_1\vee \cdots \vee x_n  +G_n^1(x_1 \vee \cdots \vee x_n ) & \text{ if } n=k \\ 
x_1 \vee \cdots \vee x_n  & \text{ if } 1 \leq  n \leq k-1.
\end{cases}$$
Now, the new module structure $\phi'$ is defined in terms of the old one $\phi$ and $G$ by  $$\phi'(x_1 \vee \cdots  \vee x_n  \otimes v)=\phi(G(x_1\vee  \cdots \vee  x_n) \otimes v)$$
with $v\in M$ homogeneous. If $1\leq n \leq k-1$, then $\phi'(x_1\vee  \cdots \vee x_n \otimes v)=\phi(x_1\vee  \cdots  \vee  x_n \otimes v)$. In particular, $\phi_n'=\phi_n $, and equivalently $m_n'=m_n$, for $2\leq n \leq k-1$ by Remark \ref{equivLmod}. If $n=k$ then 
$$
\phi'(x_1\vee  \cdots \vee  x_k \otimes v)  = \phi(x_1\vee  \cdots \vee  x_k \otimes v)+\phi(G_k^1(x_1\vee  \cdots \vee  x_k )\otimes  v).
$$
Using the Taylor expansion (\ref{eqcomo}) of $\phi$, we compute the last term
\begin{align*}
 \phi(G_k^1(x_1\vee  \cdots \vee  x_k ) \otimes v) &= Q_1^1(G_k^1(x_1\vee  \cdots \vee x_k ))\otimes v +\phi_2(G_k^1(x_1, \ldots , x_k ) \otimes v)\\
 &\quad\quad +(-1)^{\left | G_k^1(x_1\vee  \cdots \vee  x_k ) \right |}\cdot G_k^1(x_1\vee \cdots \vee  x_k ) \otimes \phi_1(v)\\ 
 &= \phi_2(G_k^1(x_1 \vee  \cdots \vee  x_k ) \otimes v)
\end{align*} where the last line follows from the fact that the differentials $Q_1^1$ and $\phi_1$ are trivial. It implies 
\begin{align*}
\phi'_{k+1}(x_1\vee  \cdots \vee  x_k \otimes v) &= \phi_{k+1} (x_1\vee  \cdots \vee  x_k \otimes v)+\phi_2(G_k^1(x_1\vee  \cdots \vee  x_k)\otimes v).\\ 
 &= \phi_{k+1}(x_1\vee  \cdots \vee  x_k \otimes v)-\phi_2(F_k^1(x_1\vee  \cdots \vee  x_k)\otimes v).
\end{align*} which  by d\'ecalage, see Remark \ref{equivLmod}, is equivalent to $$m'_{k+1}(x_1\wedge  \cdots \wedge  x_k \otimes v)=m_{k+1} (x_1\wedge  \cdots \wedge  x_k \otimes v)- m_2(f_k(x_1\wedge  \cdots \wedge  x_k )\otimes v)$$
for homogeneous $x_i\in L$. This  finishes the verification.
\end{proof}

\subs{\bf Proof of Theorem \ref{thrmLinf1Gen}.} 
The strategy is analogous to  \cite[Theorem 3.1]{Po} where the case of $A_\infty$ algebras is considered. 

{\it Step 1. Setup.}
We have $M=M^0\oplus M^1$ and $V=V^0\oplus V^1$ as graded vector spaces. Let $(M,l)$ denote the $\Linf$ algebra structure on $M$, where $l=\{l_n:M^{\otimes n}\to M\}_{n\ge 1}$ are graded anti-symmetric multilinear maps of degree $2-n$. Let $(V,m)$ denote the structure of $\Linf$ module over $M$ on $V$, where $m=\{m_n:M^{\otimes (n-1)}\otimes V\to V\}_{n\ge 1}$ are graded, anti-symmetric with respect to $M^{\otimes (n-1)}$, multilinear maps of degree $2-n$. By assumption we have that the differential $l_1$ of $M$ and the differential $m_1$ of $V$ are zero.
By $$m_n:M^{\otimes (n-1)}\otimes V^0\to V^1$$ we  also mean the restriction of the map $m_n$, slightly abusing notation if the context is clear. We denote by its partial dual by $$ 
\tilde{m}_{n}: (M^1)^{\otimes (n-1)} \ra \mathrm{Hom}( V^0,V^1).
$$
The anti-symmetry of $m_{n}$ with respect to $(M^1)^{\otimes (n-1)}$ is equivalent to the anti-symmetry of $\tilde{m}_{n}$.

{\it Step 2. The roadmap.}
The desired $\Linf$ algebra structure on $M$ will be obtained as an infinite composition 
$$
\ldots\circ f^{(4)}\circ f^{(3)} \circ f^{(2)}
$$
of $\Linf$ algebra isomorphisms  $f^{(k)}=\{f^{(k)}_n:M^{\otimes n}\to M\}_{n\ge 1}$  as in  Lemma \ref{particulartrans} with $f^{(k)}_1=\id_M$, $f^{(k)}_n=0$ if $1\neq n\neq k$, and $f^{(k)}_k\neq 0$. 
Recall that a morphism of $\Linf$ algebras is equivalent to a morphism of coalgebras compatible with the codifferentials. So here by composition we mean the composition of the associated morphisms of coalgebras.

We show now that the infinite composition of such isomorphisms is well-defined. As in Lemma \ref{trans}, each $f^{(k)}$ corresponds to a graded linear map $F^{(k)}:\rS(M[1])\to \rS(M[1])$ reconstructed from its Taylor coefficients $\{\sf^{(k)}_n=\dec^{-1}(f^{(k)}_n)\}_{n\ge 1}$ by means of the formula from Remark \ref{Taylorcoefmor}.
It is enough to show that their composition
$$
F=\ldots\circ F^{(4)}\circ F^{(3)} \circ F^{(2)}
$$
is well-defined. Equivalently, it is enough to show that the Taylor coefficients $F^1_n:\rS^n(M[1])\to M[1]$ are well-defined. We will use the notation $F^m_n:\rS^n(M[1])\to \rS^m(M[1])$ for the various components of maps $F$, as used in Remark \ref{expmor}. We also set $F^m=\sum_nF^m_n$, $F_n=\sum_mF^m_n$.

Fix $a\in \rS^n(M[1])$. Let $s\ge n$. Then it is enough to show that $(F^{(s)}\circ \ldots  \circ F^{(2)})^1_n(a)$ depends only on $F^{(2)},\ldots, F^{(n)}$.
This is obviously true for $s=n$. Assume $s>n$. Then
$$(F^{(s)}\circ \ldots  \circ F^{(2)})^1_n(a)=F^{(s),1}(F^{(s-1)}(  \ldots (F^{(3)}(F^{(2)}_n(a)))\ldots ))
$$
$$
=\sum_j \sf^{(s)}_j(F^{(s-1),j}(  \ldots (F^{(3)}(F^{(2)}_n(a)))\ldots ))
$$
$$=F^{(s-1),1}(  \ldots (F^{(3)}(F^{(2)}_n(a)))\ldots ) + \sf^{(s)}_s(F^{(s-1),s}(  \ldots (F^{(3)}(F^{(2)}_n(a)))\ldots )) 
$$ 
The first term depends only on $F^{(2)},\ldots, F^{(n)}$ by induction. The second term is zero since $F^{(2)}_n(a)$, $F^{(3)}(F^{(2)}_n(a))$, and so on, are elements of $\bigoplus_{m=1}^n\rS^m(M[1])$ by the reconstruction formula from Remark \ref{Taylorcoefmor}, whereas $\sf^{(s)}_s$ takes input only from $\rS^s(M[1])$. This shows that the infinite composition is well-defined.

Let $m^{(k)}=\{m^{(k)}_n\}_{n\ge 1}$ be the $\Linf$ module structure on $V$ over the $\Linf$ algebra structure $M$ obtained after the composition $f^{(k)}\circ\ldots \circ f^{(2)}$ of $\Linf$ algebra isomorphisms. Then $m^{(k)}$ is isomorphic to the original module structure. We will construct $f^{(k)}$ with the additional property  
\be\label{eqmk}
m^{(k)}_n:(M^1)^{\otimes (n-1)}\otimes V^0\to V^1\text{ is zero for }3\le n\le k+1.
\ee
Lemma \ref{particulartrans}  then implies that the limit module structure $m^{(\infty)}=\lim_{k\to\infty}m^{(k)}$, which converges since the infinite composition of $\Linf$ algebra isomorphisms is well-defined, satisfies the claimed properties of the theorem, namely, $m^{(\infty)}$ and $m$ are isomorphic module structures on $V$ over $M$, $m^{(\infty)}_2=m_2$ is the original one and $m_n^{(\infty)}:(M^1)^{\otimes (n-1)}\otimes V^0\to V^1$ is zero for $3\le n$.

{\it Step 3. The construction of $f^{(k)}_k$.}  
 By surjectivity of $\tilde m_{2}$, we can find a graded multilinear map $f^{(2)}_2$  filling a commutative diagram
$$
\begin{tikzcd}
(M^1)^{\otimes 2} \ar["f_2^{(2)}" ', d, dashed] \ar[dr, "\tilde{m}_{3}"]& \\
M^1 \ar[r, twoheadrightarrow, "\tilde{m}_{2}" ']& \Hom(V^0,V^1).
\end{tikzcd}
$$
Since $\tilde m_{3}$ is anti-symmetric, we can find such $f^{(2)}_2$ anti-symmetric by working with $\Lambda^2M^1$ instead of $(M^1)^{\otimes 2}$. Extend $f^{(2)}_2$  to $M^{\otimes 2}\to M$.  
By Lemma \ref{particulartrans} the resulting module structure $m^{(2)}$ satisfies the desired condition (\ref{eqmk}), namely 
$
m_{3}^{(2)}:(M^1)^{\otimes 2}\otimes V^0\to V^1
$
is zero.
We redefine $m=m^{(2)}$.

Continuing in this fashion we construct all $f^{(k)}_k$ by filling in a diagram
$$
\begin{tikzcd}
(M^1)^{\otimes k}  \ar[drr, "\tilde{m}_{k+1}"] \ar[d,  dashed, "f_k^{(k)}" ']& & \\
M^1 \ar[rr, twoheadrightarrow, "\tilde{m}_{2}" ']&& \Hom(V^0,V^1).
\end{tikzcd}
$$
Then (\ref{eqmk}) holds for all $m^{(k)}$.
$\hfill\Box$

\medskip

\subs{\bf Second proof of Theorem \ref{thmGenToCone}.} In fact we give another proof of Theorem \ref{thm1GDik}. The latter implies  Theorem \ref{thmGenToCone}. 

Let $(M,V)$ be as in Theorem \ref{thm1GDik}.
By Theorem \ref{cjdf pairs invariance}  we can compute the functors
 $\Def^0_k(M,V)$ attached to the $\Linf$ pair $(M,V)$ using the $\Linf$ structure obtained by Theorem \ref{thrmLinf1Gen}. Denote  by $m=\{m_n\}_{n\ge 1}$ the $\Linf$ module structure on $V$. 

We know from the first part of the proof of Theorem \ref{thm1GDik} that no two elements in $M^1\otimes \mm_A$ are homotopy equivalent by  Lemma \ref{MCnequi} if one considers the original $\Linf$ algebra structure on $M$. By Theorem \ref{thmLDis}, the same is true for the new isomorphic $\Linf$ algebra structure on $M$ obtained via Theorem \ref{thrmLinf1Gen}.
Thus
$
\Def(M)\simeq (\widehat{M^1})_{\bz}. 
$

By Theorem \ref{thrmLinf1Gen}, $m_n$ vanish on $(M^1)^{\otimes (n-1)}\otimes V^0$ for $n\ge 3$. Thus the degree-zero cohomology jump subfunctors simplify to 
\begin{equation}\label{eqLJIp}
\Def^0_k(M,V;A)=\{\omega\in  M^1\otimes \mm_A\mid   J^0_k(V\otimes A, m_{2}^A(\omega\otimes \_))=0 \},
\end{equation}
see Definition \ref{defDik}. 
This means that in the original proof of Theorem \ref{thm1GDik} we have $d_{univ}=B$ on the nose for the $\Linf$ structure obtained via  Theorem \ref{thrmLinf1Gen}. The rest of the proof stays the same.
$\hfill\Box$


\begin{thebibliography}{BDRZ23}

\bibitem[A14]{Al} M. Allocca, {\it  Homomorphisms of $\Linf$ modules.} J. Homotopy Relat. Struct. 9 (2014), 285-298.

\bibitem[AJN21]{Mon} J. \`Alvarez Montaner, J. Jeffries, L. N\'u\~nez-Betancourt, {\it
Bernstein-Sato polynomials in commutative algebra.} arXiv:2106.08830.

\bibitem[AC81]{AC} E. Arbarello, M. Cornalba, {\it On a conjecture of Petri. } Comment. Math. Helv. 56 (1981),  1-38.

\bibitem[A+85]{A+} E. Arbarello, M. Cornalba, P.A. Griffiths,  J. Harris, {\it Geometry of algebraic curves. Vol. I.} Springer-Verlag, New York, 1985. xvi+386 pp.

\bibitem[Ar68]{ar} M. Artin, {\it On the solutions of analytic equations.} Invent. Math. 5 (1968), 277-291. 

\bibitem[Ar69]{ar1} M. Artin, {\it Algebraic approximation of structures over complete local rings.} Inst. Hautes \'Etudes Sci. Publ. Math. No. 36 (1969), 23-58.

\bibitem[BK06]{BK} E. Ballico, C. Keem, {\it On double coverings of hyperelliptic curves.} J. Pure Appl. Algebra 207 (2006),  397-415.







\bibitem[Be92]{Ber} A. Bertram, {\it Moduli of rank-2 vector bundles, theta divisors, and the geometry of curves in projective space.} J. Differential Geom. 35 (1992), 429-469.

\bibitem[Bu23]{theta} N. Budur, {\it Local structure of theta divisors and related loci of generic curves}. Preprint 2023.

\bibitem[BMS06]{BMS} N. Budur, M. Musta\c{t}\u{a},  M. Saito. {\it Bernstein-Sato
polynomials of arbitrary varieties}.  Compos. Math. 142 (2006), 779-797.

\bibitem[BR18]{BR} N. Budur, M. Rubi\'o, {\it L-infinity pairs and applications to singularities}.  Adv.  Math. 354 (2019), 106754.

\bibitem[BW14]{BWq} N. Budur, B. Wang, {\it Cohomology jump loci of quasi-projective varieties}. Ann. Sci. \'Ec. Norm. Sup\'er. 48 (2015), 227-236. 

\bibitem[BW15]{BW} N. Budur,  B. Wang, {\it Cohomology jump loci of differential graded Lie algebras}.  Compos. Math. 151 (2015),  1499-1528. 

\bibitem[BW17]{BWr} N. Budur, B. Wang, {\it Recent results on cohomology jump loci.} In {\it Hodge theory and analysis}, 207-243.
Adv. Lect. Math. (ALM), 39
International Press, Somerville, MA, 2017.

\bibitem[BW20]{BWa} N. Budur and B. Wang, {\it Absolute sets and the decomposition theorem.} Ann. Sci. \'Ec. Norm. Sup\'er. 53 (2020), 469-536. {\it Correction to ``Absolute sets and the decomposition theorem''.}
Ann. Sci. \'Ec. Norm. Sup\'er. 55 (2022), 1473-1474.



\bibitem[CT11]{CT} S. Casalaina-Martin, M. Teixidor i Bigas, {\it Singularities of Brill-Noether loci for vector bundles on a curve}. Math. Nachr. 284 (2011),  1846-1871.

\bibitem[CNS18]{CNS} A. Chambert-Loir, J. Nicaise, J. Sebag, {\it
Motivic integration.}
Prog. Math. 325. Birkh\"auser/Springer, New York, 2018. xx+526 pp.

\bibitem[CS22]{CSo} J. Cirici, A. Sopena-Gilboy, {\it Filtered $A_\infty$ structures in complex geometry.} Proc. Amer. Math. Soc. 150 (2022), 4067-4082.

\bibitem[C98]{Co2} A. Conca, {\it
Straightening law and powers of determinantal ideals of Hankel matrices.} Adv. Math.138 (1998),  263-292.

\bibitem[C+18]{Con} A. Conca, M. Mostafazadehfard, A. Singh, M. Varbaro, {\it Hankel determinantal rings have rational singularities}, Adv. Math. 335 (2018), 111-129.

\bibitem[DEP80]{DEP} C. de Concini, D. Eisenbud,  C. Procesi, {\it Young diagrams and determinantal varieties}, Invent. Math. 56 (1980), 129-165. 

\bibitem[D86]{De86} P. Deligne, {\it Letter to Millson} (1986), \url{http://www.math.umd.edu/~millson/papers/ deligne.pdf}.

\bibitem[D+75]{DGMS} P. Deligne, P. Griffiths, J. Morgan,  D. Sullivan, {\it Real homotopy theory of K\"ahler manifolds.} Invent. Math. 29 (1975),  245-274.

\bibitem[DL98]{DL} J. Denef, F. Loeser, {\it Motivic Igusa zeta functions}, J. Algebraic Geom.  7 (1998), 505-537.


\bibitem[DP12]{dp} A. Dimca, S. Papadima, {\it Non-abelian cohomology jump loci from an analytic viewpoint.}  Commun. Contemp. Math. 16 (2014), 1350025, 47 pp.



\bibitem[Do13]{Roi} R. Docampo,  {\it Arcs on determinantal varieties}. Trans. Amer. Math. Soc. 365 (2013),  2241-2269. 



\bibitem[ENP20]{EN} L. Ein, W. Niu, J. Park, {\it Singularities and syzygies of secant varieties of nonsingular projective curves.} Invent. Math. 222 (2020),  615-665.


\bibitem[E88]{Eis} D. Eisenbud,  {\it Linear sections of determinantal varieties}. Amer. J. Math. 110 
(1988), 541-575.

\bibitem[E95]{E} D. Eisenbud, {\it Commutative algebra. With a view toward algebraic geometry.} GTM 150. Springer-Verlag, New York, 1995. xvi+785 pp.


\bibitem[El78]{El} R. Elkik, {\it Singularit\'es rationelles et d\'eformations}. Invent. Math. 47 (1978), 139-147.

\bibitem[EK20]{ek} H. Esnault, M.  Kerz, {\it
Arithmetic subspaces of moduli spaces of rank one local systems.} Camb. J. Math. 8 (2020), 453-478.




\bibitem[GGR19]{Gaf} T. Gaffney, N.G.Jr. Grulha,  M.A.S. Ruas, {\it The local Euler obstruction and topology of the stabilization of associated determinantal varieties.} Math. Z. 291 (2019),  905-930.

\bibitem[G09]{Ge} E. Getzler, {\it Lie theory for nilpotent $\Linf$-algebras}. Ann. Math. 170 (2009), 271-301.

\bibitem[Gi82]{Gie} D. Gieseker, {\it Stable curves and special divisors: Petri's conjecture}. Invent. Math. 66 (1982), 251-275.

\bibitem[GM88]{GM} W. Goldman, J. Millson, {\it Deformations of flat bundles over K\"ahler manifolds.} Inst. Hautes \'Etudes Sci. Publ. Math.  67 (1988), 43-96.


\bibitem[GL87]{gl1} M. Green, R. Lazarsfeld, {\it Deformation theory, generic vanishing theorems, and some conjectures of Enriques, Catanese and Beauville.} Invent. Math. 90 (1987),  389-407.

\bibitem[GL91]{gl2} M. Green, R. Lazarsfeld, {\it Higher obstructions to deforming cohomology groups of line bundles.} J. Amer. Math. Soc. 4 (1991),  87-103.

\bibitem[GP82]{GP} L. Gruson, C. Peskine, {\it Courbes de l'\'espace projectif: vari\'et\'es de s\'ecantes}. Enumerative geometry
and classical algebraic geometry (1981), pp. 1-31, Progr. Math. 24, Birkh\"auser, Boston, 1982.

\bibitem[GT09]{GB} I. Grzegorczyk, M. Teixidor i Bigas, {\it Brill-Noether theory for stable vector bundles.} Moduli spaces and vector bundles, 29--50, Cambridge Univ. Press, Cambridge, 2009. 


\bibitem[HHN21]{Hitch} G. Hitching, M. Hoff, P. Newstead, {\it Nonemptiness and smoothness of twisted Brill-Noether loci.} Ann. Mat. Pura Appl.  200 (2021),  685-709.


\bibitem[IK99]{IK} A. Iarrobino, V. Kanev, {\it
Power sums, Gorenstein algebras, and determinantal loci.
}
LNM 1721,
Springer-Verlag, Berlin, 1999. xxxii+345 pp.

\bibitem[J03]{Jo} A.  Johnson, {\it Multiplier ideals of determinantal ideals}, Ph.D. thesis, University of Michigan, 2003.


\bibitem[K73]{Ke} G. Kempf, {\it On the geometry of a theorem of Riemann}. 
Ann. of Math.  98 (1973), 178-185. 

\bibitem[K83]{Ke-abIn} G. Kempf, {\it  Abelian integrals.} Monograf\'ias del Instituto de Matem\'aticas  13. Universidad Nacional Aut\'onoma de M\'exico, M\'exico, 1983. vii+225 pp.



\bibitem[KS22]{KS} A. Kraft and J. Schnitzer, \textit{An Introduction to $L_\infty$-algebras and their Homotopy Theory.} \url{https://arxiv.org/abs/2207.01861}, (2022).


\bibitem[L04]{L} T. Lada, {\it $L$-infinity algebra representations.} Appl. Categorical Struct. 12 (2004), 29-34.

\bibitem[LM95]{LM} T. Lada, M. Markl, {\it Strongly homotopy Lie algebras.} Comm. Algebra 23 (1995), 2147-2161.



\bibitem[LS93]{LS} T. Lada, J. D. Stasheff, {\it Introduction to sh Lie algebras for physicists.} Int. J. Theor. Phys.  32 (1993), 1087-1104.


\bibitem[La91]{La} Y. Laszlo, {\it Un th\'eor\`eme de Riemann pour les diviseurs th\^eta sur les espaces de modules de fibr\'es stables sur une courbe}. Duke Math. J. 64 (1991),  333-347. 

\bibitem[Laz04]{Laz} R. Lazarsfeld, {\it Positivity in algebraic geometry. II. Positivity for vector bundles, and multiplier ideals.} Springer-Verlag, Berlin, 2004. xviii+385 pp.

\bibitem[LP97]{LeP} J. Le Potier, {\it Lectures on vector bundles.}  Cambridge Univ. Press, 1997. viii+251 pp.

\bibitem[Li93]{Li} Y. Li, {\it A singularity theorem in Brill-Noether theory of higher rank.} Internat. J. Math. 4 (1993),  409-419. 


\bibitem[LV12]{LV} J.-L. Loday, B. Vallette. {\it Algebraic operads}.  Springer Berlin, Heidelberg, 2012. xxiv+634 pp.




\bibitem[L+17]{Lor} A. L\"orincz, C. Raicu, U. Walther, J. Weyman,  {\it Bernstein-Sato polynomials for maximal minors and sub-maximal Pfaffians}. Adv. Math. 307 (2017), 224-252.


\bibitem[M21]{Mal} D. Mallory, {\it Minimal log discrepancies of determinantal varieties via jet schemes.} J. Pure Appl. Algebra 225 (2021), Paper No. 106497, 24 pp.



\bibitem[Ma07]{M-a} M. Manetti, {\it Lie description of higher obstructions to deforming submanifolds.} Ann. Sc. Norm. Super. Pisa Cl. Sci. 6 (2007),  631-659.

\bibitem[Ma22]{Mane} M. Manetti, {\it Lie methods in deformation theory}. Springer, Singapore (2022).

\bibitem[Mar09]{ma} E. Martinengo, {\it Local structure of Brill-Noether strata in the moduli space of flat stable bundles.} Rend. Semin. Mat. Univ. Padova 121 (2009), 259-280.


\bibitem[Mu91]{Mul} J. Mulder, {\it The abelian sum map for general curves.} Indag. Math. (N.S.) 2 (1991),  327-340.

\bibitem[Mus22]{Mus-b} M. Musta\c{t}\u{a}, {\it Bernstein-Sato polynomials for general ideals vs principal ideals.} Proc. Amer. Math. Soc. 150 (2022),  3655-3662.


\bibitem[MP20a]{MP2} M. Musta\c{t}\u{a}, M. Popa, {\it Hodge ideals for $\bQ$-divisors, $V$-filtration, and minimal exponent}. Forum  Math., Sigma 8 (2020), E19.
 
\bibitem[MP20b]{MP3} M. Musta\c{t}\u{a}, M. Popa, {\it Hodge filtration, minimal exponent, and local vanishing. } Invent. Math. 220 (2020), 453-478.

\bibitem[N88]{n} A. Nadel, {\it Singularities and Kodaira dimension of the moduli space of flat Hermitian-Yang-Mills connections.} Compos. Math. 67 (1988), 121-128.

\bibitem[P21]{Per} M. Perlman, {\it Mixed Hodge structure on local cohomology with support in determinantal varieties.} arXiv:2102.04369.

\bibitem[PR21]{PR} M. Perlman, C. Raicu, {\it Hodge ideals for the determinant hypersurface.}
Selecta Math. (N.S.) 27 (2021), Paper No. 1, 22 pp.

\bibitem[P04a]{Po} A. Polishchuk, {\it $A_\infty$-structures, Brill-Noether loci and the Fourier-Mukai transform}. Compos. Math. 140 (2004),  459-481. 

\bibitem[P04b]{Pob} A. Polishchuk, {\it $A_\infty$-structures on an elliptic curve}. Comm. Math. Physics  247  (2004),  527-551.


\bibitem[PS11]{PS} M. Popa, C. Schnell,  {\it Generic vanishing theory via mixed Hodge modules.} Forum Math., Sigma 1 (2013), 1-60.



\bibitem[SY22]{SY} C. Schnell, R. Yang, {\it A log resolution for the theta divisor of a hyperelliptic curve.} arXiv:2206.07075.

\bibitem[SY23]{SY2} C. Schnell, R. Yang, {\it Higher multiplier ideals.} arXiv:2309.16763.

\bibitem[S92]{s1} C. Simpson, {\it Higgs bundles and local systems.} Inst. Hautes \'Etudes Sci. Publ. Math. 75 (1992), 5-95.

\bibitem[St03]{Sta} J. Starr, {\it The Kodaira dimension of spaces of rational curves on low degree hypersurfaces.} arXiv:math/0305432.


\bibitem[T14]{TiBtw} M. Teixidor i Bigas, {\it Injectivity of the Petri map for twisted Brill-Noether loci}. Manuscripta Math. 145 (2014),  389-397.

\bibitem[Ts99]{Tsy} B. Tsygan, {\it
Formality conjectures for chains.} Differential topology, infinite-dimensional Lie algebras, and applications, 261-274,
Amer. Math. Soc. Transl. Ser. 2, 194,  Amer. Math. Soc., Providence, RI, 1999.

\bibitem[W12]{w} B. Wang, {\it Cohomology jump loci in the moduli spaces of vector bundles.} arXiv:1210.1487.

\bibitem[Z15]{Zhu} Z. Zhu,  {\it Jet schemes and singularities of $W^r_d(C)$ loci}. Comm. Algebra 43 (2015),  3134-3159.

\end{thebibliography}
\end{document}